\theoremstyle{plain}
\newtheorem{theorem}{Theorem}[section]
\newtheorem{definition}[theorem]{Definition}
\newtheorem{lemma}[theorem]{Lemma}
\newtheorem{corollary}[theorem]{Corollary}
\newtheorem{proposition}[theorem]{Proposition}
\newtheorem{assumption}{Assumption}
\theoremstyle{remark}
\newtheorem{remark}[theorem]{Remark}
\numberwithin{equation}{section}
\newcommand{\C}{\mathbb{C}}
\newcommand{\R}{\mathbb{R}}
\newcommand{\Z}{\mathbb{Z}}
\renewcommand{\Im}{\operatorname{Im}}
\renewcommand{\Re}{\operatorname{Re}}
\newcommand{\I}{\infty}
\newcommand{\norm}[1]{\left\lVert #1\right\rVert}
\newcommand{\IN}{\quad\text{in }}
\def\({\left(}
\def\){\right)}
\def\<{\left\langle}
\def\>{\right\rangle}
\def\le{\leqslant}
\def\ge{\geqslant}
\DeclareMathOperator{\sign}{sign}
\DeclareMathOperator{\rank}{rank}
\DeclareMathOperator{\tr}{tr}
\newcommand{\todayd}{\the\year/\the\month/\the\day}
\theoremstyle{definition}
\newcommand{\ol}{\overline}
\begin{document}
\title[]
{On scalar-type standing-wave solutions to systems of nonlinear Schr\"odinger equations}

\author[S. Masaki]{Satoshi MASAKI}
\address[]{Division of Mathematical Science, Department of Systems Innovation, Graduate School of Engineering Science, Osaka University, Toyonaka, Osaka, 560-8531, Japan}
\email{masaki@sigmath.es.osaka-u.ac.jp}
\keywords{nonlinear elliptic equation, nonlinear Schr\"odinger equation, system, solitons, standing wave, ground state, excited state, stability of ground state, instability of ground state}
\subjclass[2020]{Primary 35J50; Secondary 35Q55, 37K40}

\begin{abstract}
In this article, we study the standing-wave solutions to a class of systems of nonlinear Schr\"odinger equations.
Our target is all the standard forms of the NLS systems, with two unknowns,  that have a common linear part and cubic gauge-invariant nonlinearities and that yield a Hamiltonian with a coercive kinetic-energy part.
We give a necessary and  sufficient condition on the existence of the ground state. 
Further, we give a characterization of the shape of the ground state.
It will turn out that the ground states are scalar-type, i.e., multiples of a constant vector and a scalar function.
We further give a sufficient condition on the existence of excited states of the same form.
The stability and the instability of the ground states are also studied.
To this end, we introduce an abstract treatment on the study of scalar-type standing-wave solution that applies to a wide class of NLS systems with homogeneous energy-subcritical nonlinearity.
By the argument, some previous results are reproduced.
\end{abstract}

\maketitle

\section{Introduction}

This article studies the standing-wave solutions (solitons)
 to a class of  systems of nonlinear Schr\"odinger equations on $\R^d$.
The specific systems we have in our mind are the following:
\begin{enumerate}
\item Type 1, decoupled systems:
\begin{equation}\tag{NLS1}\label{E:nls1}
	\left\{
	\begin{aligned}
	(	i \partial_t +\Delta) u_1 
		&= \alpha |u_1|^2 u_1 ,\\
	(	i \partial_t +\Delta) u_2
		&= \beta |u_2|^2 u_2 ,
	\end{aligned}
	\right.
\end{equation}
where $\alpha, \beta \in \{-1,0,1\}$ satisfy $\alpha \ge \beta$.
\item Type 2, systems with componentwise mass-conservation:
\begin{equation}\tag{NLS2}\label{E:nls2}
	\left\{
	\begin{aligned}
	(	i \partial_t +\Delta) u_1 
		&= \alpha |u_1|^2 u_1 +  \sigma (|u_1|^2+|u_2|^2) u_1 ,
		\\
	(	i \partial_t +\Delta) u_2
		&= \beta |u_2|^2 u_2 + \sigma (|u_1|^2+|u_2|^2) u_2 ,
	\end{aligned}
	\right.
\end{equation}
where $\alpha \ge \beta$ and $\sigma \in \{-1,1\}$.
\item Type 3:
\begin{equation}\tag{NLS3}\label{E:nls3}
	\left\{
	\begin{aligned}
	(	i \partial_t +\Delta) u_1 
		&= (3 \alpha_1+\alpha_2) |u_1|^2 u_1 +(\alpha_1-\alpha_2)(2|u_2|^2 u_1+ u_2^2 \overline{u_1}) + r (|u_1|^2+|u_2|^2) u_1 ,
		\\
	(	i \partial_t +\Delta) u_2
		&= (3 \alpha_1+\alpha_2 )|u_2|^2 u_2  +(\alpha_1-\alpha_2) (2|u_1|^2 u_2+ u_1^2 \overline{u_2})+ r (|u_1|^2+|u_2|^2) u_2 ,
	\end{aligned}
	\right.
\end{equation}
where $\alpha_2\ge0$, $\alpha_1^2\neq \alpha_2^2$, $\alpha_1^2+\alpha_2^2=1$,  and $r\in\R$.
\item Type 4:
\begin{equation}\tag{NLS4}\label{E:nls4}
	\left\{
	\begin{aligned}
	(	i \partial_t + \Delta) u_1 
		&= (3 \alpha_1+\alpha_2 + 2\alpha_3) |u_1|^2 u_1 +(\alpha_1-\alpha_2) (2|u_2|^2 u_1+ u_2^2 \overline{u_1})
		\\& \quad  + r (|u_1|^2+|u_2|^2) u_1 ,
		\\
	(	i \partial_t + \Delta) u_2
		&= (3 \alpha_1+\alpha_2 - 2\alpha_3) |u_2|^2 u_2  +(\alpha_1-\alpha_2) (2|u_1|^2 u_2+ u_1^2 \overline{u_2})\\
		&\quad  + r (|u_1|^2+|u_2|^2) u_2 ,
	\end{aligned}
	\right.
\end{equation}
where
$ \alpha_2\ge 0$, $\alpha_3>0 $, $\alpha_1\neq \alpha_2$,
 $\alpha_1^2+\alpha_2^2+\alpha_3^2=1$, and $r\in\R$.
\item Type 5:
\begin{equation}\tag{NLS5}\label{E:nls5}
	\left\{
	\begin{aligned}
	(	i \partial_t + \Delta) u_1 
		&= (3 \alpha_1+\alpha_2 + 2\alpha_3 \cos \eta) |u_1|^2 u_1 +(\alpha_1-\alpha_2) (2|u_2|^2 u_1+ u_2^2 \overline{u_1})
		\\& \quad +\alpha_3 \sin \eta (2|u_1|^2 u_2 + u_1^2\ol{u_2}+|u_2|^2u_2) + r (|u_1|^2+|u_2|^2) u_1 ,
		\\
	(	i \partial_t + \Delta) u_2
		&= (3 \alpha_1+\alpha_2 - 2\alpha_3 \cos \eta) |u_2|^2 u_2  +(\alpha_1-\alpha_2) (2|u_1|^2 u_2+ u_1^2 \overline{u_2})\\
		&\quad +\alpha_3 \sin \eta (2|u_2|^2 u_1 + u_2^2\ol{u_1}+|u_1|^2u_1) + r (|u_1|^2+|u_2|^2) u_2 ,
	\end{aligned}
	\right.
\end{equation}
where
$ \alpha_2>0$, $\alpha_3>0 $,  $\alpha_1^2+\alpha_2^2+\alpha_3^2=1$, $\eta \in (0,\pi)$, and $r\in\R$.
We assume $\alpha_1>0$ if $ \eta > \pi/2$.
\end{enumerate}
Note that the systems of \eqref{E:nls3} and \eqref{E:nls4} are degenerate cases $\alpha_3=0$ and $\eta=0$
of \eqref{E:nls5}, respectively.

By a standard argument, the well-posedness of the  above systems is established in $(H^1(\R^d))^N$ for $d\le4$ (See \cite{CazBook}, for instance).
These systems possess three conserved quantities.
One is \emph{mass} (or \emph{charge})
\begin{equation}\label{E:mass}
	M(u_1,u_2) = \int_{\R^d}\( \frac12 |u_1|^2 + \frac12 |u_2|^2 \)dx
\end{equation}
and the others two are \emph{momentum}
\begin{equation}\label{E:momentum}
	P(u_1,u_2) = \Im \int_{\R^d}\( \overline{u_1}\nabla u_1 + \overline{u_2}\nabla u_2 \)dx
	=\int_{\R^d} \xi (|\hat{u_1}|^2 + |\hat{u_2}|^2) d\xi
\end{equation}
and \emph{energy}
\begin{equation}\label{E:energy}
	E(u_1,u_2) =  \int_{\R^d}\( \frac12 |\nabla u_1|^2 + \frac12 |\nabla u_2|^2 
 + \frac14 g(u_1,u_2)  \) dx,
\end{equation}
where $g$ is a suitable quartic homogeneous polynomial given by the nonlinearity.

In \cites{M,MSU1,MSU2}, the author and his collaborators give the classification of cubic systems which possess at least one mass-like conserved quantity. 
According to the classification, one sees that the above \eqref{E:nls1}--\eqref{E:nls5} are the complete list of the standard forms (or the canonical forms)
of the systems which have gauge-invariant cubic nonlinearities and which
admit the conserved energy with a coercive kinetic-energy part.
See Theorem \ref{T:reduction} for more detail.

The main purpose of the present article is to study the standing-wave solutions (solitons) to the above systems.
It will turn out that the ground state solutions, i.e., the least action solutions, to the above systems are \emph{scalar-type}.
Here, we say a vector-valued function is \emph{scalar-type} if the function is given as a multiple of a constant vector and a scalar function.
There is a number of literature on the study of standing waves of the NLS systems (see \cites{AC,MMP,Si,LW,I} and references therein).
Some of the above systems are already studied.

In this article, we introduce a unified treatment to study the \emph{scalar-type} solutions of the following
elliptic system 
\begin{equation}\tag{gE${}_\omega$} \label{E:gE}
	-\Delta u_j+ \omega u_j 
		=- F_j( u_1,u_2,\dots,u_N)  \qquad (j=1,2,\dots,N),
\end{equation}
where $x\in \R^d$,
$d\ge1$, $N\ge2$, $\omega>0$, $u_j$ are complex-valued unknowns, and $F_j$ are nonlinearities.
Let us emphasize that the parameter $\omega>0$ is independent of $j$.
This restriction is made because it is a natural requirement for the presence of standing-wave solutions to NLS systems without a constant-type linear potential such as \eqref{E:nls1}--\eqref{E:nls5} (See Remark \ref{R:sameomega}).
As $\omega$ reflects the decay speed of the components at the spatial infinity, it is quite natural to expect that
\eqref{E:gE} possesses scalar-type solutions.
Except for the equality of the parameter $\omega$, our argument applies to a wide range of systems.
Roughly speaking, we make only two assumptions on nonlinearity:
One is the existence of a simple Hamiltonian structure (i.e., the existence of a Hamiltonian with the normalized kinetic-energy part), and the other is the homogeneity of the function that determines the nonlinear part of the Hamiltonian.
The precise assumption on the nonlinearity is given below in Assumption \ref{A:1}.
We give a simple criterion on the existence/nonexistence of the ground state and its shape (Theorem \ref{T:main}).
That of scalar-type excited states is also established (Theorem \ref{T:excited}).

As is well known, solutions to the nonlinear elliptic system of the form \eqref{E:gE} 
have a connection between standing wave solutions to an NLS system.
Fix  a vector ${\bf n}=(n_1,\dots, n_N) \in \Z_+^N$ and consider the NLS system
\begin{equation}\tag{gNLS}\label{E:gNLS}
	\left\{
	\begin{aligned}
	&(	i \partial_t +n_j\Delta) u_j 
		= n_j F_j(  u_1, u_2,\dots , u_N)\qquad (j=1,2,\dots,N), \\
	&(u_1(0),u_2(0),\dots, u_N(0))=(u_{1,0},u_{2,0},\dots,u_{N,0}) \in (H^1(\R^d))^N,
	\end{aligned}
	\right.
\end{equation}
where ${\bf u}(t)=(u_1(t),u_2(t),\dots,u_N(t))$,
$(t,x) \in \R^{1+d}$, $d\ge1$, and $N\ge2$.
To link \eqref{E:gE} and \eqref{E:gNLS}, we need one more assumption on the nonlinearity (Assumption \ref{A:2}).
This is a generalized version of the gauge-invariance property. The vector ${\bf n}$ is given in the assumption.
The systems \eqref{E:nls1}--\eqref{E:nls5} are obtained with the choice $N=2$ and ${\bf n} =(1,1)$.
Here we consider general ${\bf n}$ because we would compare our result with previous results on the scalar-type solutions.
In particular, we will find that our theorems reproduce results in \cite{CO}.

\subsection{Scalar-type solutions to the nonlinear elliptic systems}

We first consider the nonlinear elliptic system \eqref{E:gE}.
Let us make the assumption on the nonlinearity clear.
\begin{assumption}\label{A:1}
The nonlinearity is given by the formula
\begin{equation}\label{D:fj}
	F_j := \tfrac2p \partial_{\overline{z_j}} g 
\end{equation}
for $j=1,2,\dots,N$ from a function $g: \C^N \to \R$ which is twice continuously differentiable as a function $\R^{2N} \to \C$
and is homogeneous of degree $p$:
\begin{equation}\label{E:g-homo}
	g(r z_1, r z_2, \dots , r z_N) = r^{p}g( z_1, z_2.\dots,z_N) 
\end{equation}
for all $( z_1, z_2, \dots, z_N) \in \mathbb{C}^N$ and $r\ge0$. Here, $p$ satisfies the energy subcriticality condition
\begin{equation}\label{E:prange}
	p \in (2,2^*),
\end{equation}
where $2^*=\infty$ for $d=1,2$ and
$2^*= 2d/(d-2)$ for $d\ge3$.
\end{assumption}
We remark that the ${z}$-derivative and the
$\overline{z}$-derivative of a function $f(z)$ are given by $\partial_{{z}} f =\tfrac12 (\partial_x - i \partial_y)\tilde{f}$ and
$\partial_{\overline{z}} f = \tfrac12(\partial_x + i \partial_y)\tilde{f}$, respectively, with the identifications $z=x+iy$ and $f(z) = \tilde{f}(x,y)$. Since $g$ is real-valued, one has $\partial_{\overline{z_j}}g = \overline{\partial_{z_j}g} $.
This gives us an alternative representation of the nonlinearity:
\begin{equation}\label{D:fj-alt}
	F_j = \tfrac1{p}(\overline{\partial_{{z_j}} g } + \partial_{\overline{z_j}} g )
	= \tfrac2p \overline{\partial_{{z_j}} g }.
\end{equation}
By differentiating the both sides of \eqref{E:g-homo} by $\overline{z_j}$ and applying the chain rule, we obtain
\begin{equation}\label{E:fj-homo}
	F_j(r  z_1, r  z_2,\dots, r  z_N) = r^{p-1} F_j ( z_1, z_2.\dots,z_N) 
\end{equation}
for all $( z_1, z_2, \dots, z_N) \in \mathbb{C}^N$ and $r\ge 0$.

For $\omega >0$,
let $S_\omega$ be the action functional on $(H^1(\R^d))^N$ given by
\begin{equation}\label{E:Action}
	S_\omega (u_1,u_2,\dots , u_N) := E(u_1,u_2,\dots , u_N) + \omega M(u_1,u_2,\dots , u_N)
\end{equation}
with
\begin{equation}\label{E:gEnergy}
	E(u_1,u_2,\dots , u_N) = \sum_{j=1}^N \int_{\R^d} \frac12|\nabla u_j|^2dx + \int_{\R^d} \frac1p g(u_1,u_2,\dots,u_N)dx
\end{equation}
and
\begin{equation}\label{E:gMass}
	M(u_1,u_2,\dots , u_N) = \sum_{j=1}^N \int_{\R^d} \frac12| u_j|^2dx,
\end{equation}
where $g$ is the function given in Assumption \ref{A:1}.
Note that ${\bf u} \in (H^1(\R^d))^N$ solves \eqref{E:gE} if and only if it is a critical point $S_\omega$,
i.e., $S'_\omega({\bf u})=0$.

\begin{definition}
For $\omega>0$, let
\[
	\mathcal{A}_{\omega} := \{ \Phi = (\phi_1,\phi_2, \dots , \phi_N) \in (H^1(\R^d))^N \ |\ \Phi \neq0,\, S'_\omega(\Phi) =0 \}
\]
be the set of solutions to the elliptic system \eqref{E:gE}
and let
\[
\mathcal{G}_\omega := \left\{ \Phi \in \mathcal{A}_\omega \ \middle|\ S_\omega (\Phi) = \inf_{\Psi \in \mathcal{A}_\omega} S_\omega (\Psi)\right\}
\]
be the set of ground states. Further, we let $\mathcal{A}= \cup_{\omega>0}\mathcal{A}_\omega$
and $\mathcal{G}= \cup_{\omega>0}\mathcal{G}_\omega$.
We call an element of $\mathcal{A} \setminus \mathcal{G}$ an excited state.
\end{definition}

Let $Q \in H^1(\R^d)$ be the positive radial solution to the elliptic equation
$-\Delta Q + Q = Q^{p-1}$ on $\R^d$.
For $\omega>0$ and $a>0$, we denote 
\[
Q_{\omega,a}:= (\omega/a)^{ 1/(p-2)} Q(\sqrt{\omega} \cdot).
\]
Note that $\omega>0$ is a scale parameter and $a>0$ is an amplitude parameter.
 Obviously, $Q_{\omega,a}$ is a positive radial solution to 
\begin{equation}\label{E:Qaeq}
-\Delta Q_{\omega,a} + \omega Q_{\omega,a} = a Q_{\omega,a}^{p-1}.
\end{equation}
Let
\[
	\partial B :=\{ (z_1,z_2,\dots,z_N) \in \C^N \ |\ |z_1|^2 + |z_2|^2+ \dots + |z_N|^2 = 1 \}
\]
be the boundary of the unit ball in $\C^{N}$.
For ${\bf w} =(w_1,w_2,\dots,w_N) \in \partial B$, $\omega>0$, 
 and $a>0$, we define
\begin{equation}
	\mathcal{R}({\bf w},\omega, a) := \{ {\bf w}Q_{\omega,a}(\cdot -y) = ( w_1 Q_{\omega,a}(\cdot -y) , w_2 Q_{\omega,a}(\cdot -y) , \dots , w_N Q_{\omega,a}(\cdot -y) ) \ |\  y \in \R^d \} . 
\end{equation}
This is a set of scalar-type functions. 

Set
\begin{equation}\label{D:gmin}
	g_{\min} = \min_{(z_1,z_2,\dots,z_N) \in \partial B} g(z_1,z_2,\dots,z_N)
\end{equation}
and
\begin{equation}\label{D:T0}
	T_0 := \{ (z_1,z_2,\dots,z_N) \in \partial B \  |\ g(z_1,z_2,\dots,z_N) =g_{\min} \}.
\end{equation}
Our main result on the ground states
is formulated as follows:
\begin{theorem}\label{T:main}
Suppose Assumption \ref{A:1}.
If $g_{\min} < 0$ then  
\[
	\mathcal{G}_\omega = \bigcup_{{\bf w} \in T_0} \mathcal{R}({\bf w},\omega,-g_{\min})
\]
for all $\omega>0$.
If $g_{\min} \ge 0$ then $\mathcal{A}=\emptyset$, i.e., there is no nontrivial solution to \eqref{E:gE} for all $\omega>0$.
\end{theorem}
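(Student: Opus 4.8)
The plan is to reduce the vectorial system to a single scalar problem through two pointwise inequalities and the sharp Gagliardo--Nirenberg inequality, and then to read off everything---nonexistence, the value of the ground-state action, and the scalar-type shape---from the equality cases. The only algebraic identity I need is the Nehari relation. Write $K(\Phi)=\sum_{j}\int_{\R^d}(|\nabla u_j|^2+\omega|u_j|^2)\,dx$ for $\Phi=(u_1,\dots,u_N)$. Since $g$ is homogeneous of degree $p$, Euler's identity gives $\langle S_\omega'(\Phi),\Phi\rangle = K(\Phi)+\int_{\R^d}g(\Phi)\,dx$, so every $\Phi\in\mathcal A_\omega$ satisfies $K(\Phi)=-\int g(\Phi)\,dx$ and hence $S_\omega(\Phi)=\bigl(\tfrac12-\tfrac1p\bigr)K(\Phi)=\tfrac{p-2}{2p}K(\Phi)$. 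Homogeneity also yields the pointwise bound $g(\Phi(x))\geq g_{\min}\,|\Phi(x)|^p$, with $|\Phi|^2=\sum_j|u_j|^2$, because $\Phi(x)/|\Phi(x)|\in\partial B$. If $g_{\min}\geq 0$ then $\int g(\Phi)\,dx\geq 0$, forcing $K(\Phi)\leq 0$; as $K(\Phi)>0$ for $\Phi\neq 0$ (here $\omega>0$ is essential), no nontrivial solution exists, i.e.\ $\mathcal A=\emptyset$, disposing of the second alternative.

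For the lower bound when $g_{\min}<0$, set $v=|\Phi|\in H^1$ and $\mathcal K(v)=\|\nabla v\|_{L^2}^2+\omega\|v\|_{L^2}^2$. The Nehari relation and the pointwise bound give $K(\Phi)=\int(-g(\Phi))\,dx\leq(-g_{\min})\|v\|_{L^p}^p$; the diamagnetic inequality $|\nabla v|\leq|\nabla\Phi|$ a.e.\ gives $\mathcal K(v)\leq K(\Phi)$; and the sharp Gagliardo--Nirenberg inequality gives $\|v\|_{L^p}^p\leq C\,\mathcal K(v)^{p/2}$, with $C$ attained precisely by translates and rescalings of the scalar ground state $Q$. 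Chaining these,
\[
	K(\Phi)\leq(-g_{\min})\,C\,\mathcal K(v)^{p/2}\leq(-g_{\min})\,C\,K(\Phi)^{p/2},
\]
and dividing by $K(\Phi)^{p/2}>0$ yields the uniform bound $K(\Phi)\geq[(-g_{\min})C]^{-2/(p-2)}=:m_*$. Hence $S_\omega(\Phi)\geq\tfrac{p-2}{2p}m_*$ for every $\Phi\in\mathcal A_\omega$. A short scalar computation, using that $Q_{\omega,-g_{\min}}$ satisfies both its own Nehari relation and saturates Gagliardo--Nirenberg, identifies this number with $S_\omega(\mathbf w Q_{\omega,-g_{\min}})$ for any $\mathbf w\in\partial B$, so $\inf_{\mathcal A_\omega}S_\omega\geq m(\omega):=S_\omega(\mathbf w_0 Q_{\omega,-g_{\min}})$.

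For the matching upper bound I would verify that each $\mathbf w\in T_0$ produces a genuine solution $\mathbf w Q_{\omega,-g_{\min}}(\cdot-y)$: the Lagrange condition at a minimizer of $g$ on $\partial B$ reads $\partial_{\overline{z_j}}g(\mathbf w)=\lambda w_j$, and pairing with $\overline{w_j}$, summing, and invoking Euler's identity forces the real multiplier $\lambda=\tfrac p2 g_{\min}$, i.e.\ $F_j(\mathbf w)=g_{\min}w_j$; homogeneity of $F_j$ then shows the scalar-type ansatz solves \eqref{E:gE}. These solutions realize $m(\omega)$, so $\inf_{\mathcal A_\omega}S_\omega=m(\omega)$ and they are ground states. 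Conversely, any ground state must saturate all three inequalities: equality in Gagliardo--Nirenberg forces $v$ to be a rescaled translate of $Q$ (hence $v=Q_{\omega,-g_{\min}}(\cdot-y)$ once the Nehari relation fixes the amplitude); equality in the pointwise bound forces $\Phi(x)/|\Phi(x)|\in T_0$ a.e.; and writing $\Phi=v\mathbf w$ with $\mathbf w:\R^d\to\partial B$ gives $|\nabla\Phi|^2=|\nabla v|^2+v^2|\nabla\mathbf w|^2$, so equality in the diamagnetic step forces $\mathbf w$ constant on $\{v>0\}$. Together these pin $\Phi=\mathbf w_0 Q_{\omega,-g_{\min}}(\cdot-y)$ with $\mathbf w_0\in T_0$, yielding $\mathcal G_\omega=\bigcup_{\mathbf w\in T_0}\mathcal R(\mathbf w,\omega,-g_{\min})$.

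The delicate point is the rigidity analysis: one must show the three inequalities can be saturated \emph{simultaneously} and characterize that locus exactly. The vectorial diamagnetic equality case (forcing a constant internal direction) has to be combined with the pointwise constraint $\mathbf w(x)\in T_0$ and with uniqueness/nondegeneracy of the scalar Gagliardo--Nirenberg optimizer; tracking which amplitude $a=-g_{\min}$ is selected, and confirming that the Lagrange multiplier is real and equal to $\tfrac p2 g_{\min}$, is where the argument must be handled with care.
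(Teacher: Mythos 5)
Your proof is correct, but it takes a genuinely different route from the paper's. The paper minimizes $S_\omega$ over the Nehari manifold $\mathcal{K}_\omega=\{K_\omega=0\}$, proves that the infimum $\mathfrak{I}(\omega)$ is attained via a profile-decomposition (concentration-compactness) argument, identifies Nehari minimizers with ground states by a Lagrange-multiplier computation, and only then determines the shape: a rearrangement step (replacing $\Phi$ by ${\bf w}\,|\Phi|$ with ${\bf w}\in T_0$) yields the pointwise identities $\Im\sum_j\overline{u_j}\nabla u_j=0$ and $u_{j_1}\nabla u_{j_2}-u_{j_2}\nabla u_{j_1}=0$, after which a connected-component argument using positivity of the Bessel kernel, Gidas--Ni--Nirenberg symmetry and Kwong's uniqueness pins down $Q_{\omega,-g_{\min}}$. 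You instead bound $\inf_{\mathcal{A}_\omega}S_\omega$ from below directly by chaining the pointwise bound $-g\le(-g_{\min})|\cdot|^p$, the vector diamagnetic inequality, and the sharp scalar Gagliardo--Nirenberg inequality in its $\omega$-dependent form, and you recover the shape from the rigidity of the equality cases; existence comes for free from the explicit candidates (your Lagrange-multiplier verification is literally the computation in the paper's proof of the ``if'' part of Theorem \ref{T:excited}). This buys a proof with no compactness machinery at all --- the deep inputs (GNN, Kwong) enter only through the classical equality case of the scalar inequality --- and the squeeze $m_*\le K(\Phi)\le\dots\le m_*$ automatically forces simultaneous saturation, so the ``delicate point'' you flag at the end is in fact harmless; your argument also implicitly contains the vector sharp inequality of Theorem \ref{T:sGN} as a byproduct. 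What the paper's heavier route buys is reusable machinery: the identification $\mathcal{G}_\omega=\mathcal{M}_\omega$ with Nehari minimizers, the explicit formula and convexity of $\mathfrak{I}(\omega)$, and above all the compactness of minimizing sequences (Lemma \ref{L:compactness}), which is re-invoked in the proof of stability (Theorem \ref{T:stability}); your proof of Theorem \ref{T:main} would not supply those later ingredients. Two points you should make airtight in a full write-up: state the equality case of the $\omega$-form of the scalar inequality precisely (equality iff $v$ is a translate of some $Q_{\omega,b}$, the dilation scale being fixed by $\omega$ with only the amplitude $b$ free, which your Nehari relation then pins to $b=-g_{\min}$), and justify at the $H^1$ level the a.e.\ identity $|\nabla\Phi|^2=|\nabla v|^2+v^2|\nabla{\bf w}|^2$ together with the implication that $\nabla{\bf w}=0$ a.e.\ on the connected set $\{v>0\}=\R^d$ makes ${\bf w}$ constant --- both are standard (cf.\ \cite{LLBook}*{Theorem 7.8}) and legitimate in your ordering, since by that stage $v$ is already known to be a positive translate of $Q_{\omega,-g_{\min}}$.
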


We remark that the theorem tells us that $g_{\min}<0$ is a necessary and sufficient condition of the existence of the ground states.
The existence of the ground state is studied in \cites{BeLi,BrLi,St} in an abstract setting.
Our result gives a precise description of the shape of the ground states.
This kind of characterization is introduced in \cite{Co} to study some specific systems of the form \eqref{E:gE}. 

Let us next turn to the excited states.
We have the following:
\begin{theorem}\label{T:excited}
Suppose Assumption \ref{A:1}.
Let ${\bf w} \in \partial B$ and $a>0$.
$
\mathcal{R}({\bf w},\omega,a)
	\subset \mathcal{A}_\omega$ holds for all $\omega>0$ 
if and only if
${\bf w} \in \partial B$ 
 is a critical point of  $g|_{\partial B}$ and $a=-g({\bf w})>0$. 
 \end{theorem}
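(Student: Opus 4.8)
The plan is to reduce the differential statement ``$\mathcal{R}({\bf w},\omega, a)\subset\mathcal{A}_\omega$'' to a purely algebraic identity on the sphere $\partial B$, and then to recognize that identity as the Lagrange-multiplier (critical-point) condition for $g|_{\partial B}$. First I would exploit the translation invariance of \eqref{E:gE} to set $y=0$, so that it suffices to test whether the single profile ${\bf u}=(w_1 Q_{\omega,a},\dots,w_N Q_{\omega,a})$ solves \eqref{E:gE}. Since $Q_{\omega,a}$ is real, positive, and radial and solves \eqref{E:Qaeq}, the left-hand side of the $j$-th equation is $w_j(-\Delta+\omega)Q_{\omega,a}=a\,w_j Q_{\omega,a}^{p-1}$. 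For the right-hand side I would invoke the homogeneity \eqref{E:fj-homo} of $F_j$, which gives $F_j(w_1 Q_{\omega,a},\dots,w_N Q_{\omega,a})=Q_{\omega,a}^{p-1}F_j({\bf w})$ because $Q_{\omega,a}\ge0$. As $Q_{\omega,a}>0$ everywhere, I may cancel the common factor $Q_{\omega,a}^{p-1}$ and conclude that ${\bf u}\in\mathcal{A}_\omega$ (note ${\bf u}\neq0$ is automatic) if and only if
\[
	F_j({\bf w})=-a\,w_j\qquad(j=1,\dots,N).
\]
This identity involves neither $\omega$ nor $y$, so ``for all $\omega>0$'' adds nothing beyond this algebraic system; this already disposes of the $\omega$-uniformity.

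Next I would translate the above system into the language of $g$ on $\partial B$. Using $F_j=\tfrac2p\partial_{\overline{z_j}}g$ from \eqref{D:fj}, the condition $F_j({\bf w})=-a\,w_j$ reads $\partial_{\overline{z_j}}g({\bf w})=-\tfrac{pa}{2}w_j$ for every $j$. I would then recall the Wirtinger form of the Lagrange condition: writing the constraint $h:=\sum_k|z_k|^2-1$, one has $\partial_{\overline{z_j}}h=z_j$, and since $g$ and $h$ are real-valued the real Lagrange system $\nabla_{\R^{2N}}g=\lambda\nabla_{\R^{2N}}h$ is equivalent to $\partial_{\overline{z_j}}g=\lambda\,z_j$ for all $j$ with a \emph{real} multiplier $\lambda$. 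Hence the solution identity is exactly the statement that ${\bf w}$ is a critical point of $g|_{\partial B}$, with multiplier $\lambda=-\tfrac{pa}{2}$.

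It remains to pin the amplitude $a$ to $-g({\bf w})$. Here I would use the homogeneity \eqref{E:g-homo} through Euler's identity, whose Wirtinger form is $\sum_j(z_j\partial_{z_j}g+\overline{z_j}\partial_{\overline{z_j}}g)=p\,g$. Evaluating at a critical point ${\bf w}$, where $\partial_{\overline{z_j}}g({\bf w})=\lambda w_j$ and (since $g$ is real and $\lambda$ is real) $\partial_{z_j}g({\bf w})=\overline{\partial_{\overline{z_j}}g({\bf w})}=\lambda\overline{w_j}$, the left-hand side collapses to $2\lambda\sum_j|w_j|^2=2\lambda$ because ${\bf w}\in\partial B$. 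Thus $g({\bf w})=\tfrac{2\lambda}{p}$. Combining with $\lambda=-\tfrac{pa}{2}$ from the previous step yields $g({\bf w})=-a$, i.e.\ $a=-g({\bf w})$, which forces $-g({\bf w})=a>0$. Reading these equivalences in reverse gives the converse: if ${\bf w}$ is a critical point of $g|_{\partial B}$ and $a=-g({\bf w})>0$, then the multiplier computed by Euler's identity is $\lambda=\tfrac p2 g({\bf w})=-\tfrac{pa}{2}$, so $F_j({\bf w})=\tfrac2p\lambda w_j=-a\,w_j$, and the first-paragraph cancellation runs backward to show $\mathcal{R}({\bf w},\omega,a)\subset\mathcal{A}_\omega$ for every $\omega>0$.

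The only genuinely delicate point is the middle step: correctly matching the real critical-point condition on the $(2N-1)$-dimensional sphere $\partial B\subset\R^{2N}$ with the complex/Wirtinger identity $\partial_{\overline{z_j}}g=\lambda z_j$, and checking that the multiplier is real (which is what makes $\partial_{z_j}g=\overline{\partial_{\overline{z_j}}g}$ usable in Euler's identity). Everything else—the homogeneity scaling, the positivity of $Q_{\omega,a}$, and translation invariance—is routine, and the reality of $\lambda$ is guaranteed precisely because both $g$ and the constraint are real-valued.
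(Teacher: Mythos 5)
Your proof is correct and follows essentially the same route as the paper: reduce to the algebraic identity $F_j({\bf w})=-a\,w_j$ using the equation for $Q_{\omega,a}$, its positivity, and the homogeneity of $F_j$; identify that identity with the Lagrange critical-point condition for $g|_{\partial B}$; and pin down the multiplier (hence $a=-g({\bf w})$) via Euler's identity for the homogeneous function $g$. The only cosmetic difference is that in the only-if direction the paper verifies the critical-point property in real coordinates with an explicit local chart of $\partial B$, whereas you invoke the equivalence of the real Lagrange system with the Wirtinger form $\partial_{\overline{z_j}}g=\lambda z_j$, $\lambda \in \R$ --- a cleaner but mathematically identical step.
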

The theorem implies that if $g|_{\partial B}$ has a negative critical value other than its minimum value then \eqref{E:gE} admits scalar-type excited states. 
Our theorems reveal that the shape of the scalar-type solutions to \eqref{E:gE}
is well described in the language of the function $g$, which is the novelty of our theorems.

\begin{remark}
We remark that a homogeneous function satisfying \eqref{E:g-homo} is identified with a pair of the degree $p$
and the function on $\partial B$. Indeed, for a function $g$ satisfying \eqref{E:g-homo},
the pair $(p,g|_{\partial B})$ is uniquely specified obviously.
On the other hand, for the prescribed pair $(q,h)$, where $q>0$ and $h: \partial B \to \R$, the function
$\C^N \ni z\mapsto |z|^q h(\frac{z}{|z|}) \in \R$ satisfies \eqref{E:g-homo}.
The important quantity $g_{\min}$ and set $T_0$ given in \eqref{D:gmin} and \eqref{D:T0}, respectively, are defined only by the function part.
\end{remark}

\begin{remark}
If $d\ge2$ then the nonlinear elliptic equation $-\Delta Q + Q = |Q|^{p-2}Q$ admits sign-changing solutions.
If the function $g$ satisfies the relation $g(-{\bf z})=g({\bf z})$ for all ${\bf z} \in \C^N$
then 
the scalar-type functions given by the sign-changing solutions instead of the positive solution $Q$
are also contained in $\mathcal{A}$. They are all excited states. See Remark \ref{R:morees} below.
As seen in the next subsection, the function $g$ corresponding to the systems \eqref{E:nls1}--\eqref{E:nls5} satisfies the assumption.
\end{remark}

\smallbreak
It is known that the ground states are related to the optimizer of the Gagliardo-Nirenberg inequality.
This is true also in our case.
Let us introduce a Gagliardo-Nirenberg-type inequality adopted in the current context. Besides its own interest, this inequality is used below in the study of the instability of the ground state.
\begin{theorem}[A sharp Gagliardo-Nirenberg-type inequality]\label{T:sGN}
Suppose Assumption \ref{A:1}. Suppose $g_{\min}<0$.
It holds for all ${\bf u} \in (H^1(\R^d))^N$ that
\[
	-G({\bf u}) \le C_{\mathrm{GN}} M({\bf u})^{\frac{p}2-\frac{d(p-2)}{4}} H({\bf u})^{\frac{d(p-2)}4},
\]
where the constant $C_{\mathrm{GN}}$ is given by
\[
	C_{\mathrm{GN}} = (\tfrac2{p-2})^{\frac{p}2} (\tfrac2d)^{\frac{d(p-2)}4} (d- \tfrac{d-2}{2}p)^{\frac{p-2}2}  \|Q\|_2^{2-p}
		(-g_{\min}).
\]
Further, the equality holds if and only if ${\bf u}$ is a constant multiple of an element of $\mathcal{G}$.
\end{theorem}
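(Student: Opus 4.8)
The plan is to reduce the vector inequality to the classical scalar Gagliardo--Nirenberg inequality applied to the modulus $v:=|{\bf u}|=\big(\sum_{j=1}^N|u_j|^2\big)^{1/2}$, and then to read off both the sharp constant and the equality cases from the optimizer $Q$ together with Theorem \ref{T:main}. First I would bound the nonlinear term pointwise: since $g$ is homogeneous of degree $p$ and $g\ge g_{\min}$ on $\partial B$, writing ${\bf u}(x)=v(x)\,({\bf u}(x)/|{\bf u}(x)|)$ on $\{v>0\}$ gives $g({\bf u}(x))\ge g_{\min}\,v(x)^p$, so (using $g_{\min}<0$) $-g({\bf u}(x))\le(-g_{\min})v(x)^p$ for a.e.\ $x$, and integrating yields $-G({\bf u})\le(-g_{\min})\Lebn{v}{p}^p$. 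Next I would invoke the diamagnetic inequality $|\nabla v|\le\big(\sum_{j=1}^N|\nabla u_j|^2\big)^{1/2}$ pointwise, which follows from applying the Cauchy--Schwarz inequality to $\partial_k v=v^{-1}\sum_j\Re(\overline{u_j}\partial_k u_j)$, together with $\Lebn{v}{2}^2=\sum_{j=1}^N\Lebn{u_j}{2}^2$. This places $v\in H^1(\R^d)$ with $\Lebn{\nabla v}{2}^2$ bounded by the kinetic part and $\Lebn{v}{2}^2$ equal to the mass part. Applying the sharp scalar inequality $\Lebn{v}{p}^p\le C_{\mathrm{opt}}\,\Lebn{\nabla v}{2}^{d(p-2)/2}\Lebn{v}{2}^{\,p-d(p-2)/2}$, whose optimizer is $Q$, and substituting these relations produces the asserted bound. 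The exponents $\tfrac p2-\tfrac{d(p-2)}4$ and $\tfrac{d(p-2)}4$ are the only ones compatible with amplitude and spatial scaling, so no extra information is lost in the reduction.

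For the value of the constant I would use $C_{\mathrm{opt}}=\Lebn{Q}{p}^p\big/\big(\Lebn{\nabla Q}{2}^{d(p-2)/2}\Lebn{Q}{2}^{\,p-d(p-2)/2}\big)$ and evaluate the three norms of $Q$ through the Nehari identity $\Lebn{\nabla Q}{2}^2+\Lebn{Q}{2}^2=\Lebn{Q}{p}^p$ and the Pohozaev identity $\tfrac{d-2}2\Lebn{\nabla Q}{2}^2+\tfrac d2\Lebn{Q}{2}^2=\tfrac dp\Lebn{Q}{p}^p$. Solving these two linear relations expresses $\Lebn{\nabla Q}{2}^2$ and $\Lebn{Q}{p}^p$ as explicit multiples of $\Lebn{Q}{2}^2$; combining with the factor $-g_{\min}$ and the numerical constants coming from the definitions of $M$ and $H$ then yields $C_{\mathrm{GN}}$ in the stated closed form.

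The sharpness and the equality statement are where the real work lies, and I would handle them by tracking equality through the whole chain. Testing with ${\bf u}={\bf w}\,Q_{\omega,-g_{\min}}(\cdot-y)$ for ${\bf w}\in T_0$ saturates every step: the homogeneity bound because $g({\bf w})=g_{\min}$, the diamagnetic inequality because the components are a fixed vector times a common real profile, and the scalar inequality because $Q_{\omega,a}$ is a rescaled translate of $Q$. Hence the constant is attained, and by Theorem \ref{T:main} this test family is exactly $\mathcal{G}_\omega$. Conversely, if equality holds in the theorem then each inequality in the chain is an equality. Equality in the scalar Gagliardo--Nirenberg inequality forces $v$ to be a rescaled translate of the positive profile $Q$, hence $v>0$ everywhere, which disposes of the zero set of ${\bf u}$; equality in the diamagnetic inequality then forces ${\bf u}={\bf w}\,v$ with a constant ${\bf w}\in\partial B$; and equality in the homogeneity bound forces $g({\bf w})=g_{\min}$, i.e.\ ${\bf w}\in T_0$. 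Thus ${\bf u}$ is a constant multiple of ${\bf w}\,Q_{\omega,-g_{\min}}(\cdot-y)\in\mathcal{G}$.

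The main obstacle will be the equality analysis of the diamagnetic inequality: the pointwise Cauchy--Schwarz equality condition only says that $({\partial_k u_j})_j$ is proportional to $(u_j)_j$ for each $k$, and I must upgrade this to the statement that the direction ${\bf w}(x)$, \emph{including the relative phases of the components}, is genuinely independent of $x$. The key simplification is to extract $v>0$ from the scalar-optimizer classification \emph{first}, so that the proportionality holds on the connected set $\{v>0\}=\R^d$ and integrates to a constant direction. For this I will also rely on the classical uniqueness of the scalar Gagliardo--Nirenberg optimizer up to translation, dilation, and scalar multiplication.
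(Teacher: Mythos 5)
Your strategy is sound, and it is genuinely different from the paper's. The paper proves the inequality variationally: it writes the best constant as a supremum $\tilde{C}_{\mathrm{GN}}$ of the quotient $-G/(M^{\frac{p(1-s_p)}2}H^{\frac{ps_p}2})$, rescales near-optimizers onto the constraint set $\{K_1=0\}$ so that the quotient is controlled by the minimal action $\mathfrak{I}(1)$ already computed in Lemma \ref{L:domega}, and identifies the optimizers through $\mathcal{M}_\omega=\mathcal{G}_\omega$ (Lemma \ref{L:MequalG}), with ground states attaining the value via the Pohozaev and Nehari relations. You instead collapse the vector problem onto the scalar one for $v=|{\bf u}|$ via the pointwise bound $-g({\bf u})\le(-g_{\min})v^p$ and the gradient convexity inequality $|\nabla v|\le(\sum_j|\nabla u_j|^2)^{1/2}$, and then invoke Weinstein's sharp scalar inequality together with the classification of its optimizers. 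That is a legitimate and shorter route, provided the scalar optimizer classification is taken as known. Note that your equality analysis (realness of $\sum_j\overline{u_j}\nabla u_j$ plus Cauchy--Schwarz proportionality on the connected set $\{v>0\}=\R^d$, upgraded to a constant direction with constant phase) is exactly the rearrangement argument the paper runs inside Lemma \ref{L:domega}, Steps 1--2, using \cite{LLBook}*{Theorem 7.8}; so the two proofs share that core. What the paper's detour through $\mathfrak{I}$ buys is the set of variational identities reused later for stability/instability; what your route buys is independence from the compactness/profile-decomposition machinery.

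Two bookkeeping points, one of which matters. Minor: since $G({\bf u})=\frac1p\int g({\bf u})\,dx$, your first step yields $-G({\bf u})\le\frac{-g_{\min}}{p}\Lebn{v}{p}^p$, not $(-g_{\min})\Lebn{v}{p}^p$. Major: carrying your computation to the end does \emph{not} reproduce the closed form of $C_{\mathrm{GN}}$ printed in the statement. With $s_p=\frac d2-\frac dp$, the identities $\Lebn{\nabla Q}{2}^2=\frac{s_p}{1-s_p}\Lebn{Q}{2}^2$ and $\Lebn{Q}{p}^p=\frac1{1-s_p}\Lebn{Q}{2}^2$ give
\[
	-G({\bf u})\le \frac{(-g_{\min})\,2^{\frac p2}}{p(1-s_p)}\Bigl(\frac{1-s_p}{s_p}\Bigr)^{\frac{d(p-2)}4}\|Q\|_{L^2}^{2-p}\,M({\bf u})^{\frac p2-\frac{d(p-2)}4}H({\bf u})^{\frac{d(p-2)}4},
\]
and this constant coincides with the intermediate expression in the paper's own proof (the line containing $\mathfrak{I}(1)^{-\frac{2}{d-2s_c}}$), but it equals the printed $C_{\mathrm{GN}}$ only when $p=2+\frac4d$. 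A concrete check: for $d=1$, $p=4$, $N=1$, $g(z)=-|z|^4$, both your derivation and the paper's intermediate expression give $1/\sqrt3$ (the classical sharp constant in this normalization, attained by $\sqrt{2}\sech$), whereas the printed formula evaluates to $3\sqrt2/4$; for $d=3$, $p=4$ the printed constant is even strictly smaller than the sharp one, so the printed inequality fails at the ground state. In other words, the last equality in the paper's proof, and hence the displayed formula for $C_{\mathrm{GN}}$, contains an algebra slip; your method produces the correct sharp constant, and you should present it in the corrected form above rather than expect the "stated closed form" to emerge from the bookkeeping.
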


\subsection{Linking with NLS systems and stability/instability of the ground states}

Let us now turn to the study of standing-wave solutions to \eqref{E:gNLS}.
Let us introduce the notion of an $H^1$-solution to \eqref{E:gNLS}.
\begin{definition}
We say an $N$-tuple of functions ${\bf u}(t)=(u_1(t),u_2(t),\dots,u_N(t))$ be an $H^1$-solution to \eqref{E:gNLS}
on an interval $I\subset \R$, $0\in \overline{I}$,  
if ${\bf u}$ belongs to
\[
(C(I,H^1(\R^d)) \cap L^{q_d}_{\mathrm{loc}}(I,W^{1,r_d}(\R^d)))^N
\] 
and satisfies
\[
	u_j(t) = e^{itn_j\Delta} u_{j,0} - i \int_0^t e^{i(t-s)\Delta} n_j {F}_j({\bf u}(s)) ds\qquad(j=1,2,\dotsm,N)
\]
on $I$. Here, $(q_1,r_1)=(4,\infty)$, $(q_2,r_2)=(\frac{2p}{p-2},p)$, and $(q_d,r_d)=(2,2^*)$ for $d\ge 3$.
Let $I_{\max}=(T_{\min},T_{\max})$ denote the maximal lifespan of the solution.
A soluiton extended to that on its maximal lifespan is called a maximal-lifespan solution.
We say a solution is global if $I_{\max}=\R$.
\end{definition}
The local well-posedness of \eqref{E:gNLS}  in the $H^1$-framework is established under Assumption \ref{A:1} 
by a standard fixed point argument with Strichartz' estimates (See e.g. \cite{CazBook} for details).
The solutions have the conserved \emph{energy} defined in \eqref{E:gEnergy}.
The energy conservation can be seen by means of the identity
\[
	\Re \sum_{j=1}^N {F}_j \overline{\partial_t u_j} 
	= \partial_t (\tfrac1p  {g}({\bf u})).
\]

To relate the nonlinear elliptic system \eqref{E:gE} and the nonlinear Schrodinger system,
we make one more assumption.

\begin{assumption}\label{A:2}
There exists a vector ${\bf n}=(n_1,\dots, n_N) \in \Z_+^N$ such that
the function $g$ satisfies the gauge  condition
\begin{equation}\label{E:g-gauge}
	g(e^{in_1\theta} z_1, e^{in_2\theta} z_2, \dots , e^{in_N\theta} z_N) =  g( z_1, z_2.\dots,z_N) 
\end{equation}
for all $( z_1, z_2, \dots, z_N) \in \mathbb{C}^N$ and $\theta \in \R$.
\end{assumption}

One easily sees that \eqref{E:g-gauge} yields
\begin{equation}\label{E:fj-gauge}
	F_j(e^{in_1\theta}  z_1, e^{in_2\theta}  z_2,\dots, e^{in_N\theta}  z_N) = e^{in_j\theta} F_j ( z_1, z_2.\dots,z_N) 
\end{equation}
for all $( z_1, z_2, \dots, z_N) \in \mathbb{C}^N$ and $\theta \in \R$.

If Assumption \ref{A:2} is fulfilled with the same vector as in \eqref{E:gNLS}
then an $H^1$-solution ${\bf u}(t)$ to \eqref{E:gNLS} also conserves the
 \emph{mass} (or \emph{charge}) $M({\bf u})$ defined in \eqref{E:gMass}.
Note that the conservation of
mass  is verified with the property $\sum_{j=1}^N n_j \Im  (F_j({\bf z}) \overline{z_j}) =0$ for all ${\bf z}=( z_1, z_2, \dots, z_N) \in \mathbb{C}^N$, which follows by differentiating \eqref{E:g-gauge} by $\theta$ at $\theta =0$.

In the rest of this subsection, we suppose that Assumptions \ref{A:1} and \ref{A:2} are satisfied and consider
\eqref{E:gNLS} with the same vector ${\bf n}$ given in Assumption \ref{A:2}.

\begin{theorem}[Standing wave solutions]\label{T:soliton}
Suppose Assumptions \ref{A:1} and \ref{A:2}, with the same vector ${\bf n}$ as in \eqref{E:gNLS}.
Let $\mathcal{A}_\omega$ be the set of solutions to \eqref{E:gE} with the nonlinearity ${F}_j$.
If $\Phi\in \mathcal{A}_\omega$ then
$
	(e^{i  n_1 \omega t }\phi_1,e^{i  n_2 \omega t } \phi_2,\dots ,e^{i  n_N \omega t } \phi_N)
$
is a global $H^1$-solution to \eqref{E:gNLS}. 
\end{theorem}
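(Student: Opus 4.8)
The plan is to verify, by a direct computation, that the phase-rotated profile solves the system, with the gauge relation \eqref{E:fj-gauge} supplied by Assumption \ref{A:2} doing the essential work, and then to confirm that this explicit function qualifies as an $H^1$-solution in the sense of the definition. First I would set $u_j(t,x) := e^{in_j\omega t}\phi_j(x)$ for each $j$ and differentiate. Since $\partial_t u_j = i n_j\omega u_j$, one has $i\partial_t u_j = -n_j\omega u_j$, while $n_j\Delta u_j = n_j e^{in_j\omega t}\Delta\phi_j$. Combining these,
\[
	(i\partial_t + n_j\Delta)u_j = n_j e^{in_j\omega t}(\Delta\phi_j - \omega\phi_j).
\]
Because $\Phi\in\mathcal{A}_\omega$ solves \eqref{E:gE}, i.e. $-\Delta\phi_j+\omega\phi_j = -F_j(\Phi)$, the parenthesis equals $F_j(\Phi)$, so the left-hand side is $n_j e^{in_j\omega t}F_j(\Phi)$.

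Second, I would invoke the gauge relation \eqref{E:fj-gauge} with $\theta=\omega t$ and $z_k=\phi_k$. Since $u_k(t)=e^{in_k\omega t}\phi_k$, this reads precisely $F_j(\mathbf{u}(t)) = e^{in_j\omega t}F_j(\Phi)$. Inserting this into the previous identity gives $(i\partial_t+n_j\Delta)u_j = n_j F_j(\mathbf{u}(t))$ for every $j$, so $\mathbf{u}$ solves the \eqref{E:gNLS} system pointwise, and at $t=0$ it equals $\Phi$. The remaining task is to check that $\mathbf{u}$ belongs to the solution class and is global. Continuity $\mathbf{u}\in(C(\R,H^1(\R^d)))^N$ is immediate, since $\mathbf{u}$ is just a time-dependent phase multiplication of the fixed $\Phi\in(H^1(\R^d))^N$ and $t\mapsto e^{in_j\omega t}$ is continuous. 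For the Strichartz component $L^{q_d}_{\mathrm{loc}}(\R,W^{1,r_d})$, I would use elliptic regularity: a solution $\Phi\in(H^1(\R^d))^N$ of \eqref{E:gE} with energy-subcritical nonlinearity bootstraps to $(H^2(\R^d))^N$, and Sobolev embedding then places each $\phi_j$ in $W^{1,r_d}(\R^d)$ (for instance, when $d\ge3$ one has $\nabla\phi_j\in H^1\hookrightarrow L^{2^*}$). Hence $\mathbf{u}\in(L^{q_d}_{\mathrm{loc}}(\R,W^{1,r_d}))^N$ with time-independent spatial norm. Since $\mathbf{u}$ is a regular function lying in this class and satisfying the PDE pointwise, it satisfies the Duhamel integral equation; equivalently, by the local well-posedness of \eqref{E:gNLS} it coincides with the unique $H^1$-solution emanating from $\Phi$. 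As the explicit formula is defined for all $t\in\R$, the solution is global.

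The hard part is not the computation, which is entirely routine: the sole piece of genuine content is the gauge identity \eqref{E:fj-gauge}, and this is guaranteed by Assumption \ref{A:2}. The only points deserving care are (i) confirming that the explicit function lies in the Strichartz component of the solution class, which I handle via elliptic regularity of $\Phi$ together with the embedding into $W^{1,r_d}$, and (ii) the passage from the pointwise PDE to the Duhamel formulation, which follows from the uniqueness statement in the local well-posedness theory.
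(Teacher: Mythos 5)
Your proof is correct, and it is exactly the argument the paper has in mind: the paper states Theorem \ref{T:soliton} without proof, treating it as an immediate consequence of the gauge relation \eqref{E:fj-gauge} (derived just before the theorem) combined with the elliptic equation \eqref{E:gE}, which is precisely your computation with $\theta=\omega t$. Your two supplementary points are also consistent with the paper's framework: the elliptic regularity you invoke is recorded there as $\mathcal{A}_\omega \subset (C^2(\R^d)\cap\bigcap_{2\le q<\infty}W^{3,q}(\R^d))^N$ in Lemma \ref{L:MequalG}, and with that regularity the passage from the pointwise equation to the Duhamel formula is the standard semigroup computation (differentiate $t\mapsto e^{-itn_j\Delta}u_j(t)$ and integrate), so the appeal to uniqueness in the well-posedness theory is not even needed.
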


\begin{remark}\label{R:sameomega}
The equality of the parameter $\omega>0$ in \eqref{E:gE} corresponds to the
absence of a constant-type linear potential in \eqref{E:gNLS}.
Namely,   
consider 
\begin{equation}\tag{gNLS'}
	(	i \partial_t +n_j\Delta - n_j b_j) u_j 
		= n_j F_j(  u_1, u_2,\dots , u_N)\qquad (j=1,2,\dots,N), 
\end{equation}
where ${\bf b} = (b_1,\dots,b_N) \in \R^N$. Suppose that $F_j$ satisfies Assumption \ref{A:1} and \ref{A:2}, with the same vector ${\bf n}$ as in the equation.
Note that a suitable application of the gauge transform $u_j(t,x) \mapsto e^{in_j \theta t} u_j(t,x)$ allows us to assume $\min_{j\in[1,N]} b_j =0$.
Then, a soliton solution to the system takes the form
$
	(e^{i  n_1 \omega t }\phi_1,e^{i  n_2 \omega t } \phi_2,\dots ,e^{i  n_N \omega t } \phi_N)
$
with a solution $\Phi=(\phi_1, \dots, \phi_N) \in (H^1(\R^d))^N$ to the elliptic system
\[
 	-\Delta \phi_j + (\omega+b_j) \phi_j = -F_j (\Phi)\qquad (j=1,2,\dots,N).
\]
Thus, our assumption on the system \eqref{E:gE} reads also as ${\bf b}=0$.
\end{remark}

Let us turn to the study of the stability/instability of the ground states.
The notion of the stability of the ground state in this article is as follows: 

\begin{definition}[Orbital stability/instability]
Let $\omega>0$.
We say the family of the ground states $ \mathcal{G}_\omega$
is stable if
for any $\varepsilon>0$ there exists $\delta>0$ such that if ${\bf u}_0 \in (H^1(\R^d))^N$ satisfies
\[
	\inf_{\Phi \in \mathcal{G}_\omega} \| {\bf u}_0 - \Phi \|_{(H^1(\R^d))^N} \le \delta
\]
then the corresponding 
$H^1$-solution ${\bf u(t)}$ to \eqref{E:gNLS} exists globally in time and satisfies
\begin{equation}\label{E:instability_def}
	\sup_{t\in \R}\inf_{\Phi \in \mathcal{G}_\omega} \| {\bf u}(t) - \Phi \|_{(H^1(\R^d))^N} \le \varepsilon.
\end{equation}
We say the family of the ground states $ \mathcal{G}_\omega$
is unstable if it is not stable.
\end{definition}

By applying a standard argument, we obtain the following.

\begin{theorem}\label{T:stability}
Suppose Assumptions \ref{A:1} and \ref{A:2}, with the same vector ${\bf n}$ as in \eqref{E:gNLS}.
Assume that $g_{\min}<0$ and let $\mathcal{G}$ be the set of ground states obtained in Theorem  \ref{T:main}.
If $2<p<2+\frac4d$ then $\mathcal{G}_\omega$ is stable for all $\omega>0$.
\end{theorem}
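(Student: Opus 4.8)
The plan is to recast orbital stability as a statement about a constrained minimization problem and to run the concentration--compactness argument of Cazenave--Lions, using the sharp Gagliardo--Nirenberg inequality of Theorem \ref{T:sGN} to control the energy. For $m>0$ set
\[
	I(m) := \inf\{\, E(\mathbf{u}) \ |\ \mathbf{u}\in (H^1(\R^d))^N,\ M(\mathbf{u})=m \,\}.
\]
Writing $E=\tfrac12 H+\tfrac1p G$, Theorem \ref{T:sGN} gives, on the sphere $M(\mathbf u)=m$, the bound $E(\mathbf u)\ge \tfrac12 H(\mathbf u)-\tfrac1p C_{\mathrm{GN}}\,m^{a}H(\mathbf u)^{b}$ with $a=\tfrac p2-\tfrac{d(p-2)}4$ and $b=\tfrac{d(p-2)}4$. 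The hypothesis $2<p<2+\tfrac4d$ is exactly $b<1$, so the right-hand side is coercive and bounded below in $H(\mathbf u)$; hence $I(m)>-\infty$, and moreover every $H^1$-solution of \eqref{E:gNLS} obeys an a priori bound on $H$ in terms of the conserved $E$ and $M$, which already yields global existence for all data (in particular for data near a ground state). Taking a scaled ground state from Theorem \ref{T:main} as a trial function, and using $g_{\min}<0$, shows $I(m)<0$.

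First I would identify the minimizing set with the ground states. If $\mathbf u$ attains $I(m)$, then the scalar function $f(t):=\tfrac12 t-\tfrac1p C_{\mathrm{GN}}m^{a}t^{b}$ is strictly convex on $t>0$ (as $b<1$) with a unique minimizer $t_\ast$, and $E(\mathbf u)\ge f(H(\mathbf u))\ge f(t_\ast)=I(m)$; forcing equality gives $H(\mathbf u)=t_\ast$ and saturation of the inequality in Theorem \ref{T:sGN}, so by its equality case $\mathbf u$ is a constant multiple of an element of $\mathcal{G}$. Being a constrained critical point, $\mathbf u$ also solves \eqref{E:gE} for a Lagrange multiplier, which the Nehari and Pohozaev relations $H=\kappa\,\omega\,M$ (with $\kappa>0$ in the subcritical range) together with $H>0$ force to be a genuine frequency $\omega(m)>0$; thus $\mathbf u\in\mathcal{G}_{\omega(m)}$. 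Conversely every element of $\mathcal{G}_\omega$ from Theorem \ref{T:main} shares the same mass $m_\omega=\tfrac12\|Q_{\omega,-g_{\min}}\|_2^2$ and the same energy, and the map $\omega\mapsto m_\omega$ is a bijection of $(0,\infty)$ onto itself; one concludes that the minimizers at mass $m_\omega$ are exactly $\mathcal{G}_\omega$.

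Next I would prove that every minimizing sequence for $I(m)$ is relatively compact in $(H^1(\R^d))^N$ up to translations. Boundedness follows from the coercivity above. Vanishing is ruled out by $I(m)<0$: in the vanishing case $G(\mathbf u_n)\to0$, whence $\liminf E(\mathbf u_n)\ge \liminf \tfrac1p G(\mathbf u_n)=0>I(m)$. Dichotomy is excluded by the strict subadditivity $I(m)<I(m_1)+I(m_2)$ for $m_1,m_2>0$ with $m_1+m_2=m$; this is where homogeneity enters, since the two-parameter scaling $\mathbf u\mapsto \theta^{\gamma}\mathbf u(\theta^{\delta}\cdot)$ preserving the mass constraint yields $I(\theta m)=\theta^{\sigma}I(m)$ with $\sigma=1+\tfrac{2(p-2)}{4-d(p-2)}>1$ in the subcritical regime, and the superadditivity of $\theta\mapsto\theta^\sigma$ together with $I(1)<0$ gives the strict inequality. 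Lions' dichotomy alternative then leaves only compactness, so a suitable translate of $\mathbf u_n$ converges strongly to a minimizer, and every minimizing sequence approaches $\mathcal{M}_m=\mathcal{G}_\omega$ in $(H^1(\R^d))^N$ modulo translations. I expect this concentration--compactness step, in particular the strict subadditivity and the profile analysis carried out uniformly across all $N$ components, to be the main obstacle.

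Finally I would deduce orbital stability by the standard contradiction argument. If $\mathcal{G}_\omega$ were unstable there would exist $\varepsilon_0>0$, data $\mathbf u_{0,n}\to\Phi\in\mathcal{G}_\omega$ in $(H^1(\R^d))^N$, and times $t_n$ with $\inf_{\Psi\in\mathcal{G}_\omega}\|\mathbf u_n(t_n)-\Psi\|_{(H^1(\R^d))^N}\ge\varepsilon_0$, where $\mathbf u_n$ is the corresponding global solution. Conservation of $M$ and $E$ and their continuity give $M(\mathbf u_n(t_n))\to m_\omega$ and $E(\mathbf u_n(t_n))\to E(\Phi)=I(m_\omega)$, so after a harmless scalar normalization fixing the mass to $m_\omega$ the sequence $\mathbf u_n(t_n)$ is minimizing for $I(m_\omega)$. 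By the compactness established above it converges, up to translation, to an element of $\mathcal{G}_\omega$, contradicting the lower bound $\varepsilon_0$. The explicit identification $\mathcal{M}_{m_\omega}=\mathcal{G}_\omega$ coming from the equality case of Theorem \ref{T:sGN} is what makes both the variational level and its minimizing set completely transparent in this step.
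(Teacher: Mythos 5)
Your proposal is correct in its architecture, but it is a genuinely different proof from the one in the paper. You run the classical Cazenave--Lions scheme: minimize $E$ on the mass sphere $\{M({\bf u})=m\}$, with coercivity coming from Theorem \ref{T:sGN} (this is exactly where $p<2+\frac4d$, i.e.\ $\frac{d(p-2)}{4}<1$, enters your argument); homogeneity then makes the infimum explicit, $I(m)=-Cm^{\sigma}$ with $\sigma=\frac{2p-d(p-2)}{4-d(p-2)}>1$, so strict subadditivity and the exclusion of dichotomy are essentially free, and the equality case of Theorem \ref{T:sGN} plus a Lagrange-multiplier argument identifies the minimizing set with $\mathcal{G}_{\omega(m)}$. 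The paper instead follows Shatah: it builds the invariant sets $\mathcal{P}^{\pm}_{\omega}$ from $S_\omega$ and $G$ (Lemma \ref{L:Ppm}), uses the strict convexity of $\omega\mapsto\mathfrak{I}(\omega)\propto\omega^{1-s_c}$ (this is where mass-subcriticality $s_c<0$ enters the paper's proof) to place data near $\mathcal{G}_{\omega_0}$ inside $\mathcal{P}^{+}_{\omega_0-\varepsilon}\cap\mathcal{P}^{-}_{\omega_0+\varepsilon}$ (Lemma \ref{L:stability_key}), and then shows that a putative escaping sequence is, after a harmless rescaling, a minimizing sequence for $\mathfrak{I}(\omega_0)$, whose precompactness modulo translations was already proved via the profile decomposition of Lemma \ref{L:compactness}. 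The paper's route is more economical: the only compactness input is the one already established for the Nehari-constrained problem, and no subadditivity analysis is needed. Your route costs a second, independent concentration-compactness argument (Lions' vanishing/dichotomy analysis for the vector-valued homogeneous $g$), which you rightly flag as the main labor and do not carry out, but it buys the characterization of $\mathcal{G}_\omega$ as the set of mass-constrained energy minimizers, a statement of independent interest that the paper never records. Two places you should tighten: first, the identification of minimizers with ground states needs a complete argument --- pairing the Euler--Lagrange equation $E'({\bf u})+\omega M'({\bf u})=0$ with ${\bf u}$ gives $2\omega M({\bf u})=-(2H({\bf u})+pG({\bf u}))>(p-2)H({\bf u})>0$ because $E({\bf u})=I(m)<0$, hence the multiplier $\omega$ is positive; then writing ${\bf u}=c\Psi$ with $\Psi\in\mathcal{G}_{\omega'}$ (equality case of Theorem \ref{T:sGN}) and comparing the two elliptic systems via the homogeneity \eqref{E:fj-homo} forces $c=1$ and $\omega=\omega'$, which is what ``thus ${\bf u}\in\mathcal{G}_{\omega(m)}$'' actually requires. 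Second, your normalization $E=\frac12H+\frac1pG$ conflicts with the paper's convention $E=H+G$ (the constants $\frac12$ and $\frac1p$ are already inside $H$ and $G$), which matters when you quote Theorem \ref{T:sGN} verbatim. Neither point is a gap in the strategy.
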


\begin{theorem}\label{T:instability}
Suppose Assumptions \ref{A:1} and \ref{A:2}, with the same vector ${\bf n}$ as in \eqref{E:gNLS}.
Assume further that ${\bf n}=(1,\dots,1)$ when $d=1$ and $p\ge 6$ or $d=2$ and $p>6$.
Assume that $g_{\min}<0$ and let $\mathcal{G}$ be the set of ground states obtained in Theorem  \ref{T:main}.
If $2+\frac4d\le p<2^*$ is satisfied
then $\mathcal{G}_\omega$ is unstable for all $\omega>0$.
\end{theorem}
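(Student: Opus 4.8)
The plan is to combine the variational description of $\mathcal{G}_\omega$ supplied by Theorems \ref{T:main} and \ref{T:sGN} with a convexity (virial) blow-up argument in the spirit of Berestycki--Cazenave and Glassey. The driving point is that along the flow of \eqref{E:gNLS} both $E$ and $M$ are conserved (the latter by Assumption \ref{A:2} with the same ${\bf n}$), hence so is the action $S_\omega=E+\omega M$, which turns the sub-threshold region into a flow-invariant set. Throughout write $H({\bf u})=\sum_j\tfrac12\int_{\R^d}|\nabla u_j|^2\,dx$ and $G({\bf u})=\int_{\R^d}g({\bf u})\,dx$, so $E=H+\tfrac1pG$, and introduce the mass-preserving dilation ${\bf u}^\lambda(x)=\lambda^{d/2}{\bf u}(\lambda x)$ and the scaling (virial) functional
\[
	K({\bf u}):=\tfrac{d}{d\lambda}\Big|_{\lambda=1}S_\omega({\bf u}^\lambda)=2H({\bf u})+\tfrac{d(p-2)}{2p}\,G({\bf u}),
\]
where I used the homogeneity \eqref{E:g-homo} to get $G({\bf u}^\lambda)=\lambda^{d(p-2)/2}G({\bf u})$. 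Since a ground state $\Phi$ is an unconstrained critical point of $S_\omega$, it obeys the Pohozaev relation $K(\Phi)=0$, and a direct computation gives $\tfrac{d^2}{d\lambda^2}\big|_{\lambda=1}S_\omega(\Phi^\lambda)=(4-d(p-2))H(\Phi)\le0$ exactly when $p\ge2+\tfrac4d$; thus $\lambda=1$ is a maximum of $\lambda\mapsto S_\omega(\Phi^\lambda)$ in the supercritical range, which is the analytic source of the instability.

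First I would set up the potential well. Using the sharp Gagliardo--Nirenberg inequality of Theorem \ref{T:sGN} together with the explicit form $\mathcal{G}_\omega=\bigcup_{{\bf w}\in T_0}\mathcal{R}({\bf w},\omega,-g_{\min})$ from Theorem \ref{T:main}, one shows that $\Phi$ minimizes $S_\omega$ on the Pohozaev manifold $\{{\bf u}\neq0:K({\bf u})=0\}$, that $S_\omega(\Phi)>0$ is this minimum, and that a convexity estimate yields $K({\bf u})\le\kappa\,(S_\omega({\bf u})-S_\omega(\Phi))<0$ for a fixed $\kappa>0$ on the sub-threshold set where $K<0$. I would then set $\mathcal{W}=\{{\bf u}\in(H^1(\R^d))^N:S_\omega({\bf u})<S_\omega(\Phi),\ K({\bf u})<0\}$ and verify its invariance under \eqref{E:gNLS}: conservation of $S_\omega$ preserves the first condition, while $K({\bf u}(t))$ cannot reach $0$ without placing ${\bf u}(t)$ on the Pohozaev manifold at action strictly below its minimum, which is impossible, so continuity in $t$ keeps $K({\bf u}(t))<0$. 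The displayed bound then gives $K({\bf u}(t))\le-\delta<0$ on the whole lifespan.

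Next I would invoke the virial identity. Assuming ${\bf n}=(1,\dots,1)$, the system has the common linear part $i\partial_t u_j+\Delta u_j=F_j$, and for a finite-variance solution the variance $A(t)=\sum_j\int_{\R^d}|x|^2|u_j(t)|^2\,dx$ satisfies $A''(t)=8K({\bf u}(t))$. What makes the identity close is algebraic: the spatial part assembles $2H$, while the nonlinear part is reconstructed as $\tfrac{d(p-2)}{2p}G$ through Euler's identity for the $p$-homogeneous $g$, namely $\sum_j\Re(u_j\partial_{z_j}g)=\tfrac p2 g$, together with the gauge relation $\sum_j\Im(F_j\overline{u_j})=0$. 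Combined with the previous step, $A''(t)\le-8\delta<0$, so $A(t)$ would become negative in finite time unless the lifespan is finite; hence the solution blows up and $\|\nabla{\bf u}(t)\|\to\infty$. Choosing ${\bf u}_0=\Phi^\lambda$ with $\lambda>1$ close to $1$, which lies in $\mathcal{W}$ (indeed $K(\Phi^\lambda)=\lambda\,\tfrac{d}{d\lambda}S_\omega(\Phi^\lambda)<0$) and converges to $\Phi$ in $(H^1(\R^d))^N$ as $\lambda\to1$, produces data arbitrarily close to $\mathcal{G}_\omega$ whose solutions leave every fixed neighborhood, contradicting \eqref{E:instability_def}. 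In the degenerate $L^2$-critical case $p=2+\tfrac4d$ the dilation leaves $S_\omega$ invariant and $E(\Phi)=0$; when this exponent falls in the restricted regime ($d=1$, $p=6$) I would instead use amplitude scaling, ${\bf u}_0=(1+\varepsilon)\Phi$, for which $E({\bf u}_0)<0$ and $A''=16E({\bf u}_0)<0$ again forces blow-up, while $(1+\varepsilon)\Phi\to\Phi$.

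I expect the main obstacle to be justifying the virial identity in the system setting, and this is precisely where ${\bf n}=(1,\dots,1)$ is forced. For unequal weights $n_j$ the second derivative of any weighted variance produces kinetic and nonlinear terms carrying mismatched factors of $n_j$, and Euler's identity no longer reassembles them into $K({\bf u})$; the clean identity $A''=8K$ requires a common Laplacian. This is an issue only for $p\ge6$, which by $p<2^*$ forces $d\in\{1,2\}$ (for $d\ge3$ one always has $p<2^*\le6$), where finite-variance/localization difficulties leave the exact virial identity as the only viable tool; outside this regime a softer, weight-insensitive argument handles general ${\bf n}$, which is why the extra hypothesis appears only for $d=1,\,p\ge6$ and $d=2,\,p>6$ (the boundary $p=6$ being included for $d=1$, its $L^2$-critical exponent, but not for $d=2$). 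Secondary technical points are the propagation of finite variance, controlled by the exponential decay of $Q$ and hence of $\Phi^\lambda$, and the truncation needed to make $A(t)$ rigorous for general data.
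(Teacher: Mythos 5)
Your treatment of the case ${\bf n}=(1,\dots,1)$ is sound and, for $p>2+\frac4d$, essentially the paper's own argument: the paper also takes the mass-preserving dilates $\Phi_c=c^{d/2}\Phi(c\,\cdot)$, $c>1$, as initial data, combines an invariant potential-well structure with the conservation laws (Theorem \ref{T:Pstruct}, phrased there via the scale-invariant quantity $E({\bf u})M({\bf u})^{(1-s_c)/s_c}$ rather than your sub-threshold set for $S_\omega$, but with the same output: $V({\bf u}(t))\le-\delta'<0$ on the lifespan), and concludes by the virial identity. Your coercivity claim $K({\bf u})\le\kappa\,(S_\omega({\bf u})-S_\omega(\Phi))$ on $\{K<0\}$ is indeed provable (e.g.\ from the monotonicity of $\lambda\mapsto K({\bf u}^\lambda)/\lambda^2$, one gets $\kappa=2$), and the identification of $\Phi$ as the minimizer of $S_\omega$ on the Pohozaev manifold is exactly what the paper establishes in Section \ref{S:Pstruct}. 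At the critical exponent $p=2+\frac4d$ you diverge from the paper: you use amplitude scaling $(1+\varepsilon)\Phi$ and Glassey convexity ($A''=16E({\bf u}_0)<0$), whereas the paper applies the pseudo-conformal transformation to $e^{i\omega t}\Phi$ to produce explicit blow-up solutions with data converging to $\Phi$; both routes are valid under mass resonance, and yours is arguably more elementary.

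The genuine gap is the general-${\bf n}$ part of the statement. The theorem asserts instability for \emph{arbitrary} ${\bf n}$ whenever $d\ge2$ and $2+\frac4d\le p\le 6$ (the hypothesis ${\bf n}=(1,\dots,1)$ is imposed only when $d=1$, $p\ge6$, or $d=2$, $p>6$), and you dispose of this case with a single clause, that ``a softer, weight-insensitive argument handles general ${\bf n}$.'' No such argument is supplied, and the one the paper uses is neither soft nor weight-insensitive in your sense: as you yourself observe, without mass resonance no weighted variance has a usable second derivative, so the Glassey scheme is unavailable. The paper's substitute is a different and more technical device: it works with the momentum-type functional $J(t)=2\int_{\R^d}\nabla\chi\cdot\sum_j n_j^{-1}\Im(\overline{u_j}\nabla u_j)\,dx$, whose weights $n_j^{-1}$ cancel the factors $n_j$ in the equation so that $J'$ takes the clean form of Lemma \ref{L:lvi}; but since $J$ is not the derivative of a positive quantity, one must restrict to radial solutions (legitimate, as $\Phi$ may be taken radial and the data $c\Phi$ inherits this), localize $\chi$ at scale $R$, control the exterior errors by the radial Sobolev and Young inequalities (Lemma \ref{L:lvi2} — this is precisely where $d\ge2$ and $p\le 6$ enter), and then run an ODE argument to obtain blow-up (Theorem \ref{T:blowup}) in the supercritical case and grow-up (Theorem \ref{T:grow-up}) at the critical exponent. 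Without this second argument, what you prove is the theorem under the extra hypothesis ${\bf n}=(1,\dots,1)$ in all cases, which is strictly weaker than the statement.
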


The proof of the above instability result is done by establishing that one can pick data that gives a blowup or grow-up solution in an arbitrary neighborhood of a ground state.
Here, we say a solution ${\bf u}(t)$ blows up if $T_{\max}<\infty$ and
a solution ${\bf u}(t)$ grows up if $T_{\max}=\infty$ and $\lim_{t\to\infty} \|  {\bf u}(t) \|_{(H^1(\R^d))^N}=\infty$.  
We remark that in the latter case we have 
\[
	\inf_{\Phi \in \mathcal{G}_\omega} \| {\bf u}(t) - \Phi \|_{(H^1(\R^d))^N}
	\ge \|  {\bf u}(t) \|_{(H^1(\R^d))^N} - \sup_{\Phi \in \mathcal{G}_\omega} 
	\|  \Phi \|_{(H^1(\R^d))^N}\to \infty
\]
as $t\to\infty$.
This clearly implies the failure of \eqref{E:instability_def}.

\begin{remark}
We make a comment on the restriction of ${\bf n}$ in the instability theorem.
Consider the following system
\begin{equation}\label{E:mNLS}
	(	i \partial_t + \kappa_j \Delta) u_j 
		= \mu_j {F}_j({\bf u})\qquad (j=1,2,\dots,N),
\end{equation}
where $(t,x) \in \R^{1+d}$, $d\ge1$, and $\mu_j$ is a constant. 
Here, $\kappa_j>0$ are constant. Suppose that Assumptions \ref{A:1} and \ref{A:2} are satisfied.
The case $\kappa_j=\mu_j=n_j$ corresponds to \eqref{E:gNLS}, where ${\bf n} = (n_1,\dots,n_N)$ is the vector 
given in Assumption \ref{A:2}.
Another important case is the so-called mass-resonance case $\kappa_j=n_j^{-1}$.
In this case, for any choice of $\mu_j$,
the system possesses several properties such as
the Galilean invariance property and the validity of the pseudo-conformal transform 
and the virial type identity.
We use the latter property in the proof of Theorem \ref{T:instability}.
This is why we assume ${\bf n}=(1,\dots,1)$, in which case \eqref{E:gNLS}
becomes a system with the mass-resonace.
On the other hand, there is an alternative approach for $d\ge 2$ and $p\le 6$. 
We prove a blowup or grow-up result for radial solutions to \eqref{E:gNLS}  without mass-resonance condition by means of
 a localized version of the virial identity,  
 as in \cite{DF,IKN,NP,NP2} (See Theorems \ref{T:blowup} and \ref{T:grow-up}).
\end{remark}

\subsection{Ground states and excited states for \eqref{E:nls1}--\eqref{E:nls5}}
Let us resume the study of the systems \eqref{E:nls1}--\eqref{E:nls5}, which is the main interest of the paper.
From the argument we developed, what we do for specific systems is to find critical values and critical points of the corresponding function $g|_{\partial B}$.
As seen in Theorem \ref{T:main}, the ground state exists if and only if the minimum value $g_{\min}$ is negative
and the shape of the ground state (restriced to a specific scaling) is characterized in terms of this value and the set $T_0$ of minimum points.
Further, if $g|_{\partial B}$ has a negative critical value other than its minimum value then there is a scalar-type excited states, thanks to Theorem \ref{T:excited}.
The stability and the instability of the ground state follow from Theorems \ref{T:stability} and \ref{T:instability}, respectively.

\subsubsection{Application to \eqref{E:nls1}}
To begin with, we consider \eqref{E:nls1}.
The result is obvious from that for the single cubic NLS.
Here we state it for completeness. 
One has 
\[
	g(u_1,u_2)=\alpha |u_1|^4 + \beta |u_2|^4
\]
with $\alpha,\beta \in \{-1,0,1\}$ and $\alpha \ge \beta$.
The result is as follow.
\begin{corollary}\label{C:app1}
Let $1 \le d \le 3$.
If $\beta\ge0$ then $\mathcal{A}=\emptyset$.
If $\beta=-1$ then we have the following:
\begin{enumerate}
\item For all $\omega>0$, the following scalar-type solitons exist:
\begin{itemize}
\item $A_{1,\omega}:= \bigcup_{\theta \in \R/2\pi \Z}\mathcal{R} ((0,e^{i\theta}),\omega,1) \subset \mathcal{A}_\omega$;
\item If $\alpha=-1$ then
$A_{2,\omega}:= \bigcup_{\theta \in \R/2\pi \Z}\mathcal{R} ((e^{i\theta},0),\omega,1) \subset \mathcal{A}_\omega$.
\end{itemize}
\item The ground state is given as follows:
\[
	\mathfrak{G}_\omega =
	\begin{cases}
	A_{1,\omega} & \alpha >\beta=-1, \\
	A_{1,\omega} \cup A_{2,\omega} & \alpha =\beta=-1. 
	\end{cases}
\]
Further, $\mathcal{G}_\omega$ is stable if $d=1$ and unstable if $d=2,3$.
\end{enumerate}
\end{corollary}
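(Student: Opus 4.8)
The plan is to reduce everything to an elementary analysis of the profile $g|_{\partial B}$ and then quote the general Theorems \ref{T:main}, \ref{T:excited}, \ref{T:stability}, and \ref{T:instability}. Here $p=4$ and ${\bf n}=(1,1)$, and since $2^*\ge 6$ for $1\le d\le 3$ we have $p\in(2,2^*)$, so Assumptions \ref{A:1} and \ref{A:2} hold. The key structural observation is that on $\partial B$ the value of $g$ depends only on $s:=|z_1|^2\in[0,1]$, because $|z_1|^4=s^2$ and $|z_2|^4=(1-s)^2$; thus $g=\gamma(s)$ with $\gamma(s):=\alpha s^2+\beta(1-s)^2$, while the two phases remain free, so every level set of $g|_{\partial B}$ is a union of tori. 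In particular $g_{\min}=\min_{[0,1]}\gamma$. If $\beta\ge0$ then $\alpha\ge\beta\ge0$, hence $\gamma\ge0$ and $g_{\min}\ge0$; Theorem \ref{T:main} then gives $\mathcal{A}=\emptyset$, settling the first assertion.

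For $\beta=-1$ I would determine $g_{\min}$ and $T_0$ case by case. If $\alpha=-1$, then $\gamma(s)=-(2s^2-2s+1)$, and since $2s^2-2s+1$ is maximal at the endpoints of $[0,1]$ (value $1$) and minimal at $s=\tfrac12$, the minimum $g_{\min}=-1$ is attained exactly at $s\in\{0,1\}$, i.e. $T_0=\{(0,e^{i\theta})\}\cup\{(e^{i\theta},0)\}$. If $\alpha\in\{0,1\}$, then $\gamma(s)=2s-1$ (for $\alpha=1$) or $\gamma(s)=-(1-s)^2$ (for $\alpha=0$), and in both cases $\gamma$ is increasing on $[0,1]$, so $g_{\min}=-1$ is attained only at $s=0$ and $T_0=\{(0,e^{i\theta})\}$. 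Inserting these into Theorem \ref{T:main} with amplitude $-g_{\min}=1$ reproduces exactly the description of $\mathcal{G}_\omega$ in part (2).

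For part (1) I would invoke Theorem \ref{T:excited}, which reduces the claim to checking that $(0,e^{i\theta})$, and when $\alpha=-1$ also $(e^{i\theta},0)$, are critical points of $g|_{\partial B}$ with $a=-g>0$. \emph{This is the one delicate point}: at such points a component vanishes, and the one-variable reduction is deceptive, since $\gamma$ has nonzero one-sided derivatives at the endpoints (e.g. $\gamma'(0)=-2\beta=2$) even though the corresponding points are genuinely critical on the smooth manifold $\partial B$. I would settle this by a direct Lagrange computation in $\R^4\cong\C^2$: one has $\nabla g=4(\alpha|z_1|^2x_1,\alpha|z_1|^2y_1,\beta|z_2|^2x_2,\beta|z_2|^2y_2)$, which carries the factor $|z_j|^2$ in the $j$-th block, so at $z_1=0$ it equals $(0,0,4\beta x_2,4\beta y_2)$, parallel to $\nabla(|z_1|^2+|z_2|^2)=2(0,0,x_2,y_2)$; hence $(0,e^{i\theta})$ is critical with multiplier $2\beta$, and symmetrically for $(e^{i\theta},0)$. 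Since $g(0,e^{i\theta})=\beta=-1$ gives $a=1>0$, Theorem \ref{T:excited} yields $\mathcal{R}((0,e^{i\theta}),\omega,1)\subset\mathcal{A}_\omega$, hence $A_{1,\omega}\subset\mathcal{A}_\omega$; whereas $g(e^{i\theta},0)=\alpha$ is negative exactly when $\alpha=-1$, which produces $A_{2,\omega}$ precisely in that case. Taking unions over $\theta$ finishes part (1).

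The stability dichotomy follows directly from Theorems \ref{T:stability} and \ref{T:instability} applied with $p=4$, whose threshold is $2+\tfrac4d$. For $d=1$ we have $p=4<6=2+\tfrac4d$, so $\mathcal{G}_\omega$ is stable; for $d=2$ (where $2+\tfrac4d=4=p$) and $d=3$ (where $2+\tfrac4d=\tfrac{10}3<4=p<6=2^*$) we have $2+\tfrac4d\le p<2^*$, so $\mathcal{G}_\omega$ is unstable. The only point to verify is that the auxiliary hypothesis of Theorem \ref{T:instability} does not intervene: it concerns $d=1$ with $p\ge6$ and $d=2$ with $p>6$, neither of which occurs for $p=4$, and in any case ${\bf n}=(1,1)$ holds.
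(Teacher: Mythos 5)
Your proposal is correct and follows essentially the same route as the paper: compute $g_{\min}$ and $T_0$ by elementary analysis of $g|_{\partial B}$, verify criticality of the semi-trivial points via the Lagrange-multiplier criterion in $\R^4$ (exactly the device the paper deploys for \eqref{E:nls2}--\eqref{E:nls5}), and then quote Theorems \ref{T:main}, \ref{T:excited}, \ref{T:stability}, and \ref{T:instability}. The only difference is one of detail: the paper's own proof of Corollary \ref{C:app1} is a two-line remark, whereas you spell out the one-variable reduction and, usefully, flag that endpoint criticality cannot be read off from the one-sided derivatives of $\gamma(s)$ but must be checked on the manifold $\partial B$ itself.
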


\subsubsection{Application to \eqref{E:nls2}}

Let us next consider \eqref{E:nls2}.
The ground states for this system is studied in \cites{LW} (See also \cites{AC,Si,CZ,MMP,I,IT,WY}).
One has 
\[
	g(u_1,u_2)=\alpha |u_1|^4 + \beta |u_2|^4+\sigma (|u_1|^2+|u_2|^2)^2
\]
with $\alpha\ge\beta$ and $\sigma\in \{\pm1\}$.

In the special case $\alpha=\beta=0$ and $1\le d \le 3$. We have $\mathcal{A}=\emptyset$ if $\sigma=1$
and 
\[
	\mathcal{G}_\omega = \bigcup_{{\bf w} \in \partial B} \mathcal{R} ({\bf w},\omega,1)
\]
if $\sigma=-1$. $\mathcal{G}_\omega$ is stable if $d=1$ and unstable if $d=2,3$.
For the other case, we obtain the following result by the analysis of $g$.
\begin{corollary}\label{C:app2}
Let $(\alpha,\beta)\neq(0,0)$ and $1 \le d \le 3$.
If $\beta >0$ and $\frac{\alpha\beta}{\alpha+ \beta} + \sigma \ge 0$ or if $\beta \le 0$ and
$\beta+\sigma \ge0$ then $\mathcal{A}=\emptyset$.
Otherwise, we have the following:
\begin{enumerate}
\item For all $\omega>0$, the following scalar-type solitons exist:
\begin{itemize}
\item If $\beta+\sigma<0$ then
$A_{3,\omega}:= \bigcup_{\theta \in \R/2\pi \Z}\mathcal{R} ((0,e^{i\theta}),\omega,-\beta-\sigma) \subset \mathcal{A}_\omega$;
\item If $\alpha+\sigma<0$ then
$A_{4,\omega}:= \bigcup_{\theta \in \R/2\pi \Z}\mathcal{R} ((e^{i\theta},0),\omega,-\alpha-\sigma) \subset \mathcal{A}_\omega$;
\item If $\beta>0$ and $\sigma=-1$ and if $\frac{\alpha\beta}{\alpha+\beta}<1$
then
\[
A_{5,\omega}:= \bigcup_{\theta_1,\theta_2 \in \R/2\pi \Z}\mathcal{R} ((\sqrt{\tfrac{\beta}{\alpha+\beta}}e^{i\theta_1},\sqrt{\tfrac{\alpha}{\alpha+\beta}} e^{i\theta_2}),\omega,1-\tfrac{\alpha \beta}{\alpha+\beta}) \subset \mathcal{A}_\omega;
\]
\item If $\alpha<0$ and if $\frac{\alpha\beta}{\alpha+\beta}<-\sigma$
then
\[
A_{6,\omega}:= \bigcup_{\theta_1,\theta_2 \in \R/2\pi \Z}\mathcal{R} ((\sqrt{\tfrac{\beta}{\alpha+\beta}}e^{i\theta_1},\sqrt{\tfrac{\alpha}{\alpha+\beta}} e^{i\theta_2}),\omega,-\sigma-\tfrac{\alpha \beta}{\alpha+\beta}) \subset \mathcal{A}_\omega.
\]
\end{itemize}
\item The ground state is given as follows:
\begin{itemize}
\item If $\beta>0$ and $\sigma=-1$ and if $\frac{\alpha\beta}{\alpha+\beta}<1$
then $\mathcal{G}_\omega=A_{5,\omega}$.
\item If $\beta\le0$ and if $\beta+\sigma<0$ then
\[
	\mathcal{G}_\omega=	\begin{cases}
	A_{3,\omega} \cup A_{4,\omega} & \alpha = \beta<0, \\
	A_{3,\omega} & otherwise.
	\end{cases}
\]
\end{itemize}
Further, $\mathcal{G}_\omega$ is stable if $d=1$ and unstable if $d=2,3$.
\end{enumerate}
\end{corollary}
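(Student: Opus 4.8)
The plan is to specialize the general framework of Theorems \ref{T:main}, \ref{T:excited}, \ref{T:stability} and \ref{T:instability} to the concrete quartic $g(u_1,u_2)=\alpha|u_1|^4+\beta|u_2|^4+\sigma(|u_1|^2+|u_2|^2)^2$, reducing the whole corollary to the elementary task of locating the critical points and critical values of $g|_{\partial B}$. First I would record that this $g$ is $C^2$ and homogeneous of degree $p=4$, so Assumption \ref{A:1} holds with $p=4\in(2,2^*)$ for $1\le d\le 3$, and that it is gauge invariant with ${\bf n}=(1,1)$, so Assumption \ref{A:2} holds; a direct computation of $F_j=\tfrac2p\partial_{\overline{z_j}}g$ confirms that this $g$ is indeed the one attached to \eqref{E:nls2}.

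The key reduction is that $g$ depends on $(z_1,z_2)$ only through $|z_1|^2,|z_2|^2$. Parametrizing $\partial B$ by $s:=|z_1|^2\in[0,1]$ (so $|z_2|^2=1-s$ and $(|z_1|^2+|z_2|^2)^2=1$) gives
\[
g|_{\partial B}=\alpha s^2+\beta(1-s)^2+\sigma=:h(s),\qquad s\in[0,1],
\]
a one-variable quadratic with leading coefficient $\alpha+\beta$. Using the Lagrange condition $\partial_{\overline{z_j}}g=\lambda z_j$ ($\lambda\in\R$), I would show that the critical points of $g|_{\partial B}$ are exactly the two phase-tori $\{z_1=0\}$ and $\{z_2=0\}$ (the endpoints $s=0,1$, always critical) together with the interior torus at $s^*=\tfrac{\beta}{\alpha+\beta}$ whenever $s^*\in(0,1)$, i.e. whenever $\alpha,\beta$ are nonzero of the same sign; the corresponding critical values are $\beta+\sigma$, $\alpha+\sigma$, and $h(s^*)=\tfrac{\alpha\beta}{\alpha+\beta}+\sigma$. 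Feeding these into Theorem \ref{T:excited} (a family $\mathcal R({\bf w},\omega,a)\subset\mathcal A_\omega$ precisely when ${\bf w}$ is critical for $g|_{\partial B}$ and $a=-g({\bf w})>0$) yields exactly the families $A_{3,\omega},\dots,A_{6,\omega}$ with the stated amplitudes and existence conditions, giving part (1).

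For the ground state I would compute $g_{\min}=\min_{[0,1]}h$ by an elementary case split on the sign of $\alpha+\beta$: in the convex case the minimum sits at $s^*$ if $s^*\in[0,1]$ and at an endpoint otherwise, while in the concave or linear case it sits at an endpoint, which by $\beta\le\alpha$ is always $s=0$. This gives $g_{\min}=\tfrac{\alpha\beta}{\alpha+\beta}+\sigma$ when $\beta>0$ and $g_{\min}=\beta+\sigma$ when $\beta\le0$; comparison with $0$ reproduces exactly the stated nonexistence dichotomy via $\mathcal A=\emptyset\iff g_{\min}\ge 0$ from Theorem \ref{T:main}. In the complementary range $g_{\min}<0$, Theorem \ref{T:main} identifies $\mathcal G_\omega$ with the union of $\mathcal R({\bf w},\omega,-g_{\min})$ over the minimizer set $T_0$; reading off $T_0$ gives $\mathcal G_\omega=A_{5,\omega}$ when $\beta>0$ (which together with $g_{\min}<0$ forces $\sigma=-1$ and $\tfrac{\alpha\beta}{\alpha+\beta}<1$), and when $\beta\le0$ gives $\mathcal G_\omega=A_{3,\omega}$, except in the degenerate case $\alpha=\beta<0$ where both endpoints minimize and $\mathcal G_\omega=A_{3,\omega}\cup A_{4,\omega}$.

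Finally, since $p=4$, the stability threshold $2+\tfrac4d$ equals $6,4,\tfrac{10}{3}$ for $d=1,2,3$: for $d=1$ one has $p<2+\tfrac4d$ and Theorem \ref{T:stability} gives stability, while for $d=2,3$ one has $p\ge 2+\tfrac4d$ and Theorem \ref{T:instability} gives instability (its restriction on ${\bf n}$ is vacuous here, as ${\bf n}=(1,1)$). The only delicate bookkeeping is matching the endpoint-versus-interior location of the minimizer to the correct soliton family and catching the degenerate equal-endpoints case $\alpha=\beta<0$; all the underlying computations are one-dimensional and elementary, so this is where I would be most careful rather than where I expect real difficulty.
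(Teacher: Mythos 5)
Your proposal is correct and follows essentially the same route as the paper: the paper likewise classifies the critical points of $g|_{\partial B}$ by the Lagrange multiplier condition (the two semi-trivial phase tori plus the interior torus at $|z_1|^2=\beta/(\alpha+\beta)$ when $\alpha\beta>0$), compares the critical values $\beta+\sigma$, $\alpha+\sigma$, $\tfrac{\alpha\beta}{\alpha+\beta}+\sigma$ to identify $g_{\min}$ and $T_0$, and then invokes Theorems \ref{T:main}, \ref{T:excited}, \ref{T:stability}, and \ref{T:instability}. Your only deviation---recasting the minimization as that of the scalar quadratic $h(s)=\alpha s^2+\beta(1-s)^2+\sigma$ on $[0,1]$ with a convexity case split---is a cosmetic repackaging of the paper's direct comparison of the three critical values, not a different argument.
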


\subsubsection{Application to \eqref{E:nls3}}
One has
\[
	g(u_1,u_2)=\alpha_1 |u_1^2+u_2^2|^2 + \alpha_2 |u_1^2-u_2^2|^2 -4\alpha_2 |u_1|^2|u_2|^2+ (2\alpha_1 + r) (|u_1|^2+|u_2|^2)^2,
\]
where $\alpha_2\ge0$, $\alpha_1^2\neq \alpha_2^2$, $\alpha_1^2+\alpha_2^2=1$,  and $r\in\R$.
We have the following.
\begin{corollary}\label{C:app3}
Let $1 \le d \le 3$. It holds that $g_{\min}=\min (3\alpha_1-\alpha_2+r, 2\alpha_1+r)$.
If $g_{\min}\ge 0$ then $\mathcal{A}=\emptyset$.
Otherwise, we have the following:
\begin{enumerate}
\item For all $\omega>0$, the following scalar-type solitons exist:
\begin{itemize}
\item If $3\alpha_1 + \alpha_2+ r<0$ then
\[
A_{7,\omega}:= \bigcup_{\theta \in \R/2\pi \Z}\mathcal{R} ((0,e^{i\theta}),\omega,-3\alpha_1 - \alpha_2- r) \cup\mathcal{R} ((e^{i\theta},0),\omega,-3\alpha_1 - \alpha_2- r) \subset \mathcal{A}_\omega;
\]
\item If $3\alpha_1 - \alpha_2+ r<0$ then
\[
	A_{8,\omega}:= \bigcup_{\theta \in \R/2\pi \Z,\, \sigma \in \{\pm1\}}\mathcal{R} ((2^{-1/2}e^{i\theta}, \sigma 2^{-1/2}e^{i\theta} ),\omega,-3\alpha_1 + \alpha_2- r) \subset \mathcal{A}_\omega;
\]
\item If $2\alpha_1 + r<0$ then 
\[
	A_{9,\omega}:= \bigcup_{\theta \in \R/2\pi \Z,\, \sigma \in \{\pm1\}}\mathcal{R} ((2^{-1/2}e^{i\theta}, i\sigma 2^{-1/2}e^{i\theta} ),\omega,-2\alpha_1 - r) \subset \mathcal{A}_\omega.
\]
\item
If $\alpha_2=0$ and $3\alpha_1 +  r<0$ then
\[
A_{10,\omega}:= \bigcup_{\nu, \theta\in \R/2\pi\Z} \mathcal{R} ((e^{i\theta}\cos \nu  , e^{i\theta}\sin \nu ),\omega,-3\alpha_1 - r)\subset \mathcal{A}_\omega.
\]
\end{itemize}
\item The ground state is given as follows:
\begin{itemize}
\item If $\alpha_1>\alpha_2$ then
then $\mathcal{G}_\omega=A_{9,\omega}$.
\item If $-1<\alpha_1<\alpha_2$ then $\mathcal{G}_\omega=	A_{8,\omega}$.
\item If $-1=\alpha_1<\alpha_2=0$ then $\mathcal{G}_\omega=	A_{10,\omega}$.
\end{itemize}
Further, $\mathcal{G}_\omega$ is stable if $d=1$ and unstable if $d=2,3$.
\end{enumerate}
\end{corollary}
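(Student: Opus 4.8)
The plan is to apply the abstract results to the explicit quartic $g$ displayed above, so that the whole statement reduces to locating the critical points and the minimum of the restriction $g|_{\partial B}$. For this system $p=4$ and ${\bf n}=(1,1)$, and one checks at once that $g$ is a real homogeneous polynomial of degree $4$ satisfying Assumption \ref{A:1} and the gauge condition \eqref{E:g-gauge}; the restriction $1\le d\le 3$ keeps us in the subcritical range $p<2^*$. Hence Theorems \ref{T:main}, \ref{T:excited}, \ref{T:stability} and \ref{T:instability} are all available, and the task is purely the analysis of $g$ on $\partial B$.

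Because $g$ is invariant under the free $S^1$-action ${\bf z}\mapsto e^{i\theta}{\bf z}$ on $\partial B$, I would first pass to the quotient $\partial B/S^1$ and parametrize it by the moduli $\rho_1,\rho_2\ge0$ with $\rho_1^2+\rho_2^2=1$ together with the relative phase $\psi$, fixing the phase of the first component. Writing $s=\rho_1^2\rho_2^2\in[0,\tfrac14]$, a direct computation gives the closed form
\[
g|_{\partial B}=3\alpha_1+\alpha_2+r+2s\big((\alpha_1-\alpha_2)\cos 2\psi-\alpha_1-3\alpha_2\big).
\]
For fixed $\psi$ this is affine in $s$, and since $\alpha_2\ge0$ the two bracket values at $\cos2\psi=\pm1$, namely $-4\alpha_2$ and $-2\alpha_1-2\alpha_2$, are both nonpositive; minimizing first in $s$ and then in $\psi$ yields $g_{\min}=\min(3\alpha_1-\alpha_2+r,\,2\alpha_1+r)$. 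This already proves the first assertion and, through Theorem \ref{T:main}, the nonexistence statement when $g_{\min}\ge0$.

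Next I would enumerate all critical points of $g|_{\partial B}$. Since $\alpha_1\neq\alpha_2$, in the interior the equation $\partial_\psi(g|_{\partial B})=0$ forces $\cos2\psi=\pm1$; feeding this into the stationarity condition in the modulus $t=\rho_1^2$ (on which $g|_{\partial B}$ depends only through $s=t(1-t)$) gives $t=\tfrac12$, producing the critical points $\rho_1=\rho_2=2^{-1/2}$ with $\cos2\psi=1$ (value $3\alpha_1-\alpha_2+r$) and with $\cos2\psi=-1$ (value $2\alpha_1+r$). The poles $\rho_1=0$ and $\rho_2=0$, which are coordinate singularities of the $(\rho_1,\rho_2,\psi)$ chart, must be examined separately in a local affine chart $\zeta=u_1/u_2$; the local expansion has no linear part, so they are critical with value $3\alpha_1+\alpha_2+r$. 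When $\alpha_2=0$ the coefficient of $s$ at $\cos2\psi=1$ vanishes identically, so the whole circle $\{\psi=0\}$ becomes a degenerate critical manifold. Lifting these gauge orbits back to $\partial B$ — accounting for the two critical longitudes $\psi$ and $\psi+\pi$ (the parameter $\sigma\in\{\pm1\}$ in $A_{8,\omega}$ and $A_{9,\omega}$) and for the free modulus $\nu$ in the degenerate family — reproduces exactly the families $\mathcal{R}(\cdot)$ listed as $A_{7,\omega},\dots,A_{10,\omega}$, and Theorem \ref{T:excited} turns each into a solution precisely when its critical value is negative, i.e.\ under the stated sign conditions. Comparing the candidate critical values (the pole value $3\alpha_1+\alpha_2+r$ is never the strict minimum when $\alpha_2>0$, being $\ge$ both others) and checking strictness then identifies $T_0$ in each regime: $A_9$ for $\alpha_1>\alpha_2$, $A_8$ for $-1<\alpha_1<\alpha_2$, and the degenerate circle $A_{10}$ for $\alpha_1=-1,\alpha_2=0$. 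This gives the ground-state description via Theorem \ref{T:main}. Finally, stability follows from Theorem \ref{T:stability} since $p=4<2+\tfrac4d$ exactly when $d=1$, and instability from Theorem \ref{T:instability} since $p=4\ge2+\tfrac4d$ for $d=2,3$; here ${\bf n}=(1,1)$, so the auxiliary hypothesis of Theorem \ref{T:instability} holds.

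The main obstacle is the complete and correct bookkeeping in the critical-point step: one must be sure that no critical point of $g|_{\partial B}$ is overlooked — in particular the poles and the degenerate critical circle that appears only when $\alpha_2=0$ — and one must track how each gauge orbit together with the discrete choice $\psi\mapsto\psi+\pi$ lifts back to $\partial B$, so that the sets $A_{7,\omega}$–$A_{10,\omega}$ are reproduced exactly rather than up to an unaccounted symmetry. Determining which critical value is the global minimum in each of the three regimes, with strict inequalities so that $T_0$ is pinned down precisely, is the delicate part; the remaining verifications are routine applications of the abstract theorems.
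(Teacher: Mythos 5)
Your strategy coincides with the paper's: verify Assumptions \ref{A:1} and \ref{A:2} with $p=4$, ${\bf n}=(1,1)$, reduce the whole corollary to locating the critical points and the minimum of $g|_{\partial B}$ in gauge-quotient coordinates (your $(s,\psi)$ chart is exactly the paper's $h(\nu,\zeta)$ from Lemma \ref{L:h} after the substitution $s=\tfrac14\sin^2 2\nu$, $\psi=\zeta$), treat the poles separately, and invoke Theorems \ref{T:main}, \ref{T:excited}, \ref{T:stability}, and \ref{T:instability}. Your closed form for $g|_{\partial B}$ and your list of critical orbits (the two equal-modulus orbits with $\cos 2\psi=\pm1$, the two poles, and the degenerate circle when $\alpha_2=0$) agree with the paper's enumeration, and all final conclusions are correct.

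However, one step is false as stated, and you rely on it twice. You assert that, because $\alpha_2\ge 0$, the two bracket values $-4\alpha_2$ and $-2\alpha_1-2\alpha_2$ are \emph{both} nonpositive, and later that the pole value $3\alpha_1+\alpha_2+r$ is ``$\ge$ both others.'' Both claims require $\alpha_1+\alpha_2\ge 0$, which is not among the hypotheses of \eqref{E:nls3}: the choice $(\alpha_1,\alpha_2)=(-4/5,3/5)$ satisfies $\alpha_2\ge0$, $\alpha_1^2\ne\alpha_2^2$, $\alpha_1^2+\alpha_2^2=1$, yet $-2(\alpha_1+\alpha_2)=2/5>0$ and $3\alpha_1+\alpha_2+r<2\alpha_1+r$. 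For such parameters your ``minimize first in $s$, then in $\psi$'' scheme breaks on the slice $\cos 2\psi=-1$, where the minimum in $s$ sits at the pole $s=0$ rather than at $s=\tfrac14$. The conclusion survives for a reason you did not supply: the pole value always dominates the value at $\cos2\psi=1$, $s=\tfrac14$, i.e.\ $3\alpha_1+\alpha_2+r\ge 3\alpha_1-\alpha_2+r$, so the pole can never undercut the stated minimum; equivalently, since $\min_\psi b(\psi)\le -4\alpha_2\le 0$, the global minimum of $2s\,b(\psi)$ over $s\in[0,\tfrac14]$ equals $\tfrac12\min(-4\alpha_2,\,-2\alpha_1-2\alpha_2)$ regardless of the sign of $\alpha_1+\alpha_2$. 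The cleanest repair is the route the paper takes: since $g|_{\partial B}$ is smooth on a compact set, its minimum is attained at a critical point, so after the exhaustive enumeration one only compares the critical values $g_1=3\alpha_1+\alpha_2+r$, $g_2=3\alpha_1-\alpha_2+r$, $g_3=2\alpha_1+r$ (and $g_4=3\alpha_1+r$ when $\alpha_2=0$), using only $g_1\ge g_2$ (from $\alpha_2\ge0$) and $g_2\gtrless g_3$ according to $\alpha_1\lessgtr\alpha_2$; no comparison of $g_1$ with $g_3$ is ever needed.
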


Notice that if $\alpha_2=0$ then $A_{7,\omega} \cup A_{8,\omega} \subset A_{10,\omega}$.

\subsubsection{Application to \eqref{E:nls4}}
One has 
\begin{align*}
	g(u_1,u_2)=& \alpha_1 |u_1^2+u_2^2|^2 + \alpha_2 |u_1^2-u_2^2|^2 -4\alpha_2 |u_1|^2|u_2|^2   \\
	&+2\alpha_3  (|u_1|^4-|u_2|^4) + (2\alpha_1 + r) (|u_1|^2+|u_2|^2)^2,
\end{align*}
where
$ \alpha_2\ge 0$, $\alpha_3>0 $, $\alpha_1\neq \alpha_2$,
 $\alpha_1^2+\alpha_2^2+\alpha_3^2=1$, and $r\in\R$.

\begin{corollary}\label{C:app4}
Let $1 \le d \le 3$.
Let $\tilde{\alpha}=\max(\alpha_1+\alpha_2,2\alpha_2)$. It holds that
\[
	g_{\min}= - \tfrac{\alpha_3^2}{\max(\tilde{\alpha},\alpha_3)}  + 3\alpha_1 + \alpha_2 - \max(\tilde{\alpha},\alpha_3) +r .
\]
If $g_{\min}\ge 0$  then $\mathcal{A}=\emptyset$.
Otherwise, we have the following:
\begin{enumerate}
\item For all $\omega>0$, the following scalar-type solitons exist:
\begin{itemize}
\item If $3\alpha_1 + \alpha_2+2\alpha_3+ r<0$ then
\[
A_{11,\omega}:= \bigcup_{\theta \in \R/2\pi \Z}\mathcal{R} ((e^{i\theta},0),\omega,-3\alpha_1 - \alpha_2-2\alpha_3- r)  \subset \mathcal{A}_\omega;
\]
\item If $3\alpha_1 + \alpha_2-2\alpha_3+ r<0$ then
\[
A_{12,\omega}:= \bigcup_{\theta \in \R/2\pi \Z}\mathcal{R} ((0,e^{i\theta}),\omega,-3\alpha_1 - \alpha_2+2\alpha_3- r)  \subset \mathcal{A}_\omega;
\]
\item If $\alpha_3\le 2\alpha_2$ and $-\tfrac{\alpha_3^2}{2\alpha_2} + 3\alpha_1 -\alpha_2 + r<0$ then 
\[
	A_{13,\omega}:= \bigcup_{\theta \in \R/2\pi \Z,\, \sigma \in \{\pm1\}}\mathcal{R} \(\( \sqrt{\tfrac{2\alpha_2-\alpha_3}{4\alpha_2}}e^{i\theta}, \sigma\sqrt{\tfrac{2\alpha_2+\alpha_3}{4\alpha_2}}e^{i\theta}\),\omega, \tfrac{\alpha_3^2}{2\alpha_2} - 3\alpha_1 +\alpha_2 - r\) \subset \mathcal{A}_\omega.
\]
\item If $\alpha_3\le |\alpha_1+\alpha_2|$ and $-\tfrac{\alpha_3^2}{\alpha_1+\alpha_2} + 2\alpha_1  + r<0$ then 
\[
	A_{14,\omega}:= \bigcup_{\theta \in \R/2\pi \Z,\, \sigma \in \{\pm1\}}\mathcal{R} \(\( \sqrt{\tfrac{\alpha_1+\alpha_2-\alpha_3}{2(\alpha_1+\alpha_2)}}e^{i\theta}, i\sigma\sqrt{\tfrac{\alpha_1+\alpha_2+\alpha_3}{2(\alpha_1+\alpha_2)}}e^{i\theta} \),\omega, \tfrac{\alpha_3^2}{\alpha_1+\alpha_2} - 2\alpha_1  - r\) \subset \mathcal{A}_\omega.
\]
\end{itemize}
\item The ground state is given as follows:
\begin{itemize}
\item If $\alpha_3 \ge \tilde{\alpha}$ then
then $\mathcal{G}_\omega=A_{12,\omega}$.
\item If $\alpha_3< \tilde{\alpha}$ and $\alpha_1<\alpha_2$ then $\mathcal{G}_\omega=A_{13,\omega}$.
\item If $\alpha_3< \tilde{\alpha}$ and $\alpha_1>\alpha_2$ then $\mathcal{G}_\omega= A_{14,\omega}$.
\end{itemize}
Further, $\mathcal{G}_\omega$ is stable if $d=1$ and unstable if $d=2,3$.
\end{enumerate}
\end{corollary}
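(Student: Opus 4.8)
The plan is to reduce everything to a two-variable analysis of the profile function $g$ on the unit sphere $\partial B\subset\C^2$; once the critical-point structure of $g|_{\partial B}$ is known, Theorems \ref{T:main} and \ref{T:excited} translate it directly into the statements about $\mathcal{G}_\omega$ and $\mathcal{A}_\omega$, while Theorems \ref{T:stability} and \ref{T:instability} settle stability. First I would parametrize ${\bf w}=(w_1,w_2)\in\partial B$ by $|w_1|^2=\tfrac{1+\tau}2$, $|w_2|^2=\tfrac{1-\tau}2$ with $\tau\in[-1,1]$, together with $c:=\cos\bigl(2(\arg w_1-\arg w_2)\bigr)\in[-1,1]$. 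Substituting into the quartic $g$ for \eqref{E:nls4} and using $|w_1|^4+|w_2|^4=1-\tfrac{1-\tau^2}2$, $|w_1|^2|w_2|^2=\tfrac{1-\tau^2}4$, $|w_1|^4-|w_2|^4=\tau$, and $|w_1^2\pm w_2^2|^2=|w_1|^4+|w_2|^4\pm2|w_1|^2|w_2|^2c$, one obtains the reduced expression
\[
 g({\bf w}) = 3\alpha_1+\alpha_2+r - \tfrac{1-\tau^2}{2}\bigl[(\alpha_1+3\alpha_2)-(\alpha_1-\alpha_2)c\bigr] + 2\alpha_3\tau ,
\]
which depends only on $(\tau,c)$; this computation drives the whole corollary.

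Next I would minimize. Since $1-\tau^2\ge0$ and $\alpha_1\neq\alpha_2$, the bracket is maximized over $c\in[-1,1]$ at $c=-\sign(\alpha_1-\alpha_2)$, its value being $\tfrac12\bigl[(\alpha_1+3\alpha_2)+|\alpha_1-\alpha_2|\bigr]=\tilde\alpha$ with $\tilde\alpha=\max(\alpha_1+\alpha_2,2\alpha_2)$. This collapses the problem to the one-dimensional quadratic
\[
 \min_{c}\,g({\bf w}) = \bigl(3\alpha_1+\alpha_2+r-\tilde\alpha\bigr)+\tilde\alpha\,\tau^2+2\alpha_3\tau,\qquad \tau\in[-1,1].
\]
Because $\alpha_3>0$, its vertex $\tau^*=-\alpha_3/\tilde\alpha$ lies in $[-1,0)$ exactly when $\tilde\alpha\ge\alpha_3$; in that regime the minimum is interior and equals $-\alpha_3^2/\tilde\alpha$, whereas for $\tilde\alpha<\alpha_3$ it sits at the endpoint $\tau=-1$ and equals $\tilde\alpha-2\alpha_3$. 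Both cases combine into $g_{\min}=-\alpha_3^2/\max(\tilde\alpha,\alpha_3)+3\alpha_1+\alpha_2-\max(\tilde\alpha,\alpha_3)+r$ (well defined since $\max(\tilde\alpha,\alpha_3)\ge\alpha_3>0$). Reading off $T_0$: for $\tilde\alpha<\alpha_3$ it is the orbit $\tau=-1$, i.e.\ $(0,e^{i\theta})$, giving $A_{12,\omega}$; for $\tilde\alpha\ge\alpha_3$ the interior $\tau^*$ with $c=-\sign(\alpha_1-\alpha_2)$ gives a real ratio when $\alpha_1<\alpha_2$ (so $\tilde\alpha=2\alpha_2$, the $A_{13,\omega}$ profile) and a purely imaginary ratio when $\alpha_1>\alpha_2$ (so $\tilde\alpha=\alpha_1+\alpha_2$, the $A_{14,\omega}$ profile). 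Theorem \ref{T:main} then yields the dichotomy $\mathcal{A}=\emptyset$ for $g_{\min}\ge0$ and $\mathcal{G}_\omega=\bigcup_{{\bf w}\in T_0}\mathcal{R}({\bf w},\omega,-g_{\min})$ otherwise, with $-g_{\min}$ matching the listed amplitudes.

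For the excited states I would enumerate all critical points of $g|_{\partial B}$ and invoke Theorem \ref{T:excited}. Phase-stationarity reads $(1-\tau^2)(\alpha_1-\alpha_2)\sin\bigl(2(\arg w_1-\arg w_2)\bigr)=0$, forcing $\tau=\pm1$ or $c=\pm1$. The endpoints $\tau=\pm1$ are the orbits where one component vanishes; these are critical for the constrained functional because $g$ is invariant under $w_1\mapsto-w_1$ and $w_2\mapsto-w_2$, so by the principle of symmetric criticality any point of the fixed-point circle $\{w_2=0\}$ or $\{w_1=0\}$ (on which $g$ is constant) is critical for $g|_{\partial B}$; they give $A_{11,\omega}$ ($\tau=1$, value $3\alpha_1+\alpha_2+2\alpha_3+r$) and $A_{12,\omega}$ ($\tau=-1$, value $3\alpha_1+\alpha_2-2\alpha_3+r$). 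For $c=+1$, stationarity in $\tau$ of $3\alpha_1+\alpha_2+r-2\alpha_2(1-\tau^2)+2\alpha_3\tau$ gives $\tau=-\alpha_3/(2\alpha_2)\in(-1,1)$ iff $\alpha_3<2\alpha_2$, producing $A_{13,\omega}$ with value $-\alpha_3^2/(2\alpha_2)+3\alpha_1-\alpha_2+r$; for $c=-1$, stationarity of $3\alpha_1+\alpha_2+r-(\alpha_1+\alpha_2)(1-\tau^2)+2\alpha_3\tau$ gives $\tau=-\alpha_3/(\alpha_1+\alpha_2)$, admissible iff $\alpha_3<|\alpha_1+\alpha_2|$, producing $A_{14,\omega}$ with value $-\alpha_3^2/(\alpha_1+\alpha_2)+2\alpha_1+r$. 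In each case the amplitude equals minus the critical value, and Theorem \ref{T:excited} gives membership in $\mathcal{A}_\omega$ exactly when that value is negative, which is the displayed existence condition. Finally, since $p=4$ and ${\bf n}=(1,1)$, the threshold $2+\tfrac4d$ equals $6,4,\tfrac{10}3$ for $d=1,2,3$, so Theorems \ref{T:stability} and \ref{T:instability} give stability for $d=1$ and instability for $d=2,3$.

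The algebra is routine; the points needing care are verifying that the vanishing-component orbits are genuine critical points (handled by the reflection symmetry rather than a naive Lagrange computation, since the $(\tau,c)$ chart degenerates there) and the bookkeeping that attaches to each minimizer or critical point the correct vector ${\bf w}$, amplitude $-g({\bf w})$, and phase relation ($\sigma$ versus $i\sigma$). The main obstacle is organizing the case division $\alpha_1\lessgtr\alpha_2$ and $\alpha_3\lessgtr\tilde\alpha$ consistently, so that the interior minimizer is correctly identified as $A_{13,\omega}$ or $A_{14,\omega}$ and shown to beat the competing boundary value $A_{12,\omega}$ --- which is precisely what the unified formula for $g_{\min}$ encodes.
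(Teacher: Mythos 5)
Your proposal is correct, and its skeleton is the one the paper itself uses: reduce to a two-parameter analysis of $g|_{\partial B}$ modulo the gauge action, enumerate the critical points, and then quote Theorems \ref{T:main}, \ref{T:excited}, \ref{T:stability}, \ref{T:instability} (this is Proposition \ref{P:app4} in the paper). The execution, however, differs in two genuine ways, both of which are worth recording. First, where the paper works in the trigonometric chart $h(\nu,\zeta)=g(\cos\nu,e^{i\zeta}\sin\nu)$ of Lemma \ref{L:h}, computes the four critical values $g_1,\dots,g_4$, and then finds $g_{\min}$ by pairwise comparisons (e.g.\ $g_2>g_3\Leftrightarrow(2\alpha_2-\alpha_3)^2>0$), you substitute $\tau=\cos 2\nu$, $c=\cos2\zeta$ and obtain $g_{\min}$ by a nested minimization: the bracket is linear in $c$, so minimizing in $c$ first collapses the problem to the convex quadratic $\tilde\alpha\tau^2+2\alpha_3\tau$ on $[-1,1]$, whose vertex/endpoint dichotomy yields the unified formula with $\max(\tilde\alpha,\alpha_3)$ in one stroke and simultaneously pins down $T_0$ (uniqueness of the optimal pair $(\tau,c)$ gives the correct orbit, including the $\sigma$ versus $i\sigma$ phase relation). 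This is cleaner than the paper's case-by-case comparison and makes the structure of $g_{\min}$ transparent. Second, at the chart degeneracies $\tau=\pm1$ the paper verifies criticality by checking directly that $(F_1(z_1,0),F_2(z_1,0))$ is a scalar multiple of $(z_1,0)$, whereas you invoke the reflections $u_j\mapsto-u_j$ and the principle of symmetric criticality; both are valid here, but note that the paper's computation is the one that generalizes to \eqref{E:nls5}, where the term $4\alpha_3\sin\eta\,(|u_1|^2+|u_2|^2)\Re(\overline{u_1}u_2)$ breaks that reflection symmetry and the semi-trivial points indeed fail to be critical. Two small bookkeeping points, neither a gap: your interior critical points require the strict inequalities $\alpha_3<2\alpha_2$ and $\alpha_3<|\alpha_1+\alpha_2|$, and the corollary's non-strict conditions are covered because at equality $A_{13,\omega}$ (resp.\ $A_{14,\omega}$) degenerates into $A_{12,\omega}$ (or $A_{11,\omega}$); and when $\alpha_2=0$, $\alpha_1<0$ one has $\tilde\alpha=0$, so the vertex $\tau^*=-\alpha_3/\tilde\alpha$ is undefined, but this lands in your endpoint regime $\tilde\alpha<\alpha_3$ and the unified formula still applies since $\max(\tilde\alpha,\alpha_3)=\alpha_3>0$.
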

We remark that if $\alpha_3=2\alpha_2$ then $A_{13,\omega}=A_{12,\omega}$.
Similarly, if $\alpha_3=\alpha_1+\alpha_2$ then $A_{14,\omega}=A_{12,\omega}$
and if $\alpha_3=-(\alpha_1+\alpha_2)$ then $A_{14,\omega}=A_{11,\omega}$.

\subsubsection{Application to \eqref{E:nls5}}
One has 
\begin{align*}
	g(u_1,u_2)=& \alpha_1 |u_1^2+u_2^2|^2 + \alpha_2 |u_1^2-u_2^2|^2 -4\alpha_2 |u_1|^2|u_2|^2  +2\alpha_3 \cos \eta (|u_1|^4-|u_2|^4)  \\
	&+4 \alpha_3 \sin \eta (|u_1|^2+|u_2|^2) \Re (\overline{u_1}u_2) + (2\alpha_1 + r) (|u_1|^2+|u_2|^2)^2,
\end{align*}
where
$ \alpha_2>0$, $\alpha_3>0 $,  $\alpha_1^2+\alpha_2^2+\alpha_3^2=1$, $\eta \in (0,\pi)$, and $r\in\R$.
We assume $\alpha_1>0$ if $ \eta > \pi/2$.
Let $\rho\ge0$ and $\tau \in (0,\pi)$.
The solutions to the equation
\begin{equation}\label{E:equation}
	\sin 2\theta + \rho \sin (\theta-\tau) =0
\end{equation}
play a crucial role. This equation is written as a quartic equation with respect to $\cos \theta$ or to $\sin \theta$
and hence it can be solved explicitly. We do not give it here.
Instead, we use the following.
\begin{lemma}\label{L:pzero2}
Let $\rho\ge0$ and $\tau \in (0,\pi)$ and let $f_{\rho,\tau} (\theta):=\sin 2\theta + \rho \sin (\theta -\tau)$. 
The solutions to the equation \eqref{E:equation} are described as follows:
\begin{enumerate}
\item
If $\tau \in (0,\pi/2)$ then
there exists $\rho_* = \rho_* (\tau)>0$ such \eqref{E:equation}
has four solutions if $\rho<\rho_*$, three solutions if $\rho = \rho_*$, and two solutions if $\rho > \rho_*$.
The solutions are described as follows: there exist $\theta_0  : \R_{\ge0} \to [0,\tau)$, $\theta_1  : [0,\rho_*] \to [\pi/2 ,\pi)$, $\theta_2 :[0,\rho_*]\to (\pi/2,\pi]$, and $\theta_3 :\R_{\ge0}\to (\pi+\tau ,3\pi/2]$ such that
$f_{\rho,\tau} (\theta_j(\rho))=0$ and $ \theta_j(0) = \tfrac{j\pi}{2}$  for $j=0,1,2,3$
and that $\theta_1(\rho_*) = \theta_2(\rho_*)$.
Moreover, $\theta_0$ and $\theta_1$ are strictly increasing and $\theta_2$ and $\theta_3$ are strictly decreasing in $\rho$.
\item
If $\tau = \pi/2$ then
the equation has four solutions if $\rho<\rho_*(\pi/2)$ and two solutions if $\rho \ge \rho_*(\pi/2)$. The solutions are given as follows:
\begin{align*}
	\theta_0={}&\arcsin \tfrac{\rho}2\quad (\rho \le \rho_*(\pi/2)), &
	\theta_1={}&\tfrac{\pi}2, &	
	\theta_2 ={}& \pi -\arcsin \tfrac{\rho}2\quad (\rho \le \rho_*(\pi/2)) ,&
	\theta_3={}&\tfrac{3\pi}2.
\end{align*}
Further, $\rho_*(\pi/2)=2$.
\item
If $\tau \in (\pi/2,\pi)$ then
there exists $\rho_{*} = \rho_{*} (\tau)>0$ such \eqref{E:equation}
has four solutions if $\rho<\rho_{*}$, three solutions if $\rho = \rho_{*}$, and two solutions if $\rho > \rho_{*}$.
The solutions are described as follows: there exist $\theta_0  : [0,\rho_{*}]\to [0,\pi/2)$, $\theta_1  : [0,\rho_{*}] \to (0,\pi/2]$, $\theta_2 : \R_{\ge0} \to (2/\pi,\pi]$, and $\theta_3 :\R_{\ge0}\to [3\pi/2,2\pi)$ such that
$f_{\rho,\tau} (\theta_j(\rho))=0$ and $ \theta_j(0) = \tfrac{j\pi}{2}$  for $j=0,1,2,3$
and that $\theta_0(\rho_{*}) = \theta_1(\rho_{*})$.
Moreover, $\theta_0$ and $\theta_3$ are strictly increasing and $\theta_1$ and $\theta_2$ are strictly decreasing in $\rho$.
\end{enumerate}
\end{lemma}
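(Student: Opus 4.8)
The plan is to recast \eqref{E:equation} as a level-set problem for a single scalar function and read off the whole structure from that function's monotonicity. Away from the poles $\theta\in\{\tau,\tau+\pi\}$, dividing \eqref{E:equation} by $\sin(\theta-\tau)$ shows that $\theta$ solves \eqref{E:equation} if and only if $h(\theta)=\rho$, where
\[
	h(\theta):=-\frac{\sin 2\theta}{\sin(\theta-\tau)}.
\]
Since $\tau\in(0,\pi)\setminus\{\pi/2\}$ in cases (1) and (3), at each pole $\sin 2\theta\neq0$, so the poles contribute no solutions, and the number of solutions of \eqref{E:equation} equals the number of preimages $h^{-1}(\rho)$ for $\rho\ge0$. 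First I would record the sign table of $h$ on $[0,2\pi)$: for $\tau\in(0,\pi/2)$ the set $\{h\ge0\}$ consists of exactly three closed humps, namely $[0,\tau]$, $[\pi/2,\pi]$, and $[\tau+\pi,3\pi/2]$, with $h=0$ at the zeros $0,\pi/2,\pi,3\pi/2$ of $\sin 2\theta$ and $h\to+\infty$ at the poles $\tau$ and $\tau+\pi$.

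The heart of the argument is the monotonicity of $h$ on each hump. Writing $h'=-N/\sin^2(\theta-\tau)$ with $N(\theta):=2\cos 2\theta\,\sin(\theta-\tau)-\sin 2\theta\,\cos(\theta-\tau)$, a direct differentiation (the mixed $\cos\cdot\cos$ terms cancel) yields the clean identity
\[
	N'(\theta)=-3\,\sin 2\theta\,\sin(\theta-\tau).
\]
Thus $N'$ carries the same sign pattern as $-\sin 2\theta\,\sin(\theta-\tau)$, so $N$ is strictly monotone on each of the subintervals cut out by $0,\tau,\pi/2,\pi,\tau+\pi,3\pi/2$, and its endpoint values then pin its sign on each hump. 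Concretely one finds $N<0$ throughout $(0,\tau)$ and $N>0$ throughout $(\tau+\pi,3\pi/2)$, so $h$ is strictly increasing on $[0,\tau)$ and strictly decreasing on $(\tau+\pi,3\pi/2]$, giving in each case a bijection onto $[0,\infty)$; while on $(\pi/2,\pi)$ the function $N$ increases from $N(\pi/2)=-2\cos\tau<0$ to $N(\pi)=2\sin\tau>0$, so it has a single zero there, i.e. $h$ has a unique interior maximum, which defines $\rho_*(\tau):=\max_{[\pi/2,\pi]}h>0$.

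With this in hand, case (1) is immediate. For every $\rho\ge0$ the two unbounded humps each furnish one preimage, defining the strictly monotone branches $\theta_0(\rho)\in[0,\tau)$ (increasing) and $\theta_3(\rho)\in(\tau+\pi,3\pi/2]$ (decreasing) on $\R_{\ge0}$ as inverses of $h$; the central hump contributes two preimages $\theta_1\le\theta_2$ for $\rho<\rho_*$, one (the maximizer) at $\rho=\rho_*$, and none for $\rho>\rho_*$, with $\theta_1$ increasing and $\theta_2$ decreasing on $[0,\rho_*]$ and merging at $\rho_*$. Summing gives four solutions for $\rho<\rho_*$, three for $\rho=\rho_*$, and two for $\rho>\rho_*$, the normalization $\theta_j(0)=j\pi/2$ being the zero endpoint of the corresponding hump; the resulting branch data lie inside the stated codomains, which need not be tight.

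The degenerate case (2), $\tau=\pi/2$, I would treat directly: there $\sin(\theta-\pi/2)=-\cos\theta$, so $f_{\rho,\pi/2}(\theta)=\cos\theta\,(2\sin\theta-\rho)$ factors, the solutions being $\theta=\pi/2,3\pi/2$ together with $\sin\theta=\rho/2$; this yields the explicit formulas, the threshold $\rho_*(\pi/2)=2$, and the count (four for $\rho<2$, two for $\rho\ge2$, the intermediate value absorbed because the colliding pair meets the constant root $\pi/2$). Finally, case (3) reduces to case (1) by the reflection symmetry $f_{\rho,\pi-\tau}(\pi-\theta)=-f_{\rho,\tau}(\theta)$, a one-line trigonometric check that sets up a bijection $\theta\mapsto\pi-\theta$ between the solution sets for parameters in $(\pi/2,\pi)$ and in $(0,\pi/2)$, reversing each monotonicity, relabeling the branches, and leaving the threshold $\rho_*$ unchanged. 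I expect the only delicate points to be organizational: deriving the identity for $N'$ cleanly (the computational crux, after which all monotonicity is automatic) and then carefully matching the reflected and relabeled case-(1) branches to the ranges and monotonicity directions asserted in case (3), together with checking the triple coincidence at $\tau=\pi/2$.
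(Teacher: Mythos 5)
Your proposal is correct, and in the core case (1) it takes a genuinely different route from the paper. The paper works with $f_{\rho,\tau}$ directly: it observes that $f_{\rho,\tau}$ solves the ODE $y''=-y-3\sin 2\theta$, uses the resulting Duhamel (variation-of-parameters) representation of $f_{\rho,\tau}$ around chosen base points to control signs, runs intermediate-value and uniqueness arguments separately on $[0,\pi/2)$, $[\pi/2,\pi]$, $(\pi,3\pi/2]$, $(3\pi/2,2\pi)$, and defines $\rho_*:=\inf\{\rho>0 \mid \inf_{[\pi/2,\pi]}f_{\rho,\tau}>0\}$. You instead pass to the level-set problem $h(\theta)=\rho$ with $h(\theta)=-\sin 2\theta/\sin(\theta-\tau)$ and extract the whole structure from piecewise monotonicity/unimodality of $h$, driven by the identity $N'(\theta)=-3\sin 2\theta\,\sin(\theta-\tau)$; I verified this identity and the endpoint evaluations $N(\tau)=-\sin 2\tau$, $N(\pi+\tau)=\sin 2\tau$, $N(\pi/2)=-2\cos\tau$, $N(\pi)=2\sin\tau$, which together with the sign of $N'$ do pin the sign of $N$ (hence of $h'$) on each hump, so the solution counts and the characterization $\rho_*=\max_{[\pi/2,\pi]}h$ all go through (note the same constant $3$ appears in your identity and in the paper's ODE --- the two computations are cousins). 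Your route buys one thing the paper's proof leaves implicit: the branches $\theta_j(\rho)$ are inverses of strictly monotone restrictions of $h$, so their continuity and the strict monotonicity in $\rho$ asserted in the lemma are automatic, whereas from the paper's argument one would still invoke the implicit function theorem, using $\partial_\rho f_{\rho,\tau}=\sin(\theta-\tau)$ and the sign of $\partial_\theta f_{\rho,\tau}$ at each root. Conversely, the paper's route never divides by a vanishing quantity and needs no bookkeeping at the poles $\theta\in\{\tau,\pi+\tau\}$. Your cases (2) (factorization $f_{\rho,\pi/2}(\theta)=\cos\theta\,(2\sin\theta-\rho)$; the paper declares this case obvious) and (3) (reflection $\zeta=\pi-\theta$, $\tau\mapsto\pi-\tau$, with relabeled branches and reversed monotonicities) coincide with the paper's treatment. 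The only repairs needed are cosmetic: write the unbounded humps as $[0,\tau)$ and $(\pi+\tau,3\pi/2]$ rather than as closed intervals containing the poles, and record explicitly the one-line monotone-plus-endpoint argument behind the claims that $N<0$ on $(0,\tau)$ and $N>0$ on $(\pi+\tau,3\pi/2)$.
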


\begin{corollary}\label{C:app5}
Let $1 \le d \le 3$. 
Let $\theta_j$ ($j=0,1,2,3$) be the solution to \eqref{E:equation} with $\rho=\alpha_3/\alpha_2$ and $\tau=\eta$,  given in Lemma \ref{L:pzero2}.
Let $\rho_*:(0,\pi)\to \R_+$ be the function given in Lemma \ref{L:pzero2}.
If
\begin{equation}\label{E:5cond}
\alpha_1>\alpha_2 \quad \text{ and }\quad 	\alpha_3^2 < \tfrac{(\alpha_1 + \alpha_2)^2(\alpha_1 - \alpha_2)^2}{\alpha_1^2 + \alpha_2^2 - 2\alpha_1 \alpha_2 \cos 2\eta}
\end{equation}
holds then
\[
	g_{\min} = - \tfrac{\alpha_3^2(\alpha_1 - \alpha_2 \cos 2 \eta)}{\alpha_1^2-\alpha_2^2}  + 2\alpha_1 +r  .
\]
If \eqref{E:5cond} does not hold then
\[
	g_{\min} = \alpha_2  \cos 2\theta_3
+2\alpha_3 \cos (\theta_3 -  \eta ) +3\alpha_1+\alpha_2 + r.
\]
If $g_{\min}\ge 0$  then $\mathcal{A}=\emptyset$.
Otherwise, we have the following:
\begin{enumerate}
\item Let $g_j=\alpha_2  \cos 2\theta_j
+2\alpha_3 \cos (\theta_j -  \eta ) +3\alpha_1+\alpha_2 + r$ for ($j=0,1,2,3$).
For all $\omega>0$, the following scalar-type solitons exist:
\begin{itemize}
\item Suppose $\alpha_3 \le \rho_*(\eta )\alpha_2$   if $\eta\in [\tfrac\pi2,\pi)$.
If $g_0<0$ then
\[
A_{15,\omega}:= \bigcup_{\theta \in \R/2\pi \Z}\mathcal{R} ((e^{i\theta} \cos \tfrac{\theta_0}2, e^{i\theta}\sin \tfrac{\theta_0}2),\omega,-g_0)  \subset \mathcal{A}_\omega;
\]
\item Suppose $\alpha_3 \le \rho_*(\eta )\alpha_2$   if $\eta\in (0,\tfrac\pi2)\cup(\tfrac\pi2,\pi)$.
If $g_1<0$ then
\[
A_{16,\omega}:= \bigcup_{\theta \in \R/2\pi \Z}\mathcal{R} ((e^{i\theta} \cos \tfrac{\theta_1}2, e^{i\theta}\sin \tfrac{\theta_1}2),\omega,-g_1)  \subset \mathcal{A}_\omega;
\]
\item Suppose $\alpha_3 \le \rho_*(\eta )\alpha_2$   if $\eta\in (0,\tfrac\pi2]$.
If $g_2<0$ then
\[
A_{17,\omega}:= \bigcup_{\theta \in \R/2\pi \Z}\mathcal{R} ((e^{i\theta} \cos \tfrac{\theta_2}2, e^{i\theta}\sin \tfrac{\theta_2}2),\omega,-g_2)  \subset \mathcal{A}_\omega;
\]
\item If $g_3<0$ then
\[
A_{18,\omega}:= \bigcup_{\theta \in \R/2\pi \Z}\mathcal{R} ((e^{i\theta} \cos \tfrac{\theta_3}2, e^{i\theta}\sin \tfrac{\theta_3}2),\omega,-g_3)  \subset \mathcal{A}_\omega;
\]
\item 
If $\alpha_1\neq\alpha_2$,
$\alpha_3^2 \le \tfrac{(\alpha_1 + \alpha_2)^2(\alpha_1 - \alpha_2)^2}{\alpha_1^2 + \alpha_2^2 - 2\alpha_1 \alpha_2 \cos 2\eta}$, and
$- \tfrac{\alpha_3^2(\alpha_1 - \alpha_2 \cos 2 \eta)}{\alpha_1^2-\alpha_2^2}  + 2\alpha_1 +r <0$ 
are satisfied then
\[
A_{19,\omega}:= \bigcup_{\theta \in \R/2\pi \Z,\,\sigma \in \{\pm 1\} } \mathcal{R} ((e^{i\theta}w_1,e^{i\theta}w_2),\omega, \tfrac{\alpha_3^2(\alpha_1 - \alpha_2 \cos 2 \eta)}{\alpha_1^2-\alpha_2^2}  - 2\alpha_1 -r )  \subset \mathcal{A}_\omega,
\]
where $w_1=\sigma\sqrt{\tfrac{\alpha_1 + \alpha_2-\alpha_3 \cos \eta }{2(\alpha_1 + \alpha_2)} }$ and
\[
 w_2= \sqrt{\tfrac{\alpha_1 + \alpha_2}{2 (\alpha_1 + \alpha_2-\alpha_3 \cos \eta)} }\(-\tfrac{\sigma \alpha_3 \sin \eta}{\alpha_1-\alpha_2}+i \sqrt{ 1- \tfrac{\alpha_3^2(\alpha_1^2 + \alpha_2^2 - 2\alpha_1 \alpha_2 \cos 2\eta)}{(\alpha_1 + \alpha_2)^2(\alpha_1 - \alpha_2)^2} }\).
\]
\end{itemize}
\item The ground state is given as follows:
\begin{itemize}
\item If \eqref{E:5cond} holds 
then $\mathcal{G}_\omega=A_{19,\omega}$.
\item If \eqref{E:5cond} fails then $\mathcal{G}_\omega=A_{18,\omega}$.
\end{itemize}
Further, $\mathcal{G}_\omega$ is stable if $d=1$ and unstable if $d=2,3$.
\end{enumerate}
\end{corollary}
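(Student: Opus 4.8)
The plan is to derive every assertion from the abstract results above, reducing the entire corollary to an analysis of the critical set of $g|_{\partial B}$, exactly as indicated at the beginning of this subsection. Since the nonlinearity is cubic, $g$ is quartic and $p=4$, so the stability dichotomy is immediate: for $d=1$ one has $2<p=4<6=2+\tfrac4d$ and Theorem~\ref{T:stability} gives stability, while for $d=2,3$ one has $2+\tfrac4d\le4=p<2^*$ and Theorem~\ref{T:instability} gives instability. The auxiliary restriction on ${\bf n}$ in Theorem~\ref{T:instability} never activates here, since ${\bf n}=(1,1)$ and neither $(d=1,\,p\ge6)$ nor $(d=2,\,p>6)$ occurs. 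Thus it remains only to locate all critical points of $g|_{\partial B}$, compute the critical values, and identify $g_{\min}$ together with the minimizing set $T_0$; Theorems~\ref{T:main} and~\ref{T:excited} then yield the soliton families, the ground state, and the nonexistence statement $\mathcal{A}=\emptyset$ when $g_{\min}\ge0$.

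First I would exploit the gauge invariance (Assumption~\ref{A:2} with ${\bf n}=(1,1)$) to collapse $g|_{\partial B}$, a priori a function on the three-sphere $\partial B\subset\C^2$, to a function of two real variables. Parametrizing a gauge orbit by $u_1=\cos\tfrac\theta2$, $u_2=e^{i\psi}\sin\tfrac\theta2$ with polar angle $\theta\in[0,\pi]$ and relative phase $\psi$, substitution into the displayed formula for $g$ produces an explicit trigonometric function $G(\theta,\psi)$. The key structural fact is that $\partial_\psi G$ factors as
\[
	\partial_\psi G = -2\sin\theta\,\sin\psi\,\bigl[(\alpha_1-\alpha_2)\sin\theta\cos\psi+\alpha_3\sin\eta\bigr],
\]
so the critical set splits into the two poles $(1,0),(0,1)$ where $\sin\theta=0$, the \emph{real slice} $\sin\psi=0$ (that is $\psi\in\{0,\pi\}$, i.e.\ $u_1\in\R$), and a \emph{complex branch} on which $\cos\psi=-\alpha_3\sin\eta/[(\alpha_1-\alpha_2)\sin\theta]$. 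On the real slice, $\partial_\theta G=0$ reduces precisely to $\sin2\theta+\tfrac{\alpha_3}{\alpha_2}\sin(\theta-\eta)=0$, which is exactly \eqref{E:equation} with $\rho=\alpha_3/\alpha_2$ and $\tau=\eta$; its solutions $\theta_0,\dots,\theta_3$, and the ranges of $\rho$ for which each exists, are supplied verbatim by Lemma~\ref{L:pzero2}.

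Next I would treat the complex branch. Inserting $\cos\psi=-\alpha_3\sin\eta/[(\alpha_1-\alpha_2)\sin\theta]$ into $\partial_\theta G=0$ and solving gives a single interior critical configuration, whose explicit coordinates are the $w_1,w_2$ recorded in $A_{19,\omega}$ (the two choices $\sigma=\pm1$ arising from the conjugate pair $\psi\mapsto-\psi$). The admissibility constraint $|\cos\psi|\le1$, together with solvability of the reduced $\partial_\theta G=0$, is what translates into condition~\eqref{E:5cond} (its non-strict form accounting for the boundary case where the complex point merges into the real slice). Evaluating $G$ at each critical point then returns the critical values $g_0,\dots,g_3$ on the real slice and the value $-\tfrac{\alpha_3^2(\alpha_1-\alpha_2\cos2\eta)}{\alpha_1^2-\alpha_2^2}+2\alpha_1+r$ on the complex branch, exactly as displayed. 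With these at hand, Theorem~\ref{T:excited} produces each family $A_{15,\omega}$--$A_{19,\omega}$ as $\mathcal{R}({\bf w},\omega,-g({\bf w}))$ for the corresponding critical point ${\bf w}$, precisely when its critical value is negative; the side conditions such as ``$\alpha_3\le\rho_*(\eta)\alpha_2$ if $\eta\in[\tfrac\pi2,\pi)$'' are read off directly from the domains of $\theta_0,\dots,\theta_3$ in Lemma~\ref{L:pzero2}, which dictate for which $\rho=\alpha_3/\alpha_2$ the relevant solution exists at all.

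Finally, to pin down the ground state I would compare the at most five critical values. The outcome is that condition~\eqref{E:5cond} is exactly the criterion for the complex critical point to be the global minimizer: when it holds, $T_0$ is the gauge orbit(s) of that point and $\mathcal{G}_\omega=A_{19,\omega}$ by Theorem~\ref{T:main}; when it fails, the real-slice solution $\theta_3$ is the minimizer, so $T_0$ is its gauge orbit and $\mathcal{G}_\omega=A_{18,\omega}$. In either case Theorem~\ref{T:main} simultaneously yields $g_{\min}$ in the stated closed form and the nonexistence $\mathcal{A}=\emptyset$ when $g_{\min}\ge0$. The main obstacle is this last global comparison under the split \eqref{E:5cond}: one must verify, across the case analysis in $\eta$ relative to $\tfrac\pi2$ furnished by Lemma~\ref{L:pzero2}, that the candidate minimizer really is the smallest among all critical values and that the minimum is attained only on the claimed orbit, so that $T_0$, and hence $\mathcal{G}_\omega$, is exactly as stated. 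All remaining computations—the explicit $G(\theta,\psi)$, the coordinates $w_1,w_2$, and the closed-form critical values—are routine once this bookkeeping is organized.
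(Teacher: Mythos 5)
Your overall architecture matches the paper's: reduce everything to the critical-point analysis of $g|_{\partial B}$ via the two-variable reduction (your $G(\theta,\psi)$ is exactly the paper's $h(\nu,\zeta)$ with $\theta=2\nu$), recover the real-slice equation \eqref{E:equation} and invoke Lemma \ref{L:pzero2}, solve the complex branch to get $A_{19,\omega}$ with admissibility giving \eqref{E:5cond}, and feed the critical values into Theorems \ref{T:main}, \ref{T:excited}, \ref{T:stability}, \ref{T:instability}. The stability/instability bookkeeping ($p=4$, ${\bf n}=(1,1)$) is correct. However, there is a genuine gap at exactly the point you flag as ``the main obstacle'': you assert, but do not prove, that \eqref{E:5cond} is the criterion for the complex critical point to be the global minimizer, proposing only to ``compare the at most five critical values.'' That direct comparison is not routine bookkeeping and in fact is infeasible as stated, because $g_3$ involves $\theta_3$, which is defined only implicitly as a root of \eqref{E:equation} and has no usable closed form to compare against the explicit complex-branch value. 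The paper resolves this with two ingredients absent from your proposal: first, Lemma \ref{L:pmin}, which shows by a reflection argument that $\theta\mapsto\alpha_2\cos 2\theta+2\alpha_3\cos(\theta-\eta)$ attains its minimum on the whole circle uniquely at $\theta_3$, so among all real-slice candidates (in every regime of $\rho$ and $\eta$, and including the endpoints) only $\theta_3$ can compete; second, a second-derivative test in the phase variable: $\partial_\zeta^2 h$ at the complex critical point has the sign of $\alpha_1-\alpha_2$ (so for $\alpha_1<\alpha_2$ that point is never a minimum), while $\partial_\zeta^2 h(\tfrac12\theta_3,0)<0$ holds if and only if the strict inequality in \eqref{E:5cond} does, the equivalence being checked by evaluating $f_{\rho,\eta}$ at $\arcsin\bigl(\tfrac{\alpha_3\sin\eta}{\alpha_1-\alpha_2}\bigr)$. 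Since the minimum of $g|_{\partial B}$ on the compact sphere must be a critical point, these sign computations identify $g_{\min}$ and $T_0$ without ever comparing the two values directly; some substitute for this mechanism is indispensable, and your proposal supplies none.

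A secondary error: you state that the critical set ``splits into the two poles $(1,0),(0,1)$, the real slice, and a complex branch.'' The poles are \emph{not} critical points of $g|_{\partial B}$ here: since $\alpha_2,\alpha_3>0$ and $\eta\in(0,\pi)$, one has $F_2(z_1,0)=\alpha_3\sin\eta\,z_1\neq0$, so $(F_1(z_1,0),F_2(z_1,0))$ is not a multiple of $(z_1,0)$ and Lagrange's criterion fails (this is the structural difference from \eqref{E:nls3}--\eqref{E:nls4}, where the semi-trivial points are critical). The factorization of $\partial_\psi G$ vanishing at $\sin\theta=0$ reflects only the degeneracy of your parametrization there, not criticality. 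Since Theorem \ref{T:excited} is an ``if and only if,'' misclassifying the poles would predict spurious semi-trivial soliton families that the corollary (correctly) does not contain; the exclusion also needs to be checked to justify that the candidate list for $g_{\min}$ is exactly the one you use. (For the minimum itself the slip happens to be harmless, since Lemma \ref{L:pmin} minimizes over the full circle including $\theta=0,\pi$, but your proposal neither invokes that lemma nor performs the pole check.)
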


\begin{remark}
 A simple change of variable shows that Corollary \ref{C:app5} holds without the assumption $\alpha_1^2+\alpha_2^2+\alpha_3^2 =1$  by replacing $r$ with $r\sqrt{\alpha_1^2+\alpha_2^2+\alpha_3^2}$.
Under this generalization, let us fix $\alpha_1,\alpha_2$ so that $\alpha_1+\alpha_2\neq0$ and regard $\alpha_3\in (0,\infty)$ as a parameter. (We let $r$ be as in \eqref{E:nls5} and regard it as also a fixed parameter.)
Then, in the case $\eta \in (0,\pi/2)$ we have the following bifurcation diagram for sufficiently large $-r>0$:
if $\alpha_3>0$ is small then there are six scalar-type solitons.
Note that there are two branches, $\sigma=\pm1$, in $A_{19,\omega}$.
At $\alpha_3=0$, the equation becomes \eqref{E:nls3}. In the limit case, $A_{15,\omega}\cup A_{17,\omega}$ correspond to $A_{7,\omega}$,
and $A_{16,\omega}\cup A_{18,\omega}$ to $A_{8,\omega}$, and $A_{19,\omega}$ to $A_{9,\omega}$.
Let us consider (large) $\alpha_3>0$.
$A_{16,\omega}$ and $A_{17,\omega}$ become the same at $\alpha_3=\rho_*(\eta) \alpha_2$ and cease to exist
for $\alpha_3>\rho_*(\eta) \alpha_2$.
$A_{15,\omega}$ exists as long as $g_1<0$. Note that $\lim_{\alpha_3\to\infty }g_1= \infty$ holds and hence that the branch does not exist for all $\alpha_3>0$.
The branch given by $A_{18,\omega}$ exists for all $\alpha_3>0$.
The both two branches in
$A_{19,\omega}$ merge into $A_{18,\omega}$ at $\alpha_3 = (\tfrac{(\alpha_1 + \alpha_2)^2(\alpha_1 - \alpha_2)^2}{\alpha_1^2 + \alpha_2^2 - 2\alpha_1 \alpha_2 \cos 2\eta})^{1/2}$.
Before the merging, either $A_{18,\omega}$ or $A_{19,\omega}$ is the ground states.
$A_{19,\omega}$ is the ground states if and only if $\alpha_1>\alpha_2$.
After the merging, $A_{18,\omega}$ is the ground state.
One can obtain a similar diagram for $\eta \ge \pi/2$.
A pitchfork bifurcation among $A_{15,\omega}$, $A_{16,\omega}$, and $A_{17,\omega}$
is found in the case $\eta=\pi/2$.
We omit the details.
Let us observe another limit case $\eta= 0$.
Since $\theta_j = j\pi/2$ for $j=0,2$, we have
$A_{15,\omega} = A_{11,\omega}$, $A_{17,\omega} = A_{12,\omega}$, and $A_{19,\omega} = A_{14,\omega}$.
Similarly $A_{16,\omega}$ and $A_{18,\omega}$ correspond to the branches $\sigma=1$ and $\sigma=-1$ in
$A_{13,\omega}$, respectively.
\end{remark}

\subsubsection{Comments}

Let us discuss the benefit of the study of the standing-wave solutions in the present article to the study of the classification of the nonlinear systems in \cites{MSU1,MSU2,M}.

\begin{remark}
The standard forms in \eqref{E:nls1}--\eqref{E:nls5} are chosen by looking at the matrix-vector form of a system introduced in \cites{MSU2,M} (See the proof of Theorem \ref{T:reduction}).
The results in the present article would suggest a different choice of the standard forms.
We say a vector-valued function is \emph{semi-trivial} if all but one component is zero (as a function).
As seen above, the ground states for \eqref{E:nls2}--\eqref{E:nls5} are not always semi-trivial.
However, one finds that for some of them it is merely a matter of the choice of the standard form of the systems.
Namely, by applying the suitable further change of variable with a \emph{real} matrix $M \in GL_2(\R)$, we can turn them into other representatives which have the same mass and similar energy and of which ground states are semi-trivial.
Indeed, if we apply a change of variable
\[
	\begin{pmatrix} v_1 \\ v_2 \end{pmatrix}
	=
	\begin{pmatrix} \frac1{\sqrt2} & -\frac1{\sqrt2} \\ \frac1{\sqrt2} & \frac1{\sqrt2} \end{pmatrix}
	\begin{pmatrix} u_1 \\ u_2 \end{pmatrix}
\]
to \eqref{E:nls3}
then the system for $(v_1,v_2)$ is again the same \eqref{E:nls3} but the sign of $\alpha_2$ is opposite.
By Corollary \ref{C:app3}, one sees that if $-1<\alpha_1<\alpha_2$ then 
\[
	\mathcal{G}_\omega =\bigcup_{\theta \in \R/2\pi \Z,\, \sigma \in \{\pm1\}}\mathcal{R} ((2^{-1/2}e^{i\theta}, \sigma 2^{-1/2}e^{i\theta} ),\omega,\gamma)
\]
for all $\omega>0$, where $\gamma:=-3\alpha_1+2\alpha_2+r>0$.
In this case, since
\begin{align*}
	&\begin{pmatrix} \frac1{\sqrt2} & -\frac1{\sqrt2} \\ \frac1{\sqrt2} & \frac1{\sqrt2} \end{pmatrix} \mathcal{R} ((2^{-1/2}e^{i\theta},  2^{-1/2}e^{i\theta} ),\omega,\gamma) = \mathcal{R} ((0,e^{i\theta}),\omega,\gamma),\\
	&\begin{pmatrix} \frac1{\sqrt2} & -\frac1{\sqrt2} \\ \frac1{\sqrt2} & \frac1{\sqrt2} \end{pmatrix} \mathcal{R} ((2^{-1/2}e^{i\theta}, - 2^{-1/2}e^{i\theta} ),\omega,\gamma) = \mathcal{R} ((e^{i\theta},0),\omega,\gamma),
\end{align*}
the transformed system has the \emph{semi-trivial} ground states.
Further, in view of the identity
\[
		\begin{pmatrix} \cos \nu & \sin \nu \\ -\sin \nu & \cos\nu  \end{pmatrix} \mathcal{R}((e^{i\theta}\cos \nu, e^{i\theta} \sin \nu ), \omega, \gamma) = \mathcal{R} ((e^{i\theta},0),\omega,\gamma)
\]
for any  $\nu\in\R/2\pi \Z$, $\omega>0$, and $\gamma>0$, we can transform \eqref{E:nls5} so that the ground states become semi-trivial ones if \eqref{E:5cond} fails.
One option to choose the standard form would be to make the structure of the ground states as simple as possible,
in this way.
\end{remark}
\begin{remark}
As we discuss in Appendix A, there is no overlapping in the list \eqref{E:nls1}--\eqref{E:nls5}
in such a sense that neither system in the list cannot be transformed into another by a change of variables of the form
 \[
	\begin{pmatrix} v_1 \\ v_2 \end{pmatrix}
	=
M	\begin{pmatrix} u_1 \\ u_2 \end{pmatrix}, \quad M \in GL_2(\R).
\]
However, one finds that the
 change of variables of this form with \emph{unitary matrices}
produces a redundancy.
(Notice that the unitary property up to constant multiplication is necessary to keep the form of the mass and the kinetic-energy part of the Hamiltonian.)
Let us see one such example. Consider \eqref{E:nls3} with $(\alpha_1,\alpha_2)=(-1,0)$.
We first introduce the change of variable with a simple unitary matrix:
\[
	\begin{pmatrix} v_1 \\ v_2 \end{pmatrix}
	=
	\begin{pmatrix} 1 & 0 \\ 0 & i \end{pmatrix}
	\begin{pmatrix} u_1 \\ u_2 \end{pmatrix}.
\]
Then the system for $(v_1,v_2)$ takes the form
\begin{equation*}
	\left\{
	\begin{aligned}
	(	i \partial_t +\Delta) v_1 
		&=  |v_1|^2 v_1 +(2|v_2|^2 v_1+ v_2^2 \overline{v_1}) + (r-4) (|v_1|^2+|v_2|^2) v_1 ,
		\\
	(	i \partial_t +\Delta) v_2
		&= |v_2|^2 v_2  + (2|v_1|^2 v_2+ v_1^2 \overline{v_2})+ (r-4) (|v_1|^2+|v_2|^2) v_2 ,
	\end{aligned}
	\right.
\end{equation*}
which is \eqref{E:nls3} in the exceptional case $(\alpha_1,\alpha_2)=(1/2,-1/2)$.
By further introducing 
\[
	\begin{pmatrix} w_1 \\ w_2 \end{pmatrix}
	=
	\begin{pmatrix} \frac1{\sqrt2} & -\frac1{\sqrt2} \\ \frac1{\sqrt2} & \frac1{\sqrt2} \end{pmatrix}
	\begin{pmatrix} v_1 \\ v_2 \end{pmatrix},
\]
we can change the sign of $\alpha_2$, yielding
\begin{equation*}
	\left\{
	\begin{aligned}
	(	i \partial_t +\Delta) w_1 
		&=  2|w_1|^2w_1  + (r-4) (|w_1|^2+|w_2|^2) w_1 ,
		\\
	(	i \partial_t +\Delta) w_2
		&= 2|w_2|^2 w_2  +  (r-4) (|w_1|^2+|w_2|^2) w_2 .
	\end{aligned}
	\right.
\end{equation*}
This is transformed into \eqref{E:nls1} with $\alpha=\beta=1$ if $r=4$ and into \eqref{E:nls2}
with $\alpha=\beta=2/|r-4|$ and $\sigma = \sign (r-4)$ if $r\neq4$.
There might exist further reductions by a unitary matrices.
\end{remark}

\begin{remark}
Our analysis of the ground state here (or, more precisely, of the study on the position of the minimum points of the function $g|_{\partial B}$)
gives a necessary condition on the existence of the change of variable with a unitary matrix which causes
redundancy in the list \eqref{E:nls1}--\eqref{E:nls5}.
If one system is transformed into another by changing variables with a unitary matrix, then all the soliton solutions for the original system are mapped to those for the transformed system. Hence the structure of the set of the ground states for these two systems must be identical.
(Note that also that of the excited states must be identical. However, we do not have a complete characterization of $\mathcal{A}\setminus \mathcal{G}$. So, it is less useful, for now.)
One tool for the investigation of the identity of the structure is the following:
For $z=(z_1,z_2), w=(w_1,w_2) \in \C^2\setminus{(0,0)}$. we define an \emph{angle} of these two vectors by
\[
	\angle(z, w) = \arccos \frac{|(z,w)_{\C^2}|}{\sqrt{(z,z)_{\C^2}}\sqrt{(w,w)_{\C^2}} },\quad
	(z,w)_{\C^2} := z_1 \overline{w_1} + z_2 \overline{w_2}.
\]
One easily sees that this is an invariant quantity under the change with  a unitary matrix, i.e.,
$\angle (Uz, Uw)=\angle (z, w)$ holds for any unitary matrix $U \in GL_2(\C)$ and $z,w \in \C^2$.
Now, suppose that $\phi_j \in \mathcal{R}((e^{i\theta_j}\cos \nu_j,e^{i\theta_j+i\zeta_j}\sin \nu_j),\omega_j ,a_j)$ $(j=1,2)$. Then, one finds that
\[
	\cos( \angle(\phi_1(x), \phi_2(x))) = 
	 \tfrac12(1+\cos 2\nu_1 \cos 2\nu_2 + \sin 2\nu_1 \sin 2\nu_2 \cos (\zeta_1-\zeta_2))
\]
for all $x\in \R^d$.
Hence, if both $\phi_1$ and $\phi_2$ are \emph{semi-trivial} ground states, that is, if $\nu_1,\nu_2 \in \{0,\pi/2\}$
then one finds that $\cos \angle(\phi_1(x), \phi_2(x))=\delta_{\nu_1,\nu_2}$.
This fact is useful to consider the structure of the ground state.
For instance, if a picked system admits a pair of ground states such that the angle between these two is not $0$ nor $1$
then the system never be transformed (by a change with a unitary matrix) into a system that admits only semi-trivial ground states,
such as \eqref{E:nls1}, \eqref{E:nls2} with $\beta\le0$ and $(\alpha,\beta)\neq (0,0)$, or \eqref{E:nls4} with $\max(\alpha_1,\alpha_2)\le \alpha_3 -\alpha_2$.
We remark that such a pair of ground states can be picked, for instance, in the case \eqref{E:nls2} with $\beta>0$, \eqref{E:nls4} with $\max(\alpha_1,\alpha_2)>\alpha_3-\alpha_2$, and \eqref{E:nls5} with the assumption \eqref{E:5cond}.
Notice that the argument requires $ g_{\min} <0$ for the existence of the ground states.
However, the assumption can be removed by looking at the set $T_0$  of minimum points instead of the ground states.
\end{remark}

\subsection{Comparison with a result by Colin and Ohta}

Our theorems reproduce several previous results on scalar-type solitons. 
Consider the following elliptic system:
\begin{equation}\label{E:CO}
	\left\{
	\begin{aligned}
	&-\Delta u_1 + \omega u_1 = \kappa |u_1|u_1 + \gamma \overline{u_1} u_2, \\
	&-\Delta u_2 + \omega u_2 =  |u_2|u_2 + \tfrac{\gamma}2  u_1^2,
	\end{aligned}
	\right.
\end{equation}
where $d\le5$ and $\kappa \in \R$ and $\gamma>0$ are parameters.
This is of the form \eqref{E:gE} and Assumption \ref{A:1} is satisfied with  $N=2$ and
\[
	g(z_1,z_2) = - \kappa |z_1|^3 - |z_2|^3 - \tfrac{3\gamma}2 \Re (\overline{z_1}^2 z_2),
\]
which satisfies \eqref{E:g-homo} with $p=3$.
The restriction $d\le 5$ comes from \eqref{E:prange} with $p=3$.
The ground state of this system is studied in \cite{CO} (see also \cite{HOT}*{Section 7} for a study of a similar model).

We remark that $g$ satisfies \eqref{E:g-gauge} with $(n_1,n_2)=(1,2)$. Hence, in view of Theorem \ref{T:soliton},
solutions to \eqref{E:CO}
give soliton solutions to the NLS system
\begin{equation}\label{E:CONLS}
	\left\{
	\begin{aligned}
	&i\partial_t u_1 +\Delta u_1 = -\kappa |u_1|u_1 - \gamma \overline{u_1} u_2, \\
	&i\partial_t u_2 +2\Delta u_2 = -2 |u_2|u_2 - \gamma  u_1^2.
	\end{aligned}
	\right.
\end{equation}

Let 
\[
	\kappa_c(\gamma) = \tfrac12 (\gamma+2) \sqrt{\gamma-1}
\]
for $\gamma\in (0,1]$.  Let us introduce 
\begin{align*}
	J_1:={}& \{  (\gamma,\kappa)\in \R_+ \times \R  \ |\  \gamma>1 \}
	\cup \{  (\gamma,\kappa)\in \R_+ \times \R  \ |\  \gamma \le 1, \, \kappa \ge \sqrt{ 2\gamma (1-\gamma)} \} \setminus \{(1,0)\}, \\
	J_2 : ={}& \{  (\gamma,\kappa)\in \R_+ \times \R  \ |\  \gamma < 1, \, \kappa \ge \sqrt{ 2\gamma (1-\gamma)} \}, \\
	J_3 : ={}& \{  (\gamma,\kappa)\in \R_+ \times \R  \ |\  \kappa > 2^{-\frac12} \gamma^{\frac32} \}
\end{align*}
and define 
\[
	\nu_1 := \arctan \( \tfrac{\gamma}{ \kappa + \sqrt{\kappa^2 + 2\gamma (\gamma-1)}} \) \in (0,\tfrac{\pi}2)
\]
for $(\gamma,\kappa) \in J_1$,
\[
	\nu_2 := \arctan \( \tfrac{\gamma}{ \kappa - \sqrt{\kappa^2 + 2\gamma (\gamma-1)}} \) \in (0,\tfrac{\pi}2)
\]
for $(\gamma,\kappa) \in J_2$, and
\[
	\nu_3 := \arctan \( \tfrac{\gamma}{  \sqrt{\kappa^2 + 2\gamma (\gamma+1)} -\kappa } \) \in (0,\tfrac{\pi}2)
\]
for $(\gamma,\kappa) \in J_3$.

\begin{theorem}\label{T:CO}
Let $1 \le d \le 5$.

(1) For all $\omega>0$, the system \eqref{E:CO} admits following scalar-type solutions:
\begin{itemize}
\item For any $\gamma>0$ and $\kappa\in\R$, 
\[
	A_{0,\omega}:= \bigcup_{\theta \in \R/2\pi \Z} \mathcal{R}( (0, e^{2i\theta}) , \omega, 1) \subset \mathcal{A}_\omega.
\]
\item For any $(\gamma,\tau) \in J_m$, 
\[
	A_{m,\omega}:= \bigcup_{\theta \in \R/2\pi \Z} \mathcal{R}( (e^{i\theta}\cos \nu_m, e^{2i\theta}\sin \nu_m ) , \omega, - g(\cos \nu_m,\sin \nu_m)) \subset \mathcal{A}_\omega,
\]
where $m=1,2,3$.
\end{itemize}

(2) The ground state is given as follows:
\begin{itemize}
\item If $\gamma>1$ or if $\gamma \in (0,1]$ and $\kappa > \kappa_c(\gamma)$ then 
$\mathcal{G}_\omega = A_{1,\omega}$.
\item If $\gamma \in (0,1)$ and $\kappa = \kappa_c(\gamma)$ then 
$
	\mathcal{G}_\omega = 
	A_{0,\omega} \cup A_{1,\omega}.
$
\item If $\gamma \in (0,1]$ and $\kappa < \kappa_c(\gamma)$ or if $(\gamma,\kappa)=(1,0)$ then 
$
	\mathcal{G}_\omega = A_{0,\omega}.
$
\end{itemize}
Further $\mathcal{G}_\omega$ is stable if $d\le3$ and unstable if $d=4,5$.
\end{theorem}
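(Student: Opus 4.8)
The plan is to recognize that \eqref{E:CO} falls entirely under the abstract framework already developed. Assumption \ref{A:1} holds with $N=2$ and $p=3$: one checks that $|z_j|^3$ is twice continuously differentiable on $\R^2$ (its second derivatives extend continuously to the origin), and $g$ is manifestly homogeneous of degree $3$ with $3\in(2,2^*)$ for $d\le 5$. The coupling term $\Re(\overline{z_1}^2 z_2)$ makes $g$ invariant under the gauge \eqref{E:g-gauge} with ${\bf n}=(1,2)$, so Assumption \ref{A:2} holds. Consequently, by Theorems \ref{T:main} and \ref{T:excited}, the whole statement reduces to locating the critical points and critical values of $g|_{\partial B}$. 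The stability dichotomy will then follow at once from Theorems \ref{T:stability} and \ref{T:instability}: since $p=3$, one has $p<2+\tfrac4d$ exactly for $d\le 3$ and $2+\tfrac4d\le p<2^*=\tfrac{2d}{d-2}$ exactly for $d\in\{4,5\}$ (as $2^*=4,\tfrac{10}3>3$ there), and the auxiliary hypothesis of Theorem \ref{T:instability} on ${\bf n}$ is vacuous because it only concerns $d\le 2$.

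First I would use the gauge invariance to cut down the variables. Writing a point of $\partial B$ as $(e^{i\phi_1}\cos\nu,\,e^{i\phi_2}\sin\nu)$ with $\nu\in[0,\pi/2]$, one computes
\[
	g=-\kappa\cos^3\nu-\sin^3\nu-\tfrac{3\gamma}2\cos^2\nu\,\sin\nu\,\cos\psi,\qquad \psi:=\phi_2-2\phi_1,
\]
so $g|_{\partial B}$ depends only on $(\nu,\psi)$, reflecting the invariance under $\phi_1\mapsto\phi_1+\theta$, $\phi_2\mapsto\phi_2+2\theta$. Stationarity in $\psi$ forces $\sin\psi=0$, i.e. $\psi\in\{0,\pi\}$ (the coefficient $\cos^2\nu\sin\nu$ being positive on $(0,\pi/2)$), and since $\gamma>0$ the choice $\psi=0$ always lowers $g$. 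Then I would treat the interior angle: for each fixed $\psi$, setting $\tfrac{d}{d\nu}g=0$ and dividing by $\cos\nu$ gives, with $t=\tan\nu$, the quadratics $(\gamma-1)t^2+\kappa t-\tfrac\gamma2=0$ for $\psi=0$ and $(\gamma+1)t^2-\kappa t-\tfrac\gamma2=0$ for $\psi=\pi$. After rationalizing, the relevant positive roots are exactly $\tan\nu_1,\tan\nu_2$ (from $\psi=0$) and $\tan\nu_3$ (from $\psi=\pi$). Moreover $\nu=\pi/2$ is always critical (the coupling is quadratic in $z_1$, so the semi-trivial point $z_1=0$ is stationary), whereas $\nu=0$ is never critical (the coupling is linear in $z_2$). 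This accounts for $A_{0,\omega}$ together with $A_{1,\omega},A_{2,\omega},A_{3,\omega}$.

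Next I would pin down the existence ranges. The sets $J_1,J_2,J_3$ are precisely the loci on which the corresponding root is real and positive and, for $A_{3,\omega}$, on which the associated critical value is negative so that the amplitude $a=-g>0$ required by Theorem \ref{T:excited} holds: tracking the sign of $\gamma-1$ in the denominator $2(\gamma-1)$, the reality condition $\kappa^2+2\gamma(\gamma-1)\ge0$, and the positivity of $\kappa\pm\sqrt{\kappa^2+2\gamma(\gamma-1)}$ reproduces $J_1$ and $J_2$, while $\sqrt{\kappa^2+2\gamma(\gamma+1)}>|\kappa|$ makes $\nu_3$ always real and the sign condition on its critical value produces $J_3$ (one can verify the latter in the limit $\gamma\to0^+$, where it collapses to $\kappa>0$, matching $\kappa>2^{-1/2}\gamma^{3/2}$). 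With the critical points and values in hand, part (1) is immediate from Theorem \ref{T:excited}. For part (2) I would evaluate the critical values $g(\cos\nu_m,\sin\nu_m)$ and compare them with the semi-trivial value $g(0,1)=-1$; the global minimizer determines $\mathcal{G}_\omega$ through Theorem \ref{T:main}, and the threshold curve $\kappa=\kappa_c(\gamma)$ is exactly the locus where the value at $A_{1,\omega}$ equals $-1$, i.e. where $A_{0,\omega}$ and $A_{1,\omega}$ exchange the role of ground state.

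I expect the main obstacle to lie in the bookkeeping of these last two steps. Delineating $J_1,J_2,J_3$ requires a careful case split on the sign of $\gamma-1$ and on the square-root expressions, including the degenerate locus $(\gamma,\kappa)=(1,0)$, where the $\psi=0$ quadratic degenerates to a linear equation and the corresponding branch disappears. Establishing the ground-state dichotomy then demands an explicit comparison of critical values to derive $\kappa_c(\gamma)$, together with the observation that no $\psi=\pi$ critical point can undercut the $\psi=0$ ones (a consequence of $\cos\psi=1$ minimizing $g$), so that the minimum is always attained among $A_{0,\omega},A_{1,\omega},A_{2,\omega}$. These are elementary but delicate one-variable computations; once they are carried out, Theorems \ref{T:main}–\ref{T:instability} assemble the full statement.
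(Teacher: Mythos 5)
Your proposal follows the same route as the paper's proof: verify Assumptions \ref{A:1} and \ref{A:2} with $p=3$ and ${\bf n}=(1,2)$, use the gauge invariance to reduce $g|_{\partial B}$ to a function of $(\nu,\psi)$, force $\psi\in\{0,\pi\}$ at interior critical points, solve the two quadratics in $\tan\nu$, treat the two semi-trivial points separately (with $(z_1,0)$ non-critical and $(0,z_2)$ critical of value $-1$), and feed the resulting list of critical points and values into Theorems \ref{T:main}, \ref{T:excited}, \ref{T:stability} and \ref{T:instability}. The only structural difference is cosmetic and in your favor: the paper runs its slice analysis on $\nu\in(-\pi/2,0)\cup(0,\pi/2)$ and must then dispose of a spurious fourth root $\nu_4<0$ by a value computation, whereas your restriction to $\nu\in[0,\pi/2]$ with the relative phase $\psi=\phi_2-2\phi_1$ never produces it. Your deduction of the stability dichotomy ($p=3$ is mass-subcritical exactly for $d\le3$, mass-critical or supercritical but energy-subcritical for $d=4,5$, with the ${\bf n}$-hypothesis of Theorem \ref{T:instability} vacuous there) also matches the paper.

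There is, however, one genuine gap in part (1), for $m=1$. You characterize $J_1$ and $J_2$ solely as the loci where the relevant root of $(\gamma-1)t^2+\kappa t-\tfrac{\gamma}{2}=0$ is real and positive, and you reserve the sign condition on the critical value for $A_{3,\omega}$ alone. But Theorem \ref{T:excited} requires $a=-g({\bf w})>0$ at \emph{every} critical point used (indeed $\mathcal{R}({\bf w},\omega,a)$ is undefined for $a\le0$), so the inclusion $A_{m,\omega}\subset\mathcal{A}_\omega$ is not established until you prove $g(\cos\nu_m,\sin\nu_m)<0$ throughout $J_m$ for $m=1,2$ as well. On $J_2$, and on the part of $J_1$ where $\kappa>0$, this is termwise obvious, since all three summands of $g(\cos\nu,\sin\nu)$ are then nonpositive with the middle one strictly negative; but $J_1$ contains the whole half-plane $\{\gamma>1\}$, including $\kappa\le0$, where the summand $-\kappa\cos^3\nu_1$ is nonnegative and negativity of the value is not automatic. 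The paper closes exactly this point by a monotonicity argument on the slice $\nu\mapsto h(\nu,0):=g(\cos\nu,\sin\nu)$: for $\gamma>1$ the quadratic has a unique positive root, the slice decreases up to $\nu_1$ and increases afterwards, hence $h(\nu_1,0)<h(\pi/2,0)=-1<0$. Note that this same inequality $h(\nu_1,0)<-1$ for all $\kappa$ when $\gamma>1$ is also what your part (2) needs to conclude $\mathcal{G}_\omega=A_{1,\omega}$ on all of $\{\gamma>1\}$ (there is no threshold curve there), so the slice-monotonicity step is not optional bookkeeping; once it is added, your argument is complete and coincides with the paper's.
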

\begin{remark}
The curve $(0,1) \ni \gamma \mapsto \kappa_c (\gamma) $ is characterized by the identity $g(\cos \nu_1,\sin \nu_1)=-1$
on $J_2$.
If $\gamma \in (0,1)$ and
$\kappa=\sqrt{2\gamma(1-\gamma)}$ then $A_{1,\omega}=A_{2,\omega}$.
\end{remark}
\begin{remark}
In \cite{CO}, $d\le 3$ is considered. Further, the excited state $A_{3,\omega}$ is not studied.
The second assertion is a reproduction of \cite{CO}*{Theorem 6} (when $d\le 3$).
\end{remark}

The rest of the paper is organized as follows:
In Section \ref{S:main}, we shall prove Theorems \ref{T:main} and \ref{T:sGN}.
Section \ref{S:excited} is devoted to the proof of Theorem  \ref{T:excited}.
Then, we turn to the study of the stability/instability of the ground states. Theorem \ref{T:stability} 
is established in Section \ref{S:stability}. Theorem \ref{T:instability} is shown in Section \ref{S:instability}
after proving a preliminary result in Section \ref{S:Pstruct}.
We then move to the study of specific systems.
In Section \ref{S:Cs}, we prove Corollaries \ref{C:app1}, \ref{C:app2}, \ref{C:app3}, \ref{C:app4}, and \ref{C:app5}. 
One treats Theorem \ref{T:CO} in Section \ref{S:CO}.
Finally, we discuss the derivation of the systems \eqref{E:nls1}--\eqref{E:nls5} in Appendix \ref{S:A}.

\section{Proof of Theorem \ref{T:main}}\label{S:main}
Let us prove Theorem \ref{T:main}. The basic strategy is quite standard. 
We minimize the action functional on the Nehari manifold (See \cites{WilBook,O}, for instance).
We give a detailed proof for completeness.
A new ingredient is a variant of rearrangement (See the proof of Lemma \ref{L:domega} below).
It is inspired by a treatment of homogeneous functions in \cites{MM,MMU}.
We remark that a similar treatment is used in \cites{Co,Co2}. 

We let $s_p:=\frac{d}2-\frac{d}p$ and
$s_c:=\frac{d}2-\frac{2}{p-2}$. Note that the condition \eqref{E:prange} reads as $s_p \in (0,\min(1,\frac{d}2))$ or $s_c \in (-\infty,\min(1,\frac{d}2))$.
Let us introduce few more functionals. Let
\[
	H({\bf u}) =  \frac12 \sum_{j=1}^N \norm{ \nabla u_j }_{L^2}^2 , \quad
	G({\bf u}) = \frac1p \int_{\mathbb{R}^d}  g({\bf u}) dx.
\]
Note that the energy functional is written as $E=H+G$.
\begin{definition}
For $\omega>0$, let us introduce  a functional
\[
	K_{\omega}({\bf u}) := 2H({\bf u}) + 2\omega M({\bf u}) + pG({\bf u}).
\]
and a set
\[
	\mathcal{K}_\omega :=\{ {\bf u} \in H^1(\R^d)^N \ |\ {\bf u} \neq0 ,\,K_\omega ({\bf u})=0\}.
\]
\end{definition}
Note that the functional $K_\omega$ has an alternative expression
\[
	K_\omega ({\bf u}) = \left.\frac{d}{d a} S_\omega (a {\bf u}) \right\rvert_{a=1}.
\]
Hence, if ${\bf u}$ is a critical point of $S_\omega$ then $K_\omega({\bf u})=0$.
This shows 
\begin{equation}\label{E:AK}
\mathcal{A}_\omega \subset \mathcal{K}_\omega.
\end{equation}

If $g_{\min}\ge0$ then $G\ge0$ holds and so $K_\omega ({\bf u})>0$ for ${\bf u} \neq0$.  
This implies $\mathcal{K}_\omega =\mathcal{A}_\omega= \emptyset$ for all $\omega>0$. 
Namely, \eqref{E:gE} does no have any nontrivial
$H^1$ solution.

Let us assume $g_{\min}<0$ in what follows.
Let us observe the following:
\begin{lemma}\label{L:1}
$\mathcal{K}_\omega$ is not empty.
\end{lemma}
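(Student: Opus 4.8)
The plan is to exhibit an explicit nonzero element of $\mathcal{K}_\omega$ by means of a scalar-type ansatz, exploiting the homogeneity \eqref{E:g-homo} of $g$. Since $g$ is continuous and $\partial B$ is compact, the infimum defining $g_{\min}$ in \eqref{D:gmin} is attained, so $T_0$ is nonempty; fix $\mathbf{w}=(w_1,\dots,w_N)\in T_0$, so that $g(\mathbf{w})=g_{\min}<0$. Let $\psi\in H^1(\R^d)$ be any nonzero \emph{nonnegative} scalar function and set $\mathbf{u}=\mathbf{w}\psi=(w_1\psi,\dots,w_N\psi)$.

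First I would evaluate the three basic functionals on this ansatz. Since $\sum_{j}|w_j|^2=1$, one has $H(\mathbf{w}\psi)=\tfrac12\Lebn{\nabla\psi}{2}^2$ and $M(\mathbf{w}\psi)=\tfrac12\Lebn{\psi}{2}^2$. For the nonlinear term, the essential point is that $\psi(x)\ge0$, so the real homogeneity \eqref{E:g-homo} applies pointwise: $g(\mathbf{w}\psi(x))=\psi(x)^{p}g(\mathbf{w})=g_{\min}\,\psi(x)^{p}$, whence $pG(\mathbf{w}\psi)=g_{\min}\Lebn{\psi}{p}^{p}$. Collecting these,
\begin{equation*}
	K_\omega(\mathbf{w}\psi)=\Lebn{\nabla\psi}{2}^2+\omega\Lebn{\psi}{2}^2+g_{\min}\Lebn{\psi}{p}^{p}.
\end{equation*}

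The final step is a one-parameter scaling. Replacing $\psi$ by $\lambda\psi$ with $\lambda>0$ yields
\begin{equation*}
	K_\omega(\lambda\mathbf{w}\psi)=\lambda^{2}\big(\Lebn{\nabla\psi}{2}^2+\omega\Lebn{\psi}{2}^2\big)+\lambda^{p}g_{\min}\Lebn{\psi}{p}^{p}.
\end{equation*}
Because $p>2$ by \eqref{E:prange} and $g_{\min}<0$, the right-hand side is positive for $\lambda$ small and negative for $\lambda$ large, so by the intermediate value theorem it vanishes at some $\lambda_0>0$; in fact one may solve explicitly
\begin{equation*}
	\lambda_0=\left(\frac{\Lebn{\nabla\psi}{2}^2+\omega\Lebn{\psi}{2}^2}{(-g_{\min})\Lebn{\psi}{p}^{p}}\right)^{1/(p-2)}.
\end{equation*}
Then $\lambda_0\mathbf{w}\psi\neq0$ lies in $\mathcal{K}_\omega$, which proves the lemma.

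I do not expect a genuine obstacle here; the only point requiring care is the sign condition $\psi\ge0$, needed so that the real homogeneity of $g$ may be invoked pointwise, since \eqref{E:g-homo} controls only nonnegative scalings. As a remark, one can dispense with the abstract $\psi$ by taking $\psi=Q_{\omega,-g_{\min}}$: testing \eqref{E:Qaeq} with $a=-g_{\min}$ against $Q_{\omega,-g_{\min}}$ gives $\Lebn{\nabla\psi}{2}^2+\omega\Lebn{\psi}{2}^2=(-g_{\min})\Lebn{\psi}{p}^{p}$, which is precisely $K_\omega(\mathbf{w}\psi)=0$. This choice already anticipates the shape of the ground state asserted in Theorem \ref{T:main}.
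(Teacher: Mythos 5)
Your proof is correct and follows essentially the same route as the paper: the paper also takes a scalar-type ansatz ${\bf w}Q$ with $g({\bf w})<0$ (using the positive solution $Q$ in place of your general nonnegative $\psi$), invokes homogeneity to get $pG({\bf w}Q)=g({\bf w})\|Q\|_{L^p}^p<0$, and finds the zero of $c\mapsto K_\omega(c\,{\bf w}Q)=c^2(\|\nabla Q\|_{L^2}^2+\omega\|Q\|_{L^2}^2)+c^p\,pG({\bf w}Q)$ using $p>2$. Your extra observations (that ${\bf w}$ may be taken in $T_0$, and that $\psi=Q_{\omega,-g_{\min}}$ makes $K_\omega$ vanish without scaling) are fine but not needed.
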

\begin{proof}
Since $g_{\min}<0$, there exists ${\bf w} \in \partial B$ such that $g({\bf w})<0$.
Hence, one has
\[
	G({\bf w}Q) = \frac1p g({\bf w}) \int_{\R^d} Q(x)^p dx <0.
\]
Note that
${K}_\omega(c{\bf w} Q) = c^2 (\|\nabla Q\|_{L^2}^2 + \omega \|Q\|_{L^2}^2 ) + c^p pG({\bf w}Q)$. 
As $p>2$, there exists $c>0$ such that ${K}_\omega(c{\bf w} Q)=0$.
\end{proof}
We consider the following minimization problem:
\begin{equation}\label{D:domega}
	\mathfrak{I}(\omega) := \inf \{ S_\omega({\bf u} ) \ |\ 
{\bf u} \in \mathcal{K}_\omega  \}.
\end{equation}

\begin{proposition}\label{L:2}
For $\omega>0$, $\mathfrak{I}(\omega) \in (0,\infty)$.
\end{proposition}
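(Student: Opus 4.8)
The plan is to establish the two bounds $\mathfrak{I}(\omega)<\infty$ and $\mathfrak{I}(\omega)>0$ separately. The upper bound is immediate: by Lemma \ref{L:1} the set $\mathcal{K}_\omega$ is nonempty, and for any ${\bf u}\in (H^1(\R^d))^N$ the value $S_\omega({\bf u})$ is finite, because the nonlinear term $G$ is controlled via the energy-subcritical Sobolev embedding. Fixing any element of $\mathcal{K}_\omega$ therefore gives $\mathfrak{I}(\omega)\le S_\omega({\bf u})<\infty$. The entire content of the proposition lies in the lower bound.

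The first step toward positivity is to rewrite the action on the constraint set. On $\mathcal{K}_\omega$ the identity $K_\omega({\bf u})=2H+2\omega M+pG=0$ gives $G=-\tfrac2p(H+\omega M)$, and substituting this into $S_\omega=H+\omega M+G$ yields
\[
	S_\omega({\bf u}) = \tfrac{p-2}{p}\bigl(H({\bf u}) + \omega M({\bf u})\bigr).
\]
Since $p>2$ the prefactor is positive, so it remains to bound $H+\omega M$ away from zero uniformly on $\mathcal{K}_\omega$. This is the crucial point: each individual value $S_\omega({\bf u})$ is already positive, but one must rule out a minimizing sequence along which $H+\omega M\to0$.

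The second step is a (non-sharp) Gagliardo--Nirenberg bound; I deliberately use the crude version rather than Theorem \ref{T:sGN} to avoid any circularity, since the latter relies on the ground-state characterization being proved here. As $g$ is continuous and homogeneous of degree $p$, one has $|g({\bf z})|\le \|g\|_{L^\infty(\partial B)}|{\bf z}|^p$, hence $-G({\bf u})\le C\int_{\R^d}|{\bf u}|^p\,dx$ with $|{\bf u}|=(\sum_j|u_j|^2)^{1/2}$. The diamagnetic-type inequality $\bigl|\nabla|{\bf u}|\bigr|\le(\sum_j|\nabla u_j|^2)^{1/2}$ reduces matters to the scalar Gagliardo--Nirenberg inequality applied to $|{\bf u}|$, producing
\[
	-G({\bf u}) \le C\, H({\bf u})^{a} M({\bf u})^{b}, \qquad a=\tfrac{d(p-2)}4,\quad b=\tfrac p2-\tfrac{d(p-2)}4,
\]
where $a+b=p/2$, $a>0$, and $b>0$ (positivity of $b$ is precisely the energy-subcriticality $p<2^*$).

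The final step combines the two facts. Writing $X:=H+\omega M$ and using $H\le X$ and $M\le X/\omega$, the right-hand side above is bounded by $C\omega^{-b}X^{a+b}=C\omega^{-b}X^{p/2}$, while the constraint gives $-G=\tfrac2p X$. Hence $\tfrac2p X\le C\omega^{-b}X^{p/2}$, and dividing by $X>0$ gives $X^{p/2-1}\ge \tfrac{2\omega^b}{pC}$. Because $p/2-1>0$, this is a genuine positive lower bound $X\ge c_0(\omega,p,d)>0$ independent of ${\bf u}$, whence $\mathfrak{I}(\omega)\ge\tfrac{p-2}{p}c_0>0$. I expect no serious obstacle; the only points requiring care are the reduction from the vector unknown to the scalar Gagliardo--Nirenberg inequality through the diamagnetic inequality, and checking $p/2-1>0$ so that the concluding inequality yields a lower bound bounded away from zero rather than a vacuous one.
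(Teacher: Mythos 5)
Your proof is correct and takes essentially the same route as the paper: both reduce to the identity $S_\omega = \tfrac{p-2}{p}(H+\omega M)$ on $\mathcal{K}_\omega$, bound $-G$ by a crude (non-sharp) Gagliardo--Nirenberg estimate obtained from the homogeneity of $g$ applied to $\rho=(\sum_j|u_j|^2)^{1/2}$, and close with the self-improving inequality $H+\omega M\le C(H+\omega M)^{p/2}$, which forces $H+\omega M$ away from zero since $p>2$. Your explicit tracking of the $\omega$-dependence and your observation that one must avoid the sharp inequality of Theorem \ref{T:sGN} to prevent circularity are sound, but they do not change the substance of the argument.
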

\begin{proof}
The finiteness follows from Lemma \ref{L:1}.
Pick ${\bf u}=(u_1,\dots,u_N) \in H^1(\R^d)^N$.
Let us show that $\mathfrak{I}(\omega)$ is positive.
Let $\rho(x) = (\sum_{j=1}^N|u_j|^2)^{1/2}\ge0$.
If $\rho(x)>0$ then
\[
	-\tfrac1p g({\bf u}(x)) = - \tfrac1p \rho(x)^p g(\tfrac{{\bf u}(x)}{\rho(x)})
	\le - \tfrac{g_{\min}}p \rho(x)^p.
\]
Hence, using the fact that $p \in (2,2^*)$, we obtain
\[
	-G({\bf u}) \le  \tfrac{|g_{\min}|}p \|u_j\|_{L^p_x \ell^2_j}^p \lesssim_N M({\bf u})^{\frac{p}2(1-s_p)} H({\bf u})^{\frac{p}{2}s_p} \lesssim (H({\bf u}) + \omega M({\bf u}))^{\frac{p}2}.
\]
Therefore, there exists a constant $C>0$ such that
if ${\bf u}\in K_\omega$ then one has
\[
	H({\bf u}) + \omega M({\bf u}) = -\tfrac{p}2G({\bf u}) \le C (H({\bf u}) + \omega M({\bf u}))^{\frac{p}2}.
\]
Thus, $S_\omega({\bf u})=(1-\frac2p)(H({\bf u}) + \omega M({\bf u})) \gtrsim 1$ for any ${\bf u}\in K_\omega$.
We obtain the conclusion.
\end{proof}

There is another characterization of $\mathfrak{I}(\omega).$
\begin{lemma}\label{L:3}
It holds that
$\mathfrak{I}(\omega) 
	= (1-\tfrac2p)\inf \{ H ({\bf u}) +   \omega M ({\bf u}) \ |\ {\bf u} \neq0,\,
	K_\omega ({\bf u}) \le 0\}$.
\end{lemma}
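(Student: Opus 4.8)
The plan is to first rewrite the action $S_\omega$ on the constraint set $\mathcal{K}_\omega$ in terms of the kinetic-plus-mass functional $J({\bf u}) := H({\bf u}) + \omega M({\bf u})$, and then to show that enlarging the constraint from $\{K_\omega = 0\}$ to $\{K_\omega \le 0\}$ does not change the infimum of $J$, by means of a scaling argument in the spirit of the Nehari manifold method.

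First I would observe that if ${\bf u} \in \mathcal{K}_\omega$ then $K_\omega({\bf u}) = 2H({\bf u}) + 2\omega M({\bf u}) + pG({\bf u}) = 0$, so that $G({\bf u}) = -\tfrac{2}{p}(H({\bf u}) + \omega M({\bf u}))$. Substituting this into $S_\omega = H + \omega M + G$ gives $S_\omega({\bf u}) = (1-\tfrac2p)(H({\bf u}) + \omega M({\bf u})) = (1-\tfrac2p)J({\bf u})$. Consequently,
\[
\mathfrak{I}(\omega) = (1-\tfrac2p)\inf\{J({\bf u}) \ |\ {\bf u} \in \mathcal{K}_\omega\} =: (1-\tfrac2p)\,m_{=}.
\]
It therefore remains to prove that $m_{=} = m_{\le}$, where $m_{\le} := \inf\{J({\bf u}) \ |\ {\bf u} \neq 0,\, K_\omega({\bf u}) \le 0\}$. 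Since $\{K_\omega = 0\} \subset \{K_\omega \le 0\}$, the inequality $m_{\le} \le m_{=}$ is immediate, and only the reverse inequality requires work.

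For the reverse inequality, the key is the rescaling ${\bf u} \mapsto c{\bf u}$. Using the homogeneity, $H(c{\bf u}) = c^2 H({\bf u})$, $M(c{\bf u}) = c^2 M({\bf u})$, and $G(c{\bf u}) = c^p G({\bf u})$ for $c > 0$, so that
\[
K_\omega(c{\bf u}) = c^2\left(2J({\bf u}) + p\,c^{p-2}G({\bf u})\right).
\]
Given ${\bf u} \neq 0$ with $K_\omega({\bf u}) \le 0$, one has $J({\bf u}) \ge \omega M({\bf u}) > 0$, whence $2J({\bf u}) + pG({\bf u}) \le 0$ forces $G({\bf u}) < 0$. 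The choice
\[
c = \left(\frac{2J({\bf u})}{p\,|G({\bf u})|}\right)^{1/(p-2)}
\]
makes the bracket vanish, i.e. $K_\omega(c{\bf u}) = 0$, so $c{\bf u} \in \mathcal{K}_\omega$. Because $K_\omega({\bf u}) \le 0$ reads $2J({\bf u}) \le p|G({\bf u})|$ and $p > 2$, we obtain $c \in (0,1]$, and therefore $J(c{\bf u}) = c^2 J({\bf u}) \le J({\bf u})$. This yields $m_{=} \le J(c{\bf u}) \le J({\bf u})$ for every admissible ${\bf u}$, hence $m_{=} \le m_{\le}$, completing the proof.

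The argument is elementary, and the only point requiring care is verifying that the scaling parameter satisfies $c \le 1$: this is precisely where the sign condition $G({\bf u}) < 0$ together with the relaxed constraint $K_\omega({\bf u}) \le 0$ (rather than $K_\omega({\bf u}) = 0$) are used jointly, and it is exactly what guarantees that projecting onto $\mathcal{K}_\omega$ does not increase $J$.
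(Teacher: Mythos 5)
Your proof is correct and takes essentially the same approach as the paper: both rewrite $S_\omega$ as $(1-\tfrac2p)(H+\omega M)$ on the constraint set $K_\omega=0$, and both handle the nontrivial inequality by rescaling a function with $K_\omega({\bf u})\le 0$ by a factor $c\in(0,1]$ so that it lands on $\{K_\omega=0\}$ while not increasing $H+\omega M$. The only difference is cosmetic: you compute the scaling parameter $c$ explicitly from the homogeneity of $G$, whereas the paper obtains $c_0\in(0,1]$ by inspecting the curve $c\mapsto K_\omega(c{\bf u})$.
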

\begin{proof}
Denote the right hand side $\tilde{\mathfrak{I}}(\omega)$.
Pick a nonzero ${\bf u} \in H^1(\R^d)^N$ so that
$K_\omega ({\bf u})=0$.
Then, it holds that
\[
S_\omega ({\bf u}) = (1-\tfrac2p)(H ({\bf u}) +   \omega M ({\bf u})) \ge \tilde{\mathfrak{I}}(\omega).
\]
Taking infimum with respect to such ${\bf u}$, we obtain $\mathfrak{I}(\omega) \ge \tilde{\mathfrak{I}}(\omega)$.
On the other hand, fix a nonzero ${\bf v} \in H^1(\R^d)^N$ so that
$K_\omega ({\bf v})\le 0$. Then, looking at the curve $\R_+ \ni c \mapsto K_\omega (c{\bf v}) \in \R$, one sees that there exists $c_0 \in (0,1]$ such that 
$K_\omega (c_0 {\bf v})=0$. It holds that
\[
	H ({\bf v}) +   \omega M ({\bf v}) \ge H (c_0 {\bf v}) +   \omega M (c_0 {\bf v})
	= \tfrac{p}{p-2}S_\omega (c_0 {\bf v}) \ge 
	\tfrac{p}{p-2} \mathfrak{I}(\omega).
\]
Taking infimum with respect to such ${\bf v}$, we obtain $\tilde{\mathfrak{I}}(\omega) \ge \mathfrak{I}(\omega)$.
Thus, the equality holds.
\end{proof}

\begin{lemma}\label{L:GK}
If ${\bf u} \in H^1(\R^d)^N $ satisfies $K_\omega ({\bf u})<0$ then
$-\frac{p-2}2 G({\bf u})>\mathfrak{I}(\omega).$
\end{lemma}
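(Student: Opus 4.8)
The plan is to exploit the scaling behaviour of $K_\omega$ along the ray $c\mapsto c{\bf u}$, combined with the characterization of $\mathfrak{I}(\omega)$ as a minimum of $-\tfrac{p-2}2 G$ over the Nehari manifold $\mathcal{K}_\omega$. First I would record the fundamental identity valid on $\mathcal{K}_\omega$: if $K_\omega({\bf v})=0$ then $2H({\bf v})+2\omega M({\bf v})=-pG({\bf v})$, hence $H({\bf v})+\omega M({\bf v})=-\tfrac{p}2 G({\bf v})$, and therefore $S_\omega({\bf v})=H({\bf v})+\omega M({\bf v})+G({\bf v})=-\tfrac{p-2}2 G({\bf v})$. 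In particular $\mathfrak{I}(\omega)=\inf_{{\bf v}\in\mathcal{K}_\omega}\bigl(-\tfrac{p-2}2 G({\bf v})\bigr)$.

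Next I would analyze the one-parameter family $c\mapsto K_\omega(c{\bf u})$. Using homogeneity, $K_\omega(c{\bf u})=c^2\bigl(2H({\bf u})+2\omega M({\bf u})\bigr)+c^p\, pG({\bf u})$. Since $2H({\bf u})+2\omega M({\bf u})>0$ for ${\bf u}\neq0$, the hypothesis $K_\omega({\bf u})<0$ forces $G({\bf u})<0$. Because the quadratic term dominates as $c\to 0^+$ (so $K_\omega(c{\bf u})>0$ there) while $K_\omega({\bf u})<0$ at $c=1$, the continuous map $c\mapsto K_\omega(c{\bf u})$ vanishes at some $c_0\in(0,1)$. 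The key point is that $c_0<1$ \emph{strictly}, which is guaranteed by the strict inequality $K_\omega(1\cdot{\bf u})<0$ (rather than $=0$).

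Then I would assemble the two observations. Since $c_0{\bf u}\in\mathcal{K}_\omega$ and $g$ is homogeneous of degree $p$, the identity above gives $S_\omega(c_0{\bf u})=-\tfrac{p-2}2 G(c_0{\bf u})=-\tfrac{p-2}2 c_0^p G({\bf u})$, whence $\mathfrak{I}(\omega)\le -\tfrac{p-2}2 c_0^p G({\bf u})$. Because $0<c_0<1$ and $-G({\bf u})>0$, one has $c_0^p\bigl(-G({\bf u})\bigr)<-G({\bf u})$, and therefore
\[
\mathfrak{I}(\omega)\le -\tfrac{p-2}2 c_0^p G({\bf u})<-\tfrac{p-2}2 G({\bf u}),
\]
which is the desired strict inequality.

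I do not anticipate a genuine obstacle here; the argument is the standard Nehari-manifold fibering already used in the proof of Lemma \ref{L:3}. The only point demanding care is preserving strictness of the final inequality, which rests precisely on $c_0<1$ (from the strict hypothesis $K_\omega({\bf u})<0$) together with $G({\bf u})<0$. Everything else is routine homogeneity bookkeeping.
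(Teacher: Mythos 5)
Your proof is correct and follows essentially the same route as the paper: both arguments fiber along the ray $c\mapsto K_\omega(c{\bf u})$ to produce $c_0\in(0,1)$ with $K_\omega(c_0{\bf u})=0$, then use homogeneity of $G$ and the variational characterization of $\mathfrak{I}(\omega)$ to get the strict inequality. The only cosmetic difference is that the paper invokes the alternative characterization $\mathfrak{I}(\omega)=(1-\tfrac2p)\inf\{H+\omega M \,:\, K_\omega\le 0\}$ from Lemma \ref{L:3}, whereas you appeal directly to the definition of $\mathfrak{I}(\omega)$ as $\inf_{\mathcal{K}_\omega}S_\omega$ together with the identity $S_\omega=-\tfrac{p-2}2G$ on $\mathcal{K}_\omega$; these are equivalent.
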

\begin{proof}
As in the previous lemma, consider the curve $\R_+ \ni c \mapsto K_\omega (c{\bf u}) \in \R$.
If $K_\omega ({\bf u})<0$ then
there exists $c_0 \in (0,1)$ such that 
$K_\omega (c_0{\bf u})=0$.
Since ${\bf u}\neq0$,
\[
	-G({\bf u}) > -G(c_0{\bf u}) = \tfrac2p(H(c_0{\bf u}) + \omega M(c_0{\bf u}))
	\ge \tfrac2p (1-\tfrac2p)^{-1} \mathfrak{I}(\omega)
\]
follows from the preceding lemma.
\end{proof}

Let  $\mathcal{M}_\omega$ be the set of minimizers to $\mathfrak{I}(\omega)$:
\begin{equation}\label{D:Momega}
	\mathcal{M}_\omega := \{ \Phi \in \mathcal{K}_\omega \ |\ 
S_\omega (\Phi) = \mathfrak{I}(\omega) \}.
\end{equation}
We next show that a minimizer exist, i.e., $\mathcal{M}_\omega\neq\emptyset$.
There is a direct approach to the existence based on Theorem \ref{T:excited} and the rearrangement argument used in the proof of Lemma \ref{L:domega}, below. However, we prove this by a standard compactness argument since we use it in the proof of the stability result (Theorem \ref{T:stability}).

\begin{lemma}\label{L:compactness}
$\mathcal{M}_\omega$ is not empty. Namely, there exists a minimizer to $\mathfrak{I}(\omega)$.
\end{lemma}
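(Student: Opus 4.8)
The plan is to take a minimizing sequence for $\mathfrak{I}(\omega)$ and extract a convergent subsequence after a symmetrization step that simultaneously decreases the energy and prevents mass from escaping to spatial infinity. By Lemma \ref{L:3} it is equivalent to minimize $H(\cdot)+\omega M(\cdot)$ over $\{{\bf u}\neq 0 : K_\omega({\bf u})\le 0\}$, so I fix a sequence $\{{\bf u}_n\}$ in this set with $H({\bf u}_n)+\omega M({\bf u}_n)\to\frac{p}{p-2}\mathfrak{I}(\omega)$; such a sequence is bounded in $(H^1(\R^d))^N$.

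The key step is a scalarization-and-rearrangement (the new ingredient). Writing $\rho_n=(\sum_{j}|u_{j,n}|^2)^{1/2}$, the diamagnetic inequality gives $\norm{\nabla\rho_n}{L^2}^2\le 2H({\bf u}_n)$, the mass $M$ is unchanged, and the homogeneity bound already used in Proposition \ref{L:2} yields $-G({\bf u}_n)\le -\frac{g_{\min}}{p}\int_{\R^d}\rho_n^p\,dx$. Let $\rho_n^*$ be the Schwarz symmetric-decreasing rearrangement of $\rho_n$ and fix ${\bf w}_0\in T_0$; set $\Psi_n={\bf w}_0\rho_n^*$. Since rearrangement preserves the $L^2$ and $L^p$ norms and does not increase the Dirichlet energy (the P\'olya--Szeg\H{o} inequality, which also places $\rho_n^*\in H^1$), and since $g({\bf w}_0)=g_{\min}$, one checks $M(\Psi_n)=M({\bf u}_n)$, $H(\Psi_n)\le H({\bf u}_n)$, and $G(\Psi_n)=\frac{g_{\min}}{p}\int_{\R^d}\rho_n^p\,dx\le G({\bf u}_n)$. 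Hence $K_\omega(\Psi_n)\le K_\omega({\bf u}_n)\le 0$ and $H(\Psi_n)+\omega M(\Psi_n)\le H({\bf u}_n)+\omega M({\bf u}_n)$, so $\{\Psi_n\}$ is again a minimizing sequence, now consisting of radially symmetric scalar-type functions.

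It remains to pass to the limit. The radial profiles $\rho_n^*$ are bounded in $H^1(\R^d)$, so by the compact radial embedding $H^1_{\mathrm{rad}}\hookrightarrow L^p$ for $d\ge 2$ (and for $d=1$ by the uniform decay $\rho_n^*(x)\le(\norm{\rho_n^*}{L^2}^2/(2|x|))^{1/2}$ of symmetric-decreasing functions) a subsequence converges strongly in $(L^p(\R^d))^N$ and weakly in $(H^1(\R^d))^N$ to some $\Psi_\infty={\bf w}_0\rho_\infty$. Because $-\tfrac p2 G(\Psi_n)=H(\Psi_n)+\omega M(\Psi_n)\to\tfrac{p}{p-2}\mathfrak{I}(\omega)>0$ and $G$ is continuous on $L^p$, we get $G(\Psi_\infty)<0$, whence $\Psi_\infty\neq 0$. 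Weak lower semicontinuity of $H+\omega M$ together with the $L^p$-convergence of $G$ gives $K_\omega(\Psi_\infty)\le 0$ and $H(\Psi_\infty)+\omega M(\Psi_\infty)\le\tfrac{p}{p-2}\mathfrak{I}(\omega)$, while the reverse inequality follows from Lemma \ref{L:3}, so equality holds. Finally, a rescaling $c_0\Psi_\infty$ with $c_0\in(0,1]$ as in Lemma \ref{L:3} forces $K_\omega(\Psi_\infty)=0$, since $c_0<1$ would strictly lower $H+\omega M$ below $\tfrac{p}{p-2}\mathfrak{I}(\omega)$; thus $\Psi_\infty\in\mathcal{K}_\omega$ with $S_\omega(\Psi_\infty)=(1-\tfrac2p)(H(\Psi_\infty)+\omega M(\Psi_\infty))=\mathfrak{I}(\omega)$, i.e. $\Psi_\infty\in\mathcal{M}_\omega$.

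The main obstacle is the loss of compactness under translations on $\R^d$; the rearrangement step is precisely what removes it, reducing the problem to radially symmetric functions where the Strauss-type compact embedding applies, so that no concentration-compactness dichotomy is required. The remaining care is the endpoint treatment of $d=1$ and the verification of the monotonicity properties of $K_\omega(c\,\Psi_\infty)$ in $c$, both of which are routine.
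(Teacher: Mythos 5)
Your proof is correct, but it follows a genuinely different route from the paper. The paper proves Lemma \ref{L:compactness} by concentration--compactness: it applies a profile decomposition with errors in $(L^p(\R^d))^N$ to the minimizing sequence, establishes the decoupling of $M$, $H$ and the additivity of $G$ over profiles, rules out $J_0=0$ (since $\mathfrak{I}(\omega)>0$) and $J_0\ge 2$ (via Lemma \ref{L:3} and a sign argument on $\sum_j K_\omega(\Phi_j)$), and concludes that the single surviving profile is a minimizer. You instead restore compactness by hand: the scalarization ${\bf u}_n \mapsto {\bf w}_0\rho_n$ (which is exactly the ``variant of rearrangement'' the paper deploys later, in Step 1 of Lemma \ref{L:domega}, to identify the \emph{shape} of minimizers) followed by Schwarz symmetrization, producing a symmetric-decreasing scalar-type minimizing sequence to which the Strauss-type compact embedding applies, with the $d=1$ case correctly salvaged by the pointwise decay of symmetric-decreasing functions. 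Your final scaling argument forcing $K_\omega(\Psi_\infty)=0$ is sound. What your approach buys is elementarity -- no profile decomposition machinery -- and it incidentally anticipates the structural result that minimizers are scalar-type and radial. What the paper's approach buys is strictly more: it shows that \emph{every} minimizing sequence is precompact modulo translations, and this stronger statement is reused verbatim in the proof of Theorem \ref{T:stability} (``mimicking the proof of Lemma \ref{L:compactness}, \dots\ $c_n{\bf u}_n(t_n)=\Phi_\infty(\cdot-y_n)+o(1)$''). Your symmetrization proof establishes existence of a minimizer but says nothing about convergence of the original, non-symmetric sequence, so if it replaced the paper's proof, the stability argument would need to be redone separately.

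Two small points of hygiene. First, you write $-\tfrac p2 G(\Psi_n)=H(\Psi_n)+\omega M(\Psi_n)$, but you only know $K_\omega(\Psi_n)\le 0$, i.e.\ $-\tfrac p2 G(\Psi_n)\ge H(\Psi_n)+\omega M(\Psi_n)$; fortunately the inequality points the right way, since together with $H(\Psi_n)+\omega M(\Psi_n)\ge \tfrac{p}{p-2}\mathfrak{I}(\omega)$ (Lemma \ref{L:3}, using $\Psi_n\neq 0$ because $M(\Psi_n)=M({\bf u}_n)>0$) and the $L^p$-continuity of $G$ it still yields $-G(\Psi_\infty)\ge \tfrac{2}{p-2}\mathfrak{I}(\omega)>0$, hence $\Psi_\infty\neq 0$. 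Second, you should record explicitly that $H(\Psi_n)+\omega M(\Psi_n)\to\tfrac{p}{p-2}\mathfrak{I}(\omega)$ by squeezing between the Lemma \ref{L:3} lower bound and the upper bound inherited from $\{{\bf u}_n\}$; with that, the rest of your limit passage is exactly right.
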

\begin{proof}
One can choose a minimizing sequence $\{{\bf u}_n\}_n =\{(u_{1,n}, u_{2,n},\dots,u_{N,n} )\}_n \subset (H^1(\R^d))^N $
such that
$K_\omega ({\bf u}_{n}) =0$ and
$
S_\omega ({\bf u}_{n}) \in [\mathfrak{I}(\omega),\mathfrak{I}(\omega) + \tfrac1n].
$
Notice that $K_\omega ({\bf u}_{n}) =0$ gives us
\[
	H ({\bf u}_{n}) + \omega M ({\bf u}_{n}) = \tfrac{p}{p-2} S_\omega ({\bf u}_{n})
	\le \tfrac{p}{p-2} (\mathfrak{I}(\omega)+\tfrac1n) \le \tfrac{p}{p-2} (\mathfrak{I}(\omega)+1).
\]
Hence, $\{{\bf u}_n\}_n$ is a bounded sequence in $(H^1 (\R^d) )^N$.

By extracting a subsequence if necessary, we suppose that the limits
$M_*:=\lim_{n\to\infty} M({\bf u}_{n})$,  $H_*:=\lim_{n\to\infty} H ({\bf u}_{n})$, and $G_*:=\lim_{n\to\infty} G({\bf u}_{n})$
exist. Then, 
\[
	0= \lim_{n\to\infty} K_\omega ({\bf u}_{n}) = 2H_* + 2 \omega M_*+ pG_*
\]
and
\[
	\mathfrak{I}(\omega) = H_* +  \omega M_*+ G_* = (1-\tfrac2p)(H_* +  \omega M_*)
	=(1-\tfrac{p}2)G_*>0
\]
hold.

We apply the profile decomposition to $\{{\bf u}_{n}\}_n$ with errors in $(L^p (\R^d))^N$.
Then, up to a subsequence, there exist $J_0 \in \mathbb{N}_0 \cup \{\infty\}$
and, for $1 \le j \le J_0$,
$\Phi_j=(\phi_{1,j}, \phi_{2,j}, \dots, \phi_{N,j}) \in H^1 (\R^d)^N$, $y_n^j \in \R^d$, and ${\bf r}_n^J=(r_{1,n}^J,r_{2,n}^J,\dots,r_{N,n}^J) \in H^1(\R^d)^N$ such that,
for each $J\le J_0$,
\[
	u_{k,n} = \sum_{j=1}^J 
	\phi_{k,j}(\cdot - {y_n^j}) + r_{k,n}^J \quad (k=1,2,\dots,N)
\]
for $n\ge1$. Further, for any $1 \le j_1 < j_2 \le J_0$,
\[
	\lim_{n\to\infty} |y_n^{j_1} - y_n^{j_2}| = \infty.
\]
One has
\[
	 (r_{1,n}^J , r_{2,n}^J , \dots, r_{N,n}^J )(\cdot + {y_n^j}) \rightharpoonup (0,0,\dots,0) \IN H^1(\R^d)^N
\]
as $n\to\infty$ for any $j \le J$ and
\[
	\lim_{J\to J_0} \varlimsup_{n\to\infty} \sum_{k=1}^N \|r_{k,n}^J\|_{L^p(\R^d)} =0.
\]
We have the decoupling inequalities
\[
	M_* \ge \sum_{j=1}^{J_0} M(\Phi_{j})
\quad
\text{and}
\quad
	H_* \ge \sum_{j=1}^{J_0} H(\Phi_{j}).
\]
Further,
\[
	\varlimsup_{n\to\infty} M({\bf r}_n^J) \le M_*, \quad \varlimsup_{n\to\infty} H({\bf r}_n^J) \le H_*
\]
for all $J\ge1$.
Let us claim
\begin{equation}\label{E:compactnesspf1}
	G_* = \sum_{j=1}^{J_0} G(\Phi_{j}).
\end{equation}
Indeed, for each fixed $J \le J_0$ finite, one sees from \eqref{E:fj-homo} that 
\[
	\left|G({\bf u}_{n}) - G\(\sum_{j=1}^J  \Phi_{j}(\cdot - {y_n^j})\)\right| = \left|-\int_0^1 \partial_\theta G({\bf u}_n- \theta {\bf r}_n^J) d\theta \right| \lesssim (\|{\bf u}_n\|_{L^p}+\|{\bf r}_n^J\|_{L^p})^{p-1} \|{\bf r}_n^J\|_{L^p}.
\]
Notice that the implicit constant is independent of $J$.
Further, by the mutual orthogonality of $\{y_n^j\}_n$, one finds
\[
	\lim_{n\to\infty} G\(\sum_{j=1}^J  \Phi_{j}(\cdot - {y_n^j})\) = \sum_{j=1}^{J} G(\Phi_{j}).
\]
This is justified, for instance, by approximating each $\phi_{k,j}$ by functions with a compact support in the $(L^p(\R^d))^N$-topology.
Hence, combining these two estimates and using the Gagliardo-Nirenberg inequality, one obtains
\[
	\varlimsup_{n\to\infty}
	\left|G({\bf u}_{n}) - \sum_{j=1}^{J} G(\Phi_{j})\right|
	\lesssim (M_* + H_*)^{\frac{p-1}2} \varlimsup_{n\to\infty} \|{\bf r}_n^J\|_{L^p}
	\to 0
\]
as $J\to J_0$.
This shows the claim.

If $J_0=0$ then $G_*=0$ follows from \eqref{E:compactnesspf1}. Hence, $\mathfrak{I}(\omega)=0$. This contradicts with $\mathfrak{I}(\omega)>0$.

If $J_0\ge2 $ then $0<H(\Phi_{j}) +\omega M(\Phi_{j}) < H_* + \omega M_* = \frac{p}{p-2}\mathfrak{I}(\omega) $ for all $j \in [1,J_0]$.
Hence, by means of Lemma \ref{L:3}, we have
$K_\omega (\Phi_j) >0$ for all $j \in [1,J_0]$.
Then,
\[
	0 = 2H_* + 2\omega M_* + pG_* \ge 2\sum_{j=1}^{J_0} H(\Phi_{j}) +2 \omega \sum_{j=1}^{J_0} M(\Phi_{j}) + p \sum_{j=1}^{J_0} G(\Phi_{j}) =\sum_{j=1}^{J_0} K_\omega (\Phi_j) >0,
\]
which is a contradiction.

Thus, we have $J_0=1$. Then,
in one hand, we have  $H(\Phi_{1}) + \omega M(\Phi_{1}) \le H_* + \omega M_*  $.
On the other hand, since
$0 = 2H_* + 2\omega M_* + pG_* \ge K_\omega(\Phi_{1})$,
we see from Lemma \ref{L:3} that
$H_* + \omega M_*= \frac{p}{p-2} \mathfrak{I}(\omega)  \le H(\Phi_{1}) + \omega M(\Phi_{1})$.
Thus, $H(\Phi_{1}) +\omega M(\Phi_{1}) = H_* + \omega M_*$.
This also shows $K_\omega(\Phi_{1}) = 2H_* + 2\omega M_* + pG_*  =0$
and $S_\omega (\Phi_1) = H_* + \omega M_* + G_*  = \mathfrak{I}(\omega)$.
Thus, $\Phi_{1}$ is a minimizer to $\mathfrak{I}(\omega)$. 
\end{proof}

\begin{lemma}\label{L:MequalG}
$\mathcal{M}_\omega = \mathcal{G}_\omega$. In particular, $\mathcal{M}_\omega \subset (C^2(\R^d) \cap (\cap_{2\le q < \infty} W^{3,q}(\R^d)))^N$.
\end{lemma}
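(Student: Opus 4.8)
The plan is to run the standard Nehari-manifold identification, splitting the desired equality into the two inclusions $\mathcal{M}_\omega \subset \mathcal{G}_\omega$ and $\mathcal{G}_\omega \subset \mathcal{M}_\omega$. The crux is a Lagrange-multiplier argument showing that every constrained minimizer of $S_\omega$ on $\mathcal{K}_\omega$ is in fact a free critical point of $S_\omega$, hence an element of $\mathcal{A}_\omega$.

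First I would show $\mathcal{M}_\omega \subset \mathcal{A}_\omega$. Take $\Phi \in \mathcal{M}_\omega$, which is nonempty by Lemma \ref{L:compactness}. Since $\Phi$ minimizes $S_\omega$ subject to the constraint $K_\omega = 0$, and since the pairing $\langle K_\omega'(\Phi), \Phi\rangle$ will turn out to be nonzero (so $K_\omega'(\Phi) \neq 0$ and the constraint is a genuine $C^1$ manifold near $\Phi$), the Lagrange multiplier rule yields $S_\omega'(\Phi) = \mu\, K_\omega'(\Phi)$ for some $\mu \in \R$. To extract $\mu = 0$ I would pair both sides with $\Phi$. Using the homogeneities, namely that $H$ and $M$ are quadratic while $G$ is homogeneous of degree $p$, Euler's identity gives $\langle H'(\Phi),\Phi\rangle = 2H(\Phi)$, $\langle M'(\Phi),\Phi\rangle = 2M(\Phi)$, and $\langle G'(\Phi),\Phi\rangle = pG(\Phi)$. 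Consequently $\langle S_\omega'(\Phi), \Phi\rangle = K_\omega(\Phi) = 0$, whereas $\langle K_\omega'(\Phi), \Phi\rangle = 4H(\Phi) + 4\omega M(\Phi) + p^2 G(\Phi)$. On $\mathcal{K}_\omega$ one has $pG(\Phi) = -2(H(\Phi) + \omega M(\Phi))$, so this last quantity reduces to $-2(p-2)(H(\Phi) + \omega M(\Phi))$, which is strictly negative because $p > 2$ and $H(\Phi) + \omega M(\Phi) = \tfrac{p}{p-2}\mathfrak{I}(\omega) > 0$ by Proposition \ref{L:2}. Hence $0 = \mu\cdot\big(-2(p-2)(H(\Phi)+\omega M(\Phi))\big)$ forces $\mu = 0$, so $S_\omega'(\Phi) = 0$ and $\Phi \in \mathcal{A}_\omega$.

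Next I would combine $\mathcal{M}_\omega \subset \mathcal{A}_\omega$ with the inclusion $\mathcal{A}_\omega \subset \mathcal{K}_\omega$ from \eqref{E:AK} to identify the two infima. Every $\Psi \in \mathcal{A}_\omega$ lies in $\mathcal{K}_\omega$, so $S_\omega(\Psi) \geq \mathfrak{I}(\omega)$; and any $\Phi \in \mathcal{M}_\omega \subset \mathcal{A}_\omega$ attains $S_\omega(\Phi) = \mathfrak{I}(\omega)$. Therefore $\inf_{\Psi \in \mathcal{A}_\omega} S_\omega(\Psi) = \mathfrak{I}(\omega)$, which shows $\mathcal{M}_\omega \subset \mathcal{G}_\omega$. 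Conversely, any $\Phi \in \mathcal{G}_\omega \subset \mathcal{A}_\omega \subset \mathcal{K}_\omega$ satisfies $S_\omega(\Phi) = \inf_{\Psi \in \mathcal{A}_\omega}S_\omega(\Psi) = \mathfrak{I}(\omega)$, so $\Phi$ is a minimizer over $\mathcal{K}_\omega$, i.e. $\Phi \in \mathcal{M}_\omega$. This gives $\mathcal{M}_\omega = \mathcal{G}_\omega$.

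Finally, for the regularity assertion, since $\mathcal{M}_\omega = \mathcal{G}_\omega \subset \mathcal{A}_\omega$, each element solves $-\Delta u_j + \omega u_j = -F_j({\bf u})$, where $F_j$ is homogeneous of degree $p-1$ with $p < 2^*$, so that $|F_j({\bf u})| \lesssim |{\bf u}|^{p-1}$. I would then bootstrap: from $u_j \in H^1(\R^d)$ and Sobolev embedding one gets integrability of ${\bf u}$, whence the right-hand side lies in a suitable $L^q$ space, and $L^q$-elliptic regularity for the invertible operator $-\Delta + \omega$ places $u_j$ in the corresponding $W^{2,q}$; iterating through Sobolev embeddings upgrades integrability until $u_j \in W^{2,q}(\R^d)$ for every finite $q$, hence $u_j \in C^{1,\gamma}$, and one further application produces $W^{3,q}(\R^d)$ for all $2\le q<\infty$ and $C^2(\R^d)$ by embedding. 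I expect the only mildly delicate point to be the Lagrange-multiplier step above, namely verifying $K_\omega'(\Phi) \neq 0$ and deducing $\mu = 0$; the regularity bootstrap is routine.
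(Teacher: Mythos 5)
Your proof is correct and follows essentially the same route as the paper: a Lagrange-multiplier argument on the constraint $K_\omega=0$, eliminating the multiplier via the homogeneity identity $\langle K_\omega'(\Phi),\Phi\rangle = 2(2-p)\left(H(\Phi)+\omega M(\Phi)\right)\neq 0$, then identifying $\inf_{\Psi\in\mathcal{A}_\omega}S_\omega(\Psi)=\mathfrak{I}(\omega)$ to obtain both inclusions, with a standard elliptic bootstrap for the regularity claim (which the paper simply cites from \cite{CazBook}). Incidentally, your sign is the correct one: this pairing is strictly negative for $p>2$, whereas the paper's proof writes it as positive — a harmless slip, since only its nonvanishing is used.
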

\begin{proof}
Suppose that $\Phi \in \mathcal{M}_\omega$.
Since $\Phi \in \mathcal{M}_\omega$ minimizes $S_\omega$ under the constraint $K_\omega =0$, one sees from
Lagrange's multiplier theorem that there exists $\mu \in \R$ such that
$S_\omega'(\Phi) + \mu K'_\omega (\Phi) =0$. 
Then,
\[
	0=K_\omega (\Phi) =\left.\frac{d}{d a} S_\omega (a \Phi) \right\rvert_{a=1}
	= (S'_\omega (\Phi),\Phi)_{(L^2(\R^d))^N} = - \mu 
	(K'_\omega (\Phi),\Phi)_{(L^2(\R^d))^N}.
\]
Since 
\begin{align*}
	(K'_\omega (\Phi),\Phi)_{L^2(\R^d)^N} = {}&
	\left.\frac{d}{d a} K_\omega (a {\bf u}) \right\rvert_{a=1}\\
	={}& 4H(\Phi) + 4\omega M(\Phi) + p^2G(\Phi)\\
	={}&2(2-p)(H(\Phi) + \omega M(\Phi))>0,
\end{align*}
we see that $\mu=0$. Hence, $S'(\Phi)=0$. This implies $\Phi \in \mathcal{A}_\omega$.
Further, recalling \eqref{E:AK}, one obtains $S_\omega (\Phi) = \mathfrak{I}(\omega) \le S_\omega (\Psi)$
for all $\Psi \in \mathcal{A}_\omega$.
This implies that $\Psi \in \mathcal{G}_\omega$.
Thus $\mathcal{M}_\omega \subset \mathcal{G}_\omega$.

We remark that the above argument shows the identity
\[
	\inf_{\Psi \in \mathcal{A}_\omega} S_\omega (\Psi) = \mathfrak{I}(\omega).
\]
This immediately shows the other relation $\mathcal{G}_\omega \subset \mathcal{M}_\omega$.
Indeed, any $\Phi \in \mathcal{G}_\omega$ satisfies
\[
	S_\omega (\Phi) =\inf_{\Psi \in \mathcal{A}_\omega} S_\omega (\Psi) = \mathfrak{I}(\omega)
\]
and $\Phi \in \mathcal{A}_\omega \subset \mathcal{K}_\omega$.

The regularity property $\mathcal{A}_\omega \subset (C^2(\R^d) \cap (\cap_{2\le q < \infty} W^{3,q}(\R^d)))^N$
follows by a standard argument (See \cite{CazBook}*{Theorem 8.1.1}, for instance).
\end{proof}

The following lemma completes the proof of Theorem \ref{T:main}.
%
%
\begin{lemma}\label{L:domega}
$\mathcal{M}_\omega =  \bigcup_{{\bf w} \in T_0} \mathcal{R}({\bf w},\omega,-g_{\min})$.
Further, 
$\mathfrak{I}(\omega) = \tfrac1{2(1-s_c)} { \|Q\|_{L^2}^2 }{(-g_{\min})^{s_c-\frac{d}2} }  \omega^{1-s_c} $.
\end{lemma}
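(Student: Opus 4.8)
The plan is to reduce the vector minimization defining $\mathfrak{I}(\omega)$ to the classical scalar ground-state problem by a rearrangement that replaces ${\bf u}$ with its modulus $\rho := |{\bf u}| = (\sum_{j=1}^N |u_j|^2)^{1/2}$, and then to read off both the value and the minimizers from the scalar theory. First I would record the pointwise rearrangement estimates. Writing $u_j = \rho w_j$ on $\{\rho>0\}$ with $(w_1,\dots,w_N)\in\partial B$, a direct computation using $\sum_j |w_j|^2 = 1$ (so that the cross term $\Re\sum_j \overline{w_j}\nabla w_j = \tfrac12\nabla(\sum_j|w_j|^2)$ vanishes) yields the identity
\[
\sum_{j=1}^N |\nabla u_j|^2 = |\nabla\rho|^2 + \rho^2 \sum_{j=1}^N |\nabla w_j|^2 \ge |\nabla\rho|^2 ,
\]
hence $H(\rho)\le H({\bf u})$, while $M(\rho)=M({\bf u})$. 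Using homogeneity of $g$ pointwise, $g({\bf u}(x)) = \rho(x)^p g({\bf w}(x)) \ge g_{\min}\rho(x)^p$, so $pG({\bf u}) \ge g_{\min}\|\rho\|_{L^p}^p$. Setting $\tilde K_\omega(v) := \|\nabla v\|_{L^2}^2 + \omega\|v\|_{L^2}^2 + g_{\min}\|v\|_{L^p}^p$ and $\tilde G(v):=\tfrac1p g_{\min}\|v\|_{L^p}^p$ for scalar $v$, these combine to $\tilde K_\omega(\rho)\le K_\omega({\bf u})$ and $H(\rho)+\omega M(\rho)\le H({\bf u})+\omega M({\bf u})$.

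Second, I would invoke the characterization in Lemma \ref{L:3}, namely $\mathfrak{I}(\omega) = (1-\tfrac2p)\inf\{H({\bf u})+\omega M({\bf u}) : {\bf u}\neq0,\ K_\omega({\bf u})\le0\}$. The estimates above show that the vector infimum is bounded below by the scalar infimum $\inf\{H(v)+\omega M(v): v\neq0,\ \tilde K_\omega(v)\le0\}$; conversely, for any fixed ${\bf w}_0\in T_0$ the map $v\mapsto {\bf w}_0 v$ converts a scalar competitor into a vector one with equality throughout (since then $g({\bf w}_0)=g_{\min}$), so the two infima coincide. The scalar problem is exactly the single-equation ground-state problem for $-\Delta v + \omega v = (-g_{\min})v^{p-1}$, whose minimizers are the translates of the positive ground state $Q_{\omega,-g_{\min}}$. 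Evaluating the action on ${\bf w}_0 Q_{\omega,-g_{\min}}$ via $S_\omega=(1-\tfrac2p)(H+\omega M)$ on $\mathcal{K}_\omega$, together with the Nehari and Pohozaev identities for $Q$ (which give $\|\nabla Q\|_{L^2}^2+\|Q\|_{L^2}^2 = \tfrac{p}{(p-2)(1-s_c)}\|Q\|_{L^2}^2$) and the scaling $Q_{\omega,a}=(\omega/a)^{1/(p-2)}Q(\sqrt\omega\,\cdot)$, produces the stated value $\mathfrak{I}(\omega) = \tfrac1{2(1-s_c)}\|Q\|_{L^2}^2(-g_{\min})^{s_c-\frac d2}\omega^{1-s_c}$. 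Since ${\bf w}_0 Q_{\omega,-g_{\min}}\in\mathcal{K}_\omega$ (its scalar profile satisfies the Nehari identity $\tilde K_\omega=0$) and realizes $\mathfrak{I}(\omega)$, every translate lies in $\mathcal{M}_\omega$, giving the inclusion $\supseteq$.

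Finally, for the reverse inclusion I would run the equality case. Let ${\bf u}\in\mathcal{M}_\omega$; by Lemma \ref{L:MequalG} it is smooth and $K_\omega({\bf u})=0$. Then $\rho=|{\bf u}|$ is a scalar competitor with $H(\rho)+\omega M(\rho)\le H({\bf u})+\omega M({\bf u})=\tfrac{p}{p-2}\mathfrak{I}(\omega)$, which equals the scalar infimum; hence $\rho$ is a scalar minimizer and so $\rho = Q_{\omega,-g_{\min}}(\cdot-y)>0$ everywhere. Because $M(\rho)=M({\bf u})$ this forces $H(\rho)=H({\bf u})$, so the gradient identity forces $\nabla w_j\equiv0$, i.e. ${\bf w}\equiv{\bf w}_0$ constant on $\R^d$; comparing $K_\omega({\bf u})=0=\tilde K_\omega(\rho)$ then forces $\int\rho^p(g({\bf w}_0)-g_{\min})\,dx=0$, i.e. ${\bf w}_0\in T_0$. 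Thus ${\bf u}={\bf w}_0 Q_{\omega,-g_{\min}}(\cdot-y)\in\mathcal{R}({\bf w}_0,\omega,-g_{\min})$, completing $\subseteq$. I expect the main obstacle to be the equality analysis: one must justify the splitting $u_j=\rho w_j$ and the gradient identity rigorously near $\{\rho=0\}$ (here the smoothness and strict positivity of the scalar ground state $\rho$ are exactly what make this clean), and one must appeal to uniqueness of the positive scalar ground state to pin down $\rho$.
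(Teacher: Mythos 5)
Your proposal is correct, but it is organized along a genuinely different route than the paper's proof. Both arguments rest on the same rearrangement facts --- $M(\rho)=M({\bf u})$, $H(\rho)\le H({\bf u})$ for $\rho=(\sum_j|u_j|^2)^{1/2}$ (Lieb--Loss), and $g({\bf u})\ge g_{\min}\rho^p$ --- but you use them to collapse the vector problem onto the \emph{classical scalar} Nehari problem for $-\Delta v+\omega v=(-g_{\min})|v|^{p-2}v$, importing the scalar theory (attainment by the ground state, positivity, Gidas--Ni--Nirenberg symmetry, Kwong uniqueness) as a black box: you first identify $\rho$ as a scalar minimizer, hence a translate of $Q_{\omega,-g_{\min}}$, hence positive everywhere, and only then run the equality case of the gradient decomposition to get constant ${\bf w}$, and compare $K_\omega({\bf u})=0$ with $\tilde K_\omega(\rho)=0$ to force ${\bf w}\in T_0$. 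The paper instead stays inside the vector problem: it compares ${\bf u}$ with $\tilde{\bf u}={\bf w}\rho$ (${\bf w}\in T_0$ fixed), extracts from the equality case the pointwise identities $\Im\sum_j\overline{u_j}\nabla u_j=0$ and $u_{j_1}\nabla u_{j_2}-u_{j_2}\nabla u_{j_1}=0$, and then must \emph{globalize} the scalar-type structure by a connected-component argument plus Bessel-kernel positivity (to show a component of $\{u_1\neq0\}$ is all of $\R^d$) before invoking GNN and Kwong. Your ordering (positivity of $\rho$ first, via scalar uniqueness) makes that globalization step unnecessary, and it also decouples this lemma from the vector existence result (Lemma \ref{L:compactness}): the value of $\mathfrak{I}(\omega)$ and the inclusion ``$\supseteq$'' come straight from the scalar side. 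The price is that you presuppose the full characterization of scalar Nehari minimizers, which is essentially the content the paper re-proves in its Steps 1--2; the paper's version is therefore more self-contained, while yours is shorter.

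Two points in your argument need explicit care. First, in the direction ``vector infimum $\le$ scalar infimum,'' the map $v\mapsto{\bf w}_0v$ preserves $G$ only for $v\ge0$: Assumption \ref{A:1} gives homogeneity $g(r{\bf z})=r^pg({\bf z})$ only for $r\ge0$, and gauge invariance is \emph{not} assumed in this lemma, so $g({\bf w}_0 v)=g_{\min}|v|^p$ can fail for complex $v$. This is harmless --- restrict the scalar problem to nonnegative competitors (no loss, by the diamagnetic inequality), or simply test with $v=Q_{\omega,-g_{\min}}$ itself. Second, the decomposition $\sum_j|\nabla u_j|^2=|\nabla\rho|^2+\rho^2\sum_j|\nabla w_j|^2$ is used twice: as the inequality $H(\rho)\le H({\bf u})$ (where only the Lieb--Loss form is legitimate in $H^1$), and pointwise in the equality case; the latter is rigorous in your ordering precisely because by then $\rho$ is smooth and strictly positive and ${\bf u}\in C^2$ by Lemma \ref{L:MequalG}, as you note.
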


The proof is essentially the same as in \cite{Co}*{Theorem 1}. 
Here we reorganize the argument.

\begin{proof}
We divide the proof into three steps.

\smallskip

{\bf Step 1}.
We shall prove $\mathcal{M}_\omega \subset  \bigcup_{{\bf w} \in T_0} \mathcal{R}({\bf w},\omega,-g_{\min})$ in the first two steps.
Fix $\omega>0$.
Pick  ${\bf u}= (u_1,u_2,\dots,u_N) \in \mathcal{M}_\omega$. 
Let us first establish
\begin{equation}\label{E:shapepf21}
	\Im \sum_{j=1}^N\overline{u_j}\nabla u_j  = 0, \quad
	u_{j_1} \nabla u_{j_2} - u_{j_2} \nabla u_{j_1} = 0
\end{equation}
on $\R^d$ for all $j_1,j_2 \in[1,N]$.
Pick ${\bf w} =(w_1,w_2,\dots,w_N) \in T_0$ and
define $\tilde{\bf u}=(\tilde{u}_1, \tilde{u}_2, \dots , \tilde{u}_N) \in (H^1(\R^d))^N$ by
\[
	\tilde{u}_j(x) = w_j \rho(x) \quad (j=1,2,\dots,N), \quad \rho(x)=\(\sum_{j=1}^N |u_j(x)|^2 \)^{1/2}.
\]
Then, one has 
\[
	\sum_{j=1}^N |\tilde{u}_j(x)|^2  = \rho(x)^2 = \sum_{j=1}^N |u_j(x)|^2.
\]
This shows $M(\tilde{\bf u}) = M({\bf u})$.
Further, thanks to the homogeneity of $g$, one has for all $x\in \R^d$ such that $\rho(x)>0$,
\begin{equation}\label{E:shapepf215}
\begin{aligned}
	g({\bf u}(x) )  = \rho(x)^p  g \(\frac{{\bf u}(x)}{\rho(x)}  \)
	\ge{} \rho(x)^p g_{\min} 
	={} \rho(x)^p g ({\bf w}) 
	= g(\tilde{u}(x) ),
\end{aligned}
\end{equation}
where we have used the fact that ${\bf u}/\rho \in \partial B$ to obtain the inequality.
Hence, $G({\bf u}) \ge G (\tilde{\bf u})$.
Moreover,  one has
\begin{align*}
	|\nabla \rho| ={}&  \tfrac1{\rho} \left|\Re {\textstyle \sum_{j=1}^N} \overline{u_j}\nabla u_j \right|\\
	\le{}& \tfrac1{\rho} \left| {\textstyle \sum_{j=1}^N} \overline{u_j}\nabla u_j \right|
	=  \sqrt{ {\textstyle \sum_{j=1}^N}|\nabla u_j|^2 - {(2\rho^2)}^{-1} {\textstyle \sum_{j_1\neq j_2}}|u_{j_1} \nabla u_{j_2} - u_{j_2} \nabla u_{j_1}|^2}.
\end{align*}
Hence, one sees that $H(\tilde{\bf u}) \le H({\bf u})$ (See, e.g. \cite{LLBook}*{Theorem 7.8}, for rigorous treatment).
In view of the above inequality, $H(\tilde{\bf u}) = H({\bf u})$ gives us the desired conclusion \eqref{E:shapepf21}.

Combining the above estimates, we have $K_\omega(\tilde{\bf u}) \le K_\omega({\bf u})=0$ and  hence
\[
	\tfrac{p-2}{p} \mathfrak{I}(\omega) \le  H(\tilde{\bf u}) + \omega M(\tilde{\bf u})
	\le H({\bf u}) +  \omega M({\bf u}) = \tfrac{p-2}{p} \mathfrak{I}(\omega)
\]
by definition of Lemma \ref{L:3}.
Together with $M(\tilde{\bf u}) = M({\bf u})$, one obtains
$H(\tilde{\bf u})=H({\bf u})$ as desired. 

Moreover, this implies $\tilde{\bf u}\in \mathcal{M}_\omega$ and hence $K_\omega(\tilde{\bf u})=0$.
Therefore, $G({\bf u})=G(\tilde{\bf u})$.
Hence, by the virtue of \eqref{E:shapepf215}, we obtain 
\begin{equation}\label{E:shapepf2175}
	\tfrac{\bf u(x)}{\rho(x)} \in T_0
\end{equation}
as long as $\rho(x)>0$.
\smallskip

{\bf Step 2.} We next show that ${\bf u} \in \mathcal{R}({\bf w},\omega,-g_{\min})$
for some ${\bf w} \in T_0$.
For $j\in[1,N]$,
let $\Gamma_j$ be the set of all connected components of $\{ x \in \R^d \ |\ u_j(x) \neq0 \}$.
Note that $\Gamma_j$ is well-defined since $u_j \in C^2(\R^d)$.
Note that $\cup_{j=1}^N \Gamma_j$ is not empty since ${\bf u}\neq0$.

Pick $j_0 \in [1,N]$ such that $\Gamma_{j_0}\neq \emptyset$.
For simplicity, we assume $j_0=1$.
Pick $\Omega \in \Gamma_{1}$. 
Thanks to the latter identity of \eqref{E:shapepf21}, for each $j \in [2,N]$, there exists a constant $\Lambda_{j}\in \C $ such that $u_{j}=\Lambda_{j} u_{1}$ on $\Omega$ (See the proof of Theorem 7.8 of \cite{LLBook}, for instance).
Substituting this relation to the former identity of \eqref{E:shapepf21}, we obtain $\Im (\overline{u_{1}}\nabla u_{1})=0$ on $\Omega$, which implies that there exist $a\in \R$ and a nonnegative function $\phi(x)$ such that
$u_{1} = e^{ia} \phi$ on $\Omega$. Hence, we obtain
\begin{equation}\label{E:shapepf2}
	u_j =	w_{0,j} r_0 \phi
\end{equation}
for $j\in[1,N]$, where $r_0=(\sum_{j=1}^N|\Lambda_j|^2)^{1/2}$ and
$w_{0,j}:= e^{ia} r_0^{-1} \Lambda_j \in \C$ with the convention $\Lambda_1=1$.
Further, we denote ${\bf w}_0=(w_{0,1},w_{0,2},\dots,w_{0,N}) \in \partial B$.
By \eqref{E:shapepf2175}, ${\bf w}_0 \in T_0$.
By means of \eqref{E:fj-gauge}, we have
\[
	F_{1} ({\bf u}) = \phi^{p-1} r_0^{p-1} F_{1}({\bf w}_0)
\]
on $\Omega$. Since ${\bf u} \in A_\omega$, we have $-\Delta u_{1} + \omega u_{1} = - F_{1} ({\bf u})$. Substituting the above formula, we obtain
\[
	-\Delta \phi + \omega \phi =\lambda \phi^{p-1} ,\quad
	\lambda:=- e^{-ia} r_0^{p-1} F_{1}({\bf w}_0)
\]
on $\Omega$. Since $\phi$ is positive on $\Omega$,
one sees from the equation that $\lambda \in \R$.
Let $\psi$ be the zero extension of $\phi|_\Omega$ to $\R^d$.
Since $\phi$ is positive on $\Omega$ and vanishes on the boundary of $\Omega$,
$\psi$ satisfies  $-\Delta \psi + \omega \psi =\lambda \psi^{p-1}$ on $\R^d$.
Hence, $\psi=\lambda (-\Delta+\omega)^{-1} \psi^{p-1}$ holds.
Since $\psi\neq0$, one has $\lambda\neq0$.
Further,
since the Bessel potential, the integral kernel of $(-\Delta+\omega)^{-1} $, is positive everywhere but the origin and since $\psi\ge0$, we see that $(-\Delta+\omega)^{-1} \psi^{p-1}$ is positive everywhere.
This shows $\lambda>0$ and $\psi$ is also positive everywhere.
By definition of $\psi$, this implies that $\phi$ is also positive on $\R^d$, i.e., $\Omega=\R^d$.

Thus the expression \eqref{E:shapepf2} is valid on $\R^d$.
Since $\phi$ is a positive solution to
$	-\Delta \phi +\omega \phi =\lambda \phi^{p-1}$ ($\lambda>0$) on $\R^d$,
if $d\ge2$ then
it is radially symmetric up to space translation (see \cite{GNN}) and hence 
$\phi = Q_{\omega,\lambda}  (\cdot -y)$ for some $y \in \R^d$ (see \cite{Kwong}).
This shows 
\begin{equation}\label{E:shapepf25}
	u_j(x) =	w_{0,j} r_0  Q_{\omega,\lambda}  (x -y)
	= w_{0,j}	Q_{\omega, r_0^{-2}\lambda}  (x -y)
\end{equation}
Hence, we conclude that
\[
	{\bf u} \in  \mathcal{R}({\bf w}_0, \omega, r_0^{-2}\lambda) \subset \bigcup_{{\bf w} \in T_0} \mathcal{R}({\bf w},\omega, r_0^{-2}\lambda).
\]

To complete the proof of this step, let us prove that $r_0^{-2}\lambda = -g_{\min}$.
In one hand, by using ${\bf w}_0 \in \partial B$ and
the fact that $Q$ satisfies $\|\nabla Q\|_{L^2}^2 = \frac{s_p}{1-s_p}\|Q\|_{L^2}^2$,
one has
\[
	2(H({\bf u})+ \omega M({\bf u}) )
	=\tfrac{1}{(1-s_p)(r_0^{-2}\lambda)^{2/(p-2)} } \omega^{1-s_c} \|Q\|_{L^2}^2.
\]
On the other hand, noting that ${\bf w}_0 \in T_0$ and $\|Q\|_{L^p}^p = \frac{1}{1-s_p}\|Q\|_{L^2}^2$, one verifies that
\[
	pG({\bf u}) = g_{\min} \int_{\R^d} |Q_{\omega, r_0^{-2}\lambda}|^p dx 
	=\tfrac{g_{\min}}{r_0^{-2}\lambda}\tfrac{1}{(1-s_p)(r_0^{-2}\lambda)^{2/(p-2)} } \omega^{1-s_c} \|Q\|_{L^2}^2.
\]
Hence, we obtain the desired identity
from $K_\omega({\bf u})=0$.
\small

{\bf Step 3}. We complete the proof.
As $\mathcal{M}_\omega \subset \bigcup_{{\bf w} \in T_0} \mathcal{R}({\bf w},\omega,-g_{\min})$,
we see that 
\[
	\mathfrak{I}(\omega) =S_\omega ({\bf u})= (1-\tfrac2p)(H({\bf u})+\omega M({\bf u}))= 
	\tfrac{1 }{2(1-s_c)} \|Q\|_{L^2}^2 (-g_{\min})^{s_c-\frac{d}2}  \omega^{1-s_c}
\]
by substituting one element in ${\bf u} \in \mathcal{M}_\omega$.
One then sees that any element in $\bigcup_{{\bf w} \in T_0} \mathcal{R}({\bf w},\omega,-g_{\min}) $
gives the same value of $S_\omega $ and belongs to $\mathcal{K}_\omega$
and hence that any element in $\bigcup_{{\bf w} \in T_0} \mathcal{R}({\bf w},\omega,-g_{\min}) $
is a minimizer to $\mathfrak{I}(\omega)$.
Thus, we obtain the opposite inclusion relation
$\bigcup_{{\bf w} \in T_0} \mathcal{R}({\bf w},\omega,-g_{\min}) \subset \mathcal{M}_\omega
$.
\end{proof}

We finish this section with the proof of the sharp Gagliardo-Nirenberg-type inequality.
\begin{proof}[Proof of Theorem \ref{T:sGN}]
The inequality is obvious when ${\bf u}=0$. Hence, we let ${\bf u}\neq0$.
It suffices to prove that the best constant is attained by elements in $\mathcal{G}$.
Let us define
\[
	\tilde{C}_{\mathrm{GN}}:= \sup \left\{ \tfrac{-G({\bf u})}{M({\bf u})^{\frac{p(1-s_p)}2} H({\bf u})^{\frac{ps_p}2}}\ \middle| \ {\bf u} \in (H^1(\R^d))^N \setminus \{0\}\right\}.
\]
We will show that $\tilde{C}_{\mathrm{GN}} = C_{\mathrm{GN}}$.

Let us begin with the proof of
$\tilde{C}_{\mathrm{GN}} \le C_{\mathrm{GN}}$.
Arguing as in the proof of Lemma \ref{L:2}, one sees that $\tilde{C}_{\mathrm{GN}}$ is finite.
Further, mimicking the proof of Lemma \ref{L:1}, one finds that $\tilde{C}_{\mathrm{GN}}>0$.
Hence, for any $\varepsilon>0$ there exists a nonzero ${\bf v} \in (H^1(\R^d))^N$ such that
\[
	0< \tilde{C}_{\mathrm{GN}} -\varepsilon \le \tfrac{-G({\bf v})}{M({\bf v})^{\frac{p(1-s_p)}2} H({\bf v})^{\frac{ps_p}2}} = \tfrac{-G({\bf v})}{M({\bf v})^{\frac{2(1-s_c)}{d-2s_c}} H({\bf v})^{\frac{d}{d-2s_c}}}.
\]
Now, we take $c_0>0$ so that $H(c_0 {\bf v} )=-\frac{ d }{d-2s_c}G(c_0 {\bf v} ) $.  This is possible because the left hand  side is of the form $a c_0^2$ and the right hand side is of the form $b c_0^p$  for some constants $a,b>0$.
We further let $\tilde{\bf v}_\lambda = c_0 \lambda^{\frac{d}2-s_c} {\bf v} (\lambda \cdot)$ with $\lambda>0$ to be chosen later.
It follows that
\[
	M(\tilde{\bf v}_\lambda) = \lambda^{-2s_c} M(c_0{\bf v}),\quad
	H(\tilde{\bf v}_\lambda) = \lambda^{2(1-s_c)} H(c_0{\bf v}),\quad
	G(\tilde{\bf v}_\lambda) = \lambda^{2(1-s_c)} G(c_0{\bf v}).
\]
These imply that $H(\tilde{\bf v}_\lambda )=-\frac{d}{d-2s_c} G(\tilde{\bf v}_\lambda) $ for any $\lambda>0$.
Further, since
\[
	K_1 (\tilde{\bf v}_\lambda) = 2\lambda^{-2s_c} (-\tfrac{2(1-s_c)}{d}\lambda^{2} H(c_0{\bf v}) +  M(c_0{\bf v})),
\]
there exists unique $\lambda>0$ such that $K_1(\tilde{\bf v}_\lambda)=0$. 
We fix this $\lambda$.
Then, $H(\tilde{\bf v}_\lambda )=-\frac{d}{d-2s_c} G(\tilde{\bf v}_\lambda) $ and $K_1(\tilde{\bf v}_\lambda)=0$ give us
\[
	S_1 (\tilde{\bf v}_\lambda ) = \tfrac2d H(\tilde{\bf v}_\lambda )
	=\tfrac{1}{1-s_c}M(\tilde{\bf v}_\lambda ) =- \tfrac2{d-2s_c} G(\tilde{\bf v}_\lambda ).
\]
Further, one has $S_1 (\tilde{\bf v}_\lambda ) \ge \mathfrak{I}(1)$ by definition of $\mathfrak{I}(1)$.
Using these relations, one has
\begin{align*}
	\tilde{C}_{\mathrm{GN}} -\varepsilon \le{}&
		\tfrac{-G({\bf v})}{M({\bf v})^{\frac{2(1-s_c)}{d-2s_c}} H({\bf v})^{\frac{d}{d-2s_c}}} \\
		={}&	\tfrac{-G(c_0{\bf v})}{M(c_0{\bf v})^{\frac{2(1-s_c)}{d-2s_c}} H(c_0{\bf v})^{\frac{d}{d-2s_c}}}\\
		={}&	\tfrac{-G(\tilde{\bf v}_\lambda)}{M(\tilde{\bf v}_\lambda)^{\frac{2(1-s_c)}{d-2s_c}} H(\tilde{\bf v}_\lambda)^{\frac{d}{d-2s_c}}}\\
		={}& \tfrac{d-2s_c}2 (1-s_c)^{-\frac{2(1-s_c)}{d-2s_c}} (\tfrac{d}2)^{-\frac{d}{d-2s_c}}S_1(\tilde{\bf v}_\lambda)^{-\frac{2}{d-2s_c}}\\
		\le{}& \tfrac{d-2s_c}2 (1-s_c)^{-\frac{2(1-s_c)}{d-2s_c}} (\tfrac{d}2)^{-\frac{d}{d-2s_c}}\mathfrak{I}(1)^{-\frac{2}{d-2s_c}}\\
		={}&(\tfrac2{p-2})^{\frac{p}2} (\tfrac2d)^{\frac{d(p-2)}4} (d- \tfrac{d-2}{2}p)^{\frac{p-2}2}  \|Q\|_2^{2-p}
		(-g_{\min})=C_{\mathrm{GN}}.
\end{align*}
Since $\varepsilon>0$ is arbitrary, we obtain $\tilde{C}_{\mathrm{GN}} \le C_{\mathrm{GN}}$.

Let us prove the other inequality.
Pick $\omega>0$ and $\Phi \in \mathcal{G}_\omega$.
Since $\Phi $ attains $\mathfrak{I}(\omega)$, one has
\begin{align*}
	C_{\mathrm{GN}}
	={}& \tfrac{d-2s_c}2 (1-s_c)^{-\frac{2(1-s_c)}{d-2s_c}} (\tfrac{d}2)^{-\frac{d}{d-2s_c}}
	\mathfrak{I}(\omega)^{-\frac{2}{d-2s_c}}\omega^{\frac{2(1-s_c)}{d-2s_c}} \\
	={}& \tfrac{d-2s_c}2 (1-s_c)^{-\frac{2(1-s_c)}{d-2s_c}} (\tfrac{d}2)^{-\frac{d}{d-2s_c}}
	S_\omega(\Phi)^{-\frac{2}{d-2s_c}} \omega^{\frac{2(1-s_c)}{d-2s_c}}.
\end{align*}
Note that $H(\Phi )=-\frac{d}{d-2s_c}G(\Phi) $ by Pohozaev's identity.
Further, $K_\omega(\Phi)=0$.
Hence, one has
\[
	S_\omega (\Phi) = \tfrac2d H(\Phi)
	=\tfrac{1}{1-s_c} \omega M(\Phi ) =- \tfrac2{d-2s_c} G(\Phi)
\]
and so
\begin{align*}
\tfrac{d-2s_c}2 (1-s_c)^{-\frac{2(1-s_c)}{d-2s_c}} (\tfrac{d}2)^{-\frac{d}{d-2s_c}}
	S_\omega(\Phi)^{-\frac{p-2}{2}} \omega^{\frac{2(1-s_c)}{d-2s_c}}
	{}& = \tfrac{(\frac{d-2s_c}{2}S_\omega(\Phi))}{(\frac{1-s_c}{\omega}S_\omega(\Phi))^{\frac{2(1-s_c)}{d-2s_c}} (\frac{d}2 S_\omega(\Phi))^{\frac{d}{d-2s_c}}}\\
	{}& =\tfrac{-G(\Phi)}{M(\Phi)^{\frac{2(1-s_c)}{d-2s_c}} H(\Phi)^{\frac{d}{d-2s_c}}} 
	 \le \tilde{C}_{\mathrm{GN}}.
\end{align*}
Hence, one obtains $C_{\mathrm{GN}}\le \tilde{C}_{\mathrm{GN}}$.

Thus, we obtain $C_{\mathrm{GN}}= \tilde{C}_{\mathrm{GN}}$.
The argument in the proof of $C_{\mathrm{GN}}\le \tilde{C}_{\mathrm{GN}}$ also shows that
any element in $\mathcal{G}$ is an optimizer to $\tilde{C}_{\mathrm{GN}}$.
Further, mimicking the proof of $\tilde{C}_{\mathrm{GN}} \le C_{\mathrm{GN}}$, one deduces that for any minimizer $\Psi$ to $\tilde{C}_{\mathrm{GN}}$ there exists $c_0>0$ and $\lambda$ such that $\tilde{\Psi} = c_0 \lambda^{\frac{d}2-s_c} {\Psi} (\lambda \cdot)$
satisfies
\[
	K_1 (\tilde{\Psi} )=0, \quad K_1 (\tilde{\Psi} ) = \mathfrak{I}(1).
\]
This implies that $\tilde{\Psi}$ attains $\mathfrak{I}(1)$ and hence $\tilde{\Psi}\in \mathcal{G}_1$ thanks to Lemma \ref{L:MequalG}.
By the explicit formula of $\mathcal{G}_\omega$, one sees that $c_0\Psi \in \mathcal{G}_{\lambda^{-2}} \subset \mathcal{G}$.
\end{proof}

\section{Proof of Theorem \ref{T:excited}}\label{S:excited}

In this section, we prove Theorem \ref{T:excited}.

\begin{proof}[Proof of Theorem \ref{T:excited}]
We first prove the if part.
Let us first claim that the identities
\begin{equation}\label{E:excitepf1}
	(\partial_{z_j}g) ({\bf w}) = \tfrac{p}2\overline{w_j}g({\bf w}),\quad
	(\partial_{\overline{z_j}}g) ({\bf w}) = \tfrac{p}2 {w_j}g({\bf w})
\end{equation}
hold for $j=1,\dots,N$ at any critical point ${\bf w} = (w_1,\dots, w_2) \in \partial B$ of $g|_{\partial B}$.
One can regard
 ${\bf w} \in \partial B$ as a critical point of $g:\C^N \to \R$ under the constraint
$\sum_{j=1}^N|z_j|^2-1=0$.
Hence, by Lagrange's multiplier theorem, there exists a constant $\lambda$ such that
\begin{align*}
	\partial_{z_j} \(g({\bf z})+\lambda \({\textstyle\sum_{j=1}^N}|z_j|^2 -1\)\middle)\right|_{{\bf z}={\bf w}} &=0,&
	\partial_{\overline{z_j}} \(g({\bf z})+\lambda \({\textstyle\sum_{j=1}^N}|z_j|^2 -1\)\middle)\right|_{{\bf z}={\bf w}}& =0,
\end{align*}
for $j=1,\dots,N$. These read as
\[
	(\partial_{z_j}g) ({\bf w}) = -\lambda \overline{w_j},\quad
	(\partial_{\overline{z_j}}g) ({\bf w}) = -\lambda {w_j}
\]
for $j=1,\dots,N$. 
Hence,
\[
	\sum_{j=1}^N (w_j(\partial_{z_j}g) ({\bf w}) +\overline{w_j} (\partial_{\overline{z_j}}g) ({\bf w}))
	= -2\lambda \sum_{j=1}^N|w_j|^2  = -2\lambda.
\]
On the other hand,
\begin{align*}
		\sum_{j=1}^N (w_j(\partial_{z_j}g) ({\bf w}) +\overline{w_j} (\partial_{\overline{z_j}}g) ({\bf w}))
		=\left. \frac{d}{d h} g(h{\bf w}) \right|_{h=1} 
		=g({\bf w})\left. \frac{d}{d h} h^p \right|_{h=1} 
	= p g({\bf w}).	
\end{align*}
Hence, $\lambda = -\tfrac{p}2 g({\bf w})$ follows. The claim is shown.

Now, let us check that $\mathcal{R}({\bf w} ,\omega, -g({\bf w})) \subset \mathcal{A}_\omega$.
For $j=1,\dots,N$, we have
\[
		-\Delta (w_j Q_{\omega,-g({\bf w})}) + \omega w_j Q_{\omega, -g({\bf w})}
		= - g({\bf w}) w_j  Q_{\omega,-g({\bf w})}^{p-1}.
\]
By the above claim and the relation between the function $g$ and the nonlinearity $F_j$, we have
\[
	- g({\bf w}) w_j =- \tfrac2p (\partial_{\overline{z_j}}g) ({\bf w})
	=-F_j({\bf w}).
\]
Therefore, using the homogeneity of $F_j$, one has
\[
	 - g({\bf w}) w_j  Q_{\omega, -g({\bf w})}^{p-1}
	 = - F_j({\bf w})  Q_{\omega, -g({\bf w})}^{p-1}
	 =-F_j({\bf w} Q_{\omega, -g({\bf w})})
\]
for $j=1,\dots, N$.
Hence, ${\bf w} Q_{\omega,-g({\bf w})}$ solves \eqref{E:gE}. The if part is established.

Let us proceed to the only-if part.
Suppose that $\mathcal{R}({\bf w} ,\omega, a) \subset \mathcal{A}_\omega$.
Then, the latter half of the proof of the if part shows that the identity 
$ a w_j Q_{\omega, a}^{p-1} =-F_j({\bf w}) Q_{\omega, a}^{p-1} $ for all $j$, which implies that
$(\partial_{\overline{z_j}} g)({\bf w}) = - \frac{p}2 a w_j$ for all $j$.
Then, by mimicking the argument in the proof of the if part,
we see that \eqref{E:excitepf1} is valid.
In particular, $a=-g({\bf w})>0$.

Let us prove that ${\bf w}$ is a critical point of $g|_{\partial B}$.
One has $\partial_{x_j} g ({\bf w})= p (\Re w_j) g({\bf w})$ and $\partial_{y_j} g ({\bf w})= p (\Im w_j) g({\bf w})$.
Since ${\bf w}\neq0$, we may suppose that  $\Re w_{1}>0$ without loss of generality. 
Then, the real part of the first component of ${\bf z} \in \partial B \subset \C^N = \R^{2N}$ is given by
$x_1 =x_1(y_1,x_2,y_2,\dots,x_N,y_N):= (1-y_1^2 -\sum_{j=2}^N (x_j^2 +y_j^2))^{1/2}$ around ${\bf z}={\bf w}$. Then, it is easy to see that the point $(\Im w_1, \Re w_2, \Im w_2, \dots, \Re w_N, \Im w_N)\in \R^{2N-1}$ is a critical point of
the function
\[
	h(y_1,x_2,y_2,\dots,x_N,y_N) := g(x_1(y_1,x_2,y_2,\dots,x_N,y_N),y_1,x_2,y_2,\dots,x_N,y_N).
\]
This implies that ${\bf z}={\bf w}$ is a critical point of $g|_{\partial B}$.
\end{proof}

\begin{remark}\label{R:morees}
If $g$ satisfies $g(-{\bf z})=g({\bf z})$ for all ${\bf z} \in \C^N$ then one has
$F_j (-{\bf z}) = - F_j ({\bf z})$
for all ${\bf z} \in \C^N$. This implies that $F_j (r{\bf z})=|r|^{p-2}rF_j ({\bf z})$ holds for any $r\in \R$ and ${\bf z}\in \C^N$.
In this case, the above proof works even if we replace the positive solution $Q$ to $-\Delta Q + Q = Q^{p-1}$
with a real-valued solution to
$-\Delta \tilde{Q} + \tilde{Q} = |\tilde{Q}|^{p-2}\tilde{Q}$.
It is known that if $d\ge2$ then there exist infinitely many
 sign-changing solutions to the elliptic equation (See \cites{BeLi2,BGK} and references therein).
\end{remark}

\section{Proof of Theorem \ref{T:stability}}\label{S:stability}

We follow the argument by Shatah \cite{Sh} (See also \cite{O}).
For $\omega>0$, let
\[
	\mathcal{P}^\pm_\omega := \{ {\bf u} \in (H^1(\R^d))^N \ |\ S_\omega ({\bf u})<\mathfrak{I}(\omega),\, \pm (-\tfrac{p-2}2 G({\bf u}))> \mathfrak{I}(\omega)  \}.
\]
\begin{lemma}\label{L:Ppm}
The sets $\mathcal{P}^\pm_\omega$ are invariant under the \eqref{E:gNLS}-flow.
\end{lemma}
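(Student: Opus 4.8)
The plan is to use the conservation laws of \eqref{E:gNLS} to turn the two defining inequalities of $\mathcal{P}^\pm_\omega$ into conditions that are manifestly preserved by the flow. First I would record that along any maximal $H^1$-solution ${\bf u}\in C(I_{\max},(H^1(\R^d))^N)$ the mass $M$ and the energy $E$ are conserved (the latter as noted above, the former under Assumption \ref{A:2}), so that the action $S_\omega=E+\omega M$ is conserved as well; in particular the open condition $S_\omega({\bf u}(t))<\mathfrak{I}(\omega)$ holds on the whole lifespan as soon as it holds at $t=0$. The second ingredient is the algebraic identity $K_\omega=2S_\omega+(p-2)G$, obtained by comparing the definitions of $S_\omega$ and $K_\omega$; equivalently $-\tfrac{p-2}2 G=S_\omega-\tfrac12 K_\omega$. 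Since $S_\omega$ is frozen along the flow, each sign condition $\pm(-\tfrac{p-2}2 G({\bf u}(t)))>\mathfrak{I}(\omega)$ is thus equivalent to a one-sided bound on the single scalar quantity $K_\omega({\bf u}(t))$.

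The heart of the argument is a continuity-plus-barrier step. The map $t\mapsto K_\omega({\bf u}(t))$ is continuous, since $K_\omega$ is continuous on $(H^1(\R^d))^N$ (its only nontrivial term, $G$, being continuous through the embedding $H^1\hookrightarrow L^p$ and the growth of $g$). Moreover ${\bf u}(t)\neq 0$ throughout: any ${\bf u}_0\in\mathcal{P}^\pm_\omega$ has $G({\bf u}_0)\neq 0$, hence ${\bf u}_0\neq 0$, so $M({\bf u}(t))=M({\bf u}_0)>0$ by mass conservation. Now if $K_\omega({\bf u}(t))$ vanished at some time $t_1$, then ${\bf u}(t_1)\in\mathcal{K}_\omega$, and the very definition \eqref{D:domega} of $\mathfrak{I}(\omega)$ would force $S_\omega({\bf u}(t_1))\geq\mathfrak{I}(\omega)$, contradicting $S_\omega<\mathfrak{I}(\omega)$. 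Hence $K_\omega({\bf u}(t))$ keeps a constant sign on $I_{\max}$.

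For $\mathcal{P}^+_\omega$ this closes the proof at once, because the computation above shows $\mathcal{P}^+_\omega=\{{\bf u}:S_\omega({\bf u})<\mathfrak{I}(\omega),\ K_\omega({\bf u})<0\}$: the inclusion $\subseteq$ follows from $-\tfrac{p-2}2 G=S_\omega-\tfrac12 K_\omega>\mathfrak{I}(\omega)$ together with $S_\omega<\mathfrak{I}(\omega)$, while the reverse inclusion is exactly Lemma \ref{L:GK}. Both defining conditions $S_\omega<\mathfrak{I}(\omega)$ and $K_\omega<0$ are preserved, so $\mathcal{P}^+_\omega$ is invariant. The set $\mathcal{P}^-_\omega$ is treated by the mirror argument, now tracking $K_\omega({\bf u}(t))>0$, and here I expect the main obstacle. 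Indeed, whereas on the $K_\omega<0$ side Lemma \ref{L:GK} matches the Nehari barrier $\mathcal{K}_\omega$ to the threshold $-\tfrac{p-2}2 G=\mathfrak{I}(\omega)$, on the $\mathcal{P}^-_\omega$ side the threshold $\tfrac{p-2}2 G=\mathfrak{I}(\omega)$ corresponds to $K_\omega=2(S_\omega+\mathfrak{I}(\omega))>0$, which lies strictly inside $\{K_\omega>0\}$; so one must rule out that this interior level is attained. The cleanest reduction is to observe that ${\bf u}_0\in\mathcal{P}^-_\omega$ forces $H({\bf u}_0)+\omega M({\bf u}_0)=S_\omega({\bf u}_0)-G({\bf u}_0)<\tfrac{p-4}{p-2}\mathfrak{I}(\omega)$, whence, since $\mathfrak{I}(\omega)>0$ by Proposition \ref{L:2} and $H+\omega M\geq 0$, the set $\mathcal{P}^-_\omega$ is empty unless $p>4$ (in particular it is empty for the cubic systems \eqref{E:nls1}--\eqref{E:nls5}, where $p=4$). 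This confines the difficulty to the range $p>4$, which is exactly the delicate case to be handled by tracking $-\tfrac{p-2}2 G$ directly along the flow.
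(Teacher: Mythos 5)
Your treatment of $\mathcal{P}^+_\omega$ is correct, and it is essentially the paper's own argument in different packaging: the paper runs the solution until the exit time $t_0$ where $-\tfrac{p-2}2 G({\bf u}(t_0))=\mathfrak{I}(\omega)$, invokes Lemma \ref{L:GK} (in contrapositive form) to get $K_\omega({\bf u}(t_0))\ge 0$, and concludes $S_\omega({\bf u}(t_0))=\tfrac12 K_\omega({\bf u}(t_0))-\tfrac{p-2}2 G({\bf u}(t_0))\ge \mathfrak{I}(\omega)$, contradicting conservation of $S_\omega$; you instead track the sign of $K_\omega$, use the variational definition \eqref{D:domega} of $\mathfrak{I}(\omega)$ as the barrier, and use Lemma \ref{L:GK} for the reverse inclusion $\{S_\omega<\mathfrak{I}(\omega),\,K_\omega<0\}\subseteq\mathcal{P}^+_\omega$. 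These are the same continuity-plus-conservation argument, and your nonvanishing remark (mass conservation guarantees ${\bf u}(t)\neq 0$) correctly fills the one point that needs checking.

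The gap is in $\mathcal{P}^-_\omega$, and it originates in reading the definition literally as $\tfrac{p-2}2 G({\bf u})>\mathfrak{I}(\omega)$. That reading is inconsistent with how $\mathcal{P}^-_\omega$ is used afterwards: Lemma \ref{L:stability_key} asserts that data near a ground state $\Phi\in\mathcal{G}_{\omega_0}$ lie in $\mathcal{P}^+_{\omega_0-\varepsilon}\cap\mathcal{P}^-_{\omega_0+\varepsilon}$, and in the proof of Theorem \ref{T:stability} membership in this intersection is exactly the two-sided bound $\mathfrak{I}(\omega_0-\varepsilon_n)<-\tfrac{p-2}2G<\mathfrak{I}(\omega_0+\varepsilon_n)$. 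Since ground states satisfy $-\tfrac{p-2}2G(\Phi)=\mathfrak{I}(\omega_0)>0$, i.e.\ $G(\Phi)<0$, your literal $\mathcal{P}^-_\omega$ (where $G>0$) could never contain such data — indeed, as you yourself show, it is empty for the cubic case $p=4$, which would make Lemma \ref{L:stability_key} vacuous. The intended meaning is that the sign $\pm$ acts on both sides of the inequality, namely
\[
	\mathcal{P}^-_\omega=\{{\bf u}\in (H^1(\R^d))^N \ :\ S_\omega({\bf u})<\mathfrak{I}(\omega),\ -\tfrac{p-2}2G({\bf u})<\mathfrak{I}(\omega)\}.
\]
Under this reading there is no ``delicate case $p>4$'' left over at all: the exit boundary is the same level set $-\tfrac{p-2}2G=\mathfrak{I}(\omega)$ as for $\mathcal{P}^+_\omega$, so the mirror of your argument closes immediately. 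If the solution left $\mathcal{P}^-_\omega$, conservation of $S_\omega$ and continuity of $t\mapsto G({\bf u}(t))$ would produce $t_0$ with $-\tfrac{p-2}2G({\bf u}(t_0))=\mathfrak{I}(\omega)$; the contrapositive of Lemma \ref{L:GK} gives $K_\omega({\bf u}(t_0))\ge 0$, whence $S_\omega({\bf u}(t_0))\ge\mathfrak{I}(\omega)$, a contradiction. So, as written, your proposal proves only half of the lemma; the missing half is not difficult, but it requires the correct reading of the definition rather than a new analytic idea.
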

\begin{proof}
Let us only consider $\mathcal{P}^+_\omega$.
Fix ${\bf u}_0 \in  \mathcal{P}^+_\omega$ and let ${\bf u}(t)$ be the corresponding solution to \eqref{E:gNLS}.
Suppose for contradiction that ${\bf u} ( t) \not \in \mathcal{P}^+_\omega$ for some $t \neq0$.
Since $S_\omega ({\bf u}(t)) = S_\omega ({\bf u}_0)<\mathfrak{I}(\omega)$  ($\forall t\in I_{\max}$) follows from the conservation laws,
there exists $t_0 $ such that
$-\tfrac{p-2}2 G({\bf u}(t_0))= \mathfrak{I}(\omega) $.
Then, we see from Lemma \ref{L:GK} that $K_\omega ({\bf u}(t_0)) \ge 0$.
Hence,
\[
	S_\omega ({\bf u}(t_0)) = \tfrac12 K_\omega({\bf u}(t_0)) -\tfrac{p-2}2 G({\bf u}(t_0)) \ge \mathfrak{I}(\omega),
\]
which contradicts with $S_\omega ({\bf u}(t_0)) = S_\omega ({\bf u}_0)<\mathfrak{I}(\omega)$.
\end{proof}
Recall that the explicit form of $\mathfrak{I}(\omega)$ is given in Lemma \ref{L:domega}.
We remark that the mass-subcritical assumption $p<2+\frac4d$, which reads as $s_c<0$, assures that
$\mathfrak{I}''(\omega)>0$ for $\omega>0$.
\begin{lemma}\label{L:stability_key}
For each $\omega_0>0$ there exists $\varepsilon_0=\varepsilon_0(\omega_0)>0$ such that the following property holds true:
For any $\varepsilon \in (0,\varepsilon_0)$ there exists $\delta>0$ such that if ${\bf u} \in (H^1(\R^d))^N$ satisfies
$\| {\bf u}_0 - \Phi \|_{(H^1(\R^d))^N}<\delta$ for some $\Phi \in \mathcal{G}_{\omega_0}$ then 
${\bf u} \in \mathcal{P}_{\omega_0-\varepsilon}^+ \cap \mathcal{P}_{\omega_0+\varepsilon}^-$.
\end{lemma}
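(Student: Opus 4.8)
The plan is to show that the ground state $\Phi$ \emph{itself} satisfies all the defining strict inequalities of $\mathcal{P}^+_{\omega_0-\varepsilon}\cap\mathcal{P}^-_{\omega_0+\varepsilon}$ with a margin that is uniform over $\mathcal{G}_{\omega_0}$, and then to extend this to nearby ${\bf u}_0$ by continuity of the relevant functionals on $(H^1(\R^d))^N$. First I would record the structural identities valid for \emph{every} $\Phi\in\mathcal{G}_{\omega_0}$. Since $\Phi$ attains $\mathfrak{I}(\omega_0)$ and satisfies $K_{\omega_0}(\Phi)=0$, the Pohozaev relations already used in the proof of Theorem \ref{T:sGN} give $S_{\omega_0}(\Phi)=\mathfrak{I}(\omega_0)$, $-\tfrac{p-2}2 G(\Phi)=\mathfrak{I}(\omega_0)$, and $\tfrac1{1-s_c}\omega_0 M(\Phi)=\mathfrak{I}(\omega_0)$. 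Combining the last identity with the explicit formula $\mathfrak{I}(\omega)=\tfrac1{2(1-s_c)}\|Q\|_{L^2}^2(-g_{\min})^{s_c-\frac d2}\omega^{1-s_c}$ from Lemma \ref{L:domega}, a direct differentiation shows $M(\Phi)=\mathfrak{I}'(\omega_0)$. I emphasize that all three numbers $S_{\omega_0}(\Phi)$, $G(\Phi)$, $M(\Phi)$, as well as $\|\Phi\|_{(H^1)^N}$, depend only on $\omega_0$ and not on the chosen ground state; this is what will make $\delta$ uniform over $\mathcal{G}_{\omega_0}$.

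The heart of the argument is the strict convexity of $\mathfrak{I}$. In the mass-subcritical range $2<p<2+\tfrac4d$, i.e. $s_c<0$, the explicit formula gives $\mathfrak{I}''(\omega)>0$ for all $\omega>0$, and $\mathfrak{I}$ is strictly increasing. Fix $\varepsilon_0\in(0,\omega_0)$ so that $\omega_0\pm\varepsilon$ stays positive, and let $\kappa>0$ be a lower bound for $\tfrac12\mathfrak{I}''$ on $[\omega_0-\varepsilon_0,\omega_0+\varepsilon_0]$. For $\varepsilon\in(0,\varepsilon_0)$ the exact relation $S_{\omega_0\mp\varepsilon}(\Phi)=S_{\omega_0}(\Phi)\mp\varepsilon M(\Phi)=\mathfrak{I}(\omega_0)\mp\varepsilon M(\Phi)$, together with the second-order Taylor expansion of $\mathfrak{I}$ about $\omega_0$ using the tangency $\mathfrak{I}'(\omega_0)=M(\Phi)$, yields
\[
	\mathfrak{I}(\omega_0\mp\varepsilon)-S_{\omega_0\mp\varepsilon}(\Phi)=\tfrac{\varepsilon^2}2\mathfrak{I}''(\xi_\mp)\ge\kappa\varepsilon^2>0
\]
for some $\xi_\mp$ between $\omega_0$ and $\omega_0\mp\varepsilon$. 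This gives the action constraints $S_{\omega_0-\varepsilon}(\Phi)<\mathfrak{I}(\omega_0-\varepsilon)$ and $S_{\omega_0+\varepsilon}(\Phi)<\mathfrak{I}(\omega_0+\varepsilon)$, each with margin at least $\kappa\varepsilon^2$. For the constraint involving $G$ I use only strict monotonicity: since $-\tfrac{p-2}2 G(\Phi)=\mathfrak{I}(\omega_0)$, the mean value theorem gives $-\tfrac{p-2}2 G(\Phi)-\mathfrak{I}(\omega_0-\varepsilon)=\mathfrak{I}(\omega_0)-\mathfrak{I}(\omega_0-\varepsilon)\ge\varepsilon\,\mathfrak{I}'(\omega_0-\varepsilon_0)>0$ and $-\tfrac{p-2}2 G(\Phi)-\mathfrak{I}(\omega_0+\varepsilon)=\mathfrak{I}(\omega_0)-\mathfrak{I}(\omega_0+\varepsilon)<0$, with margin of order $\varepsilon$. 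Hence $\Phi\in\mathcal{P}^+_{\omega_0-\varepsilon}\cap\mathcal{P}^-_{\omega_0+\varepsilon}$ for every $\varepsilon\in(0,\varepsilon_0)$.

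Finally I would pass from $\Phi$ to nearby ${\bf u}_0$ by continuity. All four defining conditions are strict inequalities in the functionals $S_{\omega_0\pm\varepsilon}$ and $G$, which are continuous on $(H^1(\R^d))^N$ with local Lipschitz bounds of polynomial type (controlled via the Gagliardo–Nirenberg inequality as in Lemma \ref{L:2}). On the ball $\{\|{\bf u}_0-\Phi\|_{(H^1)^N}\le1\}$ these Lipschitz constants depend only on $\|\Phi\|_{(H^1)^N}$ and hence, by the uniformity noted above, only on $\omega_0$. Choosing $\delta=\delta(\varepsilon,\omega_0)>0$ small enough that $\|{\bf u}_0-\Phi\|_{(H^1)^N}<\delta$ forces each of the four margins (which are $\ge\kappa\varepsilon^2$ and $\gtrsim\varepsilon$) to survive, we conclude ${\bf u}_0\in\mathcal{P}^+_{\omega_0-\varepsilon}\cap\mathcal{P}^-_{\omega_0+\varepsilon}$, with $\delta$ independent of the particular $\Phi\in\mathcal{G}_{\omega_0}$.

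The main obstacle is the second step: one must recognize that the right device for separating $S_{\omega_0\mp\varepsilon}(\Phi)$ from the threshold $\mathfrak{I}(\omega_0\mp\varepsilon)$ is precisely the strict convexity of $\mathfrak{I}$ combined with the tangency $M(\Phi)=\mathfrak{I}'(\omega_0)$. This is exactly where the subcriticality $p<2+\tfrac4d$ is indispensable: at $p=2+\tfrac4d$ one has $s_c=0$, $\mathfrak{I}$ is linear, and the quadratic margin $\kappa\varepsilon^2$ collapses, so the argument cannot produce an open trapping region. The remaining ingredients — the Pohozaev identities and the uniform continuity estimate — are routine, but the uniformity in $\Phi$ must be tracked carefully since $\mathcal{G}_{\omega_0}$ is noncompact (it is invariant under spatial translations).
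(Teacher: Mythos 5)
Your proposal is correct and follows essentially the same route as the paper's own proof: both establish that $\Phi$ itself lies in $\mathcal{P}^+_{\omega_0-\varepsilon}\cap\mathcal{P}^-_{\omega_0+\varepsilon}$ with quantitative margins — the action constraints via the tangency $\mathfrak{I}'(\omega_0)=M(\Phi)$ together with a second-order Taylor expansion exploiting $\mathfrak{I}''>0$ (the Shatah convexity argument, valid precisely because $s_c<0$), and the $G$-constraints via strict monotonicity of $\mathfrak{I}$ — and then transfer these strict inequalities to nearby ${\bf u}_0$ by continuity, noting that the margins and hence $\delta$ depend only on $\omega_0$ since $M(\Phi)$ and $H(\Phi)$ are constant on $\mathcal{G}_{\omega_0}$.
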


\begin{proof}
Since $\mathfrak{I}(\omega) $ is of the form $\mathfrak{I}(\omega) = c \omega^{q} $ with $c>0$ and $q>2$, we have
\[
	\mathfrak{I}(\omega_0 - \varepsilon) < \mathfrak{I}(\omega_0) < \mathfrak{I}(\omega_0 + \varepsilon)
\]
for any $\varepsilon \in (0, \mathfrak{I}(\omega_0)).$
Further, since $\Phi \in \mathcal{G}_{\omega_0} \subset \mathcal{K}_{\omega_0}$, one has
\[
\mathfrak{I}(\omega_0) = S_{\omega_0}(\Phi) = - \tfrac{p-2}{2} G(\Phi).
\]
Thus, for each $\varepsilon>0$ there exists $\delta>0$ such that $\| {\bf u} - \Phi \|_{(H^1(\R^d))^N} \le \delta$
implies that 
\[
	\mathfrak{I}(\omega_0 - \varepsilon) < - \tfrac{p-2}{2} G({\bf u}) < \mathfrak{I}(\omega_0 + \varepsilon).
\]

Now, let us claim that  $S_{\omega_0\pm 2\varepsilon} ({\bf u})< \mathfrak{I}(\omega_0 \pm \varepsilon)$.
A computation shows that $\mathfrak{I}'(\omega_0)=M(\Phi)$.
Hence,
\[
	S_{\omega_0\pm \varepsilon} (\Phi) = S_{\omega_0} (\Phi) \pm \varepsilon M(\Phi)
	= \mathfrak{I}(\omega_0)  \pm \varepsilon \mathfrak{I}'(\omega_0).
\]
By the Taylor expansion, there exists $\omega_\pm \in (\omega_0-\varepsilon, \omega_0+\varepsilon)$ such that
\[
	d (\omega_0 \pm \varepsilon) = \mathfrak{I}(\omega_0) \pm \varepsilon \mathfrak{I}'(\omega_0) + \varepsilon^2 \tfrac12 \mathfrak{I}''(\omega_\pm).
\]
Due to the continuity of $\mathfrak{I}''$ and the positivity  $\mathfrak{I}''>0$,
by letting $\varepsilon_0$ smaller if necessary, we have $\inf_{\omega \in [\omega_0-\varepsilon_0, \omega_0+\varepsilon_0]}  \mathfrak{I}''(\omega) \ge \tfrac12 \mathfrak{I}''(\omega_0)$.
Combining these estimates, we have
\[
	\mathfrak{I} (\omega_0 \pm \varepsilon) \ge S_{\omega_0\pm \varepsilon} (\Phi) + \tfrac{\varepsilon^2}{4} \mathfrak{I}''(\omega_0).
\]
Hence, if $\delta$ is chosen so small that
\[
	|S_{\omega_0\pm \varepsilon} (\Phi)-S_{\omega_0\pm \varepsilon} ({\bf u})| \le \tfrac{\varepsilon^2}8 \mathfrak{I}''(\omega_0)
\]
then we obtain the desired conclusion.
We remark that $\delta$ does not depend on the specific choice of $\Phi \in \mathcal{G}_{\omega_0}$ but only on $\omega_0$. This is due to the fact that $H(\Phi)$ and $M(\Phi)$ depend only on $\omega_0$.
\end{proof}

Now, let us complete the proof of the theorem.

\begin{proof}[Proof of Theorem \ref{T:stability}]
Suppose that the result fails. 
Then, there exist $\omega_0>0$ and
$\varepsilon_0>0$ such that for each $m$ there exist
${\bf u}_{0,m} \in (H^1(\R^d))^N$ and $\Phi_m \in \mathcal{G}_{\omega_0}$
such that
\begin{equation}\label{E:stpf01}
	 \| {\bf u}_{0,m} - \Phi_m \|_{(H^1(\R^d))^N} \le \tfrac1m
\end{equation}
and
\begin{equation}\label{E:stpf02}
	\sup_{t\in \R}  \inf_{\Phi \in \mathcal{G}_{\omega_0}} \| {\bf u}_{m}(t) - \Phi \|_{(H^1(\R^d))^N} \ge \varepsilon_0,
\end{equation}
where ${\bf u}_m(t)$ is a global solution to \eqref{E:gNLS} such that ${\bf u}_m(0) = {\bf u}_{0,m}$.
We let $t_m \in \R$ be the time such that
\[
	\inf_{\Phi \in \mathcal{G}_{\omega_0}} \| {\bf u}_{m}(t_m) - \Phi \|_{(H^1(\R^d))^N} = \tfrac{\varepsilon_0}2.
\]
By Lemma \ref{L:stability_key} and \eqref{E:stpf01}, there exists a series $\{\varepsilon_n\}_n\subset \R_+$, $\varepsilon_n\to0$, such that ${\bf u}_{0,n} \in \mathcal{P}^+_{\omega_0-\varepsilon_n} \cap \mathcal{P}^-_{\omega_0+\varepsilon_n}$ holds for large $n$. 
Then, one sees from Lemma \ref{L:Ppm} that ${\bf u}_{n}(t_n) \in \mathcal{P}^+_{\omega_0-\varepsilon_n} \cap \mathcal{P}^-_{\omega_0+\varepsilon_n}$ for large $n$ and so that
\begin{equation}\label{E:stpf1}
	-\tfrac{p-2}{2} G({\bf u}_n(t_n)) \to \mathfrak{I}(\omega_0)
\end{equation}
as $n\to\infty$.
Further, by the conservation laws of \eqref{E:gNLS} and \eqref{E:stpf01},
\begin{equation}\label{E:stpf2}
	|S_{\omega_0} ({\bf u}_{n}(t_n)) - \mathfrak{I}(\omega_0)|=| S_{\omega_0} ({\bf u}_{0,n})- S_{\omega_0} (\Phi_n) |
	\to 0 
\end{equation}
as $n\to\infty$. \eqref{E:stpf1} and \eqref{E:stpf2} imply that
\begin{equation}\label{E:stpf3}
	K_{\omega_0} ({\bf u}_{n}(t_n)) \to 0
\end{equation}
as $n\to\infty$.
Let $c_n >0$ be the constant such that  
\begin{equation}\label{E:stpf4}
K_{\omega_0} (c_n {\bf u}_{n}(t_n))=0. 
\end{equation}
It follows from \eqref{E:stpf3} that $c_n \to 1$ as $n\to\infty$.
Hence, together with \eqref{E:stpf2}, one finds
\begin{equation}\label{E:stpf5}
	S_{\omega_0} (c_n {\bf u}_{n}(t_n)) \to \mathfrak{I}(\omega_0).
\end{equation}
Now, due to \eqref{E:stpf4} and \eqref{E:stpf5}, we deduce that $\{ c_n {\bf u}_n(t_n)\}_n$ is a minimizing sequence
of $\mathfrak{I}(\omega_0)$.
Then, mimicking the proof of Lemma \ref{L:compactness}, one sees that, up to a subsequence, there exist $\Phi_\infty \in \mathcal{M}_{\omega_0} = \mathcal{G}_{\omega_0}$ and $y_n\in \R^d$ such that
$
	c_n {\bf u}_n(t_n) = \Phi_\infty(\cdot - y_n) +o(1)
$
strongly in $(H^1(\R^d))^N$  as $n\to \infty$.
This contradicts with \eqref{E:stpf02}.
\end{proof}

\section{The Potential-well structure in the mass-supercritical case}\label{S:Pstruct}

In this section, we establish another variational characterization of  the ground states in the mass-supercritical
case $p>2+4/d$.
The characterization is useful in the proof of the instability.
We do not need Assumption \ref{A:2} nor any restriction on $d$ and $p$ in this section.
Recall that $s_p:=\frac{d}2-\frac{d}p$ and $s_c:=\frac{d}2-\frac{2}{p-2}$. Note that $p>2+4/d$ corresponds to $s_c>0$.

We introduce a functional
\[
	V({\bf u}) = 2 H({\bf u}) + \tfrac{2d}{d-2s_c} G({\bf u}),
\]
which is the $L^2$-scaling derivative of $E({\bf u})$ or of $S_\omega ({\bf u})$.
Further, we introduce
\[
	\mathfrak{I}_2(\omega) := \inf \{ \tfrac{2s_c}{d}H({\bf u}) + \omega M({\bf u}) \ |\ {\bf u}\neq0, \,  V({\bf u}) \le 0\}
\]
for $\omega>0$
and
\[
	\mathfrak{I}_3 := \inf \{ H({\bf u}) M({\bf u})^{\frac{1-s_c}{s_c}}  \ |\ {\bf u} \neq0,\, V({\bf u})\le0 \}.
\]
We remark that if ${\bf u}\neq0$ satisfies $G({\bf u})<0$ then $V(c {\bf u})<0$ for large $c>0$.
Further, the assumption $p>2+\frac4d$ assures $s_c>0$.
Hence, $\mathfrak{I}_2(\omega)$ and $\mathfrak{I}_3$ are well-defined and are positive and finite.

The main result of this section is the following.
\begin{theorem}[Potential well structure]\label{T:Pstruct}
Suppose that $p>2+\frac4d$. 
Let $\Phi \in \mathcal{G}$.
For $\delta\in(0,1)$ there exists $\tilde{\delta}=\tilde{\delta}(d,p,\delta)>0$ such that
if  ${\bf u} \in (H^1(\R^d))^N$ is nonzero and satisfies
\[
	E({\bf u}) M({\bf u})^{\frac{1-s_c}{s_c}} \le (1-\delta)E(\Phi) M(\Phi)^{\frac{1-s_c}{s_c}}
\]
then the followings are true:
\begin{enumerate}
\item $V({\bf u})\neq0$;
\item 
if $V({\bf u})>0$ then 
	$H({\bf u}) M({\bf u})^{\frac{1-s_c}{s_c}} <  \mathfrak{I}_3$ and $
	V({\bf u}) \ge \tilde{\delta} H({\bf u})$;
\item if  $V({\bf u})<0$ then
$H({\bf u}) M({\bf u})^{\frac{1-s_c}{s_c}} >  \mathfrak{I}_3$ and
	$V({\bf u}) M({\bf u})^{\frac{1-s_c}{s_c}} \le -\tilde{\delta} \mathfrak{I}_3.$
\end{enumerate}
\end{theorem}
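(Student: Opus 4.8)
The plan is to pass to the scaling-invariant functionals
\[
	\widetilde E({\bf u}) := E({\bf u})M({\bf u})^{\frac{1-s_c}{s_c}},\quad \widetilde H({\bf u}) := H({\bf u})M({\bf u})^{\frac{1-s_c}{s_c}},\quad \widetilde V({\bf u}) := V({\bf u})M({\bf u})^{\frac{1-s_c}{s_c}},\quad \widetilde G({\bf u}) := G({\bf u})M({\bf u})^{\frac{1-s_c}{s_c}},
\]
all invariant under ${\bf u}\mapsto \mu^{2/(p-2)}{\bf u}(\mu\,\cdot)$, and to set $A:=\frac{d(p-2)}4=\frac{d}{d-2s_c}$, so that the mass-supercritical hypothesis $p>2+\frac4d$ is exactly $A>1$. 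Since $V=2H+\frac{2d}{d-2s_c}G=2H+2AG$ and $E=H+G$, one has the purely algebraic identity $V=2A\,E-2(A-1)H$, hence
\[
	\widetilde V = 2A\,\widetilde E-2(A-1)\widetilde H,\qquad \sign V = \sign\Big(\widetilde E-\tfrac{A-1}{A}\widetilde H\Big).
\]
This single relation will drive the whole trichotomy.

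Next I would record Theorem \ref{T:sGN} in invariant form. A direct bookkeeping of exponents (using the identity $\frac p2-\frac{d(p-2)}4=(A-1)\frac{1-s_c}{s_c}$) turns the sharp Gagliardo--Nirenberg inequality into
\[
	-\widetilde G\le C_{\mathrm{GN}}\,\widetilde H^{\,A},
\]
with equality if and only if ${\bf u}$ is a constant multiple of a ground state. Consequently $\widetilde E=\widetilde H+\widetilde G\ge \widetilde H-C_{\mathrm{GN}}\widetilde H^{A}=:\Psi(\widetilde H)$, where $\Psi$ is strictly concave with maximum at $\widetilde H=(AC_{\mathrm{GN}})^{-1/(A-1)}$. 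To identify $\mathfrak{I}_3$, note that any ${\bf u}\neq0$ with $V({\bf u})\le0$ satisfies $-\widetilde G\ge\frac1A\widetilde H$, which combined with $-\widetilde G\le C_{\mathrm{GN}}\widetilde H^A$ forces $\widetilde H\ge (AC_{\mathrm{GN}})^{-1/(A-1)}$; the ground state $\Phi$ satisfies $V(\Phi)=0$ and realizes equality in Gagliardo--Nirenberg, so it attains this bound. Hence
\[
	\mathfrak{I}_3=\widetilde H(\Phi)=(AC_{\mathrm{GN}})^{-1/(A-1)},\qquad \widetilde E(\Phi)=\tfrac{A-1}{A}\widetilde H(\Phi)=\tfrac{A-1}{A}\mathfrak{I}_3,
\]
and the hypothesis becomes $\widetilde E({\bf u})\le(1-\delta)\frac{A-1}{A}\mathfrak{I}_3$.

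The three assertions then follow. If $V=0$, the sign identity gives $\widetilde E=\frac{A-1}{A}\widetilde H$, while Gagliardo--Nirenberg forces $\widetilde H\ge\mathfrak{I}_3$; thus $\widetilde E\ge\frac{A-1}{A}\mathfrak{I}_3=\widetilde E(\Phi)$, contradicting the hypothesis, which proves (1). If $V>0$, then $\widetilde E>\frac{A-1}{A}\widetilde H$, so the threshold yields $\widetilde H<(1-\delta)\mathfrak{I}_3<\mathfrak{I}_3$; moreover $\widetilde V\ge 2\widetilde H\big(1-AC_{\mathrm{GN}}\widetilde H^{A-1}\big)\ge 2\big(1-(1-\delta)^{A-1}\big)\widetilde H$, which unscales to $V\ge\tilde\delta H$. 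If $V<0$, then $\widetilde E<\frac{A-1}{A}\widetilde H$ together with Gagliardo--Nirenberg forces $\widetilde H>\mathfrak{I}_3$, and $\widetilde V=2A\widetilde E-2(A-1)\widetilde H\le 2(A-1)\big((1-\delta)\mathfrak{I}_3-\widetilde H\big)<-2(A-1)\delta\,\mathfrak{I}_3$, i.e. $V({\bf u})M({\bf u})^{\frac{1-s_c}{s_c}}\le-\tilde\delta\mathfrak{I}_3$. Taking $\tilde\delta:=\min\{2(1-(1-\delta)^{A-1}),\,2(A-1)\delta\}$, which depends only on $d,p,\delta$, yields all the stated bounds.

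The genuinely substantive point is the identification $\mathfrak{I}_3=(AC_{\mathrm{GN}})^{-1/(A-1)}=\widetilde H(\Phi)$: it is here that the value of the sharp constant and the equality case of Theorem \ref{T:sGN} are indispensable, and I expect this to be the main obstacle to present without circularity. Everything else is the exponent computation producing the invariant Gagliardo--Nirenberg inequality, together with the sign and monotonicity bookkeeping built on the concavity of $\Psi$.
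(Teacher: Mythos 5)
Your proof is correct, and it takes a genuinely different route to the key point. The paper identifies $\mathfrak{I}_3$ through two auxiliary variational lemmas: first it introduces $\mathfrak{I}_2(\omega)$ and proves $\mathfrak{I}_2(\omega)=\mathfrak{I}(\omega)$ by a chain of scaling normalizations (reducing the constraint $V=0$ to $K_\omega=0$ on the slice $H=\tfrac{d}{2(1-s_c)}\omega M$), and then it computes $\mathfrak{I}_3$ explicitly from $\mathfrak{I}_2(1)=\mathfrak{I}(1)$ and the formula of Lemma \ref{L:domega}; the sharp Gagliardo--Nirenberg inequality (Theorem \ref{T:sGN}) enters only in part (2). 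You instead bypass both lemmas and read off $\mathfrak{I}_3=\widetilde H(\Phi)=(AC_{\mathrm{GN}})^{-1/(A-1)}$ in a few lines from Theorem \ref{T:sGN} (sharp constant plus equality case) together with Pohozaev's identity $V(\Phi)=0$, after which the trichotomy is pure algebra in the scaling-invariant variables via $\widetilde V=2A\widetilde E-2(A-1)\widetilde H$. Your concern about circularity is unfounded: Theorem \ref{T:sGN} is established in Section \ref{S:main}, independently of Section \ref{S:Pstruct}, and the paper itself already invokes it inside the proof of part (2), so your heavier reliance on it is legitimate. What the two routes buy: the paper's detour also produces the explicit value of $\mathfrak{I}_3$ in terms of $g_{\min}$ and $\|Q\|_{L^2}$ and characterizes the minimizers of $\mathfrak{I}_2$ and $\mathfrak{I}_3$ as exactly $\mathcal{G}$ (facts recorded in the surrounding remark), which your argument does not give; your route is shorter and self-contained, and it has one small advantage in precision --- in part (3) you get the strict inequality $H({\bf u})M({\bf u})^{\frac{1-s_c}{s_c}}>\mathfrak{I}_3$ directly from the strict bound $-\widetilde G>\tfrac1A\widetilde H$ combined with Gagliardo--Nirenberg, whereas the paper's proof only writes the non-strict $\ge$ coming from the definition of $\mathfrak{I}_3$ even though the statement asserts strictness. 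Your exponent bookkeeping ($\tfrac p2-\tfrac{d(p-2)}4=(A-1)\tfrac{1-s_c}{s_c}$, $A-1=\tfrac{2s_c}{d-2s_c}$) checks out, and your final constant $\tilde\delta=\min\{2(1-(1-\delta)^{A-1}),\,2(A-1)\delta\}$ depends only on $d,p,\delta$, as required.
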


\begin{remark}
Let $\Phi \in \mathcal{G}$.
A computation shows that
\begin{equation*}
E(\Phi) M(\Phi)^{\frac{1-s_c}{s_c}}
=	  \tfrac{s_c}{1-s_c} (\tfrac12(-g_{\min})^{s_c-\frac{d}2} \|Q\|_{L^2}^2)^{\frac{1}{s_c}}.
\end{equation*}
In particular, this quantity is independent of the choice of $\Phi$.
Further, it will turn out that
\[
	\mathfrak{I}_3=H(\Phi) M(\Phi)^{\frac{1-s_c}{s_c}}= \tfrac{d}{2(1-s_c)} (\tfrac12(-g_{\min})^{s_c-\frac{d}2} \|Q\|_{L^2}^2)^{\frac{1}{s_c}}.
\]
\end{remark}

To prove the theorem, let us begin with the study of $\mathfrak{I}_2(\omega)$.
\begin{lemma}
For $\omega>0$, $\mathfrak{I}_2(\omega) = \mathfrak{I}(\omega)$. Further, the minimizer to $\mathfrak{I}_2(\omega)$ is $\mathcal{G}_\omega$.
\end{lemma}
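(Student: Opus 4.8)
The plan is to recognize $V$ as the mass-preserving ($L^2$) scaling derivative of the energy and to play this scaling against the amplitude/Nehari scaling that defines $\mathfrak{I}$. First I would record the algebraic normalizations. Since $s_c=\frac d2-\frac2{p-2}$, one has $d-2s_c=\frac4{p-2}$, hence $\frac{2d}{d-2s_c}=\frac{d(p-2)}2$ and $\frac{2s_c}d=1-\frac4{d(p-2)}$. Writing ${\bf u}_\lambda:=\lambda^{d/2}{\bf u}(\lambda\,\cdot)$, one gets $M({\bf u}_\lambda)=M({\bf u})$, $H({\bf u}_\lambda)=\lambda^2 H({\bf u})$ and $G({\bf u}_\lambda)=\lambda^{d(p-2)/2}G({\bf u})$, so that $\frac{d}{d\lambda}E({\bf u}_\lambda)=\lambda^{-1}V({\bf u}_\lambda)$ and in particular $V({\bf u})=\frac{d}{d\lambda}E({\bf u}_\lambda)\big|_{\lambda=1}$. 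The crucial bookkeeping identity is that on the Pohozaev set $\{V=0\}$ one has $G=-\frac4{d(p-2)}H$, whence $S_\omega({\bf u})=\frac{2s_c}d H({\bf u})+\omega M({\bf u})$; i.e. the $\mathfrak{I}_2$-functional agrees with $S_\omega$ exactly on $\{V=0\}$.

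The inequality $\mathfrak{I}_2(\omega)\le\mathfrak{I}(\omega)$ is then immediate: any $\Phi\in\mathcal{G}_\omega=\mathcal{M}_\omega$ satisfies Pohozaev's identity $V(\Phi)=0$ (as used already in the proof of Theorem \ref{T:sGN}), so $\Phi$ is admissible for $\mathfrak{I}_2$ and the functional there equals $S_\omega(\Phi)=\mathfrak{I}(\omega)$.

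For the reverse inequality I would argue in two scaling steps. Given ${\bf u}\neq0$ with $V({\bf u})\le0$, one has $H({\bf u})>0$ and therefore $G({\bf u})\le-\frac4{d(p-2)}H({\bf u})<0$. Since $d(p-2)/2>2$ when $p>2+\frac4d$, the map $\lambda\mapsto V({\bf u}_\lambda)=\lambda^2\big(2H({\bf u})+\tfrac{d(p-2)}2\lambda^{\frac{d(p-2)}2-2}G({\bf u})\big)$ is positive for small $\lambda$ and nonpositive at $\lambda=1$, so there is $\lambda_0\in(0,1]$ with $V({\bf u}_{\lambda_0})=0$; because $\lambda_0\le1$ and $s_c>0$, $\frac{2s_c}d H({\bf u}_{\lambda_0})+\omega M({\bf u}_{\lambda_0})\le\frac{2s_c}d H({\bf u})+\omega M({\bf u})$, reducing matters to a point ${\bf v}:={\bf u}_{\lambda_0}$ on $\{V=0\}$, where the functional equals $S_\omega({\bf v})$. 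Now comes the key point: along the orbit $\lambda\mapsto S_\omega({\bf v}_\lambda)$ one has $\frac{d}{d\lambda}S_\omega({\bf v}_\lambda)=\lambda^{-1}V({\bf v}_\lambda)$ with $V({\bf v}_\lambda)=2H({\bf v})(\lambda^2-\lambda^{d(p-2)/2})$, which is positive for $\lambda<1$ and negative for $\lambda>1$; hence $\lambda=1$ is the \emph{strict maximum} of $S_\omega$ along the orbit. On the other hand $K_\omega({\bf v}_\lambda)\to2\omega M({\bf v})>0$ as $\lambda\to0$ and $\to-\infty$ as $\lambda\to\infty$, so some ${\bf v}_{\lambda_1}$ lies in $\mathcal{K}_\omega$, giving $S_\omega({\bf v})\ge S_\omega({\bf v}_{\lambda_1})\ge\mathfrak{I}(\omega)$. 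Combining the two steps yields $\frac{2s_c}d H({\bf u})+\omega M({\bf u})\ge\mathfrak{I}(\omega)$, hence $\mathfrak{I}_2(\omega)\ge\mathfrak{I}(\omega)$ and equality follows.

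For the identification of minimizers, $\mathcal{G}_\omega$ consists of minimizers by the easy inequality above. Conversely, if ${\bf u}_*$ attains $\mathfrak{I}_2(\omega)$, then equality must propagate through both scaling steps: the strict monotonicity of $\lambda\mapsto V({\bf u}_{*,\lambda})/\lambda^2$ forces $\lambda_0=1$, i.e. $V({\bf u}_*)=0$; and since the maximum of $S_\omega$ along the orbit is attained \emph{only} at $\lambda=1$ yet also equals $\mathfrak{I}(\omega)=S_\omega({\bf u}_{*,\lambda_1})$, uniqueness of the maximizer forces $\lambda_1=1$, i.e. $K_\omega({\bf u}_*)=0$. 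Thus ${\bf u}_*\in\mathcal{K}_\omega$ with $S_\omega({\bf u}_*)=\mathfrak{I}(\omega)$, so ${\bf u}_*\in\mathcal{M}_\omega=\mathcal{G}_\omega$ by Lemma \ref{L:MequalG}. The main obstacle is purely one of orientation—keeping straight which scaling lowers the $\mathfrak{I}_2$-functional and which renders the Pohozaev point the maximum of $S_\omega$ along its orbit—rather than any analytic difficulty, since no new compactness beyond Lemma \ref{L:compactness} is required.
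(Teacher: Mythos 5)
Your proof is correct, but it takes a genuinely different route for the crucial inequality $\mathfrak{I}_2(\omega)\ge\mathfrak{I}(\omega)$. The paper also first reduces $\{V\le 0\}$ to the Pohozaev set $\{V=0\}$ (using the amplitude scaling $c\mapsto c{\bf u}$ rather than your mass-preserving scaling, an immaterial difference), but then it introduces a \emph{second} scaling, ${\bf u}_\lambda=\lambda^{\frac d2-s_c}{\bf u}(\lambda\cdot)$, which leaves $\{V=0\}$ invariant, and minimizes $S_\omega$ along it to land on the normalized slice $H=\tfrac{d}{2(1-s_c)}\omega M$; on that slice the algebraic identity $K_\omega=\tfrac1{s_p}V$ makes the Pohozaev and Nehari constraints literally coincide, which yields both the equality of infima and (by tracking where equality can hold) the identification of minimizers. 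You instead stay with the single mass-preserving orbit ${\bf u}_\lambda=\lambda^{d/2}{\bf u}(\lambda\cdot)$ and use the classical mountain-pass observation that a Pohozaev point is the strict maximum of $S_\omega$ along its orbit, together with the fact that any orbit with $G<0$ crosses $\mathcal{K}_\omega$ (since $K_\omega({\bf v}_\lambda)\to 2\omega M>0$ as $\lambda\to0$ and $\to-\infty$ as $\lambda\to\infty$), so that $S_\omega({\bf v})\ge S_\omega({\bf v}_{\lambda_1})\ge\mathfrak{I}(\omega)$. Your equality-propagation argument for the minimizers (strict monotonicity of $V({\bf u}_{*,\lambda})/\lambda^2$ forcing $V({\bf u}_*)=0$, strictness of the orbit maximum forcing $K_\omega({\bf u}_*)=0$, then Lemma \ref{L:MequalG}) is a more explicit version of what the paper disposes of by saying the claim follows "from the proofs" of its three displayed identities. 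What each approach buys: yours avoids the auxiliary normalization and the proportionality identity $K_\omega=s_p^{-1}V$, relying only on standard orbit comparisons, and it makes the uniqueness-of-maximizer mechanism behind the minimizer identification transparent; the paper's normalization, by contrast, collapses the two variational problems onto a common constraint set, which is slightly more economical once the identity is checked. Neither route needs compactness beyond what is already in Lemma \ref{L:compactness}, and both use Pohozaev's identity for ground states in the easy direction, exactly as in the proof of Theorem \ref{T:sGN}.
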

\begin{proof}
Let us first claim that
\begin{equation}\label{E:d2pf1}
	\mathfrak{I}_2(\omega) = \inf \{ S_\omega ({\bf u}) \ |\ {\bf u}\neq0, \,  V({\bf u}) = 0\}.
\end{equation}
Note that $\tfrac{2s_c}{d}H({\bf u}) + \omega M({\bf u})=S_\omega ({\bf u}) - \frac{d-2s_c}{2d}V ({\bf u})$.
Hence,
\begin{align*}
	\mathfrak{I}_2(\omega) ={}& \inf \{ S_\omega ({\bf u})- \tfrac{d-2s_c}{2d}V ({\bf u}) \ |\ {\bf u}\neq0, \,  V({\bf u}) \le 0\}\\
	\le {}& \inf \{ S_\omega ({\bf u})- \tfrac{d-2s_c}{2d}V ({\bf u}) \ |\ {\bf u}\neq0, \,  V({\bf u}) = 0\}
	= \inf \{ S_\omega ({\bf u}) \ |\ {\bf u}\neq0, \,  V({\bf u}) = 0\}.
\end{align*}
We prove the opposite inequality.
Pick ${\bf u}\neq0$ such that $V({\bf u}) \le 0$. There exists $c_0 \in (0,1]$ such that $V(c_0 {\bf u})=0$. Hence,
\[
	\tfrac{2s_c}{d} H({\bf u}) + \omega M({\bf u}) \ge 
	\tfrac{2s_c}{d} H(c_0{\bf u}) + \omega M(c_0{\bf u}) = S_\omega(c_0{\bf u}) \ge 
	\inf \{ S_\omega ({\bf u}) \ |\ {\bf u}\neq0, \,  V({\bf u}) = 0\}.
\]
Taking the infimum with respect to ${\bf u}$, we obtain the desired inequality.

We next claim
\begin{equation}\label{E:d2pf2}
	\inf \{ S_\omega ({\bf u}) \ |\ {\bf u}\neq0, \,  V({\bf u}) = 0\}
	=\inf \{ S_\omega ({\bf u}) \ |\ {\bf u}\neq0, \,  V({\bf u}) = 0,\, H({\bf u}) = \tfrac{d}{2(1-s_c)}\omega M({\bf u})\}.
\end{equation}
The inequality ``$\le$'' is obvious by definition. We prove the opposite inequality.
Pick ${\bf u}\neq0$ such that $V({\bf u}) = 0$. Define ${\bf u}_\lambda := \lambda^{\frac{d}2-s_c} {\bf u}(\lambda \cdot)$.
Then, one sees that $V({\bf u}_\lambda) = 0$ for any $\lambda>0$. Further,
\[
	S_\omega ({\bf u}_\lambda) = \lambda^{2(1-s_c)}  \tfrac{2s_c}{d}H({\bf u})  + \lambda^{-2s_c} \omega M({\bf u}).
\]
Let $\lambda_0>0$ be the minimum point of the function $\lambda \mapsto S_\omega ({\bf u}_\lambda)$.
A computation shows that $H({\bf u}_{\lambda_0}) = \tfrac{d}{2(1-s_c)}\omega M({\bf u}_{\lambda_0})$.
Hence,
\[
	S_\omega ({\bf u}) \ge S_\omega ({\bf u}_{\lambda_0}) \ge \inf \{ S_\omega ({\bf u}) \ |\ {\bf u}\neq0, \,  V({\bf u}) = 0,\, H({\bf u}) = \tfrac{d}{2(1-s_c)}\omega M({\bf u})\}.
\]
Taking the infimum with respect to ${\bf u}$, we obtain the desired inequality.

Let us now prove 
\begin{equation}\label{E:d2pf3}
	\inf \{ S_\omega ({\bf u}) \ |\ {\bf u}\neq0, \,  V({\bf u}) = 0,\, H({\bf u}) = \tfrac{d}{2(1-s_c)}\omega M({\bf u})\}
	= \mathfrak{I}(\omega).
\end{equation}
We first note that the constraint $H({\bf u}) = \tfrac{d}{2(1-s_c)}\omega M({\bf u})$ implies $K_\omega ({\bf u})=\frac1{s_p}V({\bf u})$. Hence,
\begin{multline*}
	\inf \{ S_\omega ({\bf u}) \ |\ {\bf u}\neq0, \,  V({\bf u}) = 0,\, H({\bf u}) = \tfrac{d}{2(1-s_c)}\omega M({\bf u})\}\\
	=\inf \{ S_\omega ({\bf u}) \ |\ {\bf u}\neq0, \,  K_\omega({\bf u}) = 0,\, H({\bf u}) = \tfrac{d}{2(1-s_c)}\omega M({\bf u})\}
	\ge \mathfrak{I}(\omega).
\end{multline*}
Hence, the inequality ``$\ge$'' holds.
To see the other inequality, it suffices to see that a minimizer $\Phi \in \mathcal{G}_\omega$ to $\mathfrak{I}(\omega)$
satisfies $H(\Phi)= \tfrac{d}{2(1-s_c)}\omega M(\Phi)$.

Combining \eqref{E:d2pf1}, \eqref{E:d2pf2}, and \eqref{E:d2pf3}, we obtain the desired identity $\mathfrak{I}_2(\omega)=\mathfrak{I}(\omega)$.
The identity shows that any element in $\mathcal{G}_\omega$ is a minimizer of $\mathfrak{I}_2(\omega)$.
On the other hand, we see from the proofs of \eqref{E:d2pf1}, \eqref{E:d2pf2}, and \eqref{E:d2pf3} that
a minimizer ${\bf v}$ of $\mathfrak{I}_2(\omega)$ satisfies $V({\bf v}) = K_\omega({\bf v})=0$ and $\mathfrak{I}(\omega)= S_{\omega}({\bf v})$. Hence, ${\bf v} \in \mathcal{M}_\omega = \mathcal{G}_\omega$.
\end{proof}
Now, we turn to the study of $\mathfrak{I}_3$.
\begin{lemma}
$\mathfrak{I}_3=\tfrac{d}{2(1-s_c)} (\tfrac12(-g_{\min})^{s_c-\frac{d}{2}} \|Q\|_{L^2}^2)^{\frac{1}{s_c}}$. The set of the minimizers to $\mathfrak{I}_3$ is $\mathcal{G}$.
\end{lemma}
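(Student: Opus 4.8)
The plan is to reduce everything to the preceding lemma, which identifies $\mathfrak{I}_2(\omega)=\mathfrak{I}(\omega)$ and its minimizer set with $\mathcal{G}_\omega$, together with the explicit value $\mathfrak{I}(\omega)=C_0\,\omega^{1-s_c}$ coming from Lemma \ref{L:domega}, where $C_0:=\tfrac{1}{2(1-s_c)}\|Q\|_{L^2}^2(-g_{\min})^{s_c-\frac d2}$. No new compactness argument should be needed: the whole statement will follow from the already solved problems $\mathfrak{I}_2(\omega)$ by an optimization in $\omega$.

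First I would prove the lower bound. Fix any nonzero $\mathbf{u}$ with $V(\mathbf{u})\le0$. Since $\mathbf{u}$ is then admissible for $\mathfrak{I}_2(\omega)$ for \emph{every} $\omega>0$, the preceding lemma gives $\tfrac{2s_c}{d}H(\mathbf{u})+\omega M(\mathbf{u})\ge \mathfrak{I}_2(\omega)=C_0\,\omega^{1-s_c}$ for all $\omega>0$. I then treat this as a one-parameter family of inequalities and take a Legendre transform in $\omega$: maximizing $g(\omega):=C_0\,\omega^{1-s_c}-\omega M(\mathbf{u})$ over $\omega>0$, whose maximizer is $\omega_*=(C_0(1-s_c)/M(\mathbf{u}))^{1/s_c}$, yields $M(\mathbf{u})\,\omega_*\le \tfrac{2(1-s_c)}{d}H(\mathbf{u})$. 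Substituting the value of $\omega_*$ and rearranging the powers (using $s_c\in(0,1)$) produces $H(\mathbf{u})M(\mathbf{u})^{(1-s_c)/s_c}\ge \tfrac{d}{2(1-s_c)}\big(C_0(1-s_c)\big)^{1/s_c}$; since $C_0(1-s_c)=\tfrac12(-g_{\min})^{s_c-\frac d2}\|Q\|_{L^2}^2$, the right-hand side is exactly the claimed value.

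Next I would obtain the matching upper bound together with attainment by a direct evaluation. For any $\Phi\in\mathcal{G}=\bigcup_{\omega>0}\mathcal{G}_\omega$, Pohozaev's identity gives $V(\Phi)=0$, so $\Phi$ is admissible; and the balance relations recorded in (the proof of) the preceding lemma, namely $H(\Phi)=\tfrac{d}{2(1-s_c)}\omega M(\Phi)$ and $S_\omega(\Phi)=\tfrac2d H(\Phi)=\tfrac1{1-s_c}\omega M(\Phi)=\mathfrak{I}(\omega)$, let me write $H(\Phi)=\tfrac d2 C_0\,\omega^{1-s_c}$ and $M(\Phi)=(1-s_c)C_0\,\omega^{-s_c}$. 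A one-line computation then shows that the $\omega$-dependence cancels and $H(\Phi)M(\Phi)^{(1-s_c)/s_c}=\tfrac{d}{2(1-s_c)}\big(C_0(1-s_c)\big)^{1/s_c}$, the same constant as in the lower bound. Hence $\mathfrak{I}_3$ equals the asserted value and is attained on $\mathcal{G}$.

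Finally, for the characterization of the minimizer set the $(\Leftarrow)$ inclusion is precisely the computation just performed. For $(\Rightarrow)$ I would track the equality case of the lower bound: equality in $H(\mathbf{u})M(\mathbf{u})^{(1-s_c)/s_c}\ge\tfrac{d}{2(1-s_c)}(C_0(1-s_c))^{1/s_c}$ forces the inequality $\tfrac{2s_c}{d}H(\mathbf{u})+\omega_* M(\mathbf{u})\ge\mathfrak{I}_2(\omega_*)$ to be an equality at the single balancing frequency $\omega=\omega_*$, i.e.\ $\mathbf{u}$ realizes $\mathfrak{I}_2(\omega_*)$, whence $\mathbf{u}\in\mathcal{G}_{\omega_*}\subset\mathcal{G}$ by the preceding lemma. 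The point that must be handled with care is exactly this equality bookkeeping: one has to check that the Legendre-transform step is reversible, so that equality in the scale-invariant inequality corresponds \emph{exactly} to $\mathbf{u}$ minimizing $\mathfrak{I}_2$ at $\omega_*$. The remaining exponent arithmetic (and the conversion between the $s_p$- and $s_c$-weights so as to land on the stated constant) is routine but should be written out carefully.
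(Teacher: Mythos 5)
Your proof is correct, and it rests on the same two inputs as the paper's: the preceding lemma identifying $\mathfrak{I}_2(\omega)=\mathfrak{I}(\omega)$ with minimizer set $\mathcal{G}_\omega$, and the explicit formula $\mathfrak{I}(\omega)=C_0\,\omega^{1-s_c}$ from Lemma \ref{L:domega}. The mechanism for the lower bound, however, is dual to the paper's. The paper takes an $\varepsilon$-near minimizer $\Psi$ of $\mathfrak{I}_3$, normalizes $V(\Psi)=0$, applies the $L^2$-critical rescaling $\Psi_\lambda=\lambda^{\frac d2-s_c}\Psi(\lambda\cdot)$ (which preserves both the constraint and the product $HM^{\frac{1-s_c}{s_c}}$), chooses $\lambda$ so that $H(\Psi_\lambda)=\tfrac{d}{2(1-s_c)}M(\Psi_\lambda)$, and then compares with $\mathfrak{I}_2(1)$. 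You instead keep the test function fixed and use the full one-parameter family of inequalities $\tfrac{2s_c}{d}H({\bf u})+\omega M({\bf u})\ge\mathfrak{I}_2(\omega)$, optimizing in $\omega$; your balancing frequency $\omega_*$ plays exactly the role of the paper's balancing $\lambda$, and the two maneuvers are equivalent (rescaling in $x$ at fixed $\omega$ corresponds to varying $\omega$ at a fixed function, thanks to the homogeneity $\mathfrak{I}_2(\omega)=C_0\omega^{1-s_c}$). What your route buys is twofold: the inequality is established pointwise for every admissible ${\bf u}$, so no near-minimizer/$\varepsilon$ bookkeeping is needed; and the equality case is transparent --- equality in the product bound forces $H({\bf u})=\tfrac{d}{2(1-s_c)}\omega_*M({\bf u})$, hence equality in $\tfrac{2s_c}{d}H({\bf u})+\omega_*M({\bf u})\ge\mathfrak{I}_2(\omega_*)$, hence ${\bf u}\in\mathcal{G}_{\omega_*}\subset\mathcal{G}$ by the preceding lemma. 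This supplies a complete proof of the minimizer characterization, which the paper dismisses with ``easily obtained; we omit the details.'' Your upper bound (substituting $\Phi\in\mathcal{G}$, using Pohozaev's identity $V(\Phi)=0$ and the balance relations $S_\omega(\Phi)=\tfrac2dH(\Phi)=\tfrac{1}{1-s_c}\omega M(\Phi)=\mathfrak{I}(\omega)$) coincides with the paper's, and the constant arithmetic checks out: $C_0(1-s_c)=\tfrac12(-g_{\min})^{s_c-\frac d2}\|Q\|_{L^2}^2$, so $\tfrac{d}{2(1-s_c)}(C_0(1-s_c))^{\frac1{s_c}}$ is the stated value.
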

\begin{proof}
By substituting an element $\Phi \in \mathcal{G}$, we obtain
\[
	\mathfrak{I}_3 \le H(\Phi) M(\Phi)^{\frac{1-s_c}{s_c}}
	= \tfrac{d}{2(1-s_c)} (\tfrac12(-g_{\min})^{s_c-\frac{d}{2}} \|Q\|_{L^2}^2)^{\frac{1}{s_c}}.
\]
Let us prove the opposite inequality.
For any $\varepsilon>0$, there exists $\Psi \neq 0$ such that  $V(\Psi)\le 0$ and
\[
	H(\Psi) M(\Psi)^{\frac{1-s_c}{s_c}} \le \mathfrak{I}_3 + \varepsilon.
\]
One may suppose that $V(\Psi)=0$ by replacing $\Psi$ with $c\Psi$ so that
$V(c\Psi)=0$ with a suitable $c\in (0,1)$ if $V(\Psi)<0$.
For $\lambda>0$, we let $\Psi_\lambda = \lambda^{\frac{d}2 - s_c} \Psi (\lambda \cdot)$.
Then, one has $V(\Psi_\lambda)=0$ and  
\[
 H(\Psi_\lambda) M(\Psi_\lambda)^{\frac{1-s_c}{s_c}}= H(\Psi) M(\Psi)^{\frac{1-s_c}{s_c}}
 \]
for any $\lambda>0$.
We now choose $\lambda$ so that the identity
\[
	H(\Psi_\lambda) = \tfrac{d}{2(1-s_c)}  M(\Psi_\lambda)
\]
holds. In this case, we have $\tfrac{2s_c}{d}H(\Psi_\lambda) +  M(\Psi_\lambda)=\frac2dH(\Psi_\lambda)$
and hence
\begin{align*}
	(\mathfrak{I}_3+\varepsilon)^{s_c} \ge{}& H(\Psi_\lambda)^{s_c} M(\Psi_\lambda)^{1-s_c}\\
	={}&  (\tfrac{d}{2(1-s_c)} )^{s_c-1} \tfrac{d}2(\tfrac{2s_c}{d} H(\Psi_\lambda) +  M(\Psi_\lambda))\\
	\ge{}& (\tfrac{d}{2(1-s_c)} )^{s_c-1} \tfrac{d}2 \mathfrak{I}_2(1)\\
	={}&  (\tfrac{d}{2(1-s_c)})^{s_c} \tfrac12(-g_{\min})^{s_c-\frac{d}2} \|Q\|_{L^2}^2,
\end{align*}
where we have used the explicit value of $\mathfrak{I}_2(1)=\mathfrak{I}(1)$ given in Lemma \ref{L:domega} to obtain the last line.
Since $\varepsilon>0$ is arbitrary, we obtain the desired inequality.

The minimizer to $\mathfrak{I}_3$ is easily obtained. We omit the details.
\end{proof}

\begin{proof}[Proof of Theorem \ref{T:Pstruct}]
We first remark that 
\begin{equation}\label{E:Pstructpf0}
E(\Phi) M(\Phi)^{\frac{1-s_c}{s_c}}
={} \tfrac{2s_c}{d} \mathfrak{I}_3 
={}	  \tfrac{s_c}{1-s_c} (\tfrac12(-g_{\min})^{s_c-\frac{d}{2}} \|Q\|_{L^2}^2)^{\frac{1}{s_c}}
\end{equation}
holds for any $\Phi \in \mathcal{G}$. In particular, the value is independent of the choice of $\Phi$.

(1) 
To prove the first assertion, it suffices to show that $V({\bf u})=0$ and ${\bf u}\neq0$ imply
\begin{equation}\label{E:Pstructpf1}
E({\bf u}) M({\bf u})^{\frac{1-s_c}{s_c}} \ge E(\Phi) M(\Phi)^{\frac{1-s_c}{s_c}}.
\end{equation}
Pick nonzero ${\bf u} \in (H^1(\R^d))^N$ such that $V ({\bf u})=0$.
Then, one has
$H({\bf u}) M({\bf u})^{\frac{1-s_c}{s_c}} \ge  \mathfrak{I}_3$ by definition of $\mathfrak{I}_3$.
Further, $V ({\bf u})=0$ gives us $E ({\bf u})=\frac{2s_c}{d}H ({\bf u})$.
Thus, by means of the first identity of \eqref{E:Pstructpf0}, we obtain \eqref{E:Pstructpf1}.

(2) $V ({\bf u})>0$ is equivalent to $E ({\bf u})>\frac{2s_c}{d} H ({\bf u})$. Hence,
\[
	H({\bf u}) M({\bf u})^{\frac{1-s_c}{s_c}} < \tfrac{d}{2s_c} E ({\bf u}) M({\bf u})^{\frac{1-s_c}{s_c}} \le 
	(1-\delta)\mathfrak{I}_3
\]
by assumption and \eqref{E:Pstructpf0}.
Since $\Phi \in \mathcal{G}$ attains the sharp Gagliardo-Nirenberg inequality (Theorem \ref{T:sGN}), 
we have
\begin{align*}
	-G({\bf u}) \le{}& \tfrac{-G(\Phi)}{H(\Phi)^{ps_p/2} M(\Phi)^{p(1-s_p)/2}}
	H({\bf u})^{\frac{ps_p}{2}} M({\bf u})^{\frac{p(1-s_p)}{2}}\\
	= &{} \tfrac{d-2s_c}{d} \(\tfrac{H({\bf u}) M({\bf u})^{\frac{1-s_c}{s_c}}}{H(\Phi) M(\Phi)^{\frac{1-s_c}{s_c }}}\)^{\frac{ps_p}{2}-1}
	H({\bf u})\\
	<&{} \tfrac{d-2s_c}{d}(1-\delta)^{\frac{2s_c}{d-2s_c}} H({\bf u}).
\end{align*}
where we have used $V(\Phi)=0$ to obtain the second line.
Hence,  there exists $\tilde{\delta}>0$ such that
\[
	V ({\bf u})=2H ({\bf u})+\tfrac{2d}{d-2s_c} G({\bf u}) > 2(1-(1-\delta)^{\frac{2s_c}{d-2s_c}}) H({\bf u})
	\ge \tilde{\delta} H({\bf u})
\]
as desired.

(3)  $V ({\bf u})<0$ implies that
$
	H({\bf u}) M({\bf u})^{\frac{1-s_c}{s_c}} \ge  \mathfrak{I}_3
$
and 
\begin{align*}
	H({\bf u}) M({\bf u})^{\frac{1-s_c}{s_c}} < \tfrac{d}{d-2s_c } (-G({\bf u})) M({\bf u})^{\frac{1-s_c}{s_c}}
	 = \tfrac{d }{2s_c} (2E({\bf u})-V({\bf u})) M({\bf u})^{\frac{1-s_c}{s_c}}.
\end{align*}
Combining these two inequalities and using the assumption, one obtains
\begin{align*}
	\mathfrak{I}_3 <{}&
	- \tfrac{d }{2s_c} V({\bf u}) M({\bf u})^{\frac{1-s_c}{s_c}}
		+\tfrac{d }{s_c} E({\bf u}) M({\bf u})^{\frac{1-s_c}{s_c}}\\
		\le {}& 
		- \tfrac{d }{2s_c} V({\bf u}) M({\bf u})^{\frac{1-s_c}{s_c}}
		+(1-\delta) \mathfrak{I}_3,
\end{align*}
which gives us the desired upper bound.
\end{proof}

\section{Blowup results and instability of ground states}\label{S:instability}

In this section, we prove instability result (Theorem \ref{T:instability}).
We split the proof into two parts: the case ${\bf n}=(1,\dots,1)$ and the case $d\ge2$ and $p\le 6$. 
Notice that $2^*\le 6$ for $d\ge 3$. Hence, $p\le 6$ is weaker than the energy-subcritical condition \eqref{E:prange}
for $d\ge3$.
\subsection{The case ${\bf n}=(1,\dots,1)$}
Let us first consider the case ${\bf n}=(1,\dots,1)$.
We need the assumption only for $d=1$ or $d=2$ and $p>6$.
However, the proof here works for all $d\ge1$ and $p\in (2,2^*)$.
We exploit the pseudo-conformal transformation in the
mass-critical case $p=2+\frac4d$ to obtain an explicit blowup solution.
The key ingredient in the case $2+\frac4d<p<2^*$ is the virial identity.

\begin{proof}[Proof of Theorem \ref{T:instability} when ${\bf n}=(1,\dots,1)$]
Fix $\omega>0$.
Pick $\Phi \in \mathcal{G}_{\omega}$.
Note that ${\bf u}(t):=e^{i\omega t}\Phi$ is a solution to \eqref{E:gNLS}.
We apply the pseudo-conformal transform to this solution:
For any $b>0$, we define ${\bf v}(t) $ by the formula
\[
	{\bf v}(t,x) := (1-b^{-2} t)^{-\frac{d}2} {\bf u}\(\tfrac{t}{1-b^{-2}t},\tfrac{x}{1-b^{-2}t} \)
	e^{-i \frac{|x|^2}{4(b^2-t)}}.
\]
Then, ${\bf v}(t)$ is also a solution to \eqref{E:gNLS}. Further, ${\bf v}(t)$ blows up at $t=b^2$.
Further, one sees that
\[
	\|{\bf u}(0) -{\bf v}(0) \|_{(H^1(\R^d))^N}
	=\| (e^{-i {|x|^2}/{4b^2}}-1)\Phi \|_{(H^1(\R^d))^N} \to 0
\]
as $b\to\infty$. 

Let us proceed to the case $p>2+\frac4d$.
Pick $\omega>0$ and
$\Phi \in  \mathcal{G}_\omega$.
Let $\Phi_c := c^{\frac{d}2} \Phi (c \cdot)$ and
 define a function $f: \R_+ \to \R$
by
\[
	f(c) := E(\Phi_c) M(\Phi_c)^{\frac{1-s_c}{s_c}} = M(\Phi)^{\frac{1-s_c}{s_c}} (c^2 H(\Phi) + c^{\frac{2d}{d-2s_c}} G(\Phi))
\]
One see that
$
	V(\Phi_c)
	=M(\Phi)^{-\frac{1-s_c}{s_c}} cf'(c).
$
Further,
by using $-\frac{d}{d-2s_c}G(\Phi)=H(\Phi)=\frac{d}{2(1-s_c)} \omega M(\Phi)$,
 one obtains
\[
	f(c) = M(\Phi)^{\frac{1}{s_c}} \omega\tfrac{d}{2(1-s_c)} ( c^2- \tfrac{d-2s_c}{d}c^{\frac{2d}{d-2s_c}}).
\]
One verifies that $f$ takes its minimum at $c=1$
and that $f'(c)<0$ for $c>1$.

We now let ${\bf u}_0:=\Phi_c$ for $c>1$. We remark that
$\| {\bf u}_0- \Phi \|_{(H^1(\R^d))^N} \to 0$
as $c\downarrow 1$. Fix $c>1$.
Then, $f(c)<f(1)$ implies that there exists $\delta\in(0,1)$ such that
\[
	E({\bf u}_0) M({\bf u}_0)^{\frac{1-s_c}{s_c}} \le (1-\delta)E(\Phi) M(\Phi)^{\frac{1-s_c}{s_c}}
\]
holds. Further, $f'(c)<0$ give us $V({\bf u}_0) < 0$.
Hence, we deduce from Theorem \ref{T:Pstruct} that
\[
	V({\bf u}_0) \le - \tilde\delta \mathfrak{I}_3 M({\bf u}_0)^{-\frac{1-s_c}{s_c}}
\]
Let ${\bf u}(t)$ be the $H^1$-solution to \eqref{E:gNLS} under ${\bf u}(0)={\bf u}_0$.
Since the mass and the energy are conserved under the \eqref{E:gNLS}-flow, we have
\[
	E({\bf u}(t)) M({\bf u}(t))^{\frac{1-s_c}{s_c}} \le (1-\delta)E(\Phi) M(\Phi)^{\frac{1-s_c}{s_c}}
\]
and
\[
	V({\bf u}(t)) \le - \tilde\delta \mathfrak{I}_3 M({\bf u}_0)^{-\frac{1-s_c}{s_c}}=: - \delta'<0
\]
for all $t\in I_{\max}$.

As $|x|{\bf u}_0 \in (L^2(\R^d))^N$, we see that $|x|{\bf u}(t) \in (L^2(\R^d))^N$ for all $t\in I_{\max}$ by the standard argument (see \cite{CazBook}).
By the virial identity, one sees that
\[
	\frac{d^2}{dt^2}\sum_{j=1}^N \int_{\R^d} |x|^2  |u_j(x)|^2 dx
	= 8 V({\bf u}(t))\le - 8 \delta'.
\]
In one hand, we have $\sum_{j=1}^N \int_{\R^d} |x|^2  |u_j(x)|^2 dx \ge 0$ for all $t\in I_{\max}$.
On the other hand, we see that its second derivative is bounded by a negative constant from above.
Thees two yield a contradiction if ${\bf u}(t)$ exists globally for $t>0$ or $t<0$.
 \end{proof}

\subsection{The case $d\ge 2$ and $p\le 6$}
Let us turn to the proof for the case $d\ge 2$ and $p\le 6$.
In this case, we do not need any assumption on the vector ${\bf n}$.
\subsubsection{Blowup results for radial solutions}
We obtain blowup-type results for radial solutions to \eqref{E:gNLS} by using
the argument in \cite{IKN} (See also \cite{DF,NP,NP2}).
For the mass-supercritical case $p>2+\frac4d$, we have the following.
\begin{theorem}\label{T:blowup}
Let $d\ge2$, $2+\frac4d< p<2^*$, and $p \le 6$. 
Suppose Assumptions \ref{A:1} and \ref{A:2}, with the same vector ${\bf n}$ as in \eqref{E:gNLS}.
Assume that $g_{\min}<0$.
If a radial $H^1$-solution ${\bf u}(t) $ on $I_{\max}\ni 0$ to \eqref{E:gNLS} satisfies
\[
	\sup_{t\in [0, T_{\max})} V({\bf u}(t)) <0
\]
then $T_{\max}<\infty$.
\end{theorem}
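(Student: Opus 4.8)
The plan is to run a localized (cut-off) virial argument of Ogawa--Tsutsumi type, since a radial $H^1$-datum need not have finite variance and the clean identity used in the mass-resonant case $\mathbf{n}=(1,\dots,1)$ is unavailable here. First I would fix a radial cut-off $\phi_R(x)=R^2\psi(|x|/R)$ with $\psi$ smooth, $\psi(r)=r^2$ for $r\le1$, $\psi$ constant for $r\ge2$, and $\psi''(r)\le \psi'(r)/r\le 2$ throughout, so that the Hessian of $\phi_R$ is bounded above by $2$. Define the localized virial quantity
\[
	I_R(t):=\sum_{j=1}^N\int_{\R^d}\phi_R(x)\,|u_j(t,x)|^2\,dx,
\]
which is finite and, by the standard approximation procedure, of class $C^2$ on $I_{\max}$. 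Differentiating once and inserting the equation, the nonlinear contribution is $2\sum_j n_j\int \phi_R\,\Im(\overline{u_j}F_j(\mathbf{u}))\,dx$, and this \emph{vanishes} because $\phi_R$ is common to all components while $\sum_{j=1}^N n_j\Im(F_j(\mathbf{z})\overline{z_j})=0$; it is exactly this identity that dictates the weight $1$ on each component. Hence $I_R'(t)=2\sum_{j=1}^N n_j\int_{\R^d}\nabla\phi_R\cdot\Im(\overline{u_j}\nabla u_j)\,dx$.

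Next I would differentiate a second time, using the equation again for each $\partial_t\Im(\overline{u_j}\nabla u_j)$. On $|x|\le R$, where $\phi_R=|x|^2$, the computation reproduces the virial functional: the kinetic part yields $\sum_j n_j^2\|\nabla u_j\|_{L^2}^2$ (weighted by $n_j^2$, since passing through the equation produces one factor $n_j$ each time), while the nonlinear part telescopes, via $F_j=\tfrac2p\partial_{\overline{z_j}}g$, the chain rule, and Euler's relation for the $p$-homogeneous $g$, into the $G$-term. When $\mathbf{n}=(1,\dots,1)$ this is precisely $8V(\mathbf{u}(t))$; for general $\mathbf{n}$ it is an $\mathbf{n}$-weighted analogue. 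The remaining contributions are supported on $R\le|x|\le2R$: a kinetic remainder carrying the nonpositive coefficients $\phi_R''-2\le0$ and $\phi_R'/r-2\le0$, which has a \emph{favorable} sign and may be discarded in an upper bound; a term $\int_{|x|\ge R}\Delta^2\phi_R\,|u_j|^2\le CR^{-2}M(\mathbf{u})$; and a nonlinear remainder bounded by $C\int_{|x|\ge R}|\mathbf{u}|^p\,dx$.

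The heart of the matter, and the source of the restrictions $d\ge2$ and $p\le6$, is the control of this last remainder. Here I would invoke the radial Strauss estimate $\|u_j\|_{L^\infty(|x|\ge R)}\lesssim R^{-(d-1)/2}\|u_j\|_{L^2}^{1/2}\|\nabla u_j\|_{L^2}^{1/2}$ (valid for $d\ge2$) to get
\[
	\int_{|x|\ge R}|\mathbf{u}|^p\,dx\lesssim R^{-\frac{(d-1)(p-2)}2}\,M(\mathbf{u})^{\frac{p-2}4+1}\Big(\textstyle\sum_j\|\nabla u_j\|_{L^2}^2\Big)^{\frac{p-2}4}.
\]
Since $p\le6$ forces $\tfrac{p-2}4\le1$, Young's inequality absorbs the factor $(\sum_j\|\nabla u_j\|_{L^2}^2)^{(p-2)/4}$ into a small multiple of $\sum_j\|\nabla u_j\|_{L^2}^2$ plus a constant that is harmless because $M(\mathbf{u})$ is conserved and the prefactor $R^{-(d-1)(p-2)/2}\to0$ (for $p=6$ one uses directly that this prefactor is small). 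Combining with the favorable sign of the kinetic remainder produces, for $R$ large, a differential inequality $I_R''(t)\le\theta\,V(\mathbf{u}(t))+o_R(1)$ with $\theta>0$. Finally, the hypothesis $\sup_{t\in[0,T_{\max})}V(\mathbf{u}(t))<0$ gives a uniform bound $V(\mathbf{u}(t))\le-\delta_0<0$, and together with the coercive lower bound on $-V$ in terms of the kinetic energy furnished by the potential-well structure of Theorem \ref{T:Pstruct} (which keeps the absorbed gradient term under control and bounds $\|\nabla\mathbf{u}(t)\|_{L^2}$ away from $0$), one concludes $I_R''(t)\le-c<0$ for all $t\in[0,T_{\max})$. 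Since $I_R(t)\ge0$, a global solution would force $I_R(t)\to-\infty$, a contradiction, whence $T_{\max}<\infty$.

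I expect the main obstacle to be twofold and to sit in the last two paragraphs. First, reconciling the $\mathbf{n}$-weighted main term with the unweighted functional $V$ appearing in the hypothesis: this is immediate for $\mathbf{n}=(1,\dots,1)$ but in general requires working with the weighted virial and extracting its negativity (and the requisite coercivity) from $\sup_t V<0$ via the conservation laws and the well structure, with careful bookkeeping of the $n_j^2$ factors. Second, the quantitative absorption of the radial nonlinear remainder when $\|\nabla\mathbf{u}(t)\|_{L^2}$ may grow in time, which is exactly the point where $d\ge2$, $p\le6$, and the coercive negativity are indispensable and where the arguments of \cite{IKN,DF,NP,NP2} are needed.
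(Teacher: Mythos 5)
Your proposal contains a genuine gap, and it sits exactly where you flag "careful bookkeeping": the reconciliation of the $\mathbf{n}$-weighted second derivative with the unweighted functional $V$. If you start from the localized mass $I_R(t)=\sum_j\int\phi_R|u_j|^2dx$, then (as you correctly compute) $I_R'(t)=2\sum_j n_j\int\nabla\phi_R\cdot\Im(\overline{u_j}\nabla u_j)\,dx$, and differentiating once more through the equation produces a factor $n_j$ on \emph{each} term, so $I_R''$ carries weights $n_j^2$ on both the kinetic and the nonlinear parts. The nonlinear part is then $\sum_j n_j^2\bigl(\Re(\overline{F_j}\nabla u_j)-\Re(\overline{u_j}\nabla F_j)\bigr)$, and the telescoping you invoke rests on the identity $\sum_j\Re(\overline{F_j}\nabla u_j)=\tfrac1p\nabla_x\bigl[g({\bf u})\bigr]$, which holds only for the \emph{unweighted} sum (it uses $F_j=\tfrac2p\partial_{\overline{z_j}}g$ and the chain rule across all components simultaneously). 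With weights $n_j^2$ this sum is not a gradient of anything built from $g$, so $I_R''$ is neither ``$8V$ plus errors'' nor any quantity controlled by $V$, $E$, $M$; this is precisely the mass-resonance obstruction, and it cannot be repaired by bookkeeping. Since the entire point of Theorem \ref{T:blowup} is to treat general ${\bf n}$ (the case ${\bf n}=(1,\dots,1)$ is handled separately by the exact virial identity), your scheme fails on the cases the theorem is actually for. A secondary issue: you invoke Theorem \ref{T:Pstruct} for coercivity, but its hypothesis $E({\bf u})M({\bf u})^{\frac{1-s_c}{s_c}}\le(1-\delta)E(\Phi)M(\Phi)^{\frac{1-s_c}{s_c}}$ is not assumed in Theorem \ref{T:blowup}; the only given is $\sup_t V({\bf u}(t))<0$.

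The paper resolves the obstruction by a structurally different choice: it abandons the localized mass altogether and takes as the primary object the $n_j^{-1}$-weighted localized momentum $J(t)=2\int\nabla\chi\cdot\sum_j n_j^{-1}\Im(\overline{u_j}\nabla u_j)\,dx$, so that a \emph{single} time derivative cancels all factors of $n_j$ and yields the unweighted expression of Lemma \ref{L:lvi}, hence $J'\le 8V({\bf u})+\text{errors}$ (Lemma \ref{L:lvi2}, with the same radial Sobolev and Young steps you propose, and the same trade of $\epsilon H$ for $V$ and $E$ via $\tfrac{4s_c}{d-2s_c}H=\tfrac{2d}{d-2s_c}E-V$). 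The price is that $J$ is \emph{not} the derivative of a nonnegative quantity, so your final concavity step ($I_R\ge0$, $I_R''\le-c$) is unavailable. The paper substitutes a two-stage ODE argument: from $J'\le-4\delta$ and $|J|\lesssim RM({\bf u})^{1/2}H({\bf u})^{1/2}$ one gets $H({\bf u}(t))\gtrsim t^2$; re-running the virial inequality and absorbing then gives $J'(t)\le-cH({\bf u}(t))$ for large $t$, and setting $\xi(t)=\int_{t_1}^tH({\bf u}(s))ds$ one obtains $\xi^2\lesssim\xi'$, a Riccati-type inequality whose solutions blow up in finite time, contradicting $T_{\max}=\infty$. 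Without this replacement for the concavity argument, even a corrected version of your main term computation would not close the proof.
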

As for the mass-critical case $p=2+\frac4d$, we have a blowup or grow-up result.
We remark that $s_c=0$ and $V({\bf u})=2E({\bf u})$ hold in this case.

\begin{theorem}\label{T:grow-up}
Let $d\ge2$ and $p=2+\frac4d$. 
Suppose Assumptions \ref{A:1} and \ref{A:2}, with the same vector ${\bf n}$ as in \eqref{E:gNLS}.
Assume that $g_{\min}<0$.
If a radial $H^1$-solution ${\bf u}(t) $ on $I_{\max}\ni 0$ to \eqref{E:gNLS} satisfies
$E({\bf u}) <0$
then 
\begin{equation}\label{E:grow-up}
	\varlimsup_{t\uparrow T_{\max}} H({\bf u}(t)) = \infty.
\end{equation}
\end{theorem}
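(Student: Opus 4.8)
The plan is to argue by contradiction using a localized (radial) virial identity, exactly as in the companion blowup result (Theorem \ref{T:blowup}); the mass-critical exponent only simplifies the bookkeeping. First I would record the two structural facts special to $p=2+\frac4d$: here $s_c=0$, so that $V({\bf u})=2E({\bf u})$, and since the energy is conserved along the \eqref{E:gNLS}-flow the hypothesis $E({\bf u})<0$ gives the \emph{constant} sign
\[
	V({\bf u}(t))=2E({\bf u}(t))=2E({\bf u}_0)<0\qquad(t\in I_{\max}).
\]
Next I would set up the contradiction. Suppose the conclusion fails, i.e. $\varlimsup_{t\uparrow T_{\max}}H({\bf u}(t))<\infty$, so that $\sup_{t\in[0,T_{\max})}H({\bf u}(t))=:C_0<\infty$. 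Mass conservation then bounds $\sup_{t}\|{\bf u}(t)\|_{(H^1(\R^d))^N}$ by a finite constant $K$, and the $H^1$ blowup alternative of the local theory (\cite{CazBook}) forces $T_{\max}=\infty$: otherwise $\|{\bf u}(t)\|_{(H^1(\R^d))^N}\to\infty$, contradicting the bound. Thus it suffices to rule out a radial, global solution with $\sup_{t\ge0}\|{\bf u}(t)\|_{(H^1(\R^d))^N}\le K$ and $E({\bf u}_0)<0$.

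For the virial itself I would fix a radial cutoff $\chi$ with $\chi(x)=|x|^2$ for $|x|\le1$, $\chi$ constant for $|x|\ge2$ and $\chi''\le2$ throughout, put $\chi_R(x)=R^2\chi(x/R)$, and study
\[
	I_R(t):=\sum_{j=1}^N\int_{\R^d}\chi_R(x)\,|u_j(t,x)|^2\,dx\ge0.
\]
Differentiating twice and invoking the localized virial computation of \cite{IKN} (see also \cite{DF,NP,NP2}), one obtains an identity whose leading part is controlled from above by a positive multiple of $V({\bf u}(t))$, plus an error supported in $\{|x|>R\}$. The point is that every localization error has either a good sign or a small size: for radial ${\bf u}$ the Hessian correction to the kinetic part is $\le0$ because $\chi''\le2$, the biharmonic term is $O(R^{-2}M({\bf u}))=O(R^{-2})$, and the nonlinear remainder is dominated by $\int_{|x|>R}|{\bf u}(t)|^p\,dx$. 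Here the radial Strauss estimate $|{\bf u}(t,x)|\lesssim|x|^{-(d-1)/2}\|{\bf u}(t)\|_{(H^1(\R^d))^N}$, valid for $d\ge2$, together with $\|{\bf u}(t)\|_{(H^1(\R^d))^N}\le K$ and $p\le6$, makes this tail $\le C(K)\,R^{-(d-1)(p-2)/2}$, \emph{uniformly in $t$}. Consequently there is $c>0$ with
\[
	I_R''(t)\le c\,E({\bf u}_0)+o_R(1)\qquad(t\ge0),
\]
where $o_R(1)\to0$ as $R\to\infty$ uniformly in $t$.

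Finally I would close the argument. Since $E({\bf u}_0)<0$, choosing $R$ large enough yields $I_R''(t)\le\tfrac12 c\,E({\bf u}_0)=:-\kappa<0$ for all $t\ge0$. Integrating twice gives $I_R(t)\le I_R(0)+I_R'(0)t-\kappa t^2\to-\infty$, which contradicts $I_R(t)\ge0$. Hence $\sup_{t}H({\bf u}(t))=\infty$, i.e. \eqref{E:grow-up} holds. The main obstacle, and the only genuinely delicate step, is the uniform-in-time virial inequality of the middle paragraph in the non-mass-resonant setting: for general ${\bf n}$ the conserved energy carries the \emph{unweighted} kinetic energy $H$, whereas the second time-derivative of $I_R$ produces the component weights $n_j$ (this is precisely why the clean identity $I''=8V$ available for ${\bf n}=(1,\dots,1)$ cannot be used here), so one must verify that the weighted leading term is still dominated by $E({\bf u}_0)$ while the cross terms outside $B_R$ remain negligible. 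This is exactly what the radial localized virial of \cite{IKN} supplies, and it is where the hypotheses $d\ge2$ and $p\le6$ are consumed; the same computation proves Theorem \ref{T:blowup} in the mass-supercritical range, with the constant sign of $V({\bf u}(t))$ there coming from the potential-well structure of Theorem \ref{T:Pstruct} instead of from energy conservation.
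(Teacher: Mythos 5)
Your overall skeleton (argue by contradiction, use the blowup alternative to reduce to $T_{\max}=\infty$, run a localized radial virial estimate with the Strauss tail bound, take $R$ large) is the same as the paper's, but the functional you differentiate is the wrong one, and this is exactly where the non-mass-resonant difficulty you flag in your last paragraph becomes fatal rather than delicate. You work with the localized variance $I_R(t)=\sum_j\int\chi_R|u_j|^2\,dx$ and claim, citing \cite{IKN}, a uniform-in-time inequality $I_R''(t)\le c\,E({\bf u}_0)+o_R(1)$. For general ${\bf n}$ no such inequality is available. Indeed, using \eqref{E:gNLS} and the pointwise gauge identity $\sum_j n_j\Im(F_j\overline{u_j})=0$ one finds
\[
	I_R'(t)=2\int_{\R^d}\nabla\chi_R\cdot\sum_{j=1}^N n_j\,\Im(\overline{u_j}\nabla u_j)\,dx ,
\]
so the momentum densities enter with weights $n_j$; differentiating once more brings another factor $n_j$ from the flow, so both the kinetic and the nonlinear flux terms in $I_R''$ carry weights $n_j^2$. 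The step that produces $V({\bf u})$ (that is, $2E$ here) as the leading term is the recombination of the nonlinear flux into $\int[\Delta\chi_R]\,g({\bf u})\,dx$, and it rests on the identity $\sum_j\Re\bigl(\overline{F_j}\nabla u_j\bigr)=\tfrac1p\nabla\bigl[g({\bf u})\bigr]$, which holds only when the components are summed with \emph{equal} weights. With unequal weights $n_j^2$ the nonlinear part of $I_R''$ is neither sign-controlled nor expressible through conserved quantities, so the convexity argument collapses; and \cite{IKN} does not repair this, because their point (and the paper's) is precisely to abandon the variance.

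What the paper does instead is to work with the first-order weighted momentum
\[
	J(t):=2\int_{\R^d}(\nabla \chi)\cdot\sum_{j=1}^N n_j^{-1}\,\Im(\overline{u_j}\nabla u_j)\,dx ,
\]
whose \emph{first} derivative (not second) has the clean virial structure (Lemma \ref{L:lvi}), since the inverse weights $n_j^{-1}$ cancel the factors $n_j$ produced by the flow before the nonlinearity is summed. Lemma \ref{L:lvi2} then gives, for radial solutions and $p=2+\tfrac4d$,
\[
	J'(t)\le 16E({\bf u}) + C_1R^{-2}M({\bf u}) + C_2R^{-\frac{(d-1)(p-2)}{2}}M({\bf u})^{\frac{p+2}4}H_{\max}^{\frac{p-2}4},
\]
so that $J'(t)\le 8E({\bf u})<0$ once $R$ is large. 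Since $J$ is not the derivative of a nonnegative quantity, no convexity is used; instead one observes that under your standing assumption $\sup_{t\ge0}H({\bf u}(t))=H_{\max}<\infty$ one has the a priori bound $|J(t)|\lesssim R\,M({\bf u})^{1/2}H_{\max}^{1/2}$, while $J(t)\le J(0)+8E({\bf u})\,t\to-\infty$ linearly in $t$ — a contradiction. To repair your proposal, replace $I_R$ by $J$ and replace the double integration by this linear-growth-versus-boundedness argument; as written, the key middle inequality of your proof is false for general ${\bf n}$.
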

A standard blowup alternative and the mass conservation law show that \eqref{E:grow-up} is a necessary condition for $T_{\max}<\infty$.
In this sense, Theorem \ref{T:grow-up} is weaker than a blowup result.

For the proof, we introduce
\[
	J(t) := 2\int_{\R^d} (\nabla \chi)(x) \sum_{j=1}^N  n_j^{-1}  \Im (\overline{u_j} \nabla u_j)(t,x) dx
\]
with a real-valued function $\chi \in C_0^\infty (\R^d)$ to be specified later. 
\begin{lemma}[Localized virial identity]\label{L:lvi}
For an $H^1$-solution ${\bf u}(t)$ to \eqref{E:gNLS}, one has
\[
	J'(t) = 4 \sum_{k,m=1}^d \int_{\R^d} [\partial_{k} \partial_{m} \chi] \sum_{j=1}^N\Re (\overline{\partial_{k} u_j} \partial_{m} u_j) dx - \int_{\R^d} [\Delta \Delta \chi] \sum_{j=1}^N |u_j|^2 dx + \tfrac{p-2}{p} \int_{\R^d}
	[\Delta \chi] g({\bf u}) dx.
\]
\end{lemma}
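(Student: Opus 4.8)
The plan is to perform a localized virial (Morawetz-type) computation: differentiate $J(t)$ in $t$, insert the equation \eqref{E:gNLS}, and reorganize by integration by parts. Writing the system as $\partial_t u_j = i n_j(\Delta u_j - F_j({\bf u}))$ and using $\Im(\pm i w)=\pm\Re w$, a direct computation of the time derivative of each weighted momentum density gives
\[
\partial_t\Im(\overline{u_j}\,\partial_k u_j) = n_j\left[-\Re(\Delta\overline{u_j}\,\partial_k u_j)+\Re(\overline{u_j}\,\partial_k\Delta u_j)+\Re(\overline{F_j}\,\partial_k u_j)-\Re(\overline{u_j}\,\partial_k F_j)\right].
\]
The crucial structural point is that the right-hand side is exactly proportional to $n_j$, so the weights $n_j^{-1}$ in the definition of $J$ cancel. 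Hence $J'(t)$ reduces to a sum of single-equation virial expressions and is insensitive to the choice of ${\bf n}$; this is precisely what makes the localized virial available without a mass-resonance assumption. I would then treat the two \emph{dispersive} terms and the two \emph{nonlinear} terms separately.

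For the dispersive part I multiply by $2\partial_k\chi$, sum over $j$ and $k$, and integrate by parts, writing $\Delta=\sum_m\partial_m^2$ and repeatedly transferring derivatives onto $\chi$ while discarding the purely imaginary contributions that integrate to zero (the compact support of $\chi$ kills all boundary terms). This is the classical free-Schr\"odinger stress-tensor computation and yields
\[
4\sum_{k,m=1}^d\int_{\R^d}[\partial_k\partial_m\chi]\sum_{j=1}^N\Re(\overline{\partial_k u_j}\,\partial_m u_j)\,dx-\int_{\R^d}[\Delta\Delta\chi]\sum_{j=1}^N|u_j|^2\,dx.
\]
It carries the bulk of the routine algebra but no conceptual difficulty, and I would carry it out componentwise.

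For the nonlinear part I invoke Assumption \ref{A:1}. From $F_j=\tfrac2p\partial_{\overline{z_j}}g$ and the identity $\partial_{z_j}g=\tfrac p2\overline{F_j}$, the chain rule gives $\partial_k g({\bf u})=p\sum_{j=1}^N\Re(\overline{F_j}\,\partial_k u_j)$, while differentiating the homogeneity relation \eqref{E:g-homo} at $r=1$ (Euler's identity) gives $\Re\sum_{j=1}^N\overline{u_j}F_j({\bf u})=g({\bf u})$. Combining these with the product-rule relation $\partial_k g({\bf u})=\sum_j\Re(\overline{F_j}\,\partial_k u_j)+\sum_j\Re(\overline{u_j}\,\partial_k F_j)$ yields the pointwise identity
\[
\sum_{j=1}^N\left[\Re(\overline{F_j}\,\partial_k u_j)-\Re(\overline{u_j}\,\partial_k F_j)\right]=-\tfrac{p-2}{p}\,\partial_k g({\bf u}).
\]
Multiplying by $2\partial_k\chi$, summing over $k$, and integrating by parts once more produces the nonlinear contribution $\tfrac{2(p-2)}{p}\int_{\R^d}[\Delta\chi]\,g({\bf u})\,dx$, completing the identity. (As a consistency check, the choice $\chi=|x|^2$ with ${\bf n}=(1,\dots,1)$ reproduces the global virial identity $\tfrac{d^2}{dt^2}\sum_j\int_{\R^d}|x|^2|u_j|^2\,dx=8V({\bf u})$ used in the previous subsection, which fixes the constant in front of $\int[\Delta\chi]g({\bf u})\,dx$.)

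The main obstacle is not any single algebraic step but two matters of rigor: the careful bookkeeping of real parts and signs through the several integrations by parts in the dispersive term, and the justification of differentiating $J$ under the integral sign for a solution of merely $H^1$ regularity. The latter I would handle by the standard regularization scheme—approximate ${\bf u}_0$ by smooth, sufficiently decaying data, establish the identity for the resulting regular solutions where all manipulations are licit, and pass to the limit using the local well-posedness and continuous dependence of \eqref{E:gNLS}. The compact support of $\chi$ makes this routine, since it removes every boundary term and localizes the weights.
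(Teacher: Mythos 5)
Your proposal is correct, and it is precisely the ``direct computation'' that the paper's one-line proof invokes: differentiate $J$, insert $\partial_t u_j = in_j(\Delta u_j - F_j)$, observe that the factor $n_j$ cancels the weight $n_j^{-1}$ (this is indeed the structural point that frees the localized virial identity from any mass-resonance hypothesis), run the standard stress-tensor integrations by parts for the dispersive part, and reduce the nonlinear part to the pointwise identity $\sum_j[\Re(\overline{F_j}\,\partial_k u_j) - \Re(\overline{u_j}\,\partial_k F_j)] = -\tfrac{p-2}{p}\,\partial_k g({\bf u})$ via the chain rule $\partial_k g({\bf u}) = p\sum_j\Re(\overline{F_j}\,\partial_k u_j)$ and Euler's identity $\Re\sum_j\overline{u_j}F_j({\bf u}) = g({\bf u})$. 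All of these identities check out, and your approximation argument for $H^1$ solutions is the standard and appropriate way to make the differentiation rigorous.

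One substantive point deserves emphasis: your computation yields the nonlinear coefficient $\tfrac{2(p-2)}{p}$, whereas the lemma as printed has $\tfrac{p-2}{p}$. Your value is the correct one; the printed statement carries a factor-of-$2$ typo. This is confirmed both by your own consistency check (with $\chi = |x|^2$ and ${\bf n}=(1,\dots,1)$ one must recover $\frac{d^2}{dt^2}\sum_j\int_{\R^d}|x|^2|u_j|^2\,dx = 8V({\bf u})$, whose nonlinear part is $4d(p-2)G({\bf u}) = \tfrac{2(p-2)}{p}\int_{\R^d} 2d\, g({\bf u})\,dx$, while the printed coefficient would give only half of this) and by the paper's own application of the lemma in Lemma \ref{L:lvi2}, where the decomposition $J'(t) = 8V({\bf u}(t)) + \mathcal{R}_1 + \mathcal{R}_2 + \mathcal{R}_3$ is possible only with the coefficient $\tfrac{2(p-2)}{p}$ (the same factor of $2$ is then also missing from $\mathcal{R}_3$ there, harmlessly, since $\mathcal{R}_3$ is only estimated in absolute value). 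So your argument in fact establishes the corrected statement, and no step of it needs repair.
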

\begin{proof}
Due to a direct computation.
\end{proof}

Let $R>0$ and choose $\chi$ as follows:
\[
	\chi(x) = R^2\chi_0(R^{-1} |x|), 
\]
where $\chi_0 \in C^\infty (\R)$ satisfies 
$\chi_0 (r) = r^2 $ for $r\le 1$, $\chi_0'(r) \le 2r$ for $1\le r \le 2$, $\chi_0'(r) =0$ for $r\ge 2$, and $\chi_0''(r)\le 2$ for all $r\in \R$. Note that $\chi(x) = |x|^2$ for $|x| \le R$ and hence $\Delta \chi = 2d$ 
for $|x|\le R$.
Further, we have the bound
$ \| \Delta \chi\|_{L^\infty}+ R^2 \|\Delta \Delta \chi\|_{L^\infty} \lesssim_{\chi_0} 1  $
for any $R>0$.

We first claim the following.
\begin{lemma}\label{L:lvi2}
Let ${\bf u}(t)$ be a radial $H^1$-solution to \eqref{E:gNLS} on $I \subset \R$.
There exist positive constants $C_1$ and $C_2$ depending on $d,p,\chi_0, g|_{\partial B}$ such that
\[
	J'(t) \le 8V({\bf u}(t)) + R^{-2} C_1 M({\bf u}) + C_2 R^{-\frac{(d-1)(p-2)}{2}} M({\bf u})^{\frac{p+2}4} H({\bf u}(t))^{\frac{p-2}4}
\]
holds for all $t\in I$ and $R>0$.
\end{lemma}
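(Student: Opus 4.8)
The plan is to substitute the explicit cut-off $\chi$ into the localized virial identity of Lemma \ref{L:lvi} and to peel off, as the leading term, the exact virial $8V({\bf u}(t))$, which is precisely what Lemma \ref{L:lvi} yields when $\chi(x)=|x|^2$ globally (the identity $\frac{d^2}{dt^2}\sum_{j=1}^N\int_{\R^d}|x|^2|u_j|^2\,dx=8V({\bf u})$ already exploited in the case ${\bf n}=(1,\dots,1)$). By construction $\chi(x)=|x|^2$ on $\{|x|\le R\}$, so there $\partial_k\partial_m\chi=2\delta_{km}$, $\Delta\chi=2d$, and $\Delta\Delta\chi=0$; consequently the difference $J'(t)-8V({\bf u}(t))$ is a sum of three integrals, each integrand supported in $\{|x|>R\}$: a Hessian term carrying the weight $\partial_k\partial_m\chi-2\delta_{km}$, a biharmonic term carrying $\Delta\Delta\chi$, and a nonlinear term carrying $(\Delta\chi-2d)\,g({\bf u})$. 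The whole task is then to bound these three remainders from above by the two error terms in the statement.

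The Hessian remainder is where radiality enters decisively. Since $\chi$ is radial and, for a radial $u_j$, the gradient $\nabla u_j$ is purely radial, the Hessian contraction collapses to
\[
	\sum_{k,m=1}^d \partial_k\partial_m\chi\,\Re(\overline{\partial_k u_j}\,\partial_m u_j) = \chi''(|x|)\,|\nabla u_j|^2 ,
\]
the angular part of the Hessian dropping out identically. Writing $\chi(r)=R^2\chi_0(r/R)$ gives $\chi''(r)=\chi_0''(r/R)\le 2$ for every $r$ by the choice of $\chi_0$, so the factor $\chi''-2$ is nonpositive and the entire Hessian remainder, namely $4\int_{\R^d}(\chi''-2)\sum_{j=1}^N|\nabla u_j|^2\,dx$, is $\le 0$ and may be discarded. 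This sign-definiteness, available only for radial data, is what compensates for the absence of the mass-resonance structure.

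For the biharmonic remainder I would use the bound $\|\Delta\Delta\chi\|_{L^\infty}\lesssim_{\chi_0}R^{-2}$ together with $\sum_{j=1}^N\int_{\R^d}|u_j|^2\,dx=2M({\bf u})$ to get a contribution $\le C_1 R^{-2}M({\bf u})$. For the nonlinear remainder I would use $|\Delta\chi-2d|\lesssim_{\chi_0}1$ and the homogeneity bound $|g({\bf z})|\le(\max_{\partial B}|g|)\,|{\bf z}|^p$, reducing the estimate to $\int_{|x|>R}|{\bf u}|^p\,dx$ with $|{\bf u}|=(\sum_{j=1}^N|u_j|^2)^{1/2}$. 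The radial Sobolev (Strauss) inequality $\|{\bf u}\|_{L^\infty(|x|>R)}\lesssim R^{-(d-1)/2}\|{\bf u}\|_{L^2}^{1/2}\|\nabla{\bf u}\|_{L^2}^{1/2}$, valid for $d\ge2$, then gives
\[
	\int_{|x|>R}|{\bf u}|^p\,dx \le \|{\bf u}\|_{L^\infty(|x|>R)}^{p-2}\int_{\R^d}|{\bf u}|^2\,dx \lesssim R^{-\frac{(d-1)(p-2)}2}\,M({\bf u})^{\frac{p+2}4}\,H({\bf u}(t))^{\frac{p-2}4},
\]
after rewriting $\|{\bf u}\|_{L^2}$ and $\|\nabla{\bf u}\|_{L^2}$ in terms of $M$ and $H$. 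Collecting the three bounds yields the lemma, with $C_1,C_2$ depending only on $d,p,\chi_0$, and $\max_{\partial B}|g|$.

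The main obstacle, and the only genuinely nontrivial point, is the double use of radial symmetry: it simultaneously renders the Hessian remainder sign-definite (so it can be thrown away rather than absorbed) and, through the Strauss inequality, supplies the decay factor $R^{-(d-1)(p-2)/2}$ that localizes the nonlinear term. Everything else — the extraction of the exact $8V({\bf u}(t))$ (consistent with the exponent $\frac{2d}{d-2s_c}=\frac{d(p-2)}2$ appearing in $V$) and the bookkeeping of the constants — is routine. The argument follows the localized virial scheme of \cite{IKN}.
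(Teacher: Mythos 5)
Your proposal is correct and takes essentially the same route as the paper's proof: the same decomposition $J'(t)=8V({\bf u}(t))+\mathcal{R}_1+\mathcal{R}_2+\mathcal{R}_3$, with $\mathcal{R}_1\le 0$ by radiality and $\chi_0''\le 2$, the biharmonic remainder bounded by $R^{-2}M({\bf u})$, and the nonlinear tail controlled via the homogeneity bound $|g({\bf z})|\le(\sup_{\partial B}|g|)\,|{\bf z}|^p$ together with the radial Sobolev (Strauss) inequality, yielding $M({\bf u})^{\frac{p+2}{4}}H({\bf u})^{\frac{p-2}{4}}R^{-\frac{(d-1)(p-2)}{2}}$.
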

\begin{proof}
Since ${\bf u}(t)$ is a radial solution, one sees from Lemma \ref{L:lvi} that
\[
	J'(t) = 8V({\bf u}(t)) + \mathcal{R}_1 + \mathcal{R}_2 + \mathcal{R}_3 ,
\]
where
\[
	\mathcal{R}_1 = 4 \int_{\R^d} (\chi_0''(R^{-2}|x|) -2) \sum_{j=1}^N |\nabla u_j|^2 dx , \quad
	\mathcal{R}_2 = - \int_{\R^d} [\Delta \Delta \chi] \sum_{j=1}^N |u_j|^2 dx ,
\]
and
\[
	\mathcal{R}_3 = \tfrac{p-2}{p} \int_{\R^d}
	[\Delta \chi - 2d] g({\bf u}) dx .
\]
By the choice of $\chi_0$, one has $\mathcal{R}_1 \le 0$. Thanks to the bound on $\Delta\Delta\chi$, one finds
\[
	|\mathcal{R}_2| \le 2\|\Delta\Delta \chi\|_{L^\infty} M({\bf u}) \lesssim_{\chi_0}  R^{-2} M({\bf u}).
\]
Let $\rho:= (\sum_{j=1}^2 |u_j|^2)^{1/2}$.
Recalling the properties of $\Delta\chi$, one has
\[
	|\mathcal{R}_3|   \lesssim_{\chi_0}  
	\int_{|x|\ge R}  |g({\bf u})| dx
	\le
	\sup_{{\bf z}\in \partial B} |g({\bf z})|
	\int_{|x|\ge R}  \rho(t,x)^p dx
	\lesssim_{g|_{\partial B}}   R^{-\frac{(d-1)(p-2)}{2}}M({\bf u})  \| |x|^{\frac{d-1}2} \rho \|_{L^\infty}^{p-2}.
\]
By the radial Sobolev inequality (see, e.g. \cite{CazBook}*{Lemma 1.7.3}, for instance), one has
$
	\| |x|^{\frac{d-1}2} \rho \|_{L^\infty} \lesssim M({\bf u})^\frac14 H({\bf u})^\frac14.
$
Combining these estimates, we obtain the desired inequality.
\end{proof}

Let us now turn to the proof of the blowup/grow-up theorems.
\begin{proof}[Proof of Theorem \ref{T:blowup}]
Let us consider the case $p<6$. 
We prove by contradiction.
Suppose that ${\bf u}(t)$ is a radial $H^1$-solution to \eqref{E:gNLS} such that $T_{\max}=\infty$ and
$
	-\delta := \sup_{t\ge 0} V({\bf u}(t))  <0.
$

By Lemma \ref{L:lvi2}, one has
\begin{equation}\label{E:blowup1pf0}
	J'(t) \le 8V({\bf u}(t)) + R^{-2} C_1 M({\bf u}) + C_2 R^{-\frac{(d-1)(p-2)}{2}} M({\bf u})^{\frac{p+2}4} H({\bf u}(t))^{\frac{p-2}4}
\end{equation}
for all $t\ge 0$.
Note that the assumption $p< 6$ is equivalent to $\frac{p-2}4<1$.
We see from
 Young's inequality that
\[
	C_2  M({\bf u})^{\frac{p+2}4} H({\bf u}(t))^{\frac{p-2}4}
	\le \tfrac{4s_c }{d-2s_c}  H({\bf u}(t)) + C (C_2  M({\bf u})^{\frac{p+2}4})^{\frac4{6-p}}.
\]
By the identity $\frac{4s_c}{d-2s_c}H= \frac{2d}{d-2s_c} E- V$, one obtains
\[
	8V({\bf u})+\tfrac{4s_c }{d-2s_c}R^{-\frac{(d-1)(p-2)}{2}} H({\bf u}) = (8-R^{-\frac{(d-1)(p-2)}{2}})V ({\bf u})+ \tfrac{2d}{d-2s_c}R^{-\frac{(d-1)(p-2)}{2}} E({\bf u}).
\]
Plugging these inequalities to \eqref{E:blowup1pf0} and using the assumption, one obtains
\[
	J'(t) \le -(8-R^{-\frac{(d-1)(p-2)}{2}})\delta + R^{-2} C_1 M({\bf u}) + R^{-\frac{(d-1)(p-2)}{2}}(\tfrac{2d}{d-2s_c} E({\bf u}) +  \tilde{C}_2 M({\bf u})^{\frac{p+2}{6-p}}).
\]
Recall that $M({\bf u})$ and $E({\bf u})$ are conserved quantities.
We fix $R$ sufficiently large so that
\begin{equation}\label{E:blowup1pf1}
J'(t) \le -4\delta
\end{equation}
holds for $t\ge 0$.

By means of \eqref{E:blowup1pf1}, there exists $t_0\ge0$ such that 
$2 \delta t \le - J(t) \lesssim_{\chi_0} R M({\bf u})^\frac12 H({\bf u}(t))^\frac12$
for $t\ge t_0$, which implies that
\begin{equation}\label{E:blowup1pf2}
	H({\bf u}(t)) \gtrsim t^2
\end{equation}
for $t\ge t_0$. By the identity $\frac{4s_c}{d-2s_c}H= \frac{2d}{d-2s_c} E- V$,
the inequality \eqref{E:blowup1pf0} reads also as
\[
	J'(t) \le -\tfrac{32s_c}{d-2s_c}H({\bf u}(t))+ \tfrac{16d}{d-2s_c}E({\bf u}(t)) + R^{-2} C_1 M({\bf u}) + C_2 R^{-\frac{(d-1)(p-2)}{2}} M({\bf u})^{\frac{p+2}4} H({\bf u}(t))^{\frac{p-2}4}.
\]
Another use of Young's inequality gives us
\[
	J'(t) \le -\tfrac{16s_c}{d-2s_c}H({\bf u}(t))+ \tfrac{16d}{d-2s_c}E({\bf u}(t)) + R^{-2} C_1 M({\bf u}) +  C_3 R^{-\frac{(d-1)(p-2)}{2}} M({\bf u})^{\frac{p+2}{6-p}}
\]
by letting $R$ larger if necessary.
Since we have \eqref{E:blowup1pf2}, there exists $t_1\ge t_0$ such that
$J(t_1)\le0$ and
$
	J'(t) \le -\tfrac{8s_c}{d-2s_c}H({\bf u}(t))
$
for all $t\ge t_1$. Set $\xi (t) := \int_{t_1}^t H({\bf u}(s))ds$. One has
\[
	\tfrac{8s_c}{d-2s_c}\xi(t) \le -J(t) + J(t_1) \le - J(t) \le CR M({\bf u})^\frac12 (\xi'(t))^\frac12.
\]
Hence, one has an ordinary differential inequality
$(-\tfrac1{\xi(t)})' \ge A $
on $t\ge t_1$
with some positive constant $A$.
By an integration, we find 
	$A(t-t_1) \le \tfrac1{\xi(t_1)}-\tfrac1{\xi(t)} \le \tfrac1{\xi(t_1)}$,
which gives us a contradiction by letting $t$ large.

The case $p=6$ is handled similarly. Since $\frac{p-2}4=1$,
we obtain \eqref{E:blowup1pf1} from \eqref{E:blowup1pf0} 
without Young's inequality.
\end{proof}

\begin{proof}[Proof of Theorem \ref{T:grow-up}]
The standard blowup alternative argument yields \eqref{E:grow-up} if $T_{\max}<\infty$.
Hence, we consider the case $T_{\max}=\infty$.
We prove by contradiction.
Suppose that ${\bf u}(t)$ is a radial $H^1$-solution to \eqref{E:gNLS} such that $T_{\max}=\infty$,
$E({\bf u}) <0$, and
\[
	 H_{\max}:=\sup_{t\ge0} H({\bf u}(t)) <\infty.
\]
By Lemma \ref{L:lvi2}, one has
\[
	J'(t) \le 16E ({\bf u}) + R^{-2} C_1 M({\bf u}) + C_2 R^{-\frac{(d-1)(p-2)}{2}} M({\bf u})^{\frac{p+2}4} H_{\max}^{\frac{p-2}4}
\]
for $t\ge 0$.
Letting $R$ large, we obtain
$
	J'(t) \le 8E ({\bf u})<0
$
for $t\ge0$. Hence, there exists $t_0\ge0$ such that 
\[
	4|E ({\bf u})| t \le -J(t) \lesssim_{\chi_0} R M({\bf u})^\frac12 H_{\max}^\frac12
\]
for $t\ge t_0$.
This yields a contradiction by letting $t\to\infty$.
\end{proof}

\subsubsection{Proof of the instability result}
\begin{proof}[Proof of Theorem \ref{T:instability} when $d\ge2$ and $p\le6$]
Fix $\omega>0$. Pick $\Phi \in \mathcal{G}_{\omega}$. By the space-translation symmetry of \eqref{E:gNLS},
we may suppose that $\Phi$ is radially symmetric without loss of generality.
We take ${\bf u}_0=c \Phi$ with $c>1$. It is easy to see that
$\| u_0 - \Phi \|_{H^1} \to 0$ as $c\downarrow 1$.
Let ${\bf u}(t)$ be the corresponding maximal-lifespan $H^1$-solution. Since ${\bf u_0}$ is radial, so is ${\bf u}(t)$ for all $t \in I_{\max}$.

When $p>2+\frac4d$, mimicking the argument in the proof of the case ${\bf n}=(1,\dots,1)$,
we see that ${\bf u}(t)$ satisfies
\[
	V({\bf u}(t)) \le - \tilde\delta \mathfrak{I}_3 M({\bf u}_0)^{-\frac{1-s_c}{s_c}} = : -\delta'<0
\]
for $t \in I_{\max}$. Theorem \ref{T:blowup} then implies $T_{\max}<\infty$.

When $p=2+\frac4d$, one sees that  $E({\bf u}_0)=E(c\Phi) = c^2 H(\Phi) + c^p G(\Phi) = c^2(1-c^{p-2})H(\Phi)<0$.
Hence, in light of Theorem \ref{T:grow-up}, we see that ${\bf u}(t)$ satisfies
$\varlimsup_{t\uparrow T_{\max}} H({\bf u}(t))=\infty$.
\end{proof}

\section{Analysis of specific systems}\label{S:Cs}

In this section, we prove Corollaries \ref{C:app1}, \ref{C:app2}, \ref{C:app3}, \ref{C:app4}, and \ref{C:app5}.
In view of Theorems \ref{T:main}, \ref{T:excited}, \ref{T:stability}, and \ref{T:instability}, what we do for a specific system is to evaluate $g_{\min}$ and find the set $T_0$ defined in \eqref{D:gmin} and \eqref{D:T0}, respectively.
\subsection{System \eqref{E:nls1}}
Recall that
\[
	g (u_1,u_2) = \alpha |u_1|^4 + \beta |u_2|^4,
\]
where $\alpha, \beta \in \{-1,0,1\}$ satisfy $\alpha \ge \beta$.
It is easy to see that $g_{\min}=\frac12$ if $\beta=1$
and $g_{\min}=\beta$ if $\beta=0,-1$.
A negative critical point exists if and only if $\beta=-1$. 
When $\beta=-1$, one sees that  $T_0=\cup_{\theta}\{ (0,e^{i\theta})\}$ if $\alpha>\beta$
and $T_0=\cup_{\theta}\{ (0,e^{i\theta}), (e^{i\theta},0)\}$ if $\alpha=\beta$.
Thus, we obtain Corollary \ref{C:app1} from Theorems \ref{T:main}, \ref{T:excited}, \ref{T:stability}, and \ref{T:instability}.

\subsection{System \eqref{E:nls2}}
Recall that
\[
	g(u_1,u_2)=\alpha |u_1|^4 + \beta |u_2|^4+\sigma (|u_1|^2+|u_2|^2)^2,
\]
where $\alpha \ge \beta$ and $\sigma \in \{-1,1\}$.
Since the case $\alpha=\beta=0$ is trivial, we consider the other case.
Corollary \ref{C:app2} follows from the following proposition:
\begin{proposition}
Suppose that $(\alpha,\beta)\neq(0,0)$.
\begin{enumerate}
\item  The set of critical points of $g|_{\partial B}$ are $T_{0,1}\cup T_{0,2}$ if $\alpha\beta \le 0$
and $T_{0,1}\cup T_{0,2}\cup T_{0,3}$ if $\alpha\beta>0$,
where
\[
	T_{0,1} := \bigcup_{\theta}\{ (0,e^{i\theta})\}, \quad 
	T_{0,2} := \bigcup_{\theta}\{ (e^{i\theta},0)\},
\]
and
\[
	T_{0,3} := \bigcup_{\theta_1,\theta_2}\{ (\sqrt{\tfrac{\beta}{\alpha+\beta}}e^{i\theta_1}, \sqrt{\tfrac{\alpha}{\alpha+\beta}}e^{i\theta_2})\}.
\]
\item $g_{\min}$ and $T_0$ are given as follows:
\begin{itemize}
\item If $\beta \le 0$ then $g_{\min} = \beta+ \sigma$ and $T_0=T_{0,1} \cup T_{0,2}$ if $\alpha=\beta$
and $T_0=T_{0,1}$ otherwise.
\item If $\beta>0$ then $g_{\min} = \frac{\alpha\beta }{\alpha+\beta} + \sigma$ and $T_0=T_{0,3}$.
\end{itemize}
\end{enumerate}
\end{proposition}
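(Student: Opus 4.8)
The plan is to exploit the fact that $g|_{\partial B}$ depends only on $|u_1|^2$ and $|u_2|^2$, so the whole problem collapses to a one-variable quadratic. Writing $t:=|u_1|^2\in[0,1]$ on $\partial B$ (so that $|u_2|^2=1-t$), one has $g=\phi(t):=\alpha t^2+\beta(1-t)^2+\sigma=(\alpha+\beta)t^2-2\beta t+\beta+\sigma$. Since the map $\partial B\to[0,1]$, $(u_1,u_2)\mapsto|u_1|^2$, is a continuous surjection, computing $g_{\min}$ amounts to minimizing $\phi$ over $[0,1]$, and $T_0$ is the preimage in $\partial B$ of the set of minimizing $t$-values.

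First I would determine all critical points of $g|_{\partial B}$. The phase invariance $(u_1,u_2)\mapsto(e^{i\theta_1}u_1,e^{i\theta_2}u_2)$ of $g$ means critical points occur in torus orbits, but to handle the semi-trivial points cleanly—where one phase degenerates—I would apply the Lagrange multiplier theorem directly in the real coordinates $u_1=x_1+iy_1$, $u_2=x_2+iy_2$ on the sphere $x_1^2+y_1^2+x_2^2+y_2^2=1$. The stationarity equations factor as $x_1(2\alpha|u_1|^2+2\sigma-\lambda)=0$ and three analogues, leaving three possibilities: $u_1=0$ (the family $T_{0,1}$), $u_2=0$ (the family $T_{0,2}$), or $u_1,u_2\neq0$ with $\alpha|u_1|^2=\beta|u_2|^2$. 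In the last case $t=\beta/(\alpha+\beta)$, which lies in $(0,1)$ precisely when $\alpha\beta>0$; this is the family $T_{0,3}$. Thus $T_{0,1}$ and $T_{0,2}$ are always critical, while $T_{0,3}$ appears exactly when $\alpha\beta>0$, which is assertion (1).

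For assertion (2) I would simply minimize $\phi$ on $[0,1]$. The endpoint values are $\phi(0)=\beta+\sigma$ and $\phi(1)=\alpha+\sigma$, so $\phi(0)\le\phi(1)$ by the standing hypothesis $\alpha\ge\beta$, and the interior critical value is $\phi(\beta/(\alpha+\beta))=\frac{\alpha\beta}{\alpha+\beta}+\sigma$. When $\beta>0$ one has $\alpha+\beta>0$ (upward-opening parabola) with vertex in $(0,1)$, and the comparisons $\beta-\frac{\alpha\beta}{\alpha+\beta}=\frac{\beta^2}{\alpha+\beta}>0$ and $\alpha-\frac{\alpha\beta}{\alpha+\beta}=\frac{\alpha^2}{\alpha+\beta}>0$ show the vertex value is strictly below both endpoints, giving $g_{\min}=\frac{\alpha\beta}{\alpha+\beta}+\sigma$ and $T_0=T_{0,3}$. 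When $\beta\le0$ the vertex, if it lies in $(0,1)$ at all (which forces $\alpha<0$, hence $\alpha+\beta<0$ and a downward-opening parabola), is a maximum; in every $\beta\le0$ subcase the minimum is therefore attained at an endpoint, and since $\phi(0)\le\phi(1)$ it equals $\beta+\sigma$, attained at $t=0$ only—unless $\alpha=\beta$, where $\phi(t)=2\beta\,t(t-1)+\beta+\sigma\ge\beta+\sigma$ with equality at both endpoints. This yields $g_{\min}=\beta+\sigma$ with $T_0=T_{0,1}$, or $T_0=T_{0,1}\cup T_{0,2}$ when $\alpha=\beta$.

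The only genuinely delicate point is the critical-point bookkeeping at the semi-trivial configurations: there the phase of the vanishing component is undefined, so one cannot argue via the torus quotient and must verify stationarity in the ambient real coordinates, where these points satisfy the Lagrange equations trivially for all $(\alpha,\beta,\sigma)$. Everything else reduces to a routine analysis of one quadratic on an interval, with the sign of $\alpha+\beta$ (concavity) and the standing order $\alpha\ge\beta$ doing all the work.
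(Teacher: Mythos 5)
Your proof is correct and follows essentially the same route as the paper's: the critical points of $g|_{\partial B}$ are found via Lagrange's multiplier theorem in the real coordinates $(x_1,y_1,x_2,y_2)$, yielding exactly the alternatives $u_1=0$, $u_2=0$, or $\alpha|u_1|^2=\beta|u_2|^2$ (the last admissible on $\partial B$ precisely when $\alpha\beta>0$), and $g_{\min}$ and $T_0$ are then identified by comparing the values $\beta+\sigma$, $\alpha+\sigma$, and $\tfrac{\alpha\beta}{\alpha+\beta}+\sigma$. The only difference is cosmetic: you organize the comparison as the minimization of the quadratic $\phi(t)=(\alpha+\beta)t^2-2\beta t+\beta+\sigma$ on $[0,1]$, whereas the paper compares the three critical values directly; both give the same case distinctions and the same strictness conclusions pinning down $T_0$.
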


\begin{proof}
Regarding $g$ as a function $\R^4\to\R$ by 
$z_1=x_1+iy_1$ and $z_2=x_2+iy_2$, we have
\[
	\nabla_{x_1.y_1,x_2,y_2} g =
	4(\alpha(x_1^2+y_1^2)x_1+\sigma x_1, \alpha(x_1^2+y_1^2)x_2+\sigma x_2,
	\beta(x_2^2+y_2^2)x_2+\sigma x_2,\beta(x_2^2+y_2^2)y_2+\sigma y_2)
\]
for $(z_1,z_2) \in \partial B$.
By Lagrange's multiplier theorem, $(x_1,y_1,x_2,y_2)$ is a critical point if and only if
the vector $\nabla_{x_1.y_1,x_2,y_2} g$ is a multiple of $(x_1,y_1,x_2,y_2)$. 
This occur if and only if $x_1^2+y_1^2=0$, $x_2^2+y_2^2=0$, or $\alpha (x_1^2+y_1^2) = \beta (x_2^2+y_2^2)$ holds.
Thus, we obtain critical points $(0,e^{i\theta})$, $(e^{i\theta},0)$, and
$
	(\sqrt{\tfrac{\beta}{\alpha+\beta}}e^{i\theta_1}, \sqrt{\tfrac{\alpha}{\alpha+\beta}}e^{i\theta_2}),
$
where $\theta,\theta_1,\theta_2 \in \R$.
The third occurs only when $\alpha \beta >0$, that is, when $\beta>0$ or $\alpha<0$.
Note that $g(0,e^{i\theta})=\beta+\sigma$, $g(e^{i\theta},0)=\alpha+\beta$,
and
\[
	g(\sqrt{\tfrac{\beta}{\alpha+\beta}}e^{i\theta_1}, \sqrt{\tfrac{\alpha}{\alpha+\beta}}e^{i\theta_2}) = \tfrac{\alpha \beta}{\alpha+\beta}  + \sigma.
\]

To characterize the ground the state let us find the minimum of these critical values.
It is obvious that $\alpha+\sigma \ge \beta + \sigma$. 
Hence, if $\alpha \beta \le 0$ then 
$g_{\min}=\beta + \sigma$ and $T_0=T_{0,1}$. Note that $(\alpha,\beta)\neq(0,0)$ implies $\beta<\alpha$.

Next consider the case $\beta>0$. Since $\beta \ge \tfrac{\alpha \beta}{\alpha+\beta} $, one obtains $g_{\min} = \tfrac{\alpha \beta}{\alpha+\beta}  + \sigma$ and $T_0=T_{0,3}$.

Let us finally consider the case $\alpha<0$. Since $\beta \le \tfrac{\alpha \beta}{\alpha+\beta} $, we have
$g_{\min} = \beta + \sigma$
in this case.
Further, $T_0=T_{0,1}$
if $\alpha>\beta$ and $T_0=T_{0,1}\cup T_{0,2}$ if $\alpha=\beta$.
\end{proof}

\subsection{System \eqref{E:nls3}}
Recall that
\[
	g(u_1,u_2)=\alpha_1 |u_1^2+u_2^2|^2 + \alpha_2 |u_1^2-u_2^2|^2 -4\alpha_2 |u_1|^2|u_2|^2+ (2\alpha_1 + r) (|u_1|^2+|u_2|^2)^2,
\]
where $\alpha_2\ge0$, $\alpha_1^2+\alpha_2^2=1$, $\alpha_1^2\neq \alpha_2^2$, and $r\in\R$.

To find a minimum value, It is useful to introduce 
\begin{equation}\label{D:h}
	h(\nu,\zeta) := g(\cos \nu, e^{i\zeta} \sin \nu).
\end{equation}
\begin{lemma}\label{L:h}
Suppose that $g$ satisfies \eqref{E:g-gauge} with $N=2$ and $(n_1,n_2)=(1,1)$.
It holds that
\[
	g_{\min} =  \min_{\nu \in \R / \pi \Z,\, \zeta\in \R/ 2\pi \Z } h( \nu,\zeta ).
\]
Let  $\nu_0 \not\in \frac\pi2 \Z/\pi \Z$, $\zeta_0 \in \R/2\pi \Z$, and $\theta \in \R/2\pi \Z$.
$ (e^{i\theta}\cos \nu_0, e^{i(\theta+\zeta_0)}\sin \nu_0 )$ is a critical point of $g|_{\partial B}$
if and only if $\partial_\nu h(\nu_0,\zeta_0) = \partial_ \zeta h(\nu_0,\zeta_0) = 0$.
\end{lemma}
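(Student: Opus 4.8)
The plan is to realize every point of $\partial B$ through the parametrization
\[
	\Psi(\theta,\nu,\zeta) := \big(e^{i\theta}\cos\nu,\ e^{i(\theta+\zeta)}\sin\nu\big), \qquad (\theta,\nu,\zeta)\in\R^3,
\]
which takes values in $\partial B$ since $|\cos\nu|^2+|\sin\nu|^2=1$. The gauge condition \eqref{E:g-gauge} with $(n_1,n_2)=(1,1)$ says that $g$ is invariant under the common phase factor $e^{i\theta}$, so that $g(\Psi(\theta,\nu,\zeta)) = g(\cos\nu, e^{i\zeta}\sin\nu) = h(\nu,\zeta)$, independent of $\theta$. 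The same invariance, applied with $\theta=\pi$, gives $h(\nu+\pi,\zeta)=g(-\cos\nu,-e^{i\zeta}\sin\nu)=h(\nu,\zeta)$, so $h$ is genuinely a function on $\R/\pi\Z\times\R/2\pi\Z$.

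For the identity on $g_{\min}$ I would argue by a double inequality. On the one hand $(\cos\nu,e^{i\zeta}\sin\nu)\in\partial B$ for every $(\nu,\zeta)$, so $h(\nu,\zeta)\ge g_{\min}$ and hence $\min h\ge g_{\min}$. On the other hand, every $(z_1,z_2)\in\partial B$ lies in the image of $\Psi$: if $z_1\ne0$ write $z_1=|z_1|e^{i\theta}$ and pick $\nu\in[0,\pi/2)$ with $\cos\nu=|z_1|$, so that $\sin\nu=|z_2|$ and $z_2=e^{i\theta}e^{i\zeta}\sin\nu$ for a suitable $\zeta$; if $z_1=0$ take $\nu=\pi/2$. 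Consequently every value of $g$ on $\partial B$ equals some $h(\nu,\zeta)$, giving $g_{\min}\ge\min h$, and the two inequalities yield the claimed formula.

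For the critical-point correspondence I would use that $\Psi$ is a local parametrization of the $3$-manifold $\partial B=S^3$ near any point with both coordinates nonzero. The key computation is that the three vectors
\[
	\partial_\theta\Psi = i\Psi,\qquad
	\partial_\nu\Psi = \big(-e^{i\theta}\sin\nu,\ e^{i(\theta+\zeta)}\cos\nu\big),\qquad
	\partial_\zeta\Psi = \big(0,\ ie^{i(\theta+\zeta)}\sin\nu\big)
\]
are $\R$-linearly independent in $\C^2\cong\R^4$ precisely when $\cos\nu\ne0$ and $\sin\nu\ne0$, i.e. when $\nu\notin\frac\pi2\Z$. Examining a real relation $a\partial_\theta\Psi+b\partial_\nu\Psi+c\partial_\zeta\Psi=0$ coordinate by coordinate forces $a=b=0$ from the first entry (its real and imaginary parts give $b\sin\nu=0$ and $a\cos\nu=0$) and then $c=0$ from the second. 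Thus $d\Psi$ has rank $3$ at $(\theta,\nu_0,\zeta_0)$; since its image lies in the $3$-dimensional $T_P\partial B$, it must equal $T_P\partial B$, so $\Psi$ is a diffeomorphism from a neighborhood of $(\theta,\nu_0,\zeta_0)$ onto a neighborhood of $P:=\Psi(\theta,\nu_0,\zeta_0)$ in $\partial B$.

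With this in hand the conclusion is immediate from the chain rule: $P$ is a critical point of $g|_{\partial B}$ exactly when $dg_P$ annihilates $T_P\partial B=\operatorname{image}d\Psi$, i.e. when $d(g\circ\Psi)_{(\theta,\nu_0,\zeta_0)}=0$. Since $g\circ\Psi=h(\nu,\zeta)$ does not depend on $\theta$, the $\theta$-derivative vanishes automatically, and the remaining two conditions read $\partial_\nu h(\nu_0,\zeta_0)=\partial_\zeta h(\nu_0,\zeta_0)=0$. The only step requiring genuine care is the rank-$3$ claim, and this is exactly where the hypothesis $\nu_0\notin\frac\pi2\Z/\pi\Z$ enters: at the excluded values one coordinate vanishes, $\partial_\zeta\Psi$ or $\partial_\nu\Psi$ degenerates, and $\Psi$ fails to be an immersion, reflecting the familiar coordinate singularity of these Hopf-type coordinates on $S^3$.
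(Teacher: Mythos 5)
Your proof is correct. One thing to note: the paper states Lemma \ref{L:h} without any proof --- it is presented as an immediate consequence of the gauge invariance \eqref{E:g-gauge} --- so your argument fills in a justification the paper leaves implicit rather than paralleling an existing one. Your route is to show that $\Psi(\theta,\nu,\zeta)=(e^{i\theta}\cos\nu,\,e^{i(\theta+\zeta)}\sin\nu)$ surjects onto $\partial B$ (which gives the $g_{\min}$ identity by the double inequality) and that $d\Psi$ has rank $3$ precisely when $\cos\nu\sin\nu\neq0$, so that $\Psi$ is a local parametrization of the three-sphere near $P=\Psi(\theta,\nu_0,\zeta_0)$ and the critical-point equivalence follows from the chain rule; the linear-independence check, the only substantive step, is carried out correctly, and it is exactly where the hypothesis $\nu_0\notin\tfrac{\pi}{2}\Z/\pi\Z$ enters. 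An equivalent argument closer to the style of the paper's surrounding proofs (the propositions for \eqref{E:nls3}--\eqref{E:nls5} and the proof of Theorem \ref{T:excited}, which work with Lagrange's multiplier theorem) runs as follows: $P$ is critical for $g|_{\partial B}$ iff $dg_P$ annihilates $T_P\partial B$; differentiating \eqref{E:g-gauge} in $\theta$ gives $dg_P(iP)=0$ for free, and your rank computation says that $T_P\partial B$ is spanned by $iP$, $\partial_\nu\Psi$, $\partial_\zeta\Psi$ when $\nu_0\notin\tfrac{\pi}{2}\Z/\pi\Z$, so criticality reduces to $\partial_\nu h(\nu_0,\zeta_0)=\partial_\zeta h(\nu_0,\zeta_0)=0$. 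Both versions hinge on the same spanning fact; your packaging as a local diffeomorphism additionally makes transparent why the equivalence must fail at the excluded values of $\nu_0$ (the semi-trivial points, which the paper indeed treats separately through the nonlinearities $F_j$).
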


Corollary \ref{C:app3} follows from our abstract theorems  and the following proposition:
\begin{proposition}
\begin{enumerate}
\item If $\alpha_2>0$ then the set of critical points of $g|_{\partial B}$ is 
$T_{0,1} \cup T_{0,2} \cup T_{0,3}$, where
\[
	T_{0,1}:=\bigcup_{\theta\in \R/2\pi\Z}\{ (e^{i\theta}, 0 ) ,\, (0, e^{i\theta} ) \},
\]
\[
	T_{0,2}:=\bigcup_{\theta\in \R/2\pi\Z,\, \sigma\in \{\pm1\}}\{ (2^{-1/2}e^{i\theta}, \sigma 2^{-1/2}e^{i\theta} ) \},
\]
and
\[
	T_{0,3}:=\bigcup_{\theta\in \R/2\pi\Z,\, \sigma\in \{\pm1\}}\{ (2^{-1/2}e^{i\theta}, i \sigma 2^{-1/2}e^{i\theta} ) \}.
\]
Further, for $j=1,2,3$, we have $g({\bf w}) = g_j$ for all ${\bf w} \in T_{0,j}$, where
\[
	g_1 = 3\alpha_1 + \alpha_2+ r, \qquad 
	g_2 = 3\alpha_1 - \alpha_2+ r, \qquad
	g_3 = 2\alpha_1 + r.
\]
If $\alpha_2=0$ then the set of critical points of $g|_{\partial B}$ is 
$T_{0,3} \cup T_{0,4}$, where
\[
	T_{0,4}:=\bigcup_{\nu, \theta\in \R/2\pi\Z}\{ (e^{i\theta}\cos \nu  , e^{i\theta}\sin \nu  ) \}.
\]
Further, we have $g({\bf w}) = g_4$ for all ${\bf w} \in T_{0,4}$, where $g_4 = 3\alpha_1 + r$.
\item $g_{\min}$ and $T_0$ are given as follows: 
\begin{itemize}
\item If $\alpha_1>\alpha_2$ then $g_{\min} = g_3$ and $T_0=T_{0,3}$.
\item If $-1<\alpha_1<\alpha_2$ then $g_{\min}=g_2$ and $T_0=T_{0,2}$.
\item If $(\alpha_1,\alpha_2)=(-1,0)$ then
$g_{\min}=g_4$ and $T_0=T_{0,4}$.
\end{itemize}
\end{enumerate}
\end{proposition}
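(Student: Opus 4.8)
The plan is to push everything through the reduced function $h(\nu,\zeta)=g(\cos\nu, e^{i\zeta}\sin\nu)$ furnished by Lemma \ref{L:h} and then run an elementary critical-point analysis of $h$. First I would substitute $u_1=\cos\nu$, $u_2=e^{i\zeta}\sin\nu$ into the explicit $g$. Writing $P:=\cos^2\nu\sin^2\nu=\tfrac14\sin^2 2\nu\in[0,\tfrac14]$ and using $\cos^4\nu+\sin^4\nu=1-2P$ together with $|u_1^2\pm u_2^2|^2=1-2P\pm 2P\cos 2\zeta$ and $(|u_1|^2+|u_2|^2)^2=1$, a direct computation collapses all four quartic terms into the closed form
\[
	h(\nu,\zeta)=3\alpha_1+\alpha_2+r+2P\big[(\alpha_1-\alpha_2)\cos 2\zeta-\alpha_1-3\alpha_2\big].
\]
From here everything is bookkeeping, since $\partial_\zeta h=-4P(\alpha_1-\alpha_2)\sin 2\zeta$ and $\partial_\nu h=\sin 4\nu\,[(\alpha_1-\alpha_2)\cos 2\zeta-\alpha_1-3\alpha_2]$.

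Next I would locate the interior critical points, i.e. those with $\nu_0\in(0,\tfrac\pi2)$, which by Lemma \ref{L:h} are exactly the non-semi-trivial critical points of $g|_{\partial B}$. When $\alpha_2>0$ one has $P>0$, and the hypothesis $\alpha_1^2\neq\alpha_2^2$ (so $\alpha_1\neq\alpha_2$) forces $\sin 2\zeta=0$ from $\partial_\zeta h=0$, hence $\cos 2\zeta=\pm1$. For $\cos 2\zeta=1$ the $\nu$-bracket equals $-4\alpha_2\neq0$, and for $\cos 2\zeta=-1$ it equals $-2(\alpha_1+\alpha_2)\neq0$ (again by $\alpha_1^2\neq\alpha_2^2$); in both cases $\partial_\nu h=0$ forces $\sin 4\nu=0$, i.e. $\nu_0=\tfrac\pi4$. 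Translating back through $(e^{i\theta}\cos\nu_0,e^{i(\theta+\zeta_0)}\sin\nu_0)$, the branch $\cos 2\zeta=1$ ($\zeta\in\{0,\pi\}$) gives $T_{0,2}$ and the branch $\cos 2\zeta=-1$ ($\zeta\in\{\tfrac\pi2,\tfrac{3\pi}2\}$) gives $T_{0,3}$. The semi-trivial orbits $T_{0,1}$ lie outside the scope of Lemma \ref{L:h}, so I would check them directly: a short computation gives $\partial_{\overline{z_2}}g(1,0)=0$ and $\partial_{\overline{z_1}}g(1,0)=6\alpha_1+2\alpha_2+2r\in\R$, so the Lagrange condition $\partial_{\overline{z_j}}g=\mu z_j$ holds, whence $(1,0)$ and (by gauge invariance) all of $T_{0,1}$ are critical with value $g(1,0)=3\alpha_1+\alpha_2+r$. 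Together these exhaust the critical set, and evaluating $h$ gives the three values $g_1=3\alpha_1+\alpha_2+r$ (at $P=0$), $g_2=3\alpha_1-\alpha_2+r$ (at $P=\tfrac14,\cos 2\zeta=1$), and $g_3=2\alpha_1+r$ (at $P=\tfrac14,\cos 2\zeta=-1$), establishing part (1) for $\alpha_2>0$.

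The degenerate case $\alpha_2=0$ (whence $\alpha_1=\pm1$) is where I expect the only real subtlety. Here the $\cos 2\zeta=1$ bracket $-4\alpha_2$ vanishes identically, so $\partial_\nu h\equiv0$ along that branch and the whole circle $\{\zeta\in\{0,\pi\}\}$ — namely $T_{0,4}$, which now absorbs $T_{0,1}$ and $T_{0,2}$ — becomes a critical manifold with value $g_4=3\alpha_1+r$, while the $\cos 2\zeta=-1$ branch still forces $\nu_0=\tfrac\pi4$ and yields $T_{0,3}$ with value $g_3$. This gives the critical set $T_{0,3}\cup T_{0,4}$ claimed for $\alpha_2=0$.

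Finally, since $g_{\min}$ is attained on the compact smooth manifold $\partial B$ and every minimizer is a critical point of $g|_{\partial B}$, $g_{\min}$ equals the least critical value and $T_0$ the corresponding level set. When $\alpha_2>0$ I compare $g_1,g_2,g_3$: here $g_1-g_2=2\alpha_2>0$ rules $T_{0,1}$ out, and $g_2-g_3=\alpha_1-\alpha_2$ splits the remainder, giving $g_{\min}=g_3$, $T_0=T_{0,3}$ if $\alpha_1>\alpha_2$ and $g_{\min}=g_2$, $T_0=T_{0,2}$ if $-1<\alpha_1<\alpha_2$ (the diagonal $\alpha_1=\alpha_2$ being excluded). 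When $\alpha_2=0$ only $g_3,g_4$ occur and $g_3-g_4=-\alpha_1$: the case $\alpha_1=1$ (so $\alpha_1>\alpha_2$) again gives $T_{0,3}$, while the boundary point $(\alpha_1,\alpha_2)=(-1,0)$ gives $g_{\min}=g_4$, $T_0=T_{0,4}$, which completes part (2). The whole argument is routine once the closed form of $h$ is in hand; the main obstacle is purely organizational, namely not overlooking the two exceptional features — the non-parametrized semi-trivial orbits $T_{0,1}$ and the collapse of $T_{0,1},T_{0,2}$ into the critical circle $T_{0,4}$ precisely at $\alpha_2=0$.
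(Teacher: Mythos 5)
Your proof is correct and follows essentially the same route as the paper: both reduce the non-semi-trivial points to the function $h(\nu,\zeta)$ of Lemma \ref{L:h} (your closed form with $P=\tfrac14\sin^2 2\nu$ is exactly the paper's expression for $h$), split into the branches $\cos 2\zeta=\pm1$ using $\alpha_1\neq\pm\alpha_2$, treat the semi-trivial points by a direct Lagrange/multiplier computation, observe that the $\cos2\zeta=1$ branch degenerates into the critical circle $T_{0,4}$ when $\alpha_2=0$, and conclude by comparing the critical values $g_1,g_2,g_3,g_4$. One small repair: gauge invariance carries $(1,0)$ only to the points $(e^{i\theta},0)$, so the other half $(0,e^{i\theta})$ of $T_{0,1}$ requires either the swap symmetry $g(z_1,z_2)=g(z_2,z_1)$ of this particular $g$ or the identical computation at $(0,1)$, which is what the paper does explicitly.
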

One can summarize as $g_{\min} = \min (\alpha_1,\alpha_2) + 2\alpha_1- \alpha_2 + r$.
\begin{proof}
Regarding $g$ as a function $\R^4\to\R$ by 
$z_1=x_1+iy_1$ and $z_2=x_2+iy_2$, we have
\[
	\nabla_{x_1.y_1,x_2,y_2} g =4(\Re F_1(z_1,z_2), \Im F_1(z_1,z_2), \Re F_2(z_1,z_2), \Im F_2(z_1,z_2)).
\]

If $|z_1|=1$ and $z_2=0$ then one finds
$(F_1(z_1,0),F_2(z_1,0)) = ( 3\alpha_1 + \alpha_2 + r )(z_1,0)$.
Since $(F_1(z_1,0),F_2(z_1,0))$ is a constant multiple of $(z_1,z_2)$, we see from Lagrange's multiplier theorem
that $(z_1,0)$ is a critical point of $g|_{\partial B}$. The critical value is
$
	g(z_1,0) = 3\alpha_1 + \alpha_2 +r.
$

If $|z_2|=1$ and $z_1=0$ then
 $(F_1(0,z_2),F_2(0,z_2))  = ( 3\alpha_1 + \alpha_2 + r )(0,z_2)$.
Hence, $(0,z_2)$ is a critical point of $g|_{\partial B}$.
The critical value is
$g(0,z_2) = 3\alpha_1 + \alpha_2 +r$.

To find other critical points, let us consider
the function $h$ defined in \eqref{D:h}.
It takes the form
\[
	h(\nu,\zeta)=
	 2^{-1}(\alpha_1 - \alpha_2) \sin^2 2\nu \cos2\zeta
	-2^{-1}(\alpha_1 + 3\alpha_2) \sin^2 2\nu 
	 +3\alpha_1+\alpha_2  + r,
\]
where $\nu \in (0,\pi/2)\cup(\pi/2,\pi)$ and $\zeta \in [0,\pi)$.
Note that
\[
	\partial_\nu h(\nu,\zeta)=
	 2 \sin 2\nu \cos 2\nu ((\alpha_1 - \alpha_2) \cos2\zeta
	-(\alpha_1 + 3\alpha_2)) 
\]
and
\[
	\partial_\zeta h(\nu,\zeta)=-(\alpha_1 - \alpha_2) \sin^2 2\nu \sin 2\zeta.
\]
Since $\sin 2\nu\neq0$, $\partial_\zeta h(\nu,\zeta)=0$ implies $\sin 2\zeta =0$ and hence $\cos 2\zeta = \pm1$.
If $\cos 2\zeta =-1$ then $\partial_\nu h(\nu,\zeta)=0$ implies $\cos 2\nu =0$.
If $\cos 2\zeta =1$ then $\partial_\nu h(\nu,\zeta)=0$ implies $\cos 2\nu =0$ or $\alpha_2=0$.
Summarizing the above, we see that the set of
the critical points of $h$ in $( (0,\pi/2)\cup(\pi/2,\pi)) \times [0,2\pi)$ is given as
\[	
	\begin{cases}
	\{ \tfrac\pi4, \tfrac{3\pi}4 \} \times \{ 0, \tfrac\pi2 \}  &\text{if } \alpha_2 >0 , \\
	(\{ \tfrac\pi4, \tfrac{3\pi}4 \} \times\{ \tfrac\pi2 \} ) \cup (( (0,\tfrac\pi2)\cup(\tfrac\pi2,\pi))  \times \{ 0 \}) &\text{if } \alpha_2 =0  .
	\end{cases}
\]
The values of $g_j$ ($j=1,2,3,4$) are easily found.

Let us find $g_{\min}$.
If $\alpha_1>\alpha_2$ then $g_1 \ge g_2 > g_3$.
Hence, $g_{\min} =g_3= 2\alpha_1 + r$.
Similarly, if $\alpha_1 < \alpha_2$ and $\alpha_2>0$, i.e., if $-1<\alpha_1<\alpha_2$ then we have $\min (g_1,g_3)> g_2$.
If $\alpha_1 < \alpha_2=0$, i.e., $(\alpha_1,\alpha_2)=(-1,0)$ then we have $g_3 > g_4$.
Thus, we obtain the result.
\end{proof}

\subsection{System \eqref{E:nls4}}
Recall that 
\begin{align*}
	g(u_1,u_2)=& \alpha_1 |u_1^2+u_2^2|^2 + \alpha_2 |u_1^2-u_2^2|^2 -4\alpha_2 |u_1|^2|u_2|^2   \\
	&+2\alpha_3  (|u_1|^4-|u_2|^4) + (2\alpha_1 + r) (|u_1|^2+|u_2|^2)^2,
\end{align*}
where
$ \alpha_2\ge 0$, $\alpha_3>0 $,  $\alpha_1^2+\alpha_2^2+\alpha_3^2=1$, $\alpha_1\neq\alpha_2$, and $r\in\R$.
Corollary \ref{C:app4} follows from the following proposition:
\begin{proposition}\label{P:app4}
\begin{enumerate}
\item The set of critical points of $g|_{\partial B}$ is given as follows
\[
	\begin{cases}
	T_{0,1} \cup T_{0,2}  & \text{if }  \max (2\alpha_2, |\alpha_1+\alpha_2|) \le \alpha_3, \\
	T_{0,1} \cup T_{0,2} \cup T_{0,3} &\text{if }  |\alpha_1+\alpha_2|\le \alpha_3 < 2\alpha_2,\\
	T_{0,1} \cup T_{0,2} \cup T_{0,4} &\text{if } 2\alpha_2 \le \alpha_3 < |\alpha_1+\alpha_2| ,\\
	T_{0,1} \cup T_{0,2} \cup T_{0,3} \cup T_{0,4} &\text{if } \alpha_3 < \min (2\alpha_2, |\alpha_1+\alpha_2|),
	\end{cases}
\]
where
\[
	T_{0,1}:=\bigcup_{\theta\in \R/2\pi\Z}\{ (e^{i\theta}, 0 ) \},\qquad
	T_{0,2}:=\bigcup_{\theta\in \R/2\pi\Z}\{ (0, e^{i\theta} ) \},
\]
\[
	T_{0,3}:=\bigcup_{\theta \in \R/2\pi\Z,\,\sigma \in \{\pm 1\}} \left\{ ( \sqrt{\tfrac{2\alpha_2-\alpha_3}{4\alpha_2}}e^{i\theta}, \sigma\sqrt{\tfrac{2\alpha_2+\alpha_3}{4\alpha_2}}e^{i\theta}) \right\},
\]
and
\[
	T_{0,4} := \bigcup_{\theta \in \R/2\pi\Z,\,\sigma \in \{\pm 1\}} \left\{ ( \sqrt{\tfrac{\alpha_1+\alpha_2-\alpha_3}{2(\alpha_1+\alpha_2)}}e^{i\theta}, i\sigma\sqrt{\tfrac{\alpha_1+\alpha_2+\alpha_3}{2(\alpha_1+\alpha_2)}}e^{i\theta}) \right\}.
\]
Further, for $j=1,2,3$, we have $g({\bf w}) = g_j$ for all ${\bf w} \in T_{0,j}$, where
\begin{align*}
	g_1 ={}& 3\alpha_1 + \alpha_2 +2\alpha_3 + r, &
	g_2 ={}& 3\alpha_1 + \alpha_2 -2\alpha_3 + r , \\
	g_3 ={}& -\tfrac{\alpha_3^2}{2\alpha_2} + 3\alpha_1 -\alpha_2 + r, &
	g_4:={}& -\tfrac{\alpha_3^2}{\alpha_1+ \alpha_2} + 2\alpha_1 + r.
\end{align*}
\item $g_{\min}$ and $T_0$ is given as follows: 
\[
	g_{\min} =
	\begin{cases}	
	g_2 & \text{if }\alpha _3 \ge \tilde{\alpha} ,\\
	g_3 & \text{if }\alpha_3< \tilde{\alpha}= 2\alpha_2,\\
	g_4 & \text{if }\alpha_3< \tilde{\alpha}=\alpha_1+ \alpha_2
	\end{cases}
\]
 and
\[
	T_0 =
	\begin{cases}	
	T_{0,2} & \text{if }\alpha _3 \ge \tilde{\alpha} ,\\
	T_{0,3} & \text{if }\alpha_3< \tilde{\alpha}= 2\alpha_2,\\
	T_{0,4} & \text{if }\alpha_3< \tilde{\alpha}=\alpha_1+ \alpha_2,
	\end{cases}
\]
where $\tilde{\alpha}=\max(\alpha_1+\alpha_2,2\alpha_2)= \alpha_2 + \max(\alpha_1,\alpha_2)$.
\end{enumerate}
\end{proposition}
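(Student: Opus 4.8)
The plan is to follow the same scheme used for the system \eqref{E:nls3}: reduce the search for critical points of $g|_{\partial B}$ to a two-variable problem via Lemma \ref{L:h}, treating the semi-trivial points separately. Since $g$ here satisfies \eqref{E:g-gauge} with $(n_1,n_2)=(1,1)$, Lemma \ref{L:h} applies. First I would compute $h(\nu,\zeta)=g(\cos\nu,e^{i\zeta}\sin\nu)$ explicitly. Using $|u_1|^4-|u_2|^4=\cos 2\nu$, $4|u_1|^2|u_2|^2=\sin^2 2\nu$, and $|u_1^2\pm u_2^2|^2=1-\tfrac12\sin^2 2\nu\pm\tfrac12\sin^2 2\nu\cos 2\zeta$, a direct calculation gives
\[
	h(\nu,\zeta)= 3\alpha_1+\alpha_2+r -\tfrac12(\alpha_1+3\alpha_2)\sin^2 2\nu +\tfrac12(\alpha_1-\alpha_2)\sin^2 2\nu\cos 2\zeta +2\alpha_3\cos 2\nu .
\]
This is the $h$ of the \eqref{E:nls3} analysis perturbed by the single extra term $2\alpha_3\cos 2\nu$, which breaks the $\nu\mapsto\tfrac\pi2-\nu$ symmetry and is exactly what will be responsible for the two separate branches $T_{0,3},T_{0,4}$.

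Next I would locate the critical points. The semi-trivial points $(e^{i\theta},0)$ and $(0,e^{i\theta})$ are handled directly: one checks that $(F_1,F_2)$ is a real multiple of the position vector there, so Lagrange's theorem gives a critical point, producing $T_{0,1},T_{0,2}$. For the remaining points I use Lemma \ref{L:h}: since $\alpha_1\neq\alpha_2$ and $\sin 2\nu\neq0$ for $\nu\notin\tfrac\pi2\Z$, the equation $\partial_\zeta h=-(\alpha_1-\alpha_2)\sin^2 2\nu\sin 2\zeta=0$ forces $\cos 2\zeta=\pm1$. Plugging into
\[
	\partial_\nu h=2\sin 2\nu\bigl[\cos 2\nu\bigl((\alpha_1-\alpha_2)\cos 2\zeta-(\alpha_1+3\alpha_2)\bigr)-2\alpha_3\bigr]=0
\]
splits the analysis: $\cos 2\zeta=1$ yields $\cos 2\nu=-\tfrac{\alpha_3}{2\alpha_2}$ (solvable iff $\alpha_2>0$ and $\alpha_3\le 2\alpha_2$), giving $T_{0,3}$; while $\cos 2\zeta=-1$ yields $\cos 2\nu=-\tfrac{\alpha_3}{\alpha_1+\alpha_2}$ (solvable iff $\alpha_3\le|\alpha_1+\alpha_2|$), giving $T_{0,4}$. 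The half-angle identities $\cos^2\nu=\tfrac{1+\cos 2\nu}2$, $\sin^2\nu=\tfrac{1-\cos 2\nu}2$ turn these into the stated amplitudes, and recording which existence conditions hold produces the four-case table of part (1). Substituting each critical $(\nu,\zeta)$ back into $h$, with $\sin^2 2\nu=1-\cos^2 2\nu$, gives $g_1,g_2,g_3,g_4$; for instance the $T_{0,3}$ computation collapses to $3\alpha_1-\alpha_2+r-\tfrac{\alpha_3^2}{2\alpha_2}$ and the $T_{0,4}$ one to $2\alpha_1+r-\tfrac{\alpha_3^2}{\alpha_1+\alpha_2}$.

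For part (2) I would compare the four values. Because $\alpha_3>0$ we have $g_1>g_2$, so $g_1$ is never minimal. The decisive identities are the perfect-square differences
\[
	g_2-g_3=\tfrac{(2\alpha_2-\alpha_3)^2}{2\alpha_2}\ge0,\qquad
	g_2-g_4=\tfrac{(\alpha_1+\alpha_2-\alpha_3)^2}{\alpha_1+\alpha_2},
\]
together with
\[
	g_3-g_4=(\alpha_1-\alpha_2)\,\tfrac{2\alpha_2(\alpha_1+\alpha_2)-\alpha_3^2}{2\alpha_2(\alpha_1+\alpha_2)} .
\]
In the regime where both $T_{0,3}$ and $T_{0,4}$ are present one has $\alpha_3<2\alpha_2$ and $\alpha_3<\alpha_1+\alpha_2$, hence $\alpha_3^2<2\alpha_2(\alpha_1+\alpha_2)$, so the last difference has the sign of $\alpha_1-\alpha_2$. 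A short case split on the sign of $\alpha_1-\alpha_2$ and on whether $\alpha_3\ge\tilde\alpha$ then identifies the minimizer: $g_2$ on $T_{0,2}$ when $\alpha_3\ge\tilde\alpha$, $g_3$ on $T_{0,3}$ when $\alpha_3<\tilde\alpha=2\alpha_2$, and $g_4$ on $T_{0,4}$ when $\alpha_3<\tilde\alpha=\alpha_1+\alpha_2$, matching $\tilde\alpha=\alpha_2+\max(\alpha_1,\alpha_2)$. The step needing the most care is the bookkeeping of the existence and positivity constraints: I must confirm that $T_{0,4}$ can supply the minimum only when $\alpha_1+\alpha_2>0$ (so the square roots and the sign of $g_2-g_4$ behave as claimed), and that the boundary cases $\alpha_3=2\alpha_2$ and $\alpha_3=\alpha_1+\alpha_2$ degenerate $T_{0,3}$ and $T_{0,4}$ into $T_{0,2}$ consistently, so that no branch is spuriously double-counted.
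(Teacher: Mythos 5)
Your proposal is correct and follows essentially the same route as the paper's proof: the semi-trivial points are handled directly via Lagrange's multiplier theorem, the remaining critical points come from the reduction $h(\nu,\zeta)=g(\cos\nu,e^{i\zeta}\sin\nu)$ where $\partial_\zeta h=0$ forces $\cos 2\zeta=\pm1$ and then $\partial_\nu h=0$ gives $\cos 2\nu=-\tfrac{\alpha_3}{2\alpha_2}$ or $\cos 2\nu=-\tfrac{\alpha_3}{\alpha_1+\alpha_2}$, and part (2) rests on the same comparisons (your perfect-square identities are exactly the paper's equivalences $g_2>g_3\Leftrightarrow(2\alpha_2-\alpha_3)^2>0$, the sign of $g_2-g_4$ via $\alpha_1+\alpha_2$, and $g_3>g_4\Leftrightarrow\alpha_1>\alpha_2$ in the common regime). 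No gaps.
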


\begin{proof}
Regarding $g$ as a function $\R^4\to\R$ by 
$z_1=x_1+iy_1$ and $z_2=x_2+iy_2$, we have
\[
	\nabla_{x_1.y_1,x_2,y_2} g =4(\Re F_1(z_1,z_2), \Im F_1(z_1,z_2), \Re F_2(z_1,z_2), \Im F_2(z_1,z_2)).
\]

If $|z_1|=1$ and $z_2=0$ then
$(F_1(z_1,0),F_2(z_1,0)) = ( 3\alpha_1 + \alpha_2 +2\alpha_3 + r )(z_1,0)$.
Hence, $(z_1,0)$ is a critical point of $g|_{\partial B}$. The critical value is $g(z_1,0) = g_1$.

If $|z_2|=1$ and $z_1=0$ then
$(F_1(0,z_2) ,F_2(0,z_2)) = ( 3\alpha_1 + \alpha_2 -2\alpha_3 + r )(0,z_2)$.
Hence, $(0,z_2)$ is a critical point of $g|_{\partial B}$.
The critical value is $g(0,z_2) = g_2$.

To find the other critical points, let us consider
the function $h$ defined in \eqref{D:h}.
It takes the form
\begin{align*}
	h(\nu,\zeta) 
	={}&2^{-1}(\alpha_1 - \alpha_2)  \sin^2 2\nu \cos2\zeta-
	2^{-1}(\alpha_1 + 3\alpha_2)  \sin^2 2\nu 
	+2\alpha_3  \cos 2\nu +3\alpha_1+\alpha_2+r ,
\end{align*}
where $\nu \in (0,\pi/2)\cup (\pi/2,\pi)$ and $\zeta \in [0,\pi)$.
Note that
\[
	\partial_\nu h(\nu,\zeta)=
	 2 \sin 2\nu (\cos 2\nu ((\alpha_1 - \alpha_2) \cos2\zeta
	-(\alpha_1 + 3\alpha_2)) -2\alpha_3)
\]
and
\[
	\partial_\zeta h(\nu,\zeta)=-(\alpha_1 - \alpha_2) \sin^2 2\nu \sin 2\zeta.
\]
Since $\sin 2\nu\neq0$, $\partial_\zeta h(\nu,\zeta)=0$ implies $\sin 2\zeta =0$ and hence $\cos 2\zeta = \pm1$.

If $\cos 2\zeta =1$ then $\partial_\nu h(\nu,\zeta)=0$ implies $\alpha_2>0$ and $\cos 2\nu =-\frac{\alpha_3}{2\alpha_2}$.
We remark that such $\nu$ exists in $(0,\pi/2)\cup (\pi/2,\pi)$
if $\alpha_3 < 2\alpha_2$.
The critical value in this case is $g_3$
and the set of the corresponding critical points is $T_{0,3}$.

If $\cos 2\zeta =-1$ then $\partial_\nu h(\nu,\zeta)=0$ implies $\alpha_1 +\alpha_2 \neq0$ and
$\cos 2\nu = -\frac{\alpha_3}{ \alpha_1 +\alpha_2 }$.
We remark that such $\nu$ exists in $(0,\pi/2)\cup (\pi/2,\pi)$ if the modulus of the right hand side is less than one, that is,
if $\alpha_3 < |\alpha_1+\alpha_2|$.
The critical value in this case is $g_4$
and the set of the corresponding critical points is $T_{0,4}$.

Let us find $g_{\min}$ and $T_0$. One has $g_1>g_2$.
Hence, if $\max (2\alpha_2, |\alpha_1+\alpha_2|) \le \alpha_3$ then $g_{\min}=g_2$ and $T_0=T_{0,2}$.
Further, it follows that $g_2 > g_3$ for $ \alpha_3 < 2\alpha_2 $ since
$g_2 > g_3 \Leftrightarrow (2\alpha_2- \alpha_3)^2  > 0$.
Similarly, $g_2 > g_4$ holds if $\alpha_3 < \alpha_1+\alpha_2  $ and
$g_4 >g_2$ holds if $\alpha_3 < -(\alpha_1+\alpha_2)  $.
Finally, under the assumption $\alpha_3 < \min (2\alpha_2,\alpha_1+\alpha_2)$, $g_3>g_4$ holds if and only if $\alpha_1>\alpha_2$.
One can summarize as in the statement.
\end{proof}

\subsection{System \eqref{E:nls5}}
In this case, we have
\begin{align*}
	g(u_1,u_2)=& \alpha_1 |u_1^2+u_2^2|^2 + \alpha_2 |u_1^2-u_2^2|^2 -4\alpha_2 |u_1|^2|u_2|^2  +2\alpha_3 \cos \eta (|u_1|^4-|u_2|^4) \\
	& +4 \alpha_3 \sin \eta (|u_1|^2+|u_2|^2) \Re (\overline{u_1}u_2) + (2\alpha_1 + r) (|u_1|^2+|u_2|^2)^2,
\end{align*}
where
$ \alpha_2>0$, $\alpha_3>0 $,  $\alpha_1^2+\alpha_2^2+\alpha_3^2=1$, $\eta \in (0,\pi)$, and $r\in\R$.
We put an additional assumption $\alpha_1 >0$ if $\eta \in (\pi/2,\pi)$.
For $\alpha_2,\alpha_3>0$ and $k\in (0,\pi/2]$, recall that
$\theta_0=\theta_0(k) \in [k,\pi/2]$ is a unique solution to $ \alpha_2  \sin 2\theta   =  \alpha_3 \sin (\theta -k)$ in $[k,\pi/2]$.
Corollary \ref{C:app5}  follows from the following proposition.
\begin{proposition}\label{P:app5}
\begin{enumerate}
\item 
Let $\theta_j$ ($j=0,1,2,3$) be the solution to \eqref{E:equation} with $\rho=\alpha_3/\alpha_2$ and $\tau=\eta$ and define
\[
	\tilde{T}_{c,j}:=\bigcup_{\theta \in \R/2\pi \Z} (e^{i\theta} \cos \tfrac{\theta_j}2, e^{i\theta} \sin \tfrac{\theta_j}2).
\]
The function $g|_{\partial B}$ has the critical points
$\cup_{j=0,1,2,3} \tilde{T}_{c,j}$ if $\alpha_3/\alpha_2 \le \rho_*(\eta)$ and 
 $\cup_{j\in \mathcal{J}(\eta)} \tilde{T}_{c,j}$ if $\alpha_3/\alpha_2 > \rho_*(\eta)$, where
\[
	\mathcal{J}(\eta):= \begin{cases}
	\{ 0,3 \}&\text{if  } \eta<\pi/2 ,\\
	\{ 1,3 \}&\text{if  } \eta=\pi/2 , \\
	\{ 2,3 \}&\text{if  } \eta>\pi/2 .
	\end{cases}
\]

If 
\begin{equation}\label{E:c5pfothercrit}
\alpha_1\neq\alpha_2 \quad \text{and} \quad
\alpha_3^2 \le \tfrac{(\alpha_1 + \alpha_2)^2(\alpha_1 - \alpha_2)^2}{\alpha_1^2 + \alpha_2^2 - 2\alpha_1 \alpha_2 \cos 2\eta}
\end{equation}
then the set
\begin{align*}
	{T}_{c}:= \bigcup_{\theta \in \R/2\pi \Z,\,\sigma \in \{\pm 1\} } \Big\{ \Big(& \sigma e^{i\theta} \sqrt{\tfrac{\alpha_1 + \alpha_2-\alpha_3 \cos \eta }{2(\alpha_1 + \alpha_2)} }, \\
	&	 e^{i\theta}  \sqrt{\tfrac{\alpha_1 + \alpha_2}{2 (\alpha_1 + \alpha_2-\alpha_3 \cos \eta)} }\Big(-\tfrac{\sigma \alpha_3 \sin \eta}{\alpha_1-\alpha_2}+i \sqrt{ 1- \tfrac{\alpha_3^2(\alpha_1^2 + \alpha_2^2 - 2\alpha_1 \alpha_2 \cos 2\eta)}{(\alpha_1 + \alpha_2)^2(\alpha_1 - \alpha_2)^2} }\Big) \Big) \Big\}
\end{align*}
also gives critical points of $g|_{\partial B}$.
These are all the critical points of $g|_{\partial B}$.
\item $g_{\min}$ and $T_0$ is given as follows: 
\begin{itemize}
\item 
If the condition \eqref{E:5cond} is fulfilled then
\begin{equation*}
	g_{\min} = 
	 - \tfrac{\alpha_3^2(\alpha_1 - \alpha_2 \cos 2 \eta)}{\alpha_1^2-\alpha_2^2}  + 2\alpha_1+r   
\end{equation*}
and $T_0=T_c$.
\item If \eqref{E:5cond} is not satisfied then
\[
	g_{\min} = \alpha_2  \cos 2\theta_3
+2\alpha_3 \cos (\theta_3 -  \eta ) +3\alpha_1+\alpha_2 + r
\]
and $T_0=\tilde{T}_{c,3}$.
\end{itemize}
\end{enumerate}
\end{proposition}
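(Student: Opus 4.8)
The plan is to follow the template already used for \eqref{E:nls3} and \eqref{E:nls4}: reduce the computation of the critical points of $g|_{\partial B}$ to the two-variable angular function $h(\nu,\zeta)=g(\cos\nu,e^{i\zeta}\sin\nu)$ from \eqref{D:h}, to which Lemma \ref{L:h} applies since $g$ is gauge-invariant with $(n_1,n_2)=(1,1)$. A preliminary observation is that, in contrast with \eqref{E:nls3} and \eqref{E:nls4}, the semi-trivial points are no longer critical: a direct evaluation gives $(F_1,F_2)(1,0)$ a nonzero second component proportional to $\alpha_3\sin\eta\neq0$, so $(F_1,F_2)$ cannot be parallel to $(1,0)$. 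Hence every critical point is non-semi-trivial and is detected by Lemma \ref{L:h}.

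Next I would compute $h$ explicitly. Using $|u_1|^2+|u_2|^2=1$, $|u_1|^4-|u_2|^4=\cos2\nu$, $\Re(\overline{u_1}u_2)=\tfrac12\sin2\nu\cos\zeta$ and the identities already used for \eqref{E:nls4}, one obtains
\[
	h(\nu,\zeta)=\tfrac12(\alpha_1-\alpha_2)\sin^2 2\nu\cos2\zeta-\tfrac12(\alpha_1+3\alpha_2)\sin^2 2\nu+2\alpha_3\cos\eta\cos2\nu+2\alpha_3\sin\eta\sin2\nu\cos\zeta+3\alpha_1+\alpha_2+r.
\]
The decisive step is to factor the equation $\partial_\zeta h=0$. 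Writing $\sin2\zeta=2\sin\zeta\cos\zeta$ gives
\[
	\partial_\zeta h=-2\sin2\nu\,\sin\zeta\big[(\alpha_1-\alpha_2)\sin2\nu\cos\zeta+\alpha_3\sin\eta\big],
\]
so on the non-semi-trivial locus $\sin2\nu\neq0$ the critical points split into a \emph{real-ratio} branch $\sin\zeta=0$ and a \emph{complex-ratio} branch $\cos\zeta=-\alpha_3\sin\eta/((\alpha_1-\alpha_2)\sin2\nu)$.

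For the real-ratio branch I would use the gauge symmetry (which identifies $h(\nu,\pi)$ with $h(\pi-\nu,0)$) to fix $\zeta=0$, and set $\theta=2\nu\in[0,2\pi)$; then $\partial_\nu h=0$ collapses to $\alpha_2\sin2\theta+\alpha_3\sin(\theta-\eta)=0$, which is exactly \eqref{E:equation} with $\rho=\alpha_3/\alpha_2$ and $\tau=\eta$. Lemma \ref{L:pzero2} then supplies the number and location of the solutions $\theta_j$ and the threshold $\rho_*(\eta)$, producing the sets $\tilde T_{c,j}$ and the case distinction governing which survive; the index set $\mathcal J(\eta)$ records precisely which two roots persist once $\alpha_3/\alpha_2>\rho_*(\eta)$. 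For the complex-ratio branch, substituting the expression for $\cos\zeta$ into $\partial_\nu h=0$ causes the singular $1/\sin 2\nu$ terms to cancel and leaves the clean relation $\cos2\nu=-\alpha_3\cos\eta/(\alpha_1+\alpha_2)$; back-substitution yields the explicit components of $T_c$, while the two constraints $|\cos2\nu|\le1$ and $|\cos\zeta|\le1$ combine, after clearing denominators and using $\sin^2\eta(\alpha_1+\alpha_2)^2+\cos^2\eta(\alpha_1-\alpha_2)^2=\alpha_1^2+\alpha_2^2-2\alpha_1\alpha_2\cos2\eta$, into exactly the existence condition \eqref{E:c5pfothercrit}.

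It remains to prove part (2). Evaluating $g=h$ on each family gives $g_j=\alpha_2\cos2\theta_j+2\alpha_3\cos(\theta_j-\eta)+3\alpha_1+\alpha_2+r$ on $\tilde T_{c,j}$, while on $T_c$ the critical relations $\cos2\nu=-\alpha_3\cos\eta/(\alpha_1+\alpha_2)$ and $\sin2\nu\cos\zeta=-\alpha_3\sin\eta/(\alpha_1-\alpha_2)$ reduce the value to $-\alpha_3^2(\alpha_1-\alpha_2\cos2\eta)/(\alpha_1^2-\alpha_2^2)+2\alpha_1+r$. The remaining task---and the main obstacle---is the global comparison of these values. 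I would first show that $\theta_3$ minimizes $\theta\mapsto\alpha_2\cos2\theta+2\alpha_3\cos(\theta-\eta)$ among the roots of \eqref{E:equation}, so that $g_3=\min_j g_j$; this is read off from the sign of the derivative $-2\alpha_2 f_{\rho,\eta}$ together with the monotonicity and ordering of the $\theta_j(\rho)$ recorded in Lemma \ref{L:pzero2}. I would then compare $g_3$ with the value on $T_c$: the two coincide on the boundary $\alpha_3^2=(\alpha_1+\alpha_2)^2(\alpha_1-\alpha_2)^2/(\alpha_1^2+\alpha_2^2-2\alpha_1\alpha_2\cos2\eta)$ of \eqref{E:c5pfothercrit}, where $T_c$ degenerates onto $\tilde T_{c,3}$, and a sign analysis of their difference shows that $T_c$ is the lower of the two precisely when $\alpha_1>\alpha_2$. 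Combining these two comparisons yields $g_{\min}$ and $T_0$ as claimed, the alternative in the statement being governed exactly by \eqref{E:5cond}.
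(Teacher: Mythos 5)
Your part (1) is correct and coincides with the paper's own proof essentially step by step: the exclusion of semi-trivial critical points via $\alpha_3\sin\eta\neq0$, the reduction to $h(\nu,\zeta)$ through Lemma \ref{L:h}, the factorization of $\partial_\zeta h$ into the branch $\sin\zeta=0$ and the branch $(\alpha_1-\alpha_2)\sin 2\nu\cos\zeta+\alpha_3\sin\eta=0$, the identification of the real branch with \eqref{E:equation} and Lemma \ref{L:pzero2}, and the derivation of $\cos 2\nu=-\alpha_3\cos\eta/(\alpha_1+\alpha_2)$ together with the existence condition \eqref{E:c5pfothercrit} are exactly the paper's computations (and your cancellation claim for the singular $1/\sin2\nu$ terms is correct).

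The gap is in part (2), where both comparison steps are asserted rather than proved. First, the sign of the derivative $-2\alpha_2 f_{\rho,\eta}$ only shows that $\theta_1$ and $\theta_3$ are the two local minima of the real-branch profile; it does not decide which of the two values is smaller, and the monotonicity and ordering of the $\theta_j(\rho)$ do not obviously settle this either (an envelope computation gives $\frac{d}{d\rho}\,p_{\rho,\eta}(\theta_j(\rho))=2\cos(\theta_j(\rho)-\eta)$, and the pointwise comparison of these derivatives for $j=1$ and $j=3$ is unclear for small $\eta$). The paper needs a separate argument here, Lemma \ref{L:pmin}, which localizes the global minimum to the arc containing $\theta_3$ by reflection comparisons such as $p(\theta)>p(\theta+\pi)$ and $p(\theta)>p(2\pi-\theta)$. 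Second, and more seriously, your real-versus-complex comparison (``a sign analysis of their difference'') is precisely the statement to be proven: $\tilde h_c$ involves $\theta_3$, which is defined only implicitly as a root of a quartic, so there is no tractable closed-form difference to analyze, and coincidence of the two critical families on the boundary of \eqref{E:c5pfothercrit} does not by itself propagate a sign into the interior (one would have to exclude crossings of the two critical values away from the merging locus). The paper never compares values at all; it uses a second-derivative test: at the complex-branch points one computes $\partial_\zeta^2 h=2(\alpha_1-\alpha_2)\sin^2 2\nu\,\sin^2\zeta$, which is negative when $\alpha_1<\alpha_2$, while $\partial_\zeta^2 h(\tfrac12\theta_3,0)=-2\sin\theta_3\,((\alpha_1-\alpha_2)\sin\theta_3+\alpha_3\sin\eta)$ is negative exactly when $\alpha_1>\alpha_2$ and the inequality in \eqref{E:c5pfothercrit} is strict; since $g|_{\partial B}$ attains its minimum at a critical point, whichever family fails the test cannot contain minimum points, and this pins down $g_{\min}$ and $T_0$, the equality case being the one where $T_c=\tilde T_{c,3}$. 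If you prefer to keep your value-comparison strategy, it can be rescued by a change of variables rather than brute force: with $(X,Y)=(\cos 2\nu,\sin 2\nu\cos\zeta)$ one has $h=(\alpha_1+\alpha_2)X^2+2\alpha_3\cos\eta\,X+(\alpha_1-\alpha_2)Y^2+2\alpha_3\sin\eta\,Y+2\alpha_1+r$ on the disc $X^2+Y^2\le1$, whose interior critical point is the complex branch and whose boundary circle is the real branch; strict convexity in $Y$ (for $\alpha_1>\alpha_2$) versus strict concavity in $Y$ (for $\alpha_1<\alpha_2$) then yields the comparison at once.
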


\begin{remark}
In \eqref{E:nls4} case, $\eta=0$, the function $g$ has the symmetry $g(-u_1,u_2)=
g(u_1,-u_2)=g(u_1,u_2)$.
This shows that the set $T_0$ of minimum points of $g|_{\partial B}$ is symmetric with respect to $\{z_1=0\}$ and $\{z_2=0\}$. 
Indeed, as is observed in Proposition \ref{P:app4}, the set $T_0$ actually has this symmetry.
However, this symmetry is broken in \eqref{E:nls5} case, $\eta>0$.
We see that $g(-u_1,u_2)=g(u_1,-u_2)\neq g(u_1,u_2)$ unless $u_1=0$ or $u_2=0$.
Consequently, the set $T_0$ does not have such symmetry any more.
\end{remark}

\subsubsection{Preliminaries}
Let us first prove Lemma \ref{L:pzero2}.

\begin{proof}[Proof of Lemma \ref{L:pzero2}]
Recall that
$f_{\rho,\tau} (\theta):=\sin 2\theta + \rho \sin (\theta -\tau)$.

(1)  Fix $\tau \in (0,\pi/2)$.
It is obvious that $f_{0,\tau}(\theta)=0$ has four solutions $\theta = \pi/2, \pi , 3\pi/2, 2\pi$. Hence, we let $\rho>0$ in the rest of the proof.
Since $f_{\rho,\tau}$ solves the ODE $y''=-y - 3 \sin 2\theta$,
it follows from the Duhamel principle that
\begin{equation}\label{E:pzero2pf1}
	f_{\rho,\tau} (\theta) = \cos (\theta- \tilde{\theta}) f_{\rho,\tau} (\tilde{\theta})
	+ \sin (\theta - \tilde{\theta})f_{\rho,\tau}' (\tilde{\theta})
	-3 \int_{\tilde{\theta}}^\theta \sin (\theta - \tau) \sin 2\tau d\tau
\end{equation}
for any $\theta,\tilde{\theta} \in \R$.

Let us begin with the study of solutions in the range $[0,\pi/2)$.
Since $f_{\rho,\tau} (0) = -\rho \sin \tau <0$ and $f_{\rho,\tau} (\pi/2) = \rho \cos \tau>0$, there exists a solution $\theta_0 \in (0,\pi/2)$ to $f_{\rho,\tau}(\theta)=0$.
Without loss of generality, we may suppose that $\theta_0$ is the largest solution in this interval.
Then, $f_{\rho,\tau}>0$ on $(\theta_0,\pi/2)$. Further, one sees from \eqref{E:pzero2pf1} with $\tilde\theta=\theta_0$
that
\[
	f_{\rho,\tau} (\theta) = \sin (\theta- \theta_0) f'_{\rho,\tau} (\theta_0)
	-3 \int_{\theta_0}^\theta \sin (\theta - \tau) \sin 2\tau d\tau.
\]
By putting $\theta=\pi/2$ in this formula and using the fact that $f_{\rho,\tau}(\pi/2)>0$,
one obtains $f'_{\rho,\tau} (\theta_0)>0$.
Then, the above formula yields $f_{\rho,\tau} <0$ on $[0,\theta_0)$.
Hence, $\theta_0$ is the only solution in this interval.
Finally, since $f_{\rho,\tau} (\tau) = \sin 2\tau>0$, one has the bound $\theta_0<\tau$.

Let us move on to the solutions in $(\pi, \frac{3\pi}2]$.
Since $f_{\rho,\tau}(\pi)=\rho \sin \tau >0$ and $f_{\rho,\tau} (3\pi/2)= - \rho \cos \tau <0$,
the equation $f_{\rho,\tau} (\theta)=0$ has a solution in $(\pi,3\pi/2)$.
We let $\theta_3 \in (\pi,3\pi/2)$ be the smallest solution in this range.
Then, one has $f_{\rho,\tau}(\theta_3)=0$ and $f_{\rho,\tau}>0$ on $(\pi,\theta_3)$.
One sees from \eqref{E:pzero2pf1} with $\tilde\theta=\theta_3$ that
\[
	f_{\rho,\tau} (\theta) = 
	 \sin (\theta - \theta_3)f_{\rho,\tau}' (\theta_3)
	-3 \int_{\theta_3}^\theta \sin (\theta - \tau) \sin 2\tau d\tau.
\]
By substituting $\theta=\pi$ to this formula and using the fact that $f_{\rho,\tau}(\pi)>0$, one sees that $f_{\rho,\tau}'(\theta_3)<0$.
Together with the above formula, this shows  $f_{\rho,\tau}<0$ on $(\theta_3,3\pi/2)$.
Hence, $\theta_3$ is the unique solution in this interval.
Noting that $f_{\rho,\tau} (\pi + \tau) = \sin 2\tau>0$, we have $\pi+ \tau < \theta_3$.

Let us next consider solutions in $(\frac{3\pi}2,2\pi)$. 
One has $\sin 2\theta<0$.
Since $\tau \in (0,\pi/2)$, we see that $\theta - \tau \in (\pi,2\pi)$
and hence that $\rho\sin (\theta-\tau)<0$.
Therefore, we have $f_{\rho,\tau} (\theta)< 0$ for all $\theta \in (3\pi/2,2\pi)$,
which means that
$f_{\rho,\tau}(\theta)=0$ has no solution in this range.

We finally consider the interval $[\pi/2,\pi]$.
Define
\[
	\rho_* : = \inf \left\{ \rho >0 \ \middle|\ \inf_{\theta \in [{\pi}/2,\pi]} f_{\rho,\tau}(\theta) >0 \right\}.
\]
As $\inf_{\theta \in [\frac{\pi}2,\pi]} \sin (\theta -\tau)=\min (\sin \tau,\cos \tau)>0$, $\rho_*$ is finite. 
Since $f_{\rho,\tau}(\tau+\pi/2) = -\sin 2\tau +\rho\le 0$ for $\rho\le \sin 2\tau$, one has the lower bound $\rho_* \ge \sin 2\tau>0$.

By definition, if $\rho > \rho_*$ then there is no solution to $f_{\rho,\tau}(\theta)=0$
in $[{\pi}/2,\pi]$.

Let us prove if $\rho=\rho_*$  the equation $f_{\rho_*,\tau}(\theta)=0$ has a unique solution
in $[{\pi}/2,\pi]$.
Since $f_{\rho,\tau}$ is continuous in $L^\I ([{\pi}/2,\pi])$ with respect to $\rho$,
one has
\[
	\inf_{\theta \in [{\pi}/2,\pi]} f_{\rho_*,\tau}(\theta) =0.
\]
Since $f_{\rho_*,\tau}$ is continuous in $\theta$ and since $f_{\rho_*,\tau}(\pi/2)>0$
and $f_{\rho_*,\tau}(\pi)>0$,
there exists $\theta_*\in (\pi/2,\pi)$ such that $f_{\rho_*,\tau}(\theta_*)=0$.
It follows that $f_{\rho_*,\tau}'(\theta_*)=0$. Then, we see from \eqref{E:pzero2pf1} with $\tilde\theta=\theta_*$ that
\[
	f_{\rho_*,\tau}(\theta) =
		-3 \int_{\theta_*}^\theta \sin (\theta - \tau) \sin 2\tau d\tau.
\]
Hence, we see that $f_{\rho_*,\tau}\ge0$ on $[\pi/2,\pi]$. Further, in this interval, the equality holds if and only if $\theta=\theta_*$.
Thus, $f_{\rho_*,\tau}(\theta)=0$ has the solution $\theta_*\in (\pi/2,\pi)$ which is unique in $[{\pi}/2,\pi]$. 

We now prove that if $0 \le \rho <\rho_*$ then there exist exactly two solutions to $f_{\rho,\tau}(\theta)=0$ in $[{\pi}/2,\pi]$. 
Fix $\rho\in (0,\rho_*)$.
Since we have $f_{\rho,\tau}(\pi/2)> 0$, $f_{\rho,\tau}(\pi)>0$, and
\[
	f_{\rho,\tau} (\theta_*) = (\rho_*-\rho) \sin (\theta_* -\tau) <0,
\]
there exist two solutions $\theta_1 \in (\pi/2,\theta_*)$ and
$\theta_2 \in (\theta_*,\pi)$.
By Rolle's theorem, there exists $\theta_{**} \in (\theta_2,\theta_3)$ such that
$f_{\rho,\tau}'(\theta_{**})=0$. Then, \eqref{E:pzero2pf1} with $\tilde{\theta}=\theta_{**}$ yields
\[
	f_{\rho,\tau} (\theta) = \cos (\theta-\theta_{**}) f_{\rho,\tau}(\theta_{**})
	-3 \int_{\theta_{**}}^\theta \sin (\theta - \tau) \sin 2\tau d\tau.
\]
If $f(\theta_{**})\ge0$ then $f_{\rho,\tau}(\theta)=0$ has at most one solution in $[{\pi}/2,\pi]$, a contradiction.
Hence, $f_{\rho,\tau}(\theta_{**})<0$. Then, the above formula shows that $f_{\rho,\tau}$
is strictly decreasing in $(\pi/2,\theta_{**})$ and strictly decreasing in $(\theta_{**},\pi)$.
In particular, $\theta_1$ and $\theta_2$ are only solutions to $f_{\rho,\tau}(\theta)=0$
in $[{\pi}/2,\pi]$.
Combining the above, we obtain the first assertion.

(2) is obvious.

(3) follows from (1) by symmetry. If we define $\zeta = \pi - \theta$, $f_{\rho,\tau}(\theta)=0$ is equivalent to
$\sin 2\zeta + \rho \sin (\zeta - (\pi-\tau))=0$. Thus, we have $\rho_{*}(\tau)=\rho_*(\pi-\tau)$ and
$\theta_0(\tau)=\pi - \theta_2(\pi-\tau)$, $\theta_1(\tau)= \pi - \theta_1(\pi-\tau)$,
$\theta_2(\tau)=\pi - \theta_0(\pi-\tau)$, and $\theta_3(\tau)=3\pi - \theta_3(\pi-\tau)$.
\end{proof}

\begin{lemma}\label{L:pmin}
For $\rho>0$ and $\tau \in (0,\pi)$, the function
\[
	p_{\rho,\tau}(\theta):= \cos 2\theta   +  2\rho \cos (\theta -\tau) 
\]
takes its minimum at $\theta = \theta_3$. There is no other minimum point in $[0,2\pi)$.
\end{lemma}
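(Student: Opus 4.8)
The plan is to reduce the minimisation of $p_{\rho,\tau}$ to the study of the zeros of $f_{\rho,\tau}$ already classified in Lemma \ref{L:pzero2}. First I would observe that
\[
	p_{\rho,\tau}'(\theta) = -2\sin 2\theta - 2\rho\sin(\theta-\tau) = -2 f_{\rho,\tau}(\theta),
\]
so the critical points of $p_{\rho,\tau}$ on the circle $\R/2\pi\Z$ are exactly the zeros $\theta_0,\theta_1,\theta_2,\theta_3$ of $f_{\rho,\tau}$ provided by Lemma \ref{L:pzero2}. Since $p_{\rho,\tau}$ is smooth and $2\pi$-periodic, its global minimum is attained at one of these critical points, and $p''_{\rho,\tau}=-2 f'_{\rho,\tau}$ shows that a zero $\theta_j$ is a local minimum precisely when $f_{\rho,\tau}$ changes sign from $+$ to $-$ there. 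Reading off the sign pattern of $f_{\rho,\tau}$ established in the proof of Lemma \ref{L:pzero2}, I would identify $\theta_0$ and $\theta_2$ as local maxima and $\theta_1,\theta_3$ as the only local minima; in particular $\theta_3$ is always a nondegenerate local minimum, while $\theta_1$ is a genuine local minimum only in the four-solution range $\rho<\rho_*$. Hence in the cases $\rho\ge\rho_*$, where $\theta_3$ is the sole local minimum, it is already the unique global minimiser and we are done.

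The remaining, and main, point is to compare the two competing minima in the four-solution regime, that is, to prove $p_{\rho,\tau}(\theta_3)<p_{\rho,\tau}(\theta_1)$. I focus on $\tau\in(0,\pi/2)$; the case $\tau=\pi/2$ is handled directly by the substitution $s=\sin\theta$, which turns $p_{\rho,\pi/2}=1+2\rho s-2s^2$ into a downward parabola whose minimum over $s\in[-1,1]$ sits at $s=-1$, i.e. at $\theta=3\pi/2=\theta_3$; and the range $\tau\in(\pi/2,\pi)$ follows from the identity $p_{\rho,\pi-\tau}(\pi-\theta)=p_{\rho,\tau}(\theta)$ combined with the reflection relations of Lemma \ref{L:pzero2}(3). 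For $\tau\in(0,\pi/2)$ I would set
\[
	\Phi(\rho) := p_{\rho,\tau}(\theta_1(\rho)) - p_{\rho,\tau}(\theta_3(\rho)),
\]
which is smooth on $(0,\rho_*)$ and satisfies $\Phi(0)=p_{0,\tau}(\pi/2)-p_{0,\tau}(3\pi/2)=0$. Because $\theta_1,\theta_3$ are critical points of $p_{\rho,\tau}$, the $\theta$-derivative terms drop out and the envelope computation gives
\[
	\Phi'(\rho) = 2\cos(\theta_1-\tau) - 2\cos(\theta_3-\tau).
\]
Since $\theta_1-\tau\in(0,\pi)$ and $\theta_3-\tau\in(\pi,3\pi/2)$, the sign of $\Phi'$ is governed by the geometric inequality $\theta_1+\theta_3<2(\pi+\tau)$, which is exactly $\cos(\theta_1-\tau)>\cos(\theta_3-\tau)$.

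To establish $\theta_1+\theta_3<2(\pi+\tau)$ I would exploit the near-oddness of $f_{\rho,\tau}$ about $\pi+\tau$: writing $\theta=\pi+\tau+\phi$ and $G(\phi):=\sin(2\phi+2\tau)-\rho\sin\phi$, the relevant zeros become $\phi_1=\theta_1-\pi-\tau<0<\phi_3=\theta_3-\pi-\tau$, and the target reads $\phi_3<-\phi_1$. The key algebraic identity $G(\phi)+G(-\phi)=2\sin 2\tau\,\cos 2\phi$ yields $G(-\phi_1)=2\sin 2\tau\,\cos 2\phi_1$, and one then locates $-\phi_1$ relative to the root $\phi_3$ by comparing this value with the sign of $G$ on the interval issuing from $\phi_3$, where $G$ decreases through $0$. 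The main obstacle is precisely this last step: pinning down the sign of $\cos 2\phi_1$ and keeping track of the periodic wrap-around of the reflected point, which I expect to require the local-minimum condition $f'_{\rho,\tau}(\theta_1)<0$ (equivalently $p''_{\rho,\tau}(\theta_1)>0$) to constrain the position of $\theta_1$, rather than the crude a priori bounds on its range. Once $\theta_1+\theta_3<2(\pi+\tau)$ is secured, $\Phi'>0$ on $(0,\rho_*)$ gives $\Phi(\rho)>\Phi(0)=0$, i.e. $p_{\rho,\tau}(\theta_3)<p_{\rho,\tau}(\theta_1)$, so $\theta_3$ is the unique global minimiser and no other minimum point exists in $[0,2\pi)$.
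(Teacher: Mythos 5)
Your reduction is sound up to the crux: the identification $p_{\rho,\tau}'=-2f_{\rho,\tau}$, the classification of $\theta_1,\theta_3$ as the local minima, the parabola argument at $\tau=\pi/2$, the reflection identity $p_{\rho,\pi-\tau}(\pi-\theta)=p_{\rho,\tau}(\theta)$, the disposal of the regime $\rho\ge\rho_*$, and the envelope formula $\Phi'(\rho)=2\cos(\theta_1-\tau)-2\cos(\theta_3-\tau)$ are all correct. However, the inequality $\theta_1+\theta_3<2(\pi+\tau)$ on which your whole comparison rests is \emph{false} on part of the four-solution range, so the step you flag as ``the main obstacle'' is not merely unfinished --- it cannot be carried out by any refinement of the reflection trick. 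To see this, fix $\rho\in(0,2)$ and let $\tau\to0^+$. Writing $\alpha=\arccos(-\rho/2)\in(\pi/2,\pi)$, the limiting roots are $\alpha$ and $2\pi-\alpha$, and implicit differentiation of $f_{\rho,\tau}(\theta)=\sin2\theta+\rho\sin(\theta-\tau)$ (whose $\theta$-derivative at both points equals $\rho^2/2-2$) gives
\[
	\theta_1=\alpha+\tfrac{\rho^2}{4-\rho^2}\,\tau+O(\tau^2),
	\qquad
	\theta_3=2\pi-\alpha+\tfrac{\rho^2}{4-\rho^2}\,\tau+O(\tau^2),
\]
hence $\theta_1+\theta_3=2\pi+\tfrac{2\rho^2}{4-\rho^2}\tau+O(\tau^2)$. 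This is below $2\pi+2\tau$ if and only if $\rho^2<2+O(\tau)$. Since $\rho_*(\tau)\to2$ as $\tau\to0$, for small $\tau$ there is a nonempty range $\sqrt2<\rho<\rho_*(\tau)$ on which $\theta_1+\theta_3>2(\pi+\tau)$, i.e.\ $\Phi'(\rho)<0$. So $\Phi$ is \emph{not} monotone on $(0,\rho_*)$: it increases and then decreases back toward $0$ as $\rho\uparrow\rho_*$ (in the $\tau\to0$ limit, $\Phi(\rho_*)\approx(\rho_*-2)^2/2\to0$). The desired conclusion $\Phi>0$ is true, but it cannot be deduced from $\Phi(0)=0$ together with a sign of $\Phi'$; you would additionally need to control $\Phi$ near the degenerate endpoint $\rho_*$, which is essentially as hard as the original comparison.

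The paper avoids comparing the two local minima altogether, and its mechanism is the ingredient your argument is missing. For $\tau\in(0,\pi/2)$ it localizes the global minimum by three one-line reflections: $p_{\rho,\tau}(\theta)-p_{\rho,\tau}(\theta+\pi)=4\rho\cos(\theta-\tau)>0$ on $[0,\pi/2]$; $p_{\rho,\tau}(\theta)-p_{\rho,\tau}(2\pi-\theta)=4\rho\sin\theta\sin\tau>0$ on $[\pi/2,\pi)$; and $p_{\rho,\tau}(\theta)-p_{\rho,\tau}(3\pi-\theta)=4\rho\cos\theta\cos\tau>0$ on $(3\pi/2,2\pi]$. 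Consequently the minimum can only lie in $[\pi,3\pi/2]$, where by Lemma \ref{L:pzero2} the unique critical point is $\theta_3$; the case $\tau\in(\pi/2,\pi)$ is handled symmetrically with the interval $[3\pi/2,2\pi]$. This removes $\theta_1$ from contention pointwise, by exhibiting for each $\theta$ outside the target interval a strictly better competitor inside it, instead of attempting the (false) global monotonicity in $\rho$.
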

\begin{proof}
We only consider the case $\tau\neq\pi/2$ since $\tau=\pi/2$ is obvious.
Note that $p(\theta) =  \cos 2\theta + 2\rho \cos \tau \cos \theta + 2\rho \sin \tau \sin \theta$.

If $\tau  \in (0,\pi/2)$ then one sees that minimum is attained only in $[\pi,3\pi/2]$.
Indeed, if $\theta \in [0,\pi/2]$ then one has $p_{\rho,\tau}(\theta)> p_{\rho,\tau}(\theta+\pi)$.
If $\theta \in [\pi/2,\pi)$ then $p_{\rho,\tau}(\theta)>p_{\rho,\tau}(2\pi-\theta)$ holds.
Similarly, if $\theta \in (3\pi/2,2\pi]$ then $p_{\rho,\tau}(\theta)>p_{\rho,\tau}(3\pi-\theta)$.
These imply that minimum of $p_{\rho,\tau}$ is not achieved in $[\pi,3\pi/2]^c$.
Since a minimum point is a critical point and since
$\theta_3$ is the only critical point in this interval for $\rho>0$, one sees that the minimum point must be this point.

If $\tau \in (\pi/2,\pi)$ then one sees that minimum is attained only in $[3\pi/2,2\pi]$ by a similar symmetry argument. By the previous lemma, $\theta_3$ is the only critical point in this range
and hence it must be the minimum point.
\end{proof}

\subsubsection{Proof of Proposition \ref{P:app5}}
\begin{proof}
Regarding $g$ as a function $\R^4\to\R$ by 
$z_1=x_1+iy_1$ and $z_2=x_2+iy_2$, we have
\[
	\nabla_{x_1.y_1,x_2,y_2} g =4(\Re F_1(z_1,z_2), \Im F_1(z_1,z_2), \Re F_2(z_1,z_2), \Im F_2(z_1,z_2)).
\]

If $|z_1|=1$ and $z_2=0$ then
\[
	F_1(z_1,0) = ( 3\alpha_1 + \alpha_2 +2\alpha_3\cos \eta + r )z_1, \quad
	F_2(z_1,0) = \alpha_3 \sin \eta z_1.
\]
Since $\alpha_3 \sin \eta\neq0$, $(F_1(z_1,0),F_2(z_1,0))$ is not a constant multiple of $(z_1,z_2)$.
Hence, $(z_1,0)$ is not a critical point of $g|_{\partial B}$. 

If $|z_2|=1$ and $z_1=0$ then
\[
	F_1(0,z_2) = \alpha_3 \sin \eta z_2, \quad
	F_2(0,z_2) = ( 3\alpha_1 + \alpha_2 -2\alpha_3\cos \eta + r )z_2.
\]
Since $\alpha_3 \sin \eta\neq0$, $(0,z_2)$ is not a critical point of $g|_{\partial B}$.

To find other critical points, let us consider
the function $h$ defined in \eqref{D:h}.
It takes the form
\begin{align*}
	h(\nu,\zeta) 
	={}&2^{-1}(\alpha_1 - \alpha_2)  \sin^2 2\nu \cos2\zeta-
	2^{-1}(\alpha_1 + 3\alpha_2)  \sin^2 2\nu \\
	&
	+2\alpha_3 \cos \eta \cos 2\nu +2\alpha_3 \sin \eta \sin 2\nu \cos\zeta +3\alpha_1+\alpha_2+r,
\end{align*}
where $\nu \in (0,\pi/2)\cup  (\pi/2,\pi)$ and $\zeta \in [0,\pi)$.
One has
\[
	\partial_\nu h(\nu,\zeta)=
	 2 \sin 2\nu (\cos 2\nu ((\alpha_1 - \alpha_2) \cos2\zeta
	-(\alpha_1 + 3\alpha_2)) -2\alpha_3\cos \eta)
	+ 4 \alpha_3 \sin \eta \cos 2\nu \cos \zeta
\]
and
\begin{align*}
	\partial_\zeta h(\nu,\zeta) 
	={}&-2 \sin\zeta  \sin 2\nu ( (\alpha_1 - \alpha_2)  \sin 2\nu  \cos \zeta
	+\alpha_3 \sin \eta ).
\end{align*}

Since $\sin 2\nu \neq0$, $\partial_\zeta h=0$ implies that $\zeta=0$ or
\begin{equation}\label{E:app5_mod_pf1}
	(\alpha_1 - \alpha_2)  \sin 2\nu  \cos \zeta
	+\alpha_3 \sin \eta=0.
\end{equation}

Let us first consider the case $\zeta =0$. One has
\[
	 h(\nu,0)=\alpha_2 \cos 4 \nu + 2\alpha_3 \cos (2\nu-\eta) + 3\alpha_1  +r
	 =\alpha_2 p_{{\alpha_3}/{\alpha_2}, \eta}(2\nu) + 3\alpha_1  +r.
\]
Hence, by means of Lemma \ref{L:pzero2}, the solutions $\partial_\nu h(\nu,0)$ in $(0,\pi/2)\cup (\pi/2,\pi)$
are $\nu =\tfrac12 \theta_j(\alpha_3/\alpha_2)$ for $j=0,1,2,3$ if $\alpha_3/\alpha_2 \le \rho_*(\eta)$ and $\nu =\tfrac12 \theta_j(\alpha_3/\alpha_2)$ for $j \in \mathcal{J}(\eta)$ if $\alpha_3/\alpha_2 > \rho_*(\eta)$.
They are critical points of $h$ and correspond to the points given by $\tilde{T}_{c,j}$.

If $\alpha_1=\alpha_2$ there is no more critical points since \eqref{E:app5_mod_pf1} has no solution.
Suppose $\alpha_1\neq \alpha_2$. Then, \eqref{E:app5_mod_pf1} is equivalent to
\[
	 \cos \zeta
	=- \tfrac{\alpha_3 \sin \eta}{(\alpha_1 - \alpha_2)  \sin 2\nu }.
\]
Using this relation, we see that $\partial_\zeta h=0$ is equivalent to 
\[
	\cos 2\nu = - \tfrac{\alpha_3 \cos \eta }{\alpha_1 + \alpha_2}.
\]
To obtain a solution $(\nu,\zeta)$ such that $\sin 2 \nu \neq 0$ to these identities, we need
\[
	\tfrac{\alpha_3^2 \sin^2 \eta}{(\alpha_1 - \alpha_2)^2} \le  1- \tfrac{\alpha_3^2 \cos^2 \eta}{(\alpha_1+\alpha_2)^2}.
\]
Let us denote the set of the critical points $\mathcal{C} \subset ((0.\pi/2)\cup (\pi/2,\pi))\times [0,\pi)$ of $h$.
The condition for the existence of this critical point is summarized as \eqref{E:c5pfothercrit}.
Further, for any $(\nu,\zeta)\in \mathcal{C}$, we have
\[
	h(\nu,\zeta) = 	 - \tfrac{\alpha_3^2(\alpha_1 - \alpha_2 \cos 2 \eta)}{\alpha_1^2-\alpha_2^2}  + 2\alpha_1+r  .
\]
We denote this value $h_c$. The set of the corresponding critical points is $T_c$.

Let us find $g_{\min}$. We compare critical values.
By virtue of Lemma \ref{L:pmin}, 
\[
	\min_{j=0,1,2,3} h(\tfrac12 \theta_j (\alpha_3/\alpha_2),0) = h(\tfrac12 \theta_3 (\alpha_3/\alpha_2),0).
\]
One sees that this value is equal to
\[
	\tilde{h}_c:=  \alpha_2 \cos ( 2\theta_3 (\tfrac{\alpha_3}{\alpha_2})) + 2\alpha_3 \cos (\theta_3 (\tfrac{\alpha_3}{\alpha_2})-\eta) 
	+ 3\alpha_1 + \alpha_2 + r.
\]

If \eqref{E:c5pfothercrit} fails
then there is no other  critical point and hence $g_{\min}=\tilde{h}_c$ and $T_0=\tilde{T}_{c,3}$. 

We consider the case \eqref{E:c5pfothercrit} holds.
Let us first consider the subcase $\alpha_1<\alpha_2$.
If $(\nu,\zeta) \in \mathcal{C}$ then
\[
	\partial_\zeta^2 h = 2(\alpha_1-\alpha_2) (1- \tfrac{ 4\alpha_3^2 \sin^2 \eta \alpha_1\alpha_2 }{ (\alpha_1 + \alpha_2)^2(\alpha_1 - \alpha_2)^2 }).
\]
This has the same sign as $\alpha_1-\alpha_2$ since \eqref{E:c5pfothercrit}  yields
\[
1- \tfrac{ 4\alpha_3^2 \sin^2 \eta \alpha_1\alpha_2 }{ (\alpha_1 + \alpha_2)^2(\alpha_1 - \alpha_2)^2 }
\ge  \tfrac{(\alpha_1-\alpha_2)^2}{\alpha_1^2 + \alpha_2^2 - 2\alpha_1 \alpha_2 \cos 2\eta} >0.
\]
Hence, if $\alpha_1<\alpha_2$ then $\partial_\zeta^2 h<0$ holds and hence $h_c$ is not the minimum value.
Thus, $g_{\min}= \tilde{h}_c$ and $T_0=\tilde{T}_{c,3}$.

Let us consider the subcase $\alpha_1>\alpha_2$ and
\begin{equation}\label{E:c5pfothercrit2}
	\alpha_3^2 < \tfrac{(\alpha_1 + \alpha_2)^2(\alpha_1 - \alpha_2)^2}{\alpha_1^2 + \alpha_2^2 - 2\alpha_1 \alpha_2 \cos 2\eta}.
\end{equation}
One has
\[
	\partial_\zeta^2 h(\nu,0) =
	-2 \sin 2\nu ( \alpha_3 \sin \eta + (\alpha_1-\alpha_2) \sin 2\nu ).
\]
Since $\theta_3 \in (\pi,2\pi)$, $\partial_\zeta^2 h(\tfrac12\theta_3,0)<0$ holds if and only if
$
	\sin  \theta_3 <-\tfrac{\alpha_3 \sin \eta}{ \alpha_1-\alpha_2}.
$
If $\eta<\pi/2$ then the condition reads as $f_{{\alpha_3}/{\alpha_2}, \eta}(\arcsin \tfrac{\alpha_3 \sin \eta}{ \alpha_1-\alpha_2})>0$.
This is equivalent to \eqref{E:c5pfothercrit2}.
A similar argument shows that the condition for $\partial_\zeta^2 h(\tfrac12\theta_3,0)<0$
in the case $\eta \in [\pi/2.\pi)$ is also \eqref{E:c5pfothercrit2}.
In this case $\tilde{h}_c$ is not the minimum point.
Hence, we have $g_{\min}=h_c$ and $T_0=T_c$.

Finally, in the subcase $\alpha_1-\alpha_2>0$ and
\[
	\alpha_3^2 = \tfrac{(\alpha_1 + \alpha_2)^2(\alpha_1 - \alpha_2)^2}{\alpha_1^2 + \alpha_2^2 - 2\alpha_1 \alpha_2 \cos 2\eta},
\]
we see that $(\frac12 \theta_3,0)\in \mathcal{C}$. Hence, we have $g_{\min}=h_c = \tilde{h}_c$ and
$T_0=T_c = \tilde{T}_{c,3}$.
\end{proof}

\section{Proof of Theorem \ref{T:CO}}\label{S:CO}

\begin{proof}
In view of Theorems \ref{T:main}, \ref{T:excited}, \ref{T:stability}, and \ref{T:instability}, it suffices to investigate
critical points of
\[
	g(z_1,z_2) = - \kappa |z_1|^3 - |z_2|^3 - \tfrac{3\gamma}2 \Re (\overline{z_1}^2 z_2).
\]
Regarding $g$ as a function $\R^4\to\R$ by 
$z_1=x_1+iy_1$ and $z_2=x_2+iy_2$, we have
\[
	\nabla_{x_1.y_1,x_2,y_2} g =4(\Re F_1(z_1,z_2), \Im F_1(z_1,z_2), \Re F_2(z_1,z_2), \Im F_2(z_1,z_2)).
\]

If $|z_1|=1$ and $z_2=0$ then
$F_1(z_1,0) = \kappa z_1$ and $F_2(z_1,0) = \tfrac\gamma2 z_1^2$.
Since $(F_1(z_1,0),F_2(z_1,0))$ is not a constant multiple of $(z_1,0)$, one sees from Lagrange's multiplier theorem that
$(z_1,0)$ is not a critical point of $g|_{\partial B}$. 

If $|z_2|=1$ and $z_1=0$ then
$(F_1(0,z_2),F_2(0,z_2)) = (0, z_2)$.
This implies that $(0,z_2)$ is a critical point of $g|_{\partial B}$. The critical value is $g(0,z_2) = -1$.
Hence,
\[
	A_{0,\omega}:= \bigcup_{\theta \in \R/2\pi \Z} \mathcal{R}( (0, e^{2i\theta}) , \omega, 1) \subset \mathcal{A}_\omega
\]
for any $\omega>0$.

To find other critical points, let us introduce
\[
	h(\nu,\zeta) = g(\cos \nu  , e^{i\zeta}\sin \nu ) = - \kappa \cos^3 \nu - \sin^3 \nu -\tfrac{3\gamma}2 \cos^2 \nu \sin \nu \cos \zeta
\]
By the symmetry $g(-z_1,z_2)=g(z_1,z_2)$, we consider $h$ on $(\nu,\zeta) \in ((-\pi/2,0)\cup (0,\pi/2))\times (\R/2\pi\Z)$.
As in Lemma \ref{L:h}, critical points of $h$ in this domain corresponds to those of $g|_{\partial B}$.
We see that $\partial_\zeta h(\nu,\zeta) =\tfrac{3\gamma}2 \cos^2 \nu \sin \nu \sin \zeta =0$
implies $\zeta =0$ or $\pi$.

We first consider the case $\zeta=0$.
Since
\[
	\partial_\nu h(\nu,0)= \tfrac{3}2\cos^3 \nu( 2\kappa \tan \nu  + 2(\gamma-1) \tan^2 \nu
	-\gamma ),
\]
one sees that $\partial_\nu h(\nu,0)=0$ holds if and only if  $2\kappa \tan \nu  + 2(\gamma-1) \tan^2 \nu
	-\gamma =0$.
If $\gamma\neq1$ and $\kappa^2 + 2\gamma (\gamma-1)>0$ then it has two solutions
\[
	\nu_1 = \arctan\tfrac{ \gamma }{ \kappa + \sqrt{\kappa^2 + 2\gamma (\gamma-1) } }, \quad
	\nu_2 = \arctan\tfrac{ \gamma }{ \kappa - \sqrt{\kappa^2 + 2\gamma (\gamma-1) } }.
\]
If $\gamma\in(0,1)$ and $\kappa^2 + 2\gamma (\gamma-1)=0$ then the two solutions are the same.
If $\gamma=1$ and $\kappa>0$ then it has a unique solution $\nu_1=\gamma/2\kappa$.

Let us see $h(\nu_j,0)<0$ if and only if $(\gamma,\kappa) \in {J}_j$ for $j=1,2$. Looking at the sign of $\partial_\nu h(\nu,0)$, one sees that
$h(\nu_1)<h(\pi/2,0)=-1$ holds for $\gamma>1$ or if $\gamma=1$ and $\kappa>0$.
If $\gamma<1$ and $\kappa \ge \sqrt{2\gamma(1-\gamma)}$ then one obtains
$h(\nu_1,0)<h(0,0)=-\kappa<0$.
If $\gamma<1$ and $\kappa \le -\sqrt{2\gamma(1-\gamma)}$ then one obtains
$h(\nu_1)>h(-\pi/2)=1$.
Thus, $h(\nu_1,0)<0$ if and only if $(\gamma,\kappa) \in {J}_1$.
To see the sign of $h(\nu_2)$, we put $b_1=\kappa - \sqrt{\kappa^2 + 2\gamma (\gamma-1)}\neq0$.
Then, $\cos \nu_2 = |b_1|/(\gamma^2+b_1^2)^{1/2}$, $\sin \nu_2 = b_1 \gamma/|b_1|(\gamma^2+b_1^2)^{1/2}$, and $\kappa=(b_1^2+2\gamma(1-\gamma))/2b_1$.
Hence,
\[
	h(\nu_2,0) =  -\tfrac{b_1}{2|b_1|}(\gamma^2+b_1^2)^{-\frac12} ( b_1^2 + 2\gamma ).
\]
We see that $h(\nu_2,0)<0$ if and only if $(\gamma,\kappa) \in {J}_2$.

We next consider the case $\zeta =\pi$.
Since
\[
	\partial_\nu h(\nu,\pi)= \tfrac{3}2\cos^3 \nu( 2\kappa \tan \nu  - 2(\gamma+1) \tan^2 \nu
	+\gamma ),
\]
one sees that $\partial_\nu h(\nu,\pi)=0$ holds if and only if  $-2\kappa \tan \nu  + 2(\gamma+1) \tan^2 \nu
-\gamma =0$. It has two solutions
\[
	\nu_3 = \arctan\tfrac{ \gamma }{  \sqrt{\kappa^2 + 2\gamma (\gamma+1) } -\kappa  }, \quad
	\nu_4 = -\arctan\tfrac{ \gamma }{  \sqrt{\kappa^2 + 2\gamma (\gamma+1) }+\kappa  }.
\]
One deduces from the sign of $\partial_\nu h(\nu,\pi)$ that $h(\nu_4,\pi)>h(-\pi/2,\pi)=1$.
Let us observe that $h(\nu_3,\pi)<0$ if and only if $(\gamma,\kappa) \in J_3$.
To this end, out $b_2=\sqrt{\kappa^2 + 2\gamma (\gamma+1) } -\kappa\in (0,\sqrt{2\gamma (\gamma+1) })$.
Then, since $\cos \nu_2 = b_2/(\gamma^2+b_2^2)^{1/2}$, $\sin \nu_2 =  \gamma/(\gamma^2+b_2^2)^{1/2}$, and $\kappa=(-b_2^2+2\gamma(\gamma+1))/2b_2$, one has
\[
	h(\nu_3,\pi) = \tfrac12 (b_2^2+\gamma^2)^{-\frac12} (b_2^2-2\gamma).
\]
Hence, $h(\nu_3,\pi)<0$ if and only if $b_2<\sqrt{2\gamma}$. The condition is equivalent to $(\gamma,\kappa) \in J_3$.

Let us find $\mathcal{G}_\omega$. 
Recall that $g(0,e^{i\theta})=-1$. 
Hence, $g_{\min}=-1$ and  $\mathcal{G}_\omega = A_{0,\omega}$
for $(\gamma,\kappa) \in (\R_+\times \R)\setminus (J_1\cup J_2 \cup J_3)$.

Note that $J_1 \supset J_2 \cup J_3$.
For $(\gamma,\kappa) \in J_2$, we have $h(\nu_2,0) \ge h(\nu_1,0)$. Further, the equality holds if $\gamma<1$ and $\kappa=\sqrt{2\gamma(1-\gamma)}$.
For $(\gamma,\kappa) \in J_2$, recalling that $\nu_3>0$, we have $h(\nu_3,\pi) > h(\nu_3, 0) \ge h(\nu_1,0)$.
Hence, $(\nu_3,\pi)$ is not the minimum point.
These imply that
\[
	g_{\min} = \min (-1,h(\nu_1,0))
\]
for $(\gamma,\kappa) \in J_1\cup J_2 \cup J_3 = J_1$.

Let us compare this value with $h(\nu_1,0)$ for $(\gamma,\kappa) \in J_1$.
If $\gamma>1$ or if $\gamma=1$ and $\kappa>0$ then we see from the sign of $\partial_\nu h(\nu,0)$ that $h(\nu_1,0)<h(\pi/2,0)=-1$. Hence, $g_{\min}=h(\nu_1,0)$ and hence $\mathcal{G}_\omega = A_{1,\omega}$.
Let us consider the region $0<\gamma<1$ and $\kappa \ge \sqrt{2\gamma (1-\gamma)}$.
Set $b_3=\kappa + \sqrt{\kappa^2 + 2\gamma (\gamma-1)} \ge \sqrt{2\gamma (1-\gamma)}$.
Then, since $\cos \nu_1 = b_3/(\gamma^2+b_3^2)^{1/2}$, $\sin \nu_1 =  \gamma/(\gamma^2+b_3^2)^{1/2}$, and $\kappa=(b_3^2-2\gamma(\gamma-1))/2b_3$, one has
\[
	h(\nu_1,0) = -\tfrac12 (\gamma^2+b_3^2)^{-1/2}(b_3^2+2\gamma).
\]
Hence, $h(\nu_1,0)<-1$ if and only if $b_3> 2\sqrt{1-\gamma}$. 
This condition reads as $\kappa > \kappa_c(\gamma)$.
In this case, we see $g_{\min}=h(\nu_1,0)$ and $\mathcal{G}_\omega = A_{1,\omega}$.
A similar argument shows that if $\kappa = \kappa_c(\gamma)$ then $h(\nu_1,0)=-1$ and hence
$g_{\min}=-1$ and $\mathcal{G}_\omega = A_{0,\omega} \cup A_{1,\omega}$.
Similarly, if $\kappa \in [ \sqrt{2\gamma(1-\gamma)}, \kappa_c(\gamma))$ then
$g_{\min}=-1$ and $\mathcal{G}_\omega = A_{0,\omega}$.
\end{proof}

\appendix

\section{Derivation of the standard systems}\label{S:A}

In this appendix, we show that the systems \eqref{E:nls1}--\eqref{E:nls5} exhaust all standard form
of the systems which have a conserved energy which has a coercive kinetic-energy part.
The precise statement is as follows:
\begin{theorem}\label{T:reduction}
Consider  a system of the form 
\begin{equation}\label{E:gNLS2}
	\left\{
	\begin{aligned}
	(	i \partial_t +\Delta) u_1 
		&= \lambda_1 |u_1|^2 u_1 + \lambda_2 |u_1|^2 u_2+ \lambda_3 u_1^2 \overline{u_2}+ \lambda_4 |u_2|^2 u_1+ \lambda_5 u_2^2 \overline{u_1} + \lambda_6 |u_2|^2 u_2 ,
		\\
	(	i \partial_t +\Delta) u_2
		&=\lambda_7 |u_1|^2 u_1 + \lambda_8 |u_1|^2 u_2+ \lambda_9 u_1^2 \overline{u_2}+ \lambda_{10} |u_2|^2 u_1+ \lambda_{11} u_2^2 \overline{u_1} + \lambda_{12} |u_2|^2 u_2 ,
	\end{aligned}
	\right.
\end{equation}
where $(t,x)\in \R \times \R^3$ and $\lambda_j \in \R$.
Suppose that there exist $a,b,c \in \R$ with $b^2-ac<0$ and a real-valued quartic polynomial $g=g(z_1,z_2)$ satisfying $g(e^{i\theta}z_1,e^{i\theta}z_2)=g(z_1,z_2)$ for any $(z_1,z_2,\theta) \in \C\times \C \times \R$ such that
\begin{equation}\label{E:tmpenergy}
	\int_{\R^3} (a|\nabla u_1|^2 + 2b\Re \overline{\nabla u_1} \cdot\nabla u_2 + c |\nabla u_2|^2 + g(u_1,u_2)) dx
\end{equation}
becomes a time-independent quantity for any $H^1(\R^3) \times H^1(\R^3)$-solution $(u_1(t),u_2(t))$ to the system.
Then, there exists $M \in GL_2(\R)$ such that the system which
\[
		\begin{pmatrix} \tilde{u}_1 \\ \tilde{u}_2 \end{pmatrix}
		=M	\begin{pmatrix} u_1 \\ u_2 \end{pmatrix}
\]
solves becomes either one of \eqref{E:nls1}--\eqref{E:nls5}.
\end{theorem}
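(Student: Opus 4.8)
The plan is to carry out the reduction in three stages: convert the conservation hypothesis into a Hamiltonian normal form, normalize the kinetic form by a real linear map, and then classify the resulting gauge-invariant quartic potential under the residual symmetry group.

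\emph{Stage 1: Hamiltonian structure and reality.} Writing the system \eqref{E:gNLS2} as $i\partial_t u = -\Delta u + N(u)$ with $u=(u_1,u_2)$ and $A=\begin{pmatrix} a & b \\ b & c \end{pmatrix}$, I would differentiate \eqref{E:tmpenergy} along a solution, integrate by parts, and substitute $\Delta u_k = N_k - i\partial_t u_k$. Since $A$ is real symmetric, the purely kinetic contribution and the term proportional to $i\,\overline{\partial_t u}\,\partial_t u$ cancel, leaving $\frac{d}{dt}(\text{energy}) = 2\,\mathrm{Re}\int \sum_j \overline{\bigl(\partial_{\bar z_j} g - \sum_k A_{jk} N_k\bigr)}\,\partial_t u_j\,dx$. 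As $u$ and $\nabla u$ may be prescribed independently at a given time, vanishing of this expression for every solution forces the pointwise identity $A N = \partial_{\bar z} g$; this is exactly the matrix--vector Hamiltonian form of \cites{MSU2,M}, which I would cite. Because the $\lambda_j$ and the entries of $A$ are real, the monomial coefficients of $g$ are real, equivalently $g(\bar z_1,\bar z_2)=g(z_1,z_2)$. Finally $b^2-ac<0$ makes $A$ sign-definite, and after replacing $(a,b,c,g)$ by $(-a,-b,-c,-g)$ if necessary I may assume $A$ is positive definite.

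\emph{Stage 2: Normalizing the kinetic form.} Choosing a real $M_0\in GL_2(\R)$ with $M_0^{\mathsf T}M_0=2A$ and setting $\tilde u = M_0 u$, the form \eqref{E:tmpenergy} becomes the standard $\frac12(|\nabla\tilde u_1|^2+|\nabla\tilde u_2|^2)$ plus a transformed potential, while the linear part remains the decoupled $(i\partial_t+\Delta)\tilde u_j$ of \eqref{E:nls1}--\eqref{E:nls5}, and the nonlinearity is the gradient of a gauge- and conjugation-invariant real quartic $\tilde g(\tilde z)=\mathrm{const}\cdot g(M_0^{-1}\tilde z)$; both invariances are inherited because $M_0^{-1}$ is real and commutes with the scalar $e^{i\theta}$. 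The real linear maps fixing the normalized kinetic form are exactly $O(2)$; together with the amplitude (space--time) scaling symmetry of the equation, this is the group under which I must bring $\tilde g$ to a normal form.

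\emph{Stage 3: Classification of the potential.} I would pass to the Stokes coordinates $s_0=|z_1|^2+|z_2|^2$, $s_1=2\,\mathrm{Re}(z_1\bar z_2)$, $s_2=-2\,\mathrm{Im}(z_1\bar z_2)$, $s_3=|z_1|^2-|z_2|^2$, which obey $s_0^2=s_1^2+s_2^2+s_3^2$. A real, gauge- and conjugation-invariant quartic is precisely a quadratic form in $(s_0,s_1,s_3)$ once the cone relation eliminates $s_2^2$, the conjugation symmetry having already removed every term odd in $s_2$. The rotation $R_\phi\in SO(2)$ on $(u_1,u_2)$ induces a rotation of $(s_1,s_3)$ by $2\phi$ fixing $s_0$, while the reflections in $O(2)$ induce the sign changes $s_1\mapsto-s_1$ and $s_3\mapsto-s_3$. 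Writing $\tilde g = q_{00}s_0^2 + 2q_{01}s_0s_1 + 2q_{03}s_0s_3 + q_{11}s_1^2 + 2q_{13}s_1s_3 + q_{33}s_3^2$, I would use the planar rotation to diagonalize the block $B=\begin{pmatrix} q_{11} & q_{13} \\ q_{13} & q_{33} \end{pmatrix}$; the coupling vector $v=(q_{01},q_{03})$ then retains a rotation-invariant angle $\eta$ relative to the eigenbasis of $B$, since a single rotation cannot simultaneously diagonalize $B$ and align $v$. The eigenvalues of $B$ and $|v|$, normalized by the scaling symmetry to $\alpha_1^2+\alpha_2^2+\alpha_3^2=1$, become $\alpha_1,\alpha_2,\alpha_3$, and the residual $s_0^2$-coefficient becomes $r$. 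The distinguished strata read off the five families: $v=0$ gives \eqref{E:nls3}, a coupling aligned with an eigenaxis ($\eta\in\{0,\pi\}$) gives \eqref{E:nls4}, the generic configuration gives \eqref{E:nls5}, and the degenerate cases in which the $s_1$-direction decouples completely give the diagonal systems \eqref{E:nls1} and \eqref{E:nls2}.

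\emph{Main obstacle.} The principal difficulty is the exhaustive bookkeeping in Stage 3: one must verify that the orbit space of quadratic forms in $(s_0,s_1,s_3)$ under $O(2)$ and scaling is covered by exactly these five normal forms, reconcile each with the precise coefficient conventions of \eqref{E:nls1}--\eqref{E:nls5}, and check that the inequality constraints imposed in each family ($\alpha\ge\beta$, $\alpha_2\ge0$, $\alpha_1^2\ne\alpha_2^2$, $\eta\in(0,\pi)$, and the sign condition on $\alpha_1$) are precisely the choices of representative that remove the remaining reflection redundancy, so that the families are disjoint (cf. Appendix \ref{S:A}). The most delicate point is the degenerate stratum $B=\mu I$, where the induced rotational symmetry becomes the full circle and extra reductions collapse several families into one another.
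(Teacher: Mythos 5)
Your proposal is correct in outline, but it takes a genuinely different route from the paper's proof, so a comparison is in order. The paper never isolates the potential $g$ as the object to classify: it encodes the system \eqref{E:gNLS2} as a pair $(C,V)\in M_3(\R)\times\R^3$ (the matrix--vector form of \cites{MSU2,M}), invokes \cite{MSU2}*{Proposition A.9} to turn the conservation hypothesis into the linear-algebraic criterion \eqref{E:tmpecriterion}, and then runs a case analysis on $\rank C\in\{2,1,0\}$, quoting the $GL_2(\R)$ normal-form theorems \cite{M}*{Theorem 1.10} and \cite{MSU2}*{Theorems A.13, A.16} for $C$, finally reading off \eqref{E:nls1}--\eqref{E:nls5} in each case. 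You instead spend the non-compact part of $GL_2(\R)$ at the start, normalizing the kinetic matrix via $M_0^{\mathsf T}M_0=2A$ (legitimate: $b^2-ac<0$ makes $A$ definite, and after a sign flip positive definite), so that the residual group is the conformal group $\R_+\cdot O(2)$, and you then classify the quartic $g$ itself. Your Stokes dictionary does reproduce the paper's list: in the coordinates $(s_0,s_1,s_3)$ one computes that the potential of \eqref{E:nls3} is exactly $(\alpha_1-\alpha_2)s_1^2+(\alpha_1+\alpha_2)s_3^2+(2\alpha_1+r)s_0^2$ (so $v=0$), that \eqref{E:nls4} adds $2\alpha_3\,s_0s_3$ (coupling along an eigenaxis of $B$), that \eqref{E:nls5} adds $2\alpha_3(\sin\eta\,s_0s_1+\cos\eta\,s_0s_3)$ (generic coupling, with $\eta$ your invariant angle), and that \eqref{E:nls1}--\eqref{E:nls2} are precisely the $s_1$-free forms. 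The paper's route buys the outsourcing of all orbit bookkeeping to results already proved elsewhere; your route buys a self-contained and more conceptual argument (classifying quadratic forms in three variables under rotation of two of them, reflections, and positive scaling is elementary linear algebra), and it explains a priori why exactly these five strata occur. The residual verification you flag as the main obstacle, including the stratum $B=\mu I$, is real but finite, and your identification of the degenerate strata with \eqref{E:nls1}--\eqref{E:nls2} is correct.

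The one step you must not present as self-evident is Stage 1. Your energy computation is right up to $\frac{d}{dt}E=2\,\mathrm{Re}\int\sum_j\overline{\bigl(\partial_{\bar z_j}g-(AN)_j\bigr)}\,\partial_t u_j\,dx$, but the claim that prescribing $u$ and $\nabla u$ independently ``forces the pointwise identity $AN=\partial_{\bar z}g$'' does not follow as stated: after substituting the equation and separating homogeneities, the condition is the \emph{integral} identity $\mathrm{Im}\int\sum_j\overline{h_j(u)}\,\Delta u_j\,dx=0$ for all $u$, with $h:=\partial_{\bar z}g-AN$, and integrands that are spatial divergences integrate to zero. So a priori this only places $h$ in the kernel of a finite-dimensional linear map (a null-Lagrangian condition), not at the origin. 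That this kernel is trivial --- equivalently, that conservation of an energy of the form \eqref{E:tmpenergy} forces the Hamiltonian structure --- is exactly the nontrivial content of \cite{MSU2}*{Proposition A.9}, whose criterion \eqref{E:tmpecriterion} consists of six linear conditions on the twelve coefficients, consistent with the six-dimensional space of Hamiltonian nonlinearities for a fixed definite $A$. Since you explicitly defer to \cites{MSU2,M} at this point, your proof goes through, but be aware that the citation, not the pointwise-prescription heuristic, is carrying the step. Relatedly, your conjugation-invariance claim for $g$ should be proved \emph{after} the Hamiltonian identity is in hand: it holds because the $\bar z$-gradients of the conjugation-odd quartics $|z_1|^2\Im(z_1\bar z_2)$, $|z_2|^2\Im(z_1\bar z_2)$, $\Im(z_1^2\bar z_2^2)$ have purely imaginary monomial coefficients, so realness of the $\lambda_j$ and of $A$ annihilates the odd part of $g$; this observation is the hinge on which your reduction to quadratic forms in $(s_0,s_1,s_3)$ turns, and is worth writing out in full.
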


\begin{remark}
The condition $b^2-ac<0$ corresponds to the coercivity of quadratic part
the quantity defined in \eqref{E:tmpenergy}.
Here, we restrict ourselves to the three dimensional case. However, this is purely for the sake of simplicity.
Note that the action of $M\in GL(\R)$ on the coefficients $\{\lambda_j\}_{j=1,\dots,12}$ is independent of the
dimensions, and more generally, of the choice of the linear part of the equations.
\end{remark}

\begin{proof}
A  system of the form \eqref{E:gNLS2} is identified with a vector
$(\lambda_j)_{j=1}^{12} \in \R^{12}$.
For a vector $(\lambda_j)_{j=1}^{12} \in \R^{12}$,
we define a matrix $C\in M_3 (\R)$ and a vector $V \in \R^3$ by
\[
C=
\begin{pmatrix}
\lambda_2-\lambda_3 & -\lambda_1+\lambda_8-\lambda_9 & -\lambda_7 \\
\lambda_5 & -\lambda_3+\lambda_{11} & -\lambda_9 \\
\lambda_6 & -\lambda_4+\lambda_5+\lambda_{12} & -\lambda_{10}+\lambda_{11}
\end{pmatrix}  ,\quad
	V=
	\begin{pmatrix} \lambda_8-2\lambda_9 \\ \tfrac12 (-\lambda_2+2\lambda_3-\lambda_{10}+ 2\lambda_{11}) \\  \lambda_4-2\lambda_5 \end{pmatrix}. 
\]
Note that the inverse map is given as follows:
For given $((c_{jk})_{1\le j,k \le 3},(v_\ell)_{1\le \ell \le 3})\in M_3(\R) \times \R^3$, the corresponding nonlinearities are given by
\begin{equation*}
\begin{aligned}
	F_{1} (u_1,u_2) :={}&  -(c_{12}+c_{23}) |u_1|^2 u_1+c_{11}(2|u_1|^2 u_2 + u_1^2\ol{u_2})+c_{21} (2u_1|u_2|^2 + \ol{u_1}u_2^2) + c_{31} |u_2|^2u_2 \\
	& - (\tr{C}) \Re (\overline{u_1} u_2) u_1 +(v_1 |u_1|^2 + 2v_2\Re (\overline{u_1}u_2) + v_3 |u_2|^2)u_1, \\ 
	F_{2} (u_1,u_2) :={}&  -c_{13} |u_1|^2 u_1-c_{23}(2|u_1|^2 u_2 + u_1^2\ol{u_2})-c_{33} (2u_1|u_2|^2 + \ol{u_1}u_2^2) + (c_{21}+c_{32}) |u_2|^2u_2\\
	& + (\tr{C}) \Re (\overline{u_1} u_2) u_2 + (v_1 |u_1|^2 + 2v_2\Re (\overline{u_1}u_2) + v_3 |u_2|^2)u_2.
\end{aligned}
\end{equation*}
In the sequel, we will freely use the identification between a system of the form \eqref{E:gNLS2} and a pair $(C,V)\in M_3(\R) \times \R^3$.

By \cite{MSU2}*{Proposition A.9}, there exists $g$ such that
the quantity \eqref{E:tmpenergy} becomes a conserved quantity if and only if the pair $(C,V)$ satisfies
\begin{equation}\label{E:tmpecriterion}
	C \begin{pmatrix} a \\ b \\ c \end{pmatrix} = 0, \quad
	\begin{pmatrix} \tr C - 2v_2 & 2v_1 & 0 \\ -v_3 & \tr C & v_1  \\ 0 & - 2v_3 & \tr C + 2v_2 \end{pmatrix} \begin{pmatrix} a \\ b \\ c \end{pmatrix}=0,
\end{equation}
where $v_j$ ($j=1,2,3$) is the $j$-th component of the vector $V$.
In particular, we have $\rank C \le 2$.
\smallskip

\noindent{\bf Case 1}.
We first consider the case $\rank C=2$. The assumption $b^2-ac<0$ implies that $C$ belongs to 
$Z_{\mathrm{e}}$ given in \cite{M}*{Section 1.4}.
Then,  thanks to \cite{M}*{Theorem 1.10}, there exists $M \in GL_2(\R)$ such that
the matrix $C'$ corresponding to the transformed system becomes either one of the element in $T_{\mathrm{e}}$
given as follows:
$T_{\mathrm{e}}:= T_{\mathrm{e},1} \cup  T_{\mathrm{e},2}\cup T_{\mathrm{e},3}
$, where
\[
	A_1 := \begin{pmatrix} 1 & 0 & -1  \\ 0 & 2 & 0 \\ -1 & 0 & 1 \end{pmatrix},
	\quad
	A_2:= \begin{pmatrix} 0 & - 2 & 0 \\ 1 & 0 & - 1 \\ 0 & 2 & 0 \end{pmatrix},
	\quad
	A_3:= \begin{pmatrix} 0 & - 2 & 0 \\ -1 & 0 & 1 \\ 0 & 2 & 0 \end{pmatrix}
\]
and
\[
	A_4(\eta):= \begin{pmatrix} \sin \eta& -2\cos \eta & -\sin \eta  \\ 0&0&0 \\ \sin \eta & -2\cos \eta & -\sin \eta \end{pmatrix}
\]
and
\begin{align*}
	T_{\mathrm{e},1} :={}& 
	\left\{ p_1 A_1 + p_2 A_2 + p_3 A_3 + p_4 A_4(0) \ \middle| \ (p_1,p_2,p_3,p_4) \in \mathcal{T}_1 \right\},\\
	T_{\mathrm{e},2} :={}&
	\bigcup_{\eta \in [0,\frac{\pi}2]}  \left\{ p_1 A_1 + p_2 A_2 + p_3 A_3 + p_4 A_4(\eta) \ \middle| \ (p_1,p_2,p_3,p_4) \in \mathcal{T}_2(\zeta) \right\}, \\
	T_{\mathrm{e},3} := {}& \bigcup_{\zeta \in (\frac{\pi}2,\pi)}  \left\{ p_1 A_1 + p_2 A_2 + p_3 A_3 + p_4 A_4(\eta) \ \middle| \ (p_1,p_2,p_3,p_4) \in \mathcal{T}_3 \right\} 
\end{align*}
with $S^3_+ := \{ (w,x,y,z) \in \R^4 \ |\ w^2+x^2+y^2+z^2=1,\, w \ge 0, \, y \ge 0,\, 0 \le z<1 \}$ and
\begin{align*}
	\mathcal{T}_1 :={}&\{ (w,x,y,z) \in S^3_+ \ |\ y=0 \} \cup \{ (w,x,y,z) \in S^3_+ \ |\ y \neq 1/\sqrt{2},\, z=0 \}, \\
	\mathcal{T}_2(\eta) :={}&\{ (w,x,y,z) \in S^3_+ \ |\ y>0,\, z>0 \} \\
	&\setminus \{ (w,x,y,z) \in S^3_+ \ |\ (w,x,y) = k (\sin 2\eta, \cos 2\eta,1),\, \exists k \in \R \} , \\
	\mathcal{T}_3 :={}&\{ (w,x,y,z) \in S^3_+ \ |\ x>0,\, y>0,\, z>0 \}. 
\end{align*}
Since the original system has a conserved quantity of the form \eqref{E:tmpenergy}, so does the transformed system.
Further, since $\ker C'$ is spanned by $\ltrans{(1,0,1)}$, we see from the criterion \eqref{E:tmpecriterion} that
the conserved energy
of the transformed system takes the form
\[
	\int_{\R^3} (|\nabla \tilde{u}_1|^2 +  |\nabla \tilde{u}_2|^2 + \tilde{g}(\tilde{u}_1,\tilde{u}_2)) dx.
\]
Let $\tilde{v}_j$ ($j=1,2,3$) be the $j$-th component of the transformed vector $\tilde{V}$.
Then we see from the criterion \eqref{E:tmpecriterion} that
$\tr C' - 2\tilde{v}_2 = \tr C' + 2\tilde{v}_2 =0$ and $\tilde{v}_1=\tilde{v}_3$.
In particular, $\tr C'=0$ and $\tilde{v}_2=0$. Thus, we see from the formula $C'=p_1 A_1 + p_2 A_2 + p_3 A_3 + p_4 A_4(\eta)$ that $p_1=\frac14 \tr C'=0$.

If $C' = \alpha_1 A_2 + \alpha_2 A_3 \in T_{\mathrm{e},1}$ ($(0,\alpha_1,\alpha_2,0) \in \mathcal{T}_1$)
then the transformed system is \eqref{E:nls3}.
In the case $C' = \alpha_1 A_2 + \alpha_3 A_4(0) \in T_{\mathrm{e},1}$ ($(0,\alpha_1,0,\alpha_3) \in \mathcal{T}_1$)
and the case $C' = \alpha_1 A_2 + \alpha_2 A_3 + \alpha_3 A_4(0) \in T_{\mathrm{e},2}$ ($(0,\alpha_1,\alpha_2,\alpha_3) \in \mathcal{T}_2(0)$), we have \eqref{E:nls4}.
Finally, \eqref{E:nls5} is obtained if $C' \in T_{\mathrm{e},3}$ or if
$C' = \alpha_1 A_2 + \alpha_2 A_3 + \alpha_3 A_4(\zeta) \in T_{\mathrm{e},2}$ ($(0,\alpha_1,\alpha_2,\alpha_3) \in \mathcal{T}_2(\zeta)$)) for some $\zeta \in (0,\pi/2]$.
This completes the proof in the case $\rank C=2$.
\smallskip

\noindent{\bf Case 2}.
Let us proceed to the case $\rank C=1$. In this case, by means of \cite{MSU2}*{Theorem A.16},
there exist $M \in GL_2(\R)$ and
$(k_1,k_2,k_3) \in \R^3\setminus\{(0,0,0)\}$ such that
the matrix $C'$ corresponding to the transformed system becomes either one of the following three forms:
\[
	\begin{pmatrix}
	0 & k_1 & 0 \\
	0 & k_2 & 0 \\
	0 & k_3 & 0 
	\end{pmatrix}, \quad
	\begin{pmatrix}
	0 & 0 & k_1 \\
	0 & 0 & k_2 \\
	0 & 0 & k_3 
	\end{pmatrix}, \quad
	\begin{pmatrix}
	 k_1 & 0 & k_1\\
	 k_2 & 0 & k_2\\
	 k_3 & 0 & k_3
	\end{pmatrix}.
\]
The assumption $b^2-ac<0$ implies that $\ker C'$ contains a vector $\ltrans{(a',b',c')} \in \R^3$ such that $(b')^2-a'c'<0$. This implies that $C'$ takes the first form and that $\ker C'$ is spanned by $\ltrans{(1,0,0)}$ and $\ltrans{(0,0,1)}$.
Hence, by means of \eqref{E:tmpecriterion}, there exists a pair of positive numbers $a'$ and $c'$
such that
\[
	\begin{pmatrix} \tr C' - 2\tilde{v}_2 & 2\tilde{v}_1 & 0 \\ -\tilde{v}_3 & \tr C' & \tilde{v}_1  \\ 0 & - 2\tilde{v}_3 & \tr C' + 2\tilde{v}_2 \end{pmatrix} \begin{pmatrix} a' \\ 0 \\ c' \end{pmatrix}=0,
\]
where $\tilde{v}_j$ ($j=1,2,3$) is the $j$-th component of the vector part of the transformed system.
One then sees that $\tr C'=\tilde{v}_2=0$.
Further, either $(\tilde{v}_1,\tilde{v}_3)=(0,0)$ or $\tilde{v}_1\tilde{v}_3>0$ holds.
 
Hence, by virtue of \cite{MSU2}*{Theorem A.13}, the triplet $(C',\tilde{v}_1,\tilde{v}_3)$ can be chosen one of the following three forms:
\begin{equation}\label{e:Z_+1}
	C'  = \begin{pmatrix} 0 & 1  & 0 \\ 0 &0 & 0 \\ 0 & 1 & 0  \end{pmatrix},
	\quad \tilde{v}_1,\tilde{v}_3 \in \R,
\end{equation}
\begin{equation}\label{e:Z_+2}
	C'  = \pm \begin{pmatrix} 0 & 1  & 0 \\ 0 &0 & 0 \\ 0 & -1 & 0  \end{pmatrix},
	\quad \tilde{v}_1 \ge \tilde{v}_3,
\end{equation}
or
\begin{equation}\label{e:Z_+3}
	C'  =\pm \begin{pmatrix} 0 &  1  & 0 \\ 0 &0 & 0 \\ 0 & 0 & 0  \end{pmatrix},
	\quad  \tilde{v}_1\in \R,\, \tilde{v}_3\in \{-1,0,1\}.
\end{equation}

If $C'$, $\tilde{v}_1$, and $\tilde{v}_3$ are as in \eqref{e:Z_+1}, the transformed system takes the form
\begin{equation*}
	\left\{
	\begin{aligned}
	(	i \partial_t +\Delta) \tilde{u}_1 
		&= -|\tilde{u}_1|^2\tilde{u}_1 +(\tilde{v}_1 |\tilde{u}_1|^2+\tilde{v}_3 |\tilde{u}_2|^2)\tilde{u}_1,\\
	(	i \partial_t + \Delta) \tilde{u}_2
		&= |\tilde{u}_2|^2 \tilde{u}_2+(\tilde{v}_1 |\tilde{u}_1|^2+ \tilde{v}_3 |\tilde{u}_2|^2)\tilde{u}_2 .
	\end{aligned}
	\right.
\end{equation*}
If $\tilde{v}_1=\tilde{v}_3=0$ then it is \eqref{E:nls1}.
If $\tilde{v}_1\tilde{v}_3>0$ then
by introducing a new variable $(u^\dagger_1, u^\dagger_2)=( \sqrt{|\tilde{v}_3|}\tilde{u}_2,\sqrt{|\tilde{v}_1|}\tilde{u}_1)$,
we obtain
\begin{equation*}
	\left\{
	\begin{aligned}
	(	i \partial_t +\Delta) u^\dagger_1 
		&= \tfrac1{|\tilde{v}_3|} |u^\dagger_1|^2u^\dagger_1 +(\sign \tilde{v}_1) (|u^\dagger_1|^2+ |u^\dagger_2|^2)u^\dagger_1,\\
	(	i \partial_t + \Delta) u^\dagger_2
		&= -\tfrac1{|\tilde{v}_1|}|u^\dagger_2|^2 u^\dagger_2+(\sign \tilde{v}_1) ( |u^\dagger_1|^2+|u^\dagger_2|^2)u^\dagger_2 ,
	\end{aligned}
	\right.
\end{equation*}
which is \eqref{E:nls2} with $\alpha\beta<0$.

If $C'$, $\tilde{v}_1$, and $\tilde{v}_3$ are as in \eqref{e:Z_+2}, the system takes the form
\begin{equation*}
	\left\{
	\begin{aligned}
	(	i \partial_t +\Delta) \tilde{u}_1 
		&= \tilde{\sigma} |\tilde{u}_1|^2\tilde{u}_1 +(\tilde{v}_1|\tilde{u}_1|^2+\tilde{v}_3|\tilde{u}_2|^2)\tilde{u}_1,\\
	(	i \partial_t + \Delta) \tilde{u}_2
		&= \tilde{\sigma} |\tilde{u}_2|^2 \tilde{u}_2+(\tilde{v}_1|\tilde{u}_1|^2+\tilde{v}_3|\tilde{u}_2|^2)\tilde{u}_2 
	\end{aligned}
	\right.
\end{equation*}
with $\tilde{\sigma} \in \{\pm1\}$.
If $\tilde{v}_1=\tilde{v}_3=0$ then it is \eqref{E:nls1}.
If $\tilde{v}_1\tilde{v_3}$ then
by introducing a new variable $(u^\dagger_1, u^\dagger_2)=(\sqrt{|\tilde{v}_1|}\tilde{u}_1, \sqrt{|\tilde{v}_3|}\tilde{u}_2)$,
we obtain
\begin{equation*}
	\left\{
	\begin{aligned}
	(	i \partial_t +\Delta) u^\dagger_1 
		&= \tfrac{\tilde{\sigma}}{|\tilde{v}_1|} |u^\dagger_1|^2u^\dagger_1 +(\sign \tilde{v}_1)(|u^\dagger_1|^2+  |u^\dagger_2|^2)u^\dagger_1,\\
	(	i \partial_t + \Delta) u^\dagger_2
		&= \tfrac{\tilde{\sigma}}{|\tilde{v}_3|}|u^\dagger_2|^2 u^\dagger_2+(\sign \tilde{v}_1)( |u^\dagger_1|^2+  |u^\dagger_2|^2)u^\dagger_2 .
	\end{aligned}
	\right.
\end{equation*}
By swapping $u^\dagger_1 $ and $u^\dagger_2$ if necessary, we obtain \eqref{E:nls2} with $\alpha \beta >0$.

If $C'$, $\tilde{v}_1$, and $\tilde{v}_3$ are as in \eqref{e:Z_+3}, the system takes the form
\begin{equation*}
	\left\{
	\begin{aligned}
	(	i \partial_t +\Delta) \tilde{u}_1 
		&= \tilde{\sigma} |\tilde{u}_1|^2\tilde{u}_1 +(\tilde{v}_1 |\tilde{u}_1|^2+\tilde{v}_3 |\tilde{u}_2|^2)\tilde{u}_1,\\
	(	i \partial_t + \Delta) \tilde{u}_2
		&= (\tilde{v}_1|\tilde{u}_1|^2+ \tilde{v}_3 |\tilde{u}_2|^2)\tilde{u}_2 
	\end{aligned}
	\right.
\end{equation*}
with $\tilde{\sigma} \in \{\pm1\}$.
If $\tilde{v}_1=\tilde{v}_3=0$ then it is \eqref{E:nls1}.
If $\tilde{v}_1\neq0$ and $\tilde{v}_3= \sign \tilde{v}_1$ then
by introducing a new variable $(u^\dagger_1, u^\dagger_2)=(\sqrt{|\tilde{v}_1|}\tilde{u}_1, \tilde{u}_2)$,
we obtain
\begin{equation*}
	\left\{
	\begin{aligned}
	(	i \partial_t +\Delta) u^\dagger_1 
		&= -\tfrac{\tilde{\sigma}}{|\tilde{v}_1|} |u^\dagger_1|^2u^\dagger_1 +(\sign \tilde{v}_1)( |u^\dagger_1|^2+  |u^\dagger_2|^2)u^\dagger_1,\\
	(	i \partial_t + \Delta) u^\dagger_2
		&= (\sign \tilde{v}_1)( |u^\dagger_1|^2+  |u^\dagger_2|^2)u^\dagger_2 .
	\end{aligned}
	\right.
\end{equation*}
By swapping $u^\dagger_1 $ and $u^\dagger_2$ if $\tilde{\sigma}=1$, we obtain \eqref{E:nls2} with $\alpha \beta =0$ but $(\alpha,\beta)\neq (0,0)$.
\smallskip

\noindent{\bf Case 3}.
Finally, we consider the case $\rank C=0$. In this case $C=0$. 
By the presence of a conserved energy with a coercive kinetic-energy part, after a change of variable if necessary,
there exists a pair of positive numbers $a'$ and $c'$
such that
\[
	\begin{pmatrix}  - 2\tilde{v}_2 & 2\tilde{v}_1 & 0 \\ -\tilde{v}_3 & 0 & \tilde{v}_1  \\ 0 & - 2\tilde{v}_3 &  2\tilde{v}_2 \end{pmatrix} \begin{pmatrix} a' \\ 0 \\ c' \end{pmatrix}=0,
\]
where $\tilde{v}_j$ ($j=1,2,3$) is the $j$-th component of the vector part of the transformed system.
One then sees that $\tilde{v}_2=0$ and that either $(\tilde{v}_1,\tilde{v}_3)=(0,0)$ or $\tilde{v}_1\tilde{v}_3>0$ holds, as in the $\rank C=1$ case.
Then, the transformed system takes the form
\begin{equation*}
	\left\{
	\begin{aligned}
	(	i \partial_t +\Delta) \tilde{u}_1 
		&= (\tilde{v}_1 |\tilde{u}_1|^2+\tilde{v}_3 |\tilde{u}_2|^2)\tilde{u}_1,\\
	(	i \partial_t + \Delta) \tilde{u}_2
		&= (\tilde{v}_1 |\tilde{u}_1|^2+ \tilde{v}_3 |\tilde{u}_2|^2)\tilde{u}_2 .
	\end{aligned}
	\right.
\end{equation*}
If $\tilde{v}_1 = \tilde{v_3}=0$ then it is \eqref{E:nls1}. 
If $\tilde{v}_1 \neq0$ then
the system fir $(u^\dagger_1, u^\dagger_2)=(\sqrt{|\tilde{v}_1|}\tilde{u}_1, \sqrt{|\tilde{v}_3|}\tilde{u}_2)$
becomes
\begin{equation*}
	\left\{
	\begin{aligned}
	(	i \partial_t +\Delta) u^\dagger_1 
		&= (\sign \tilde{v}_1)(|u^\dagger_1|^2+  |u^\dagger_2|^2)u^\dagger_1,\\
	(	i \partial_t + \Delta) u^\dagger_2
		&= (\sign \tilde{v}_1)( |u^\dagger_1|^2+  |u^\dagger_2|^2)u^\dagger_2 .
	\end{aligned}
	\right.
\end{equation*}
which is the remaining case $\alpha=\beta=0$ of \eqref{E:nls2}.
\end{proof}

\subsection*{Acknowledgements} 
The author would like to express the most profound appreciation to professors Masahito Ohta, Noriyoshi Fukaya, and Yoshinori Nishii for their fruitful discussions and for giving constructive suggestions on the preliminary version of the results.
The author would like to thank professor Norihisa Ikoma for the helpful discussions and for the information on references.
Deep gratitude goes to professor Sim\~{a}o Correia for drawing the author's attention to the reference \cite{Co,Co2}.
The author was supported by JSPS KAKENHI Grant Numbers JP21H00991 and JP21H00993.

\begin{bibdiv}
\begin{biblist}

\bib{AC}{article}{
   author={Ambrosetti, Antonio},
   author={Colorado, Eduardo},
   title={Standing waves of some coupled nonlinear Schr\"{o}dinger equations},
   journal={J. Lond. Math. Soc. (2)},
   volume={75},
   date={2007},
   number={1},
   pages={67--82},
   issn={0024-6107},
   review={\MR{2302730}},
   doi={10.1112/jlms/jdl020},
}

\bib{BGK}{article}{
   author={Berestycki, Henri},
   author={Gallou\"{e}t, Thierry},
   author={Kavian, Otared},
   title={\'{E}quations de champs scalaires euclidiens non lin\'{e}aires dans le
   plan},
   language={French, with English summary},
   journal={C. R. Acad. Sci. Paris S\'{e}r. I Math.},
   volume={297},
   date={1983},
   number={5},
   pages={307--310},
   issn={0249-6291},
   review={\MR{734575}},
}

\bib{BeLi}{article}{
   author={Berestycki, H.},
   author={Lions, P.-L.},
   title={Nonlinear scalar field equations. I. Existence of a ground state},
   journal={Arch. Rational Mech. Anal.},
   volume={82},
   date={1983},
   number={4},
   pages={313--345},
   issn={0003-9527},
   review={\MR{695535}},
   doi={10.1007/BF00250555},
}

\bib{BeLi2}{article}{
   author={Berestycki, H.},
   author={Lions, P.-L.},
   title={Nonlinear scalar field equations. II. Existence of infinitely many
   solutions},
   journal={Arch. Rational Mech. Anal.},
   volume={82},
   date={1983},
   number={4},
   pages={347--375},
   issn={0003-9527},
   review={\MR{695536}},
   doi={10.1007/BF00250556},
}

\bib{BrLi}{article}{
   author={Brezis, Ha\"{\i}m},
   author={Lieb, Elliott H.},
   title={Minimum action solutions of some vector field equations},
   journal={Comm. Math. Phys.},
   volume={96},
   date={1984},
   number={1},
   pages={97--113},
   issn={0010-3616},
   review={\MR{765961}},
}

\bib{CazBook}{book}{
   author={Cazenave, Thierry},
   title={Semilinear Schr\"{o}dinger equations},
   series={Courant Lecture Notes in Mathematics},
   volume={10},
   publisher={New York University, Courant Institute of Mathematical
   Sciences, New York; American Mathematical Society, Providence, RI},
   date={2003},
   pages={xiv+323},
   isbn={0-8218-3399-5},
   review={\MR{2002047}},
   doi={10.1090/cln/010},
}

\bib{CO}{article}{
   author={Colin, Mathieu},
   author={Ohta, Masahito},
   title={Bifurcation from semitrivial standing waves and ground states for
   a system of nonlinear Schr\"{o}dinger equations},
   journal={SIAM J. Math. Anal.},
   volume={44},
   date={2012},
   number={1},
   pages={206--223},
   issn={0036-1410},
   review={\MR{2888286}},
   doi={10.1137/110823808},
}

\bib{Co}{article}{
   author={Correia, Sim\~{a}o},
   title={Ground-states for systems of $M$ coupled semilinear Schr\"{o}dinger
   equations with attraction-repulsion effects: characterization and
   perturbation results},
   journal={Nonlinear Anal.},
   volume={140},
   date={2016},
   pages={112--129},
   issn={0362-546X},
   review={\MR{3492731}},
   doi={10.1016/j.na.2016.03.006},
}

\bib{Co2}{article}{
   author={Correia, Sim\~{a}o},
   title={Characterization of ground-states for a system of $M$ coupled
   semilinear Schr\"{o}dinger equations and applications},
   journal={J. Differential Equations},
   volume={260},
   date={2016},
   number={4},
   pages={3302--3326},
   issn={0022-0396},
   review={\MR{3434400}},
   doi={10.1016/j.jde.2015.10.032},
}
	
\bib{CZ}{article}{
   author={Chen, Zhijie},
   author={Zou, Wenming},
   title={An optimal constant for the existence of least energy solutions of
   a coupled Schr\"{o}dinger system},
   journal={Calc. Var. Partial Differential Equations},
   volume={48},
   date={2013},
   number={3-4},
   pages={695--711},
   issn={0944-2669},
   review={\MR{3116028}},
   doi={10.1007/s00526-012-0568-2},
}

\bib{DF}{article}{
   author={Dinh, Van Duong},
   author={Forcella, Luigi},
   title={Blow-up results for systems of nonlinear Schr\"{o}dinger equations
   with quadratic interaction},
   journal={Z. Angew. Math. Phys.},
   volume={72},
   date={2021},
   number={5},
   pages={Paper No. 178, 26},
   issn={0044-2275},
   review={\MR{4309791}},
   doi={10.1007/s00033-021-01607-6},
}

\bib{GNN}{article}{
   author={Gidas, B.},
   author={Ni, Wei Ming},
   author={Nirenberg, L.},
   title={Symmetry of positive solutions of nonlinear elliptic equations in
   ${\bf R}^{n}$},
   conference={
      title={Mathematical analysis and applications, Part A},
   },
   book={
      series={Adv. in Math. Suppl. Stud.},
      volume={7},
      publisher={Academic Press, New York-London},
   },
   date={1981},
   pages={369--402},
   review={\MR{634248}},
}

\bib{HOT}{article}{
   author={Hayashi, Nakao},
   author={Ozawa, Tohru},
   author={Tanaka, Kazunaga},
   title={On a system of nonlinear Schr\"{o}dinger equations with quadratic
   interaction},
   journal={Ann. Inst. H. Poincar\'{e} C Anal. Non Lin\'{e}aire},
   volume={30},
   date={2013},
   number={4},
   pages={661--690},
   issn={0294-1449},
   review={\MR{3082479}},
   doi={10.1016/j.anihpc.2012.10.007},
}

\bib{I}{article}{
   author={Ikoma, Norihisa},
   title={Uniqueness of positive solutions for a nonlinear elliptic system},
   journal={NoDEA Nonlinear Differential Equations Appl.},
   volume={16},
   date={2009},
   number={5},
   pages={555--567},
   issn={1021-9722},
   review={\MR{2551904}},
   doi={10.1007/s00030-009-0017-x},
}

\bib{IT}{article}{
   author={Ikoma, Norihisa},
   author={Tanaka, Kazunaga},
   title={A local mountain pass type result for a system of nonlinear
   Schr\"{o}dinger equations},
   journal={Calc. Var. Partial Differential Equations},
   volume={40},
   date={2011},
   number={3-4},
   pages={449--480},
   issn={0944-2669},
   review={\MR{2764914}},
   doi={10.1007/s00526-010-0347-x},
}

\bib{IKN}{article}{
   author={Inui, Takahisa},
   author={Kishimoto, Nobu},
   author={Nishimura, Kuranosuke},
   title={Blow-up of the radially symmetric solutions for the quadratic
   nonlinear Schr\"{o}dinger system without mass-resonance},
   journal={Nonlinear Anal.},
   volume={198},
   date={2020},
   pages={111895, 10},
   issn={0362-546X},
   review={\MR{4090442}},
   doi={10.1016/j.na.2020.111895},
}

\bib{Kwong}{article}{
   author={Kwong, Man Kam},
   title={Uniqueness of positive solutions of $\Delta u-u+u^p=0$ in ${\bf
   R}^n$},
   journal={Arch. Rational Mech. Anal.},
   volume={105},
   date={1989},
   number={3},
   pages={243--266},
   issn={0003-9527},
   review={\MR{969899}},
   doi={10.1007/BF00251502},
}

\bib{LLBook}{book}{
   author={Lieb, Elliott H.},
   author={Loss, Michael},
   title={Analysis},
   series={Graduate Studies in Mathematics},
   volume={14},
   edition={2},
   publisher={American Mathematical Society, Providence, RI},
   date={2001},
   pages={xxii+346},
   isbn={0-8218-2783-9},
   review={\MR{1817225}},
   doi={10.1090/gsm/014},
}

\bib{LW}{article}{
   author={Lin, Tai-Chia},
   author={Wei, Juncheng},
   title={Ground state of $N$ coupled nonlinear Schr\"{o}dinger equations in
   $\mathbf R^n$, $n\leq 3$},
   journal={Comm. Math. Phys.},
   volume={255},
   date={2005},
   number={3},
   pages={629--653},
   issn={0010-3616},
   review={\MR{2135447}},
   doi={10.1007/s00220-005-1313-x},
}

\bib{MMP}{article}{
   author={Maia, L. A.},
   author={Montefusco, E.},
   author={Pellacci, B.},
   title={Positive solutions for a weakly coupled nonlinear Schr\"{o}dinger
   system},
   journal={J. Differential Equations},
   volume={229},
   date={2006},
   number={2},
   pages={743--767},
   issn={0022-0396},
   review={\MR{2263573}},
   doi={10.1016/j.jde.2006.07.002},
}

\bib{M}{article}{
   author={Masaki, Satoshi},
   title={Classification of a class of systems of cubic ordinary
   differential equations},
   journal={J. Differential Equations},
   volume={344},
   date={2023},
   pages={471--508},
   issn={0022-0396},
   review={\MR{4510789}},
   doi={10.1016/j.jde.2022.11.001},
}
\bib{MM}{article}{
   author={Masaki, Satoshi},
   author={Miyazaki, Hayato},
   title={Long range scattering for nonlinear Schr\"{o}dinger equations with
   critical homogeneous nonlinearity},
   journal={SIAM J. Math. Anal.},
   volume={50},
   date={2018},
   number={3},
   pages={3251--3270},
   issn={0036-1410},
   review={\MR{3815545}},
   doi={10.1137/17M1144829},
}

\bib{MMU}{article}{
   author={Masaki, Satoshi},
   author={Miyazaki, Hayato},
   author={Uriya, Kota},
   title={Long-range scattering for nonlinear Schr\"{o}dinger equations with
   critical homogeneous nonlinearity in three space dimensions},
   journal={Trans. Amer. Math. Soc.},
   volume={371},
   date={2019},
   number={11},
   pages={7925--7947},
   issn={0002-9947},
   review={\MR{3955539}},
   doi={10.1090/tran/7636},
}

\bib{MSU1}{article}{
   author={Masaki, Satoshi},
   author={Segata, Jun-ichi},
   author={Uriya, Kota},
   title={On asymptotic behavior of solutions to cubic nonlinear
   Klein-Gordon systems in one space dimension},
   journal={Trans. Amer. Math. Soc. Ser. B},
   volume={9},
   date={2022},
   pages={517--563},
   review={\MR{4439505}},
   doi={10.1090/btran/116},
}
\bib{MSU2}{misc}{
	author={Masaki, Satoshi},
	author={Segata, Jun-ichi},
	author={Uriya, Kota},
	title={Asymptotic behavior in time of solution to system of cubic nonlinear Schr\"odinger equations in one space dimension},
	status={to appear in Springer Proc. Math. Stat., available as arXiv:2112.06427},
}

\bib{NP}{article}{
   author={Noguera, Norman},
   author={Pastor, Ademir},
   title={A system of Schr\"{o}dinger equations with general quadratic-type
   nonlinearities},
   journal={Commun. Contemp. Math.},
   volume={23},
   date={2021},
   number={4},
   pages={Paper No. 2050023, 66},
   issn={0219-1997},
   review={\MR{4237567}},
   doi={10.1142/S0219199720500236},
}

\bib{NP2}{article}{
   author={Noguera, Norman},
   author={Pastor, Ademir},
   title={Blow-up solutions for a system of Schr\"{o}dinger equations with
   general quadratic-type nonlinearities in dimensions five and six},
   journal={Calc. Var. Partial Differential Equations},
   volume={61},
   date={2022},
   number={3},
   pages={Paper No. 111, 35},
   issn={0944-2669},
   review={\MR{4409822}},
   doi={10.1007/s00526-022-02219-2},
}

\bib{O}{article}{
	author={Ohta, Masahito},
	title={[Stability analysis of standing waves to nonlinear Schr\"odinger equation] Hisenkei Schr\"odinger houteishiki no koritsuhakai no anteiseikaiseki (in Japanese)},
	journal={RIMS K\^oky\~uroku},
	volume={1890},
	date={2014},
	pages={200--211}
}

\bib{Sh}{article}{
   author={Shatah, Jalal},
   title={Stable standing waves of nonlinear Klein-Gordon equations},
   journal={Comm. Math. Phys.},
   volume={91},
   date={1983},
   number={3},
   pages={313--327},
   issn={0010-3616},
   review={\MR{723756}},
}

\bib{Si}{article}{
   author={Sirakov, Boyan},
   title={Least energy solitary waves for a system of nonlinear Schr\"{o}dinger
   equations in $\mathbb{ R}^n$},
   journal={Comm. Math. Phys.},
   volume={271},
   date={2007},
   number={1},
   pages={199--221},
   issn={0010-3616},
   review={\MR{2283958}},
   doi={10.1007/s00220-006-0179-x},
}

\bib{St}{article}{
   author={Strauss, Walter A.},
   title={Existence of solitary waves in higher dimensions},
   journal={Comm. Math. Phys.},
   volume={55},
   date={1977},
   number={2},
   pages={149--162},
   issn={0010-3616},
   review={\MR{454365}},
}

\bib{WY}{article}{
   author={Wei, Juncheng},
   author={Yao, Wei},
   title={Uniqueness of positive solutions to some coupled nonlinear
   Schr\"{o}dinger equations},
   journal={Commun. Pure Appl. Anal.},
   volume={11},
   date={2012},
   number={3},
   pages={1003--1011},
   issn={1534-0392},
   review={\MR{2968605}},
   doi={10.3934/cpaa.2012.11.1003},
}

\bib{WilBook}{book}{
   author={Willem, Michel},
   title={Minimax theorems},
   series={Progress in Nonlinear Differential Equations and their
   Applications},
   volume={24},
   publisher={Birkh\"{a}user Boston, Inc., Boston, MA},
   date={1996},
   pages={x+162},
   isbn={0-8176-3913-6},
   review={\MR{1400007}},
   doi={10.1007/978-1-4612-4146-1},
}
\end{biblist}
\end{bibdiv}

\end{document}